\theoremstyle{plain}
\newtheorem{theorem}{Theorem}
\newtheorem{proposition}[theorem]{Proposition}
\newtheorem{lemma}[theorem]{Lemma}
\newtheorem{corollary}[theorem]{Corollary}
\theoremstyle{remark}
\newtheorem{example}[theorem]{Example}
\newtheorem{remark}[theorem]{Remark}
\newenvironment{assumption}[1]{%
  \manualtheoreminner
}{\endmanualtheoreminner}
\DeclareMathOperator{\E}{{\mathbb E}}
\DeclareMathOperator{\R}{{\mathbb R}}
\DeclareMathOperator{\N}{{\mathbb N}}
\DeclareMathOperator{\Var}{Var} 
\DeclareMathOperator{\dom}{dom}
\def\cF{\mathcal{F}}
\def\cG{\mathcal{G}}
\def\cH{\mathcal{H}}
\def\cI{\mathcal{I}}
\def\cN{\mathcal{N}}
\def\cO{\mathcal{O}}
\def\bN{\mathbb{N}}
\def\bP{\mathbb{P}}
\def\bR{\mathbb{R}}
\def\sF{\mathscr{F}}
\newcommand{\1}{\mathbbm{1}}            
\newcommand{\set}[1]{\{#1\}}            
\DeclareMathOperator*{\w}{w-\!\!} 
\DeclareMathOperator{\dif}{d \!}        
\definecolor{Red}{rgb}{0.9,0,0.0}
\definecolor{Blue}{rgb}{0,0.0,1.0}
\begin{document}
\global\long\def\epsilon{\varepsilon}

\global\long\def\E{\mathbb{E}}

\global\long\def\I{\mathbf{1}}

\global\long\def\N{\mathbb{N}}

\global\long\def\R{\mathbb{R}}

\global\long\def\C{\mathbb{C}}

\global\long\def\Q{\mathbb{Q}}

\global\long\def\P{\mathbb{P}}

\global\long\def\D{\Delta_{n}}

\global\long\def\dom{\operatorname{dom}}

\global\long\def\b#1{\mathbb{#1}}

\global\long\def\c#1{\mathcal{#1}}

\global\long\def\s#1{{\scriptstyle #1}}

\global\long\def\u#1#2{\underset{#2}{\underbrace{#1}}}

\global\long\def\r#1{\xrightarrow{#1}}

\global\long\def\mr#1{\mathrel{\raisebox{-2pt}{\ensuremath{\xrightarrow{#1}}}}}

\global\long\def\t#1{\left.#1\right|}

\global\long\def\l#1{\left.#1\right|}

\global\long\def\f#1{\lfloor#1\rfloor}

\global\long\def\sc#1#2{\langle#1,#2\rangle}

\global\long\def\abs#1{\lvert#1\rvert}

\global\long\def\bnorm#1{\Bigl\lVert#1\Bigr\rVert}

\global\long\def\wraum{(\Omega,\c F,\P)}

\global\long\def\fwraum{(\Omega,\c F,\P,(\c F_{t}))}

\global\long\def\norm#1{\lVert#1\rVert}

\global\long\def\theta{\vartheta}

\title{Parameter estimation for semilinear SPDEs from local measurements}

\author{Randolf Altmeyer\thanks{Institut für Mathematik, Humboldt-Universität zu Berlin, Unter den Linden 6, 10099 Berlin, Germany. Email: altmeyrx@math.hu-berlin.de}, Igor Cialenco\thanks{Department of Applied Mathematics, Illinois Institute of Technology, 10 W 32nd Str, Building REC, Room 220, Chicago, IL 60616, USA, Email: cialenco@iit.edu}, Gregor Pasemann\thanks{Institut f\"ur Mathematik, Humboldt-Universit\"at zu Berlin, Unter den Linden 6, 10099 Berlin, Germany, E-mail: gregor.pasemann@hu-berlin.de}}

\maketitle

\begin{abstract}

\noindent This work contributes to the limited literature on estimating the diffusivity or drift coefficient of nonlinear SPDEs driven by additive noise. Assuming that the solution is measured locally in space and over a finite time interval, we show that the augmented maximum likelihood estimator introduced in \cite{AltmeyerReiss2019} for linear SPDEs remains rate-optimal  when applied to a large class of semilinear SPDEs. The obtained abstract results are applied to several important classes of SPDEs, including stochastic reaction-diffusion equations. Moreover, we also study the stochastic Burgers equation, as an example with first order nonlinearity, which is a borderline case of the general results. The optimal statistical results are obtained through a precise control of the spatial regularity of the solution and by using higher order fractional $L^p$-Sobolev type spaces. We conclude with numerical examples that validate the theoretical results. 

\medskip

\noindent\textit{MSC 2010:} Primary 60F05;\ Secondary 60H15,  62M05, 62G05 62F12. 

\smallskip

\noindent\textit{Keywords:} stochastic partial differential equations, semilinear SPDEs, augmented MLE, stochastic Burgers, stochastic reaction-diffusion,  optimal regularity, inference, drift estimation, viscosity estimation, central limit theorem, local measurements. 
\end{abstract}

\section{Introduction}

While the statistical analysis of stochastic evolution equations, and stochastic partial differential equations (SPDEs) in particular,  is becoming a mature research field, there are many problems left open that broadly can be streamlined into two directions, both undertaken in this paper: a) to consider larger and more diverse classes of equations, usually dictated by specific and practically important  models; b) to develop new statistical methods and techniques that are theoretically  sound and practically relevant. 

We consider a general class of second order \textit{semi-linear SPDEs} of the form\footnote{The equation is strictly defined in Section~\ref{sec:Prelims}.}  
\begin{equation}\label{eq:introSPDE}
\dif X(t)=\vartheta\Delta X(t)\dif t + F(t,X(t)) \dif t+B \dif W(t), \quad 0<t\le T, \ X(0)=X_0,\\
\end{equation}
defined on an appropriate Hilbert space, endowed with zero boundary conditions on a bounded domain $\Lambda \subset\bR^d$, and where $\vartheta$ is the parameter of interest, $F$ is a (nonlinear) function, $B$ a linear operator, and $W$ a cylindrical Brownian motion.

Up until recently, most of the literature on parameter estimation for SPDEs was rooted in the so-called spectral approach by assuming that the observations are obtained in the Fourier space over some finite time interval. For details on this classical method, as well as for general  historical developments in this field, we refer to the survey \cite{Cialenco2018}. Recently, new methods have been developed to study statistical inference problems for linear SPDEs, notably the methodology based on local measurements introduced in \cite{AltmeyerReiss2019}, as well as several approaches dedicated to discrete sampling (cf. \cite{CialencoHuang2017,BibingerTrabs2017,BibingerTrabs2019,Chong2019,Chong2019a,CialencoDelgado-VencesKim2019,KainoMasayukiUchida2019,KhalilTudor2019,CialencoKim2020}), data assimilation (\cite{CotterCrisanHolmEtAl2019,NueskenReichRozdeba2019}) and Bayesian inference (\cite{ReichRozdeba2020}, \cite{Yan2019}). 
While many SPDEs of practical relevance are inherently nonlinear, such equations are considered only in few works (\cite{IgorNathanAditiveNS2010,PasemannStannat2019}, \cite{Pasemann2020}), all within the spectral approach.

The main goal of this paper is to study the estimation of the diffusivity (drift) parameter $\vartheta$ of the \textit{nonlinear SPDE} \eqref{eq:introSPDE} in the context of the \textit{local measurements} framework of \cite{AltmeyerReiss2019}.  We take as an ansatz that the \textit{augmented maximum likelihood estimator} (augmented MLE) of $\vartheta$ introduced in \cite{AltmeyerReiss2019} for linear SPDEs and defined by
$$
\widehat{\vartheta}_{\delta}=\frac{\int_{0}^{T}X_{\delta,x_{0}}^{\Delta}(t)\dif X_{\delta,x_{0}}\left(t\right)}{\int_{0}^{T}(X_{\delta,x_{0}}^{\Delta}(t))^{2}\dif t}, 
$$
has desired asymptotic properties when applied to nonlinear SPDEs, where the observables $X_{\delta,x_0}(t)$, and respectively $X^\Delta_{\delta,x_{0}}(t)$, are obtained from integrating the solution $X$ against a kernel $K_{\delta,x_0}$, and respectively against $\Delta K_{\delta,x_0}$, assuming that $K_{\delta,x_0}$ has support in a $\delta$-neighborhood of a fixed spatial point $x_0$ (hence local measurements).

In the main result of this paper, we prove under some minimal assumptions satisfied by a large class of SPDEs that $\widehat{\vartheta}_{\delta}$, as $\delta\to0$, is a consistent and asymptotically normal estimator of $\vartheta$. Statistically, this shows that spatially localized measurements of semilinear SPDEs contain enough information to identify the coefficient next to the highest order derivative, which is in line with the conclusion of \cite{AltmeyerReiss2019}, as well as with the literature on discrete sampling\footnote{It was shown that to estimate the diffusivity coefficient in a stochastic heat equation driven by an additive noise it is enough to sample the solution at one spacial point over a finite time interval.} listed above, but contrary to the spectral approach, where by its very nature the solution has to be observed everywhere in the physical domain. For an application of the augmented MLE with multiple local measurements to experimental data from cell biology see \cite{altmeyer2020parameter}.

In a nutshell, we establish the exact rate of convergence of the augmented MLE, that depends on the regularity gap (the extra regularity of the nonlinear part of the solution) or the order of the nonlinearity $F$ comparative to the Laplacian. We show that this rate of convergence is not specific to $\widehat{\theta}_{\delta}$ by proving that it is the best possible rate in the minimax sense for any admissible estimator and any sufficiently regular nonlinearity. The derivation of the main results fundamentally exploits  in a novel way fine analytical properties of the solution through a precise control of the spatial regularity of the solution by using higher order fractional $L^p$-Sobolev type spaces. 

The augmented MLE is remarkably flexible. It does not depend on the geometry of the domain $\Lambda$ nor its dimension. Moreover, the estimation procedure remains valid even when the nonlinearity $F$, the covariance operator $B$ or the initial data $X_0$ are unknown or misspecified, as is often the case in practice. We also note that the operator $B$ is not required to commute with the Laplacian $\Delta$, which is one of the core assumptions in the spectral approach. On the other hand, we treat only the parametric case, compared to \cite{AltmeyerReiss2019}, but the extension to nonparametric $\vartheta$ is straightforward, yet computationally significantly more involved. 

The main contributions of this paper can be summarized as follows: First, we present abstract conditions on $F,B,K$ and $X_0$, that guarantee the above mentioned asymptotic properties for $\widehat{\vartheta}_\delta$; Section~\ref{sec:Prelims}. 
We show that these structural conditions are minimal and cover a wide range of SPDEs. Second, we discuss some classical examples of SPDEs proving that the abstract conditions are fulfilled.   
This includes the stochastic reaction-diffusion equations and the stochastic Burgers equation.  Third, we show that equations with first order nonlinearities, such as the stochastic Burgers equation, constitute the extreme case, to which the general asymptotic normality results do not apply while the consistency still holds true. We treat this case separately, by combining the regularity analysis of the solution with its Wiener chaos expansion; cf. Section~\ref{sec:BurgersMain}. Forth, the results for stochastic Allen-Cahn and stochastic Burgers equation are illustrated numerically in Section~\ref{sec:Discussion-and-numerical}.

Thorough discussions on the nature of the proofs, the form of the imposed conditions and comparison to other existing methods, are presented throughout the paper as well as in the concluding Section~\ref{sec:discussion}. Due to the technical nature of the proofs, to streamline the presentation, the vast majority of the results are proved in the Appendix.  Although the well-posedness and regularity properties of the solution are at the core of our analysis, we postpone them to Section~\ref{append:wellPos}, where for the sake of completeness, we also provide a self-contained treatment of well-posedness of SPDEs relevant to the purposes of our study.

\section{Preliminaries and the main problem}\label{sec:Prelims}

\subsection{Notation}

Let $\Lambda$ be an open and bounded set in $\R^{d}$ with smooth boundary $\partial\Lambda$ and let $\langle\cdot,\cdot\rangle$ be the inner product in $L^2(\Lambda)$. For $p>1$ and any linear operator $A:L^p(U)\rightarrow L^p(U)$, where $U\subset \R^d$ is open, let $\norm{A}_{L^{p}(U)}$ denote its operator norm. For $k\in \N_0$, $H^k(\R^d)$ are the usual $L^2$-Sobolev spaces. Let $\Delta z=\sum_{i=1}^d \partial_i^2 z$ denote the Laplace operator on $L^p(\Lambda)$, $p>1$, with zero boundary conditions. To describe higher regularities we consider for $s\in \R$ the fractional Laplacians $(-\Delta)^{s/2}$ on $L^{p}(\Lambda)$, cf. \cite{Yagi10}, and denote their domains by $W^{s,p}(\Lambda):=\{u\in L^{p}(\Lambda):\norm{u}_{s,p}<\infty\}$, where $\norm{\,\cdot\,}_{s,p}:=\norm{\,\cdot\,}_{W^{s, p}(\Lambda)}:=\norm{(-\Delta)^{s/2}\,\cdot\,}_{L^{p}(\Lambda)}$. We also set $W^{s}(\Lambda):=W^{s,2}(\Lambda)$ and $\norm{\cdot}_s:=\norm{\cdot}_{s,2}$, $\norm{\cdot}:=\norm{\cdot}_{2}$. The spaces $W^{s,p}(\Lambda)$ differ from the  Sobolev spaces as defined, for example,  in \cite{Adams}, but they are subspaces of the classical Bessel potential spaces and allow for a Sobolev embedding theorem; for details, see \cite{Triebel1983book}, \cite{DebusscheMoorHofmanova2015},  \cite[Section 16.5]{Yagi10}. Similarly, $(-\Delta_0)^{s/2}$  will stand for the fractional negative Laplace operator on $\R^d$.

We fix a constant $\vartheta\in\bR_+:=(0,\infty)$, that will play the role of the parameter of interest, and denote by $(S_{\vartheta}(t))_{t\geq 0}$ the semigroup generated by $\vartheta\Delta$ on $L^2(\Lambda)$. Respectively,  $(e^{t\Delta_0})_{{t\geq 0}}$ is the heat semigroup on $\R^d$ generated by $\Delta_0$.

Throughout this work we fix a finite time horizon $T>0$, and let $(\Omega,\sF,(\sF_{t})_{0\le t\le T},\P)$ be a filtered probability space supporting a cylindrical Brownian motion $W$ on $L^{2}(\Lambda)$. Informally, $\dot{W}$ is referred to as \textit{space-time} white noise.  Throughout, all equalities and inequalities, unless otherwise mentioned, will be understood in the $\bP$-a.s. sense. As usual, we will denote by $\r{\P}$ the convergence in probability, and $\w\lim$ or $\xrightarrow{d}$ will stand for the convergence in distribution. Correspondingly, for two sequences of random variables $(a_n)_{n\geq 1}$, $(b_n)_{n\geq 1}$, by definition  $a_n=o_{\P}(b_n)$ and $a_n=\cO_\bP(b_n)$, if $a_n/b_n \r{\P} 0$ as $n\rightarrow \infty$ and, respectively,  $\sup_{n\in\N}\bP(|a_n/b_n|>M)\rightarrow 0$, as $M\rightarrow\infty$.

\subsection{The SPDE model}\label{sec:SPDEmodel}
Consider the semilinear stochastic partial differential equation
\begin{equation}
\begin{cases}
\dif X(t)=\vartheta\Delta X(t)\dif t + F(t,X(t)) \dif t+B \dif W(t), \quad 0\leq t\le T,\\
X(0)=X_{0}\in L^2(\Lambda),\\
X(t)|_{\partial\Lambda}=0, \quad 0<t\le T,
\end{cases}\label{eq:SPDE}
\end{equation}
where $F:[0,+\infty)\times \cH\to L^2(\Lambda)$  is a Borel measurable function with a suitable Hilbert space $\cH\subset L^2(\Lambda)$ and a linear operator  $B:L^2(\Lambda)\rightarrow L^2(\Lambda)$. The initial data $X_0$ is $\cF_0$-measurable.

In what follows, we always assume that \eqref{eq:SPDE} has a mild solution, namely that there exists an  adapted process $X=(X(t))_{0\leq t\leq T}$ with values in $L^{2}(\Lambda)$ and such that
\begin{equation}
X(t) = S_{\vartheta}(t)X_0 + \int_0^t S_{\vartheta}(t-s)F(s,X(s))\dif s + \int_0^t S_{\vartheta}(t-s)B \dif W(s). \label{eq:mildSolution}
\end{equation}
In particular, we implicitly assume that all integrals in \eqref{eq:mildSolution} are well-defined.  Sufficient conditions for the existence and uniqueness of mild solutions are well-known (cf.~\cite{DaPratoZabczykBook2014}) and will be discussed for specific equations in Section ~\ref{sec:particularEquations}. The choice to work with mild solutions is primarily dictated by the methods we use to establish fine analytical properties of $X$ that are  needed for the statistical analysis below. 

On the other hand, the statistical experiment, which will be introduced  in the next section, is based only on functionals of the form $\sc{X(t)}z$ for some test functions $z$. We therefore assume that $X$ is also a weak solution to \eqref{eq:SPDE}, that is, $X$ is an $L^{2}(\Lambda)$-valued adapted process such that for any test function $z\in W^{2}(\Lambda)$
\begin{equation}
\sc{X(t)}z=  \sc{X_0}z+\int_{0}^{t}\sc{X(s)}{\vartheta \Delta z}\dif s+\int_{0}^{t}\sc{F(s,X(s))}z \dif s+\sc{W(t)}{B^{*}z}.\label{eq:weakSolution}
\end{equation}
This holds for the process $X$ in \eqref{eq:mildSolution} under mild assumptions, which will be satisfied in all examples below; cf. ~\cite[Theorem 5.4]{DaPratoZabczykBook2014} and \cite[Proposition G.0.5]{LiuRoeckner2015}. 
Generally speaking, considering a weak solution will allow for a larger class of operators $B$, including $B$ being the identity operator and thus \eqref{eq:SPDE} driven by a space-time white noise. We also believe that all results on statistical inference in this paper hold true assuming only the existence of a weak solution in $L^2(\Lambda)$, as long as the splitting argument in Section \ref{sec:MainAssumptions} below together with an analysis of the spatial regularity of the involved processes can be performed, and detailed proofs of this are postponed to future works.

\subsection{Statistical experiment}\label{sec:StatExperiment}

Following the setup from \cite{AltmeyerReiss2019}, we fix a spatial point $x_{0}\in\Lambda$ around which the local measurements of the solution will be performed.  Throughout, we will use the following notations: for $p\geq 2$, $z\in L^{p}(\R^{d})$ and $\delta>0$, 
\begin{align*}
\Lambda_{\delta,x_{0}} & :=\delta^{-1}(\Lambda-x_{0})=\{\delta^{-1}(x-x_{0})\,:\,x\in\Lambda\}, \\
z_{\delta,x_{0}}(x) & :=\delta^{-d/2}z(\delta^{-1}(x-x_{0})),\quad x\in\R^{d},
\end{align*}
and we also set $\Lambda_{0,x_{0}}:=\R^{d}$. For $\delta > 0$, denote by $\Delta_{\delta,x_{0}}$ the Laplace operator on $L^p(\Lambda_{\delta,x_{0}})$ and by $(-\Delta_{\delta,x_0})^{s/2}$, $s\in\R$, the fractional Laplacian with domain $W^{s,p}(\Lambda_{\delta,x_0})$. Correspondingly,  $(S_{\vartheta,\delta,x_{0}}(t))_{t\geq0}$ is the semigroup generated by $\vartheta\Delta_{\delta,x_0}$ on $L^{2}(\Lambda_{\delta, x_0})$.

The measurements are obtained with respect to a fixed function (or kernel) $K\in H^{2}(\R^{d})$ with compact support in $\Lambda_{\delta,x_0}$ for $\delta\leq 1$ such that $K_{\delta,x_0}\in W^{2}(\Lambda)$. Local measurements for the solution $X$ of \eqref{eq:SPDE} at $x_{0}$ with resolution level $\delta$ on the time interval $[0,T]$ are given by the real-valued processes $X_{\delta,x_{0}}=(X_{\delta,x_{0}}(t))_{0\le t\le T}$, and $X_{\delta,x_{0}}^{\Delta}=(X_{\delta,x_{0}}^{\Delta}(t))_{0\le t\le T}$, where
\begin{align}
X_{\delta,x_{0}}(t) & :=\left\langle X(t),K_{\delta,x_{0}}\right\rangle ,\label{EqXdelta}\\
X_{\delta,x_{0}}^{\Delta}(t) & :=\left\langle X(t),\Delta K_{\delta,x_{0}}\right\rangle.\label{EqXdeltaDelta}
\end{align}
Note that $X_{\delta,x_0}^{\Delta}(t)=\Delta X_{\delta,\cdot}(t)|_{x=x_0}$ by convolution, and thus,  $X_{\delta,x_0}^{\Delta}(t)$ can be computed by observing $X_{\delta,x}(t)$ for $x$ in a neighborhood of $x_0$.

The statistical analysis requires additional assumptions on $K$ which will be imposed below.  We will also show that the variance of the proposed estimator depends on the choice of $K$;   cf. Theorem~\ref{thm:rates}. For typical examples of $K$ see Section~\ref{sec:Discussion-and-numerical}.

\subsection{The estimator}

As noticed in \cite{IgorNathanAditiveNS2010}, and consequently used and generalized in \cite{PasemannStannat2019}, the estimator of the diffusivity coefficient $\vartheta$ for linear SPDEs derived within the so-called spectral approach retains its asymptotic properties when applied to a nonlinear SPDE, given that the nonlinear part does not `dominate' the linear part.  Thus, for the local measurements $X_{\delta,x_0}$, $X^{\Delta}_{\delta,x_0}$, we take as ansatz the \textit{augmented maximum likelihood estimator} (augmented MLE) of $\vartheta$ introduced in \cite{AltmeyerReiss2019} for linear SPDEs, which is defined by 
\begin{equation}\label{eq:Estimator}
\widehat{\vartheta}_{\delta}=\frac{\int_{0}^{T}X_{\delta,x_{0}}^{\Delta}(t)\dif X_{\delta,x_{0}}\left(t\right)}{\int_{0}^{T}(X_{\delta,x_{0}}^{\Delta}(t))^{2}\dif t}.
\end{equation}
As discussed in \cite{AltmeyerReiss2019}, this estimator is closely related to, but different from, the actual MLE, which cannot be computed in closed form, even for linear equations and constant $\vartheta$. We also note that $\widehat{\vartheta}_{\delta}$ makes no explicit reference to $F$ or $B$, which are generally unknown to the observer and therefore treated here as nuisance.

From \eqref{eq:weakSolution}, clearly the dynamics of $X_{\delta,x_0}$ are given by
\begin{equation}
  \dif X_{\delta,x_0} = \vartheta X^\Delta_{\delta,x_0} \dif t +
  \langle  F(t,X(t)), K_{\delta,x_0} \rangle\dif t + \norm{B^*K_{\delta,x_0}} \dif  \bar{w}(t),\label{eq:X_delta_eq}
\end{equation}
where $\bar{w}(t):= \langle W(t), B^* K_{\delta,x_0}\rangle/ \norm{B^*K_{\delta,x_0}}$  is a scalar Brownian motion, as long as $\norm{B^*K_{\delta,x_0}}$ does not vanish, which is guaranteed to be true for small $\delta>0$ (cf. Assumption~\ref{assump:B} and the discussion therein). Using \eqref{eq:Estimator} and \eqref{eq:X_delta_eq}, we obtain the error decomposition
\begin{equation}\label{eq:augm_decomp}
\widehat{\vartheta}_{\delta}=\vartheta+(\cI_{\delta})^{-1}R_{\delta}+(\cI_{\delta})^{-1}M_{\delta},
\end{equation}
where
\begin{align*}
\cI_{\delta} & :=\norm{B^{*}K_{\delta,x_{0}}}^{-2}\int_{0}^{T}(X_{\delta,x_{0}}^{\Delta}(t))^{2}\dif t,\tag*{\textrm{(observed Fisher information)}}\\
R_{\delta} & := \norm{B^{*}K_{\delta,x_{0}}}^{-2}\int_{0}^{T}X_{\delta,x_{0}}^{\Delta}(t)\sc{F(t,X(t))}{K_{\delta,x_{0}}}\dif t, \tag*{\textrm{(nonlinear bias)}} \\
M_{\delta} & := \norm{B^{*}K_{\delta,x_{0}}}^{-1}\int_{0}^{T}X_{\delta,x_{0}}^{\Delta}(t)\dif\bar{w}(t).\tag*{\textrm{(martingale part)}}
\end{align*}
The nonlinear bias $R_\delta$ accounts for not observing $(\sc{F(t,X(t))}{K_{\delta,x_0}})_{0\leq t\leq T}$. The observed Fisher information $\c{I}_\delta$ does not correspond to the Fisher information of the statistical model, although it plays a similar role here in the sense that $\cI_\delta \rightarrow \infty$ means `increasing information', and hence yields consistent estimation. In view of \eqref{eq:X_delta_eq}, the decomposition \eqref{eq:augm_decomp} is essentially obtained from the `whitened' process $X_{\delta,x_0}/\norm{B^*K_{\delta,x_0}}$. The statistical performance of $\widehat{\vartheta}_{\delta}$ is therefore not affected by $B$, as $\delta\rightarrow 0$, as we will see below. This is in stark contrast to the regularity properties of $X$, which improve as $B$ becomes more smoothing.

Using the decomposition \eqref{eq:augm_decomp}, to prove consistency, it is enough to show that $(\cI_{\delta})^{-1}R_{\delta}$ and  $(\cI_{\delta})^{-1}M_{\delta}$ vanish, as $\delta\to0$, and to prove asymptotic normality, we will show that $\delta^{-1}(\cI_{\delta})^{-1}R_{\delta}\to0$, while  $\delta^{-1}(\cI_{\delta})^{-1}M_{\delta}$ converges in distribution to a Gaussian random variable.

\subsection{The splitting argument and main model assumptions} \label{sec:MainAssumptions}

In this section, we list high level structural assumptions on the model inputs that will guarantee the desired asymptotic properties of $\widehat{\vartheta}_{\delta}$. These assumptions will be implied by verifiable conditions on the nonlinear term $F$, the operator $B$ and the initial condition $X_0$ in Sections \ref{sec:higherReg} and \ref{sec:particularEquations}.

Similar to \cite{IgorNathanAditiveNS2010,PasemannStannat2019} we use the `splitting of the solution' argument. Namely, consider the $L^{2}(\Lambda)$-valued process $\bar{X}=(\bar X(t))_{0\leq t\leq T}$ given by 
\begin{equation}\label{eq:mildSolutionLinear}
\bar{X}(t) = \int_0^t S_{\vartheta}(t-s)B \dif W(s).
\end{equation}
Analogous to \eqref{eq:mildSolution}, $\bar X$ is a mild solution to the corresponding linear equation
\begin{equation}
\dif \bar{X}(t) = \vartheta \Delta \bar{X}(t)\dif t + B\dif W(t), \quad 0<t\leq T, \quad \bar{X}(0)=0. \label{eq:SPDElinear}
\end{equation}
Then, the nonlinear part $\widetilde{X}:=X-\bar{X}$ satisfies
\begin{equation}\label{eq:NonlinearMild}
\widetilde X(t) = S_{\vartheta}(t)X_0 + \int_0^t S_{\vartheta}(t-s)F(s,\bar{X}(s)+\widetilde{X}(s))\mathrm{d}s, \quad 0 \leq t \leq T,
\end{equation}
namely, it solves the partial differential equation with random coefficients given by
\begin{equation}
\frac{\dif}{\dif t} \widetilde{X}(t) = \vartheta \Delta \widetilde{X}(t) + F(t,\bar {X}(t) + \widetilde{X}(t)), \quad 0<t\leq T, \quad \widetilde{X}(0)=X_0. \label{eq:SPDENonlinear}
\end{equation}
With this at hand, the statistical properties of the local measurements in \eqref{EqXdelta} and \eqref{EqXdeltaDelta} can be studied separately for the linear parts $\bar{X}_{\delta,x_{0}}(t) :=\langle \bar{X}(t),K_{\delta,x_{0}}\rangle$, $\bar{X}_{\delta,x_{0}}^{\Delta}(t) :=\langle \bar{X}(t),\Delta K_{\delta,x_{0}}\rangle$ and the corresponding nonlinear parts $\widetilde{X}_{\delta,x_{0}}(t)$, $\widetilde{X}_{\delta,x_{0}}^{\Delta}(t)$. 

Using \eqref{eq:mildSolutionLinear}, we first note that $\bar{X}_{\delta,x_0}$, $\bar{X}_{\delta,x_0}^{\Delta}$ are centered Gaussian processes. Following similar arguments as in \cite{AltmeyerReiss2019}, exact limits of their covariance functions, as $\delta\rightarrow 0$, will be obtained after appropriate scaling by analyzing the actions of $\vartheta\Delta$ and $S_{\vartheta}(t)$ on the localized functions $z_{\delta,x_0}$; see Section~\ref{append:ScaleCov}. These limits are non-degenerate only under certain scaling assumptions on $B$ and $K$. In view of the error decomposition \eqref{eq:augm_decomp}, we  further impose mild conditions on $\widetilde X$ and $F$ that allow to reduce the entire line of reasoning to the linear case; see Proposition~\ref{prop:pseudo_Fisher_info}. 

\begin{assumption}{\textit{B}}\label{assump:B} 
There exists a constant $\gamma > d/4-1/2$, $\gamma \geq 0$, such that $B:L^2(\Lambda)\rightarrow W^{2\gamma}(\Lambda)$ is an isomorphism. Further, there is a family of linear and bounded operators $(B_{\delta,x_{0}}, \ 0\leq\delta\leq 1)$,  $B_{\delta,x_{0}}:L^2(\R^{d})\rightarrow L^{2}(\R^{d})$ such that
\begin{equation}\label{eq:B_scaling}
B^{*}(-\Delta)^{\gamma}z{}_{\delta,x_{0}}=(B_{\delta,x_{0}}^{*}z)_{\delta,x_{0}},\quad 0<\delta\leq 1,
\end{equation}
for any smooth function $z$ supported in $\Lambda_{\delta,x_{0}}$, and such that
$B^{*}_{\delta,x_{0}}z\rightarrow B^{*}_{0,x_{0}}z$ in $L^{2}(\R^{d})$, for $\delta\rightarrow0$ and $z\in L^{2}(\R^{d})$.
\end{assumption}

\begin{assumption}{\textit{K}}\label{assump:K}
There exists a function $\widetilde{K}\in H^{2\lceil\gamma\rceil+2}(\R^{d})$ with compact support in $\Lambda_{\delta,x_0}$ for $\delta\leq 1$ such that $K= (-\Delta)^{\lceil\gamma\rceil}\widetilde{K}$.
\end{assumption}

\begin{assumption}{\textit{ND}}\label{assump:ND}
With $\Psi(z)  :=\int_{0}^{\infty}\norm{B_{0,x_{0}}^{*}e^{s\Delta_{0}}z}^2_{L^{2}(\R^{d})} \dif s$, assume that $\norm{B_{0,x_{0}}^{*}(-\Delta_{0})^{-\gamma} K}_{L^{2}(\R^{d})}>0$,
$\Psi((-\Delta_{0})^{1-\gamma} K)>0$.
\end{assumption}

\begin{assumption}{\textit{F}}\label{assump:F} 
There exists $\nu > 0$ such that 
\begin{align}
	\int_{0}^{T}(\widetilde{X}_{\delta,x_0}^{\Delta}(t))^{2}\mathrm{d}t &= o_{\P}(\delta^{-2+4\gamma}), \label{eq:assumptionF_tilde}\\
	\int_{0}^{T}\sc{F(t,X(t))}{K_{\delta,x_0}}^{2}\dif t &= \mathcal{O}_{\P}(\delta^{2\nu-2+4\gamma}). \label{eq:assumptionF_F}
\end{align}
\end{assumption}

Next, let us discuss these assumptions in the context of the analytical and statistical properties of the underlying SPDE model. Assumption \ref{assump:B} requires only that $B^*$ scales as the fractional Laplacian $(-\Delta)^{-\gamma}$ when applied to localized functions $z_{\delta,x_0}$. In particular, it is not required that $B^*$ commutes with $\Delta$. The parameter $\gamma$ determines the spatial regularity of $X(t)$. For $\gamma > d/4-1/2$ the linear process \eqref{eq:mildSolutionLinear}  takes values in $L^2(\Lambda)$; see  Proposition~\ref{prop:LinearRegularity}. From the scaling of the fractional Laplacian on localized functions $z_{\delta,x_0}$ (see Lemma~\ref{lem:delta_neg_frac_scaling}) it follows that there exists at most one $\gamma$ satisfying  \eqref{eq:B_scaling} with a non-degenerate operator $B_{0,x_0}$. Moreover, $\gamma$ can be estimated from the observed data. Indeed, having a continuous path of $X_{\delta,x_0}$, for $\delta>0$, at our disposal, one can compute its quadratic variation, which equals $T\norm{B^*{K}_{\delta,x_0}}^2$, cf. \eqref{eq:weakSolution} or \eqref{eq:X_delta_eq}. Finally, $\delta^{2\gamma} T\norm{B^*{K}_{\delta,x_0}}$ converges by \eqref{eq:B_K_convergence} as $\delta\to 0$ to a non-degenerate limit, from which $\gamma$ can be uniquely determined.

Assumptions \ref{assump:K} and \ref{assump:ND} are necessary to ensure non-degenerate variances for $\widehat{\vartheta}_{\delta}$; see Theorem~\ref{thm:rates} and the fact that 
\begin{align*}
\Psi((-\Delta_{0})^{1-\gamma}K)  \leq & \norm{B^*_{0,x_0}}^2_{L^2(\R^d)} \int_{0}^\infty \norm{e^{s\Delta_0}\Delta_0(-\Delta_{0})^{\lceil\gamma\rceil-\gamma}\widetilde{K}}^2_{L^2(\R^d)} \dif s \\
=&  \norm{B^*_{0,x_0}}^2_{L^2(\R^d)} \frac{1}{2} \norm{(-\Delta_{0})^{1/2+\lceil\gamma\rceil-\gamma}\widetilde{K}}^2_{L^2(\R^d)} < \infty,
\end{align*}
concluding by Lemma~\ref{lem:delta_frac_bound_conv} and $1/2+\lceil \gamma\rceil - \gamma>0$. 

Since $X_{\delta,x_0}(t)  = \delta^{2\lceil\gamma\rceil}(-\Delta)^{\lceil\gamma\rceil}\sc{X(t)}{\widetilde{K}_{\delta,\cdot}}|_{x=x_0}$, practically speaking Assumption \ref{assump:K} is not restrictive. 
Thus, analogous to the remark after \eqref{EqXdeltaDelta}, the local measurement  $X_{\delta,x_0}(t)$ in \eqref{EqXdelta} can be obtained by observing $\sc{X(t)}{\widetilde{K}_{\delta,x}}$ for a kernel $\widetilde{K}$ and for $x$ in a neighborhood of $x_0$.  

Next we present a few examples illustrating Assumptions~\ref{assump:B} and \ref{assump:ND}. 

\begin{example}\label{ex:B}

\noindent(i) Let $\gamma$ be as in Assumption \ref{assump:B}. For a smooth function $\sigma\in C^{\infty}(\R^d)$ define the multiplication operator $M_{\sigma}z = \sigma \cdot z$, and consider the linear operator $B=M_{\sigma}(-\Delta)^{-\gamma}$. A larger $\gamma$ corresponds to a smoother noise,  while $\sigma$ controls locally the noise level.  Note that $B$ does not commute with $\Delta$ nor with the semigroup $S_{\vartheta}(t)$, unless $\sigma$ is constant. Then $B^{*}=(-\Delta)^{-\gamma}M_{\sigma}$ and according to Lemmas~\ref{lem:delta_neg_frac_scaling} and \ref{lem:G_delta} we have
\[
B_{\delta,x_{0}}^{*}z=(-\Delta_{\delta,x_{0}})^{-\gamma}M_{\sigma(\delta\cdot+x_{0})}(-\Delta_{\delta,x_{0}})^{\gamma}z,\quad z\in C^{\infty}_{c}(\overline{\Lambda}_{\delta,x_0}).
\]
By Lemma \ref{lem:G_delta}, $B_{\delta,x_{0}}^{*}$ extends
to a bounded operator $B_{\delta,x_{0}}^{*}:L^{2}(\R^{d})\rightarrow L^{2}(\R^{d})$ satisfying $B_{\delta,x_{0}}^{*}z\rightarrow B_{0,x_{0}}^{*}z:=M_{\sigma(x_{0})}z$ for $z\in L^2(\R^{d})$. Moreover, 
\begin{align*}
\Psi((-\Delta_{0})^{\lceil\gamma\rceil-\gamma}\Delta\widetilde{K}) & = \frac{\sigma^2(x_0)}{2} \norm{(-\Delta_{0})^{1/2+\lceil\gamma\rceil-\gamma}\widetilde{K}}^2_{L^2(\R^d)}.
\end{align*}
Assumptions \ref{assump:B} and \ref{assump:ND} are satisfied as long as $(-\Delta_0)^{1/2+\lceil\gamma\rceil-\gamma}\widetilde{K}$ is not identically zero and $\sigma(x_0) \neq 0$. For integer $\gamma$ and using integration by parts the last display simplifies to $\frac{\sigma^2(x_0)}{2}\norm{\nabla\widetilde{K}}^2_{L^2(\R^d)}$.

\bigskip\noindent
(ii) Let now $B=(-\Delta)^{-\gamma}M_\sigma$ for a $\gamma$ as in Assumption \ref{assump:B} and $\sigma\in C(\overline{\Lambda})$, $\sigma(x_0)\neq 0$. Clearly,  $B^*=M_\sigma (-\Delta)^{-\gamma}$ and we immediately obtain $B^*_{\delta,x_0}=M_\sigma$, $B^*_{0,x_0}=M_{\sigma(x_0)}$, and $\Psi$ is as in (i). 

\bigskip\noindent
(iii) With $\gamma$ and $\sigma$ as in (i) let $B=M_\sigma(-\Delta)^{-\gamma}+(-A)^{-\gamma'}$, where $\gamma < \gamma'$ and $A=\Delta - b$ for a constant $b >0$. Note that $A^* = A$ and by Lemma~\ref{lem:scaling}, $Az_{\delta,x_0} = \delta^{-2}(\Delta_{\delta,x_0}z - \delta^2 bz)_{\delta,x_0}$. Moreover, $\norm{(-A)^{-\gamma'}(-\Delta)^{\gamma}z_{\delta,x_0}}_{L^2(\R^d)} \rightarrow 0$ by Lemmas \ref{lem:delta_neg_frac_scaling}, \ref{lem:delta_frac_bound_conv}. Therefore, $B^*_{\delta,x_0}z$ is as in (i), up to a perturbation of order $o(1)$ that may depend on $z$, and hence $B^*_{0,x_0}$, $\Psi$ are again as in (i).  
\end{example}

Assumption \ref{assump:F} is satisfied under sufficient spatial regularity of $\widetilde{X}$ and $F(\cdot,X(\cdot))$, as the next lemma shows. In Section~\ref{sec:higherReg} we show that these regularity properties hold under a general growth condition on $F$. 

\begin{lemma}
\label{lem:regularity}
Grant Assumption \ref{assump:K} and let $r:=s^* + \nu + d/p$, where $s^* := 1+2\gamma-d/2$, $p\geq 2$, $\nu>0$. Assumption~\ref{assump:F} holds true as soon as 
\[
	\widetilde{X}\in C([0,T];W^{r,p}(\Lambda)),\quad  F(\cdot,X(\cdot))\in C([0,T];W^{r-2,p}(\Lambda)).
\]
\end{lemma}
\begin{proof}
Applying Lemma \ref{lem:scalar_ineq} below with $q=p/(p-1)$, we have 
\begin{align*}
	\int_{0}^{T}\widetilde{X}_{\delta,x_0}^{\Delta}(t)^{2}\mathrm{d}t & = \int_{0}^{T}\langle\widetilde{X}(t), \delta^{-2} (\Delta K)_{\delta,x_0}\rangle^{2}\mathrm{d}t \\
& \leq C \delta^{-4+2r+2d(\frac{1}{2}-\frac{1}{p})}
\norm{(-\Delta_{\delta,x_0})^{-\frac{r-2}{2}}K}_{L^{q}(\Lambda_{\delta,x_0})}^{2},
\end{align*}
for a constant $c$. That \eqref{eq:assumptionF_tilde} is satisfied follows from $-4+2r+2d(1/2-1/p)=2\nu-2+4\gamma$ and $\nu>0$, as well as from noting that the norm in the last display is bounded uniformly in $0<\delta\leq 1$ according to Lemma~\ref{lem:fracDelta_K}(i), as $r-2 < 2\lceil\gamma\rceil + d/p$. In the same way, \eqref{eq:assumptionF_F} is obtained from
\begin{align*}
	\int_{0}^{T}\sc{F(X(t))}{K_{\delta,x_0}}^{2}\mathrm{d}t & = \int_{0}^{T}\sc{(-\Delta)^{-1} F(X(t))}{\delta^{-2} (\Delta K)_{\delta,x_0}}^{2}\mathrm{d}t.\qedhere
	\end{align*}
\end{proof}

\section{Main results}\label{sec:Main-results}
In this section we present the main results of this paper, starting with the asymptotic properties 
of the augmented MLE $\widehat{\vartheta}_{\delta}$ pertinent to \eqref{eq:SPDE} in its abstract form, and then discussing refinements to Assumption~\ref{assump:F}. In the third part, we consider several important classes of particular equations, and in teh forth part we focus on the stochastic Burgers equation as an important test case not covered by the general theory, and which is treated by a different approach. Proofs of technical results are postponed to Appendix~\ref{append:ProofMainResults}.

\subsection{Asymptotic analysis of the estimator} \label{sec:AsympAnEst}

We study first the observed Fisher information. In view of the splitting argument let
\[
\bar{\c I}_{\delta}:=\norm{B^{*}K_{\delta,x_{0}}}^{-2}\int_{0}^{T}(\bar{X}_{\delta,x_{0}}^{\Delta}(t))^{2}\dif t
\]
denote the observed Fisher information corresponding to the linear part. 

\begin{proposition} \label{prop:pseudo_Fisher_info}
Assume that Assumptions~\ref{assump:B}, \ref{assump:K} and \ref{assump:ND} are satisfied.
Then, as $\delta\rightarrow0$, the following asymptotics hold true:
\begin{enumerate}
\item $\delta^{2}\E[\bar{\c I}_{\delta}]\rightarrow (\vartheta\Sigma)^{-1}$, where 
\[
\Sigma:=T^{-1} \norm{B_{0,x_{0}}^{*}(-\Delta_{0})^{\lceil\gamma\rceil-\gamma}\widetilde{K}}_{L^{2}(\R^{d})}^{2}\Psi((-\Delta_{0})^{\lceil\gamma\rceil-\gamma}\Delta\widetilde{K})^{-1}.
\]
\item $\c{\bar{I}}_{\delta}/\E[\c{\bar{I}}_{\delta}]\r{\P}1$.
\end{enumerate}
In addition, if Assumption \ref{assump:F} is satisfied, then: 
\begin{enumerate}[resume]
\item $\c I_{\delta}=\bar{\c I}_{\delta}+o_{\P}(\delta^{-2})$.
\item $\c I_{\delta}^{-1}R_{\delta}= \mathcal{O}_{\P}(\delta^{\nu})$.
\end{enumerate}
\end{proposition}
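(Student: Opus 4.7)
The plan is to handle parts (i)--(ii) by direct Gaussian analysis of the linear part $\bar X$, and then to deduce (iii)--(iv) from the splitting $X=\bar X+\widetilde X$ together with Assumption~\ref{assump:F}.

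For (i), I would start from the observation that $\bar X^\Delta_{\delta,x_0}$ is a centered Gaussian process driven by $BW$. Itô's isometry and Fubini give
\begin{equation*}
\E\!\int_0^T (\bar X^\Delta_{\delta,x_0}(t))^2\,dt = \int_0^T (T-u)\,\norm{B^* S_\vartheta(u)\Delta K_{\delta,x_0}}^2\,du.
\end{equation*}
The crucial step is identifying the $\delta$-scaling. Using Assumption~\ref{assump:K} I would write $\Delta K=\pm(-\Delta_0)^{\gamma}z$ on $\R^d$ with $z:=(-\Delta_0)^{\lceil\gamma\rceil-\gamma}\Delta\widetilde K$; combined with (a) the scaling identity $(-\Delta)^s\phi_{\delta,x_0}=\delta^{-2s}((-\Delta_0)^s\phi)_{\delta,x_0}$, (b) the analogous semigroup rescaling $S_\vartheta(u)\phi_{\delta,x_0}\approx (e^{u\vartheta\delta^{-2}\Delta_0}\phi)_{\delta,x_0}$ (valid up to a boundary error from $\Lambda$ vs.\ $\R^d$), and (c) Assumption~\ref{assump:B}, one obtains $B^* S_\vartheta(u)\Delta K_{\delta,x_0}\approx \delta^{2\gamma-2}(B^*_{\delta,x_0}e^{u\vartheta\delta^{-2}\Delta_0}z)_{\delta,x_0}$. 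The substitution $s=u\vartheta/\delta^2$ together with dominated convergence (legitimate by Assumption~\ref{assump:ND}) yields $\int_0^T(T-u)\norm{B^* S_\vartheta(u)\Delta K_{\delta,x_0}}^2\,du\sim \delta^{4\gamma-2}T\,\Psi(z)/\vartheta$, while the same machinery applied to $\widetilde w:=(-\Delta_0)^{\lceil\gamma\rceil-\gamma}\widetilde K$ produces $\norm{B^*K_{\delta,x_0}}^2\sim \delta^{4\gamma}\norm{B^*_{0,x_0}\widetilde w}_{L^2(\R^d)}^2$. Dividing gives $\delta^2\E[\bar{\cI}_\delta]\to(\vartheta\Sigma)^{-1}$. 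For (ii), I would exploit the Wick formula $\Var(\int_0^T (\bar X^\Delta_{\delta,x_0})^2\,dt)=2\iint_{[0,T]^2}\Cov(\bar X^\Delta_{\delta,x_0}(s),\bar X^\Delta_{\delta,x_0}(t))^2\,ds\,dt$; after the same rescaling, the covariance concentrates on $|s-t|=O(\delta^2)$, so $\Var(\bar{\cI}_\delta)=O(\delta^2)(\E\bar{\cI}_\delta)^2$, and Chebyshev closes the argument.

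Parts (iii) and (iv) are then algebraic consequences. Expanding $X^\Delta_{\delta,x_0}=\bar X^\Delta_{\delta,x_0}+\widetilde X^\Delta_{\delta,x_0}$ yields
\begin{equation*}
\cI_\delta-\bar{\cI}_\delta = \norm{B^*K_{\delta,x_0}}^{-2}\Bigl(2\int_0^T \bar X^\Delta_{\delta,x_0}\widetilde X^\Delta_{\delta,x_0}\,dt + \int_0^T (\widetilde X^\Delta_{\delta,x_0})^2\,dt\Bigr).
\end{equation*}
With $\norm{B^*K_{\delta,x_0}}^{-2}=O(\delta^{-4\gamma})$, part (i) delivering $\int_0^T (\bar X^\Delta_{\delta,x_0})^2\,dt=O_\P(\delta^{-2+4\gamma})$, and the first bound in Assumption~\ref{assump:F} forcing $\int_0^T (\widetilde X^\Delta_{\delta,x_0})^2\,dt=o_\P(\delta^{-2+4\gamma})$, Cauchy--Schwarz on the cross term proves (iii). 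For (iv), Cauchy--Schwarz applied to $R_\delta$ combined with part (iii) and both bounds in Assumption~\ref{assump:F} gives $|R_\delta|=O_\P(\delta^{-4\gamma}\cdot\delta^{-1+2\gamma}\cdot\delta^{\nu-1+2\gamma})=O_\P(\delta^{\nu-2})$, which together with $\cI_\delta^{-1}=O_\P(\delta^2)$ from (i)--(iii) yields $\cI_\delta^{-1}R_\delta=O_\P(\delta^\nu)$.

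The main obstacle will be the scaling analysis in (i): carefully bookkeeping signs and dimensional factors while tracking the interplay of the fractional power $(-\Delta)^\gamma$ appearing in Assumption~\ref{assump:B}, the Laplacian acting on $K_{\delta,x_0}$, and the semigroup $S_\vartheta(u)$, and in particular showing that replacing $S_\vartheta(u)$ on the bounded domain $\Lambda$ by the free semigroup $e^{u\vartheta\Delta_0}$ on $\R^d$ after rescaling contributes only vanishing error -- which rests on the support of $K_{\delta,x_0}$ concentrating at $x_0$ together with Gaussian heat-kernel tail bounds captured by the appendix scaling lemmas.
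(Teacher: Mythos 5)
Your proposal is correct and follows essentially the same route as the paper: Itô isometry and the appendix scaling lemmas for the fractional Laplacian and semigroup reduce (i) to a dominated-convergence limit after the substitution $s=u\vartheta\delta^{-2}$, Wick's formula gives the variance bound $\Var(\bar{\cI}_\delta)\lesssim\delta^2\E[\bar{\cI}_\delta]^2$ for (ii), and (iii)--(iv) follow from the splitting, Cauchy--Schwarz and Assumption~\ref{assump:F} exactly as in the paper. One small correction: the dominating function justifying the limit in (i) comes from the uniform analytic-semigroup bound of Proposition~\ref{prop:semigroup_props}(i) (yielding $|f_\delta(s)|\lesssim 1\wedge s^{-2}$), not from Assumption~\ref{assump:ND}, whose role is only to make the limiting variance non-degenerate.
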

\begin{proof}
	The proof is deferred to Appendix~\ref{append:ProofMainResults}. 
\end{proof}

Now we are in the position to present our first main result.
\begin{theorem} \label{thm:rates}
Assume that Assumptions~\ref{assump:B}, \ref{assump:K}, \ref{assump:ND} and~\ref{assump:F} are satisfied. Then the following assertions hold true:
\begin{enumerate}
\item $\widehat{\vartheta}_{\delta}$  is a weakly consistent estimator of $\vartheta$ and 
\begin{equation}\label{eq:thMain1-cons}
\widehat{\vartheta}_{\delta} =\vartheta + \mathcal{O}_{\P}(\delta^{\nu\wedge1}). 
\end{equation}
\item If $\nu>1$, then $\widehat{\vartheta}_{\delta}$  is asymptotically normal and, with $\Sigma$ from Proposition~\ref{prop:pseudo_Fisher_info}(i),
\begin{equation}\label{eq:ThMain-AsNorm}
\w\lim_{\delta\to0}\delta^{-1}(\widehat{\vartheta}_{\delta}-\vartheta)  =\cN\left(0,\vartheta\Sigma\right).
\end{equation}
\end{enumerate}
\end{theorem}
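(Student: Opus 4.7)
The plan is to feed the error decomposition \eqref{eq:augm_decomp} into Proposition~\ref{prop:pseudo_Fisher_info}, which already does all the heavy lifting. The key intermediate fact I will use is that $\delta^{2}\c I_{\delta}\to_{\P}(\vartheta\Sigma)^{-1}$. This follows by writing
\[
\delta^{2}\bar{\c I}_{\delta}=\delta^{2}\E[\bar{\c I}_{\delta}]\cdot\bigl(\bar{\c I}_{\delta}/\E[\bar{\c I}_{\delta}]\bigr)
\]
and applying parts (i) and (ii) of the proposition, then adding the $o_{\P}(1)$ perturbation coming from (iii). Notably the limit is \emph{deterministic}, which is what later feeds the martingale central limit theorem.

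Next I would handle the two error terms in \eqref{eq:augm_decomp} separately. The nonlinear bias $(\c I_{\delta})^{-1}R_{\delta}$ is already bounded by $\mathcal{O}_{\P}(\delta^{\nu})$ directly from Proposition~\ref{prop:pseudo_Fisher_info}(iv); nothing further to do. For the martingale part, I would observe from the definition of $M_{\delta}$ that it is a continuous $(\sF_t)$-martingale with quadratic variation
\[
\langle M_{\delta}\rangle_{T}=\norm{B^{*}K_{\delta,x_{0}}}^{-2}\int_{0}^{T}(X_{\delta,x_{0}}^{\Delta}(t))^{2}\dif t=\c I_{\delta},
\]
so that $\delta M_{\delta}$ has quadratic variation $\delta^{2}\c I_{\delta}\to_{\P}(\vartheta\Sigma)^{-1}$, a deterministic constant. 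The continuous martingale CLT (e.g.\ Jacod--Shiryaev Thm.~VIII.4.17) then yields $\delta M_{\delta}\xrightarrow{d}\mathcal{N}(0,(\vartheta\Sigma)^{-1})$, and in particular $M_{\delta}=\mathcal{O}_{\P}(\delta^{-1})$, so that $(\c I_{\delta})^{-1}M_{\delta}=\mathcal{O}_{\P}(\delta)$.

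Assembling these, consistency follows at once: $\widehat{\vartheta}_{\delta}-\vartheta=(\c I_{\delta})^{-1}R_{\delta}+(\c I_{\delta})^{-1}M_{\delta}=\mathcal{O}_{\P}(\delta^{\nu})+\mathcal{O}_{\P}(\delta)$, which gives \eqref{eq:thMain1-cons}. For asymptotic normality, under the stronger assumption $\nu>1$ the bias term satisfies $\delta^{-1}(\c I_{\delta})^{-1}R_{\delta}=\mathcal{O}_{\P}(\delta^{\nu-1})=o_{\P}(1)$, while
\[
\delta^{-1}(\c I_{\delta})^{-1}M_{\delta}=(\delta^{2}\c I_{\delta})^{-1}(\delta M_{\delta})\xrightarrow{d}(\vartheta\Sigma)\cdot\mathcal{N}(0,(\vartheta\Sigma)^{-1})=\mathcal{N}(0,\vartheta\Sigma)
\]
by Slutsky's lemma, yielding \eqref{eq:ThMain-AsNorm}. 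The one point requiring a moment of care is verifying the applicability of the martingale CLT — specifically, that the limiting quadratic variation is genuinely deterministic and not merely stochastically bounded — but this is immediate once Proposition~\ref{prop:pseudo_Fisher_info} is in hand, so the theorem is essentially a clean bookkeeping exercise on top of the proposition.
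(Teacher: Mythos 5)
Your proposal is correct and follows essentially the same route as the paper: both arguments rest entirely on Proposition~\ref{prop:pseudo_Fisher_info}, identify the quadratic variation of the (rescaled) martingale part with $\cI_{\delta}$, invoke the continuous-martingale CLT once that quadratic variation is shown to converge in probability to a deterministic constant, and finish with Slutsky. The only cosmetic difference is the normalization (the paper rescales by $\E[\bar{\c I}_{\delta}]^{1/2}$ so the limiting quadratic variation is $1$, whereas you rescale by $\delta$ so it is $(\vartheta\Sigma)^{-1}$), which is immaterial.
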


\begin{proof} Consider the error decomposition \eqref{eq:augm_decomp} and let
$$
Y_{t}^{(\delta)}:=\norm{B^{*}K_{\delta,x_{0}}}^{-1}X_{\delta,x_{0}}^{\Delta}(t)/\E[\bar{\c I}_{\delta}]^{1/2}.
$$
Thus, $M_{\delta}/\E[\bar{\c I}_{\delta}]^{1/2} = \int_{0}^{T}Y_{t}^{(\delta)}d\bar{w}(t)$. By Proposition~\ref{prop:pseudo_Fisher_info}(i)-(iii) we obtain that  
\[
\c I_{\delta}/\E[\bar{\c I}_{\delta}] = (\bar{\c I}_{\delta}+o_{\P}(\delta^{-2}))/\E[\bar{\c I}_{\delta}]\r{\P}1,
\]
such that the quadratic variation of $M_{\delta}/\E[\bar{\c I}_{\delta}]^{1/2}$ satisfies $\int_{0}^{T}(Y_{t}^{(\delta)})^{2} \dif t=\c I_{\delta}/\E[\bar{\c I}_{\delta}]\r{\P}1$.
From here, by a standard central limit theorem for continuous martingales (cf. \cite[Theorem 5.5.4]{LiptserShiryayevBookMartingales}),
we obtain that $M_{\delta}/\E[\bar{\c I}_{\delta}]^{1/2}\xrightarrow{d}\cN(0,1)$. 
We also note that in view of Proposition~\ref{prop:pseudo_Fisher_info}(i)-(ii), $\delta\E[\bar{\c I}_\delta]^{1/2}\rightarrow (\vartheta \Sigma)^{-1/2}$, as $\delta\to0$. Using the above, as well as \eqref{eq:augm_decomp} and Proposition~\ref{prop:pseudo_Fisher_info}(iv), the identity \eqref{eq:thMain1-cons} follows at once.
Similarly and by employing Slutsky's Lemma, we obtain \eqref{eq:ThMain-AsNorm}.
The proof is complete.
\end{proof}

For $\nu<1$ the error in \eqref{eq:thMain1-cons} is dominated by the nonlinear contribution $\c I_{\delta}^{-1}R_{\delta}$ and asymptotic normality does not hold. It is interesting to note that the nonlinear bias will generally \textit{not} decrease with larger $T$, as opposed to the martingale term, which is of order $T^{-1/2}$, see Proposition \ref{prop:pseudo_Fisher_info} and the lower bound in Theorem \ref{thm:minimax} below. Obtaining a central limit theorem in the critical case $\nu=1$ is a challenging problem, and generally speaking has to be treated on case-by-case basis; one such example is the stochastic Burgers equations discussed in Section~\ref{sec:particularEquations}. For $\nu>1$, there is no asymptotic bias in \eqref{eq:ThMain-AsNorm} and since the asymptotic variance depends linearly on the unknown parameter, one can easily deduce an asymptotic confidence interval for $\vartheta$.
\begin{corollary}
Assume that Assumptions~\ref{assump:B}, \ref{assump:K}, \ref{assump:ND} and~\ref{assump:F} are satisfied for $\nu>1$. For $0<\alpha<1$,  let 
\begin{equation*}
I_{1-\alpha} = \left[\widehat{\vartheta}_{\delta}-\cI_{\delta}^{-1/2}q_{1-\alpha/2}, \widehat{\vartheta}_{\delta}+\cI_{\delta}^{-1/2}q_{1-\alpha/2}\right],
\end{equation*}
where $q_\beta$ is the $\beta$-quantile of the standard normal distribution. Then, $I_{1-\alpha}$ is a confidence interval for $\vartheta$ with asymptotic coverage $1-\alpha$, as $\delta\rightarrow 0$.
\end{corollary}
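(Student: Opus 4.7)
The proof reduces to a clean Slutsky-type argument once the convergence of the observed Fisher information has been assembled from Proposition~\ref{prop:pseudo_Fisher_info}. My plan is as follows.

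First, I would combine parts (i), (ii), and (iii) of Proposition~\ref{prop:pseudo_Fisher_info} to conclude that
\[
\delta^{2}\cI_{\delta} \xrightarrow{\P} (\vartheta\Sigma)^{-1}, \qquad \text{equivalently} \qquad \delta\,\cI_{\delta}^{1/2} \xrightarrow{\P} (\vartheta\Sigma)^{-1/2},
\]
as $\delta\to 0$. Indeed, parts (i) and (ii) give $\delta^{2}\bar{\cI}_{\delta}\xrightarrow{\P}(\vartheta\Sigma)^{-1}$, and (iii) transfers this limit to $\cI_{\delta}$ because the correction is $o_{\P}(\delta^{-2})$.

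Second, I would factor the studentised error as
\[
\cI_{\delta}^{1/2}(\widehat{\vartheta}_{\delta} - \vartheta) \;=\; \bigl[\delta\,\cI_{\delta}^{1/2}\bigr] \cdot \bigl[\delta^{-1}(\widehat{\vartheta}_{\delta}-\vartheta)\bigr]
\]
and invoke Slutsky's lemma: the first factor tends to $(\vartheta\Sigma)^{-1/2}$ in probability by the previous step, while Theorem~\ref{thm:rates}(ii) provides that the second factor converges in distribution to $\cN(0,\vartheta\Sigma)$. Consequently, the product converges in distribution to $(\vartheta\Sigma)^{-1/2}\cdot\cN(0,\vartheta\Sigma)=\cN(0,1)$.

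Finally, I would rewrite the event $\{\vartheta\in I_{1-\alpha}\}$ as a bound on this studentised quantity, so that continuity of the standard normal distribution function at $\pm q_{1-\alpha/2}$ upgrades the weak convergence above to convergence of the coverage probability to $\P(|Z|\leq q_{1-\alpha/2})=1-\alpha$ with $Z\sim\cN(0,1)$. There is no serious obstacle here: the substantive analytic content already sits in Theorem~\ref{thm:rates}(ii) and Proposition~\ref{prop:pseudo_Fisher_info}, and what remains is a textbook combination of Slutsky's lemma and the continuous mapping theorem; the only point to watch is that the convergence of $\cI_{\delta}$ is in probability, which is exactly what Slutsky requires to multiply with a weakly convergent factor.
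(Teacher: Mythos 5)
Your argument follows the paper's own proof step for step: Proposition~\ref{prop:pseudo_Fisher_info} for the limit of the observed Fisher information, Slutsky's lemma combined with Theorem~\ref{thm:rates}(ii), and continuity of the standard normal distribution function. Your first two steps are correct, and in fact more carefully normalised than the paper's one-line proof, which asserts $\cI_{\delta}\to(\vartheta\Sigma)^{-1}$ even though Proposition~\ref{prop:pseudo_Fisher_info}(i)--(iii) actually give $\delta^{2}\cI_{\delta}\r{\P}(\vartheta\Sigma)^{-1}$, exactly as you state.

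The place where your proof does not close is the final step. The pivot you construct, $\cI_{\delta}^{1/2}(\widehat{\vartheta}_{\delta}-\vartheta)\xrightarrow{d}\cN(0,1)$, is the right one for the interval $\widehat{\vartheta}_{\delta}\pm\cI_{\delta}^{-1/2}q_{1-\alpha/2}$, whereas the interval $I_{1-\alpha}$ in the statement carries an extra factor $\delta$: the event $\{\vartheta\in I_{1-\alpha}\}$ rewrites as $\{|\cI_{\delta}^{1/2}(\widehat{\vartheta}_{\delta}-\vartheta)|\leq\delta\,q_{1-\alpha/2}\}$, not $\{|\cI_{\delta}^{1/2}(\widehat{\vartheta}_{\delta}-\vartheta)|\leq q_{1-\alpha/2}\}$. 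With your (correct) rate $\cI_{\delta}^{-1/2}=\cO_{\P}(\delta)$, the printed interval has half-width of order $\delta^{2}$, which is negligible against the $\cO_{\P}(\delta)$ estimation error, so its coverage tends to $0$ rather than $1-\alpha$. This is best read as a normalisation slip in the statement (the factor $\delta$ in $I_{1-\alpha}$ should be dropped, or equivalently $\cI_{\delta}$ replaced by $\delta^{2}\cI_{\delta}$), which the paper's proof conceals precisely through its mis-stated limit for $\cI_{\delta}$; but as a matter of logic your step three, ``rewrite the event as a bound on the studentised quantity'', silently replaces the printed interval by the corrected one, and so does not prove the corollary exactly as stated. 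You should either flag the discrepancy or prove the statement for the corrected interval.
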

\begin{proof}
By Proposition \ref{prop:pseudo_Fisher_info} we have $\delta^2\cI_{\delta}\rightarrow (\vartheta\Sigma)^{-1}$. Theorem \ref{thm:rates}(ii) and Slutsky's lemma show 
\begin{equation*}
\w\lim_{\delta\to0}\cI_{\delta}^{1/2}\left(\widehat{\vartheta}_{\delta}-\vartheta\right) = \cN(0,1).
\end{equation*}
This yields $\lim_{\delta\to0}\bP(\vartheta \in I_{1-\alpha}) = 1-\alpha$.
\end{proof}

It is worth pointing out that the rate of convergence $\delta^{\nu\wedge1}$ in \eqref{eq:thMain1-cons} does not depend on the `smoothing' parameter $\gamma$. Moreover, as the next result shows, the rate is even minimax optimal. For $\nu > 0$ and $\gamma>d/4-1/2$, let $ \Theta_{\nu,\gamma}$ be the set of all admissible model inputs $\kappa = (\theta,F,B,X_0)$ in \eqref{eq:SPDE} such that $\theta>0$ and Assumptions \ref{assump:B}, \ref{assump:F} are satisfied. We denote by $\P_{\kappa}$ the law of $X_{\delta,x_0}$ on the canonical  space $C([0,T])$, equipped with the Borel sigma algebra corresponding to the sup norm on $[0,T]$, and by $\E_{\kappa}$ its expectation.

\begin{theorem}\label{thm:minimax}
Let $0<\nu\leq 2$ and let $K$ be as in Assumption \ref{assump:K}. When $\nu>1$ let $\gamma>d/4-1/2$, and when $\nu\leq 1$ let $\gamma > 1/2+d/4$. Then, as $\delta\rightarrow 0$, we have the following asymptotic lower bound of the root mean squared error   
\[
	\inf_{\hat{\theta}} \sup_{\kappa\in \Theta_{\nu,\gamma}} \E_{\kappa}\left[(\hat{\vartheta}-\theta)^2 \right]^{1/2} \geq c_1 T^{-1/2}\delta \1_{\{\nu> 1\}} + c_2\delta^{\nu} \1_{\{\nu \leq 1\}},
\]
for some constants $c_1,c_2>0$, and where the infimum is taken over all estimators $\hat{\theta}$ based on observing $X_{\delta,x_0}$. 
\end{theorem}
\begin{proof}
	The proof is deferred to Appendix~\ref{append:ProofMainResults}. 
\end{proof}

The broad specifications of $F$, $B$ and $K$ allow for application of the asymptotic results  to a wide range of SPDEs.  We also emphasize that the asymptotic variance $\vartheta\Sigma$ in Theorem \ref{thm:rates} for $\nu>1$ does not depend on $F$ at all. Therefore, the augmented MLE is robust to the misspecification of $F$, which practically speaking is often difficult to model exactly. As far as $B$ is concerned, similar to Example~\ref{ex:B}(i-iii), only the scaling with respect to $\gamma$ appears in $\Sigma$.

\begin{example}\label{ex:concreteCLT} For $B$ as in Example~\ref{ex:B}(i-iii), grant Assumptions~\ref{assump:K} and \ref{assump:F} for $\nu>1$. In this case, $\Sigma$ can be computed explicitly,  it is independent of $B^*_{0,x_0}=M_{\sigma(x_0)}$, and we have that 
\begin{equation*}
\w\lim_{\delta\to0}\delta^{-1}(\widehat{\vartheta}_{\delta}-\vartheta)  =\cN\left(0,\frac{2 \vartheta \norm{(-\Delta_{0})^{\lceil\gamma\rceil-\gamma}\widetilde{K}}_{L^{2}(\R^{d})}^{2}}{T \norm{(-\Delta_{0})^{1/2+\lceil\gamma\rceil-\gamma}\widetilde{K}}^2_{L^2(\R^d)}} \right).
\end{equation*}
Moreover, for integer $\gamma$, the asymptotic variance is equal to $2\vartheta T^{-1}\norm{\widetilde{K}}^2_{L^2(\R^d)} \norm{\nabla\widetilde{K}}^{-2}_{L^2(\R^d)}$.
\end{example}

\subsection{Higher regularity of the perturbation process}\label{sec:higherReg}

We give now sufficient conditions to verify Assumption~\ref{assump:F}. Inspired by the perturbation argument of \cite{IgorNathanAditiveNS2010,PasemannStannat2019}, we study the spatial regularity of the processes $\bar{X}$ and $\widetilde{X}$. Aiming to obtain optimal regularity that exploits the localization under the kernel $K$, we consider the spaces $W^{s,p}(\Lambda)$ introduced in Section~\ref{sec:Prelims}.

For $p\geq 2$, denote  by $\bar s(p)$ the $L^{p}$-regularity index of the linear process, namely 
\begin{equation}
	\bar s(p) = \sup \set{ s\in\bR\, : \, \bar X\in C([0,T]; W^{s, p}(\Lambda))},\quad \P\text{-a.s..}\label{eq:sBar}
\end{equation}
Under Assumption \ref{assump:B} it can be shown (see Supplement~\ref{app:RegularityLinear}) that 
\begin{equation*}
\max(s^* - d/2 + d/p,0) \leq \bar{s}(p)\leq s^*=1+2\gamma-d/2. 
\end{equation*}
The constant $s^*$ should be viewed as the `optimal expected spatial regularity' of $\bar{X}$, while $\bar{s}(p)$ depends on the geometry of the domain $\Lambda$ and strict inequality may occur. Nevertheless, $\bar{s}(p)=s^*$ for rectangular domains in any dimension, and thus in particular if $d=1$. Note that Theorem \ref{thm:rates} and Theorem \ref{thm:regularity} below can be shown to hold also for non-smooth boundaries $\partial \Lambda$, as long as the eigenfunctions of the Laplacian are smooth on $\bar{\Lambda}$, which is true for rectangular domains.

Let us introduce the following common growth condition on $F$, parametrized by $s,\eta\in\R$, $p\geq 2$.
\begin{assumption}{$A_{s, \eta, p}$}\label{assump:A}
We have $F(t,u)\equiv F(u)$, and there exist $\epsilon>0$ and a continuous function $g:[0,\infty)\rightarrow[0,\infty)$ such that 
\begin{equation*}
\norm{F(u)}_{s+\eta-2+\epsilon, p}\leq g(\norm{u}_{s, p}),\quad u\in W^{s,p}(\Lambda).
\end{equation*}
\end{assumption}
Without loss of generality, we can assume that $g$ is non-decreasing (otherwise replace $g$ with $x\mapsto \sup_{0\leq y\leq x}g(y)$). As we will see in the next section, the term $2-\eta$ should be understood as the order of $F$ in the sense of a differential operator.

\begin{proposition}\label{prop:ExcessRegularity} 
Let $p \geq 2$, $2\leq p_1 \leq p$ and let $0\leq s_1 < \bar{s}(p)$. Assume that
\[
X_0\in W^{\bar{s}(p)+\eta,p}(\Lambda),\quad \widetilde X\in C([0,T]; W^{s_1, p_1}(\Lambda)),
\]
and suppose that Assumption \hyperref[assump:A]{$A_{s,\eta,p'}$} holds true for some $\eta>0$ and all $s_1\leq s < \bar{s}(p)$, $p_1 \leq p' \leq p$. Then $\widetilde X\in C([0,T]; W^{\bar{s}(p)+\eta, p}(\Lambda))$. In particular, $X \in C([0,T]; W^{s, p}(\Lambda))$ for all $s<\bar{s}(p)$. 
\end{proposition}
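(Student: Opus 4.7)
My plan is to iterate the mild formula \eqref{eq:NonlinearMild}, exploiting parabolic smoothing together with the regularity gain $\eta+\epsilon$ encoded in Assumption~\hyperref[assump:A]{$A_{s,\eta,p'}$}, while simultaneously using Sobolev embeddings to raise the integrability index from $p_1$ to $p$. Since $X_0\in W^{\bar{s}(p)+\eta,p}(\Lambda)$ and $S_\vartheta(t)$ is a bounded analytic semigroup on $L^p(\Lambda)$ commuting with fractional powers of the Dirichlet Laplacian, the deterministic term $S_\vartheta(\cdot)X_0$ already lies in $C([0,T];W^{\bar{s}(p)+\eta,p}(\Lambda))$, so it suffices to bound the nonlinear convolution $I(t):=\int_0^t S_\vartheta(t-s)F(\bar X(s)+\widetilde X(s))\,\dif s$.

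Two standard facts drive the bootstrap. First, parabolic smoothing: for $\sigma\in\R$, $\alpha\in[0,1)$ and $q\geq 2$, one has $\norm{S_\vartheta(t)u}_{\sigma+2\alpha,q}\leq C t^{-\alpha}\norm{u}_{\sigma,q}$; integrating in time and letting $\alpha\uparrow 1$ gives $I\in C([0,T];W^{\sigma+2-\kappa,q})$ for every $\kappa>0$, whenever $F(\bar X+\widetilde X)\in L^\infty(0,T;W^{\sigma,q})$. Second, Sobolev embedding yields $W^{\sigma,q_1}(\Lambda)\hookrightarrow W^{\sigma-d(1/q_1-1/q_2),q_2}(\Lambda)$ for $2\leq q_1\leq q_2$. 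A single bootstrap step thus reads: if $\widetilde X\in C([0,T];W^{\sigma,q})$ with $s_1\leq\sigma<\bar{s}(p)$ and $p_1\leq q\leq p$, then $\bar X\in C([0,T];W^{\sigma,p})\hookrightarrow C([0,T];W^{\sigma,q})$, Assumption~\hyperref[assump:A]{$A_{\sigma,\eta,q}$} gives $F(\bar X+\widetilde X)\in C([0,T];W^{\sigma+\eta-2+\epsilon,q})$, and parabolic smoothing lifts this to $I\in C([0,T];W^{\sigma+\eta+\epsilon-\kappa,q})$.

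I would iterate this step as follows. Fix $\kappa\in(0,\epsilon/2)$ and choose a partition $p_1=q_0<q_1<\dots<q_N=p$ with $d(1/q_{k-1}-1/q_k)<\eta+\epsilon-\kappa$ for every $k$. At step $k$, working in $L^{q_{k-1}}$ gains $\eta+\epsilon-\kappa$ in $\sigma$, while the Sobolev embedding into $L^{q_k}$ costs only $d(1/q_{k-1}-1/q_k)$, producing a strictly positive net increment bounded below independently of $k$. Truncating $\sigma$ at a chosen level $\sigma_*\in(\bar{s}(p)-\epsilon+\kappa,\bar{s}(p))$ so that Assumption~\hyperref[assump:A]{$A$} remains applicable, finitely many iterations bring us to $\widetilde X\in C([0,T];W^{\sigma_*,p})$; one last iteration, now performed entirely in $L^p(\Lambda)$, yields regularity $\sigma_*+\eta+\epsilon-\kappa>\bar{s}(p)+\eta$, which is the main claim. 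The concluding assertion $X\in C([0,T];W^{s,p})$ for $s<\bar{s}(p)$ then follows from $X=\bar X+\widetilde X$ and the definition \eqref{eq:sBar} of $\bar{s}(p)$.

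The main obstacle I anticipate is combinatorial rather than analytic: one must keep $\sigma$ strictly below $\bar{s}(p)$ throughout the iteration so that Assumption~\hyperref[assump:A]{$A_{s,\eta,p'}$} applies at every step, move $q$ monotonically from $p_1$ to $p$, and choose the partition finely enough that each Sobolev loss is strictly dominated by the smoothing gain $\eta+\epsilon$. Once this bookkeeping is in place, each individual iteration reduces to routine analytic-semigroup calculus.
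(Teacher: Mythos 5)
Your proposal is correct and follows essentially the same route as the paper's proof: iterate the mild formulation \eqref{eq:NonlinearMild}, use the analytic-semigroup smoothing bound $\norm{S_\vartheta(t)u}_{\sigma+2\alpha,q}\lesssim t^{-\alpha}\norm{u}_{\sigma,q}$ to absorb the loss of $2-\epsilon$ derivatives from Assumption \hyperref[assump:A]{$A_{s,\eta,p'}$}, and climb from $p_1$ to $p$ via Sobolev embeddings. The only difference is bookkeeping — you interleave small regularity and integrability gains along a partition $p_1=q_0<\dots<q_N=p$, whereas the paper first pushes the regularity all the way to $\bar s(p)+\eta$ within $L^{p_1}$ and then embeds into $W^{s_1,p''}$ for some $p''>p_1$ and repeats — which does not change the substance of the argument.
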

\begin{proof}
The proof is deferred to Appendix~\ref{append:ProofMainResults}.
\end{proof}

This shows that $\widetilde{X}$ is more regular in space than $\bar{X}$ with \textit{excess regularity} $\eta$. Note that existence results for semilinear SPDEs typically provide some minimal spatial $L^2$-Sobolev regularity for the solution $X$, and thus for $\widetilde{X}$; see \cite{LiuRoeckner2015} or Lemma  \ref{lem:SemilinearWellPosedGlobalInTime} below, assuming additional local Lipschitz and coercivity conditions.

\begin{theorem}
\label{thm:regularity}
Grant Assumption \ref{assump:K} and the assumptions of Proposition \ref{prop:ExcessRegularity}. Suppose that $\eta>s^* - \bar s(p) + d/p$.
Then Assumption~\ref{assump:F} holds true with
\begin{equation*}
		\nu = \left(\eta - \left(s^* - \bar s(p) + d/p\right)\right) \wedge 5/4.
\end{equation*}
\end{theorem}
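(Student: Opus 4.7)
The plan is to verify the two displays in Assumption~\ref{assump:F} directly, using the enhanced spatial regularity supplied by Proposition~\ref{prop:ExcessRegularity} and converting it into decay rates by pairing against the localized kernel. Concretely, Proposition~\ref{prop:ExcessRegularity} delivers $\widetilde X\in C([0,T];W^{\bar s(p)+\eta,p}(\Lambda))$ and $X\in C([0,T];W^{s,p}(\Lambda))$ for every $s<\bar s(p)$; combining this with Assumption~\hyperref[assump:A]{$A_{s,\eta,p}$} applied at $s$ arbitrarily close to $\bar s(p)$ additionally yields $\sup_{t\in[0,T]}\norm{F(X(t))}_{\bar s(p)+\eta-2,p}<\infty$ almost surely. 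Thus both $\widetilde X(t)$ and $F(X(t))$ sit in Sobolev spaces of strictly higher smoothness than the linear process $\bar X$, and this is the analytic input I plan to exploit.

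The quantitative step is a duality-and-scaling bound. For any $\beta\in\R$ and $q$ the conjugate exponent of $p$, the duality $|\sc{u}{v}|\leq\norm{u}_{\beta,p}\norm{v}_{-\beta,q}$ combines with the scaling identities for fractional Laplacians on localized functions (Lemmas~\ref{lem:delta_neg_frac_scaling} and~\ref{lem:G_delta}) and the change of variables $\norm{z_{\delta,x_0}}_{L^q(\R^d)}=\delta^{-d(1/2-1/q)}\norm{z}_{L^q(\R^d)}$ to produce
\[
\norm{K_{\delta,x_0}}_{-\beta,q} = \c{O}\bigl(\delta^{\beta+d(1/2-1/p)}\bigr), \qquad \delta\to 0,
\]
with the analogous identity for $\Delta K_{\delta,x_0}$ carrying an additional factor $\delta^{-2}$. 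I apply this twice: first with $u=\widetilde X(t)$, $v=\Delta K_{\delta,x_0}$ and $\beta=\bar s(p)+\eta$, and second with $u=F(X(t))$, $v=K_{\delta,x_0}$ and $\beta=\bar s(p)+\eta-2$. In both cases the pairing acquires the same factor $\delta^{\bar s(p)+\eta-2+d(1/2-1/p)}$; after squaring and using $s^*=1+2\gamma-d/2$, this exponent rewrites as $-2+4\gamma+2\bigl(\eta-(s^*-\bar s(p)+d/p)\bigr)$. Integrating in $t$ together with the uniform Sobolev bounds established above then gives both~\eqref{eq:assumptionF_tilde} and~\eqref{eq:assumptionF_F} with $\nu=\eta-(s^*-\bar s(p)+d/p)$.

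The main technical obstacle I expect is justifying the duality and the scaling estimates in a form that is uniform in $\delta\in(0,1]$ and compatible with the domain boundary, which is precisely what the scaling lemmas for localized functions are designed for, exploiting that $\supp K_{\delta,x_0}\subset\Lambda$. The ceiling $\nu\leq 5/4$ does not appear in the chain above---which in principle yields an arbitrarily large $\nu$ as $\eta$ grows---and encodes a secondary technical saturation, plausibly coming from the fixed regularity of $K$ built into Assumption~\ref{assump:K}: once $\beta=\bar s(p)+\eta-2$ becomes large the negative-order Sobolev scaling of $K_{\delta,x_0}$ that feeds the bound above ceases to improve at rate $\delta^{\beta+d(1/2-1/p)}$, and this saturation has to be tracked separately to justify the $\wedge\,5/4$ in the statement.
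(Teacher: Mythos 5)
Your overall strategy is the paper's: pair $\widetilde X(t)$ (resp.\ $F(X(t))$) against $\Delta K_{\delta,x_0}$ (resp.\ $K_{\delta,x_0}$) by duality, use the scaling of the localized fractional Laplacian to extract the power of $\delta$, and control the remaining factor $\norm{(-\Delta_{\delta,x_0})^{-\beta/2}K}_{L^q(\Lambda_{\delta,x_0})}$ uniformly in $\delta$. This is exactly Lemma~\ref{lem:scalar_ineq} combined with Lemma~\ref{lem:fracDelta_K}(i), and your bookkeeping of the exponent (rewriting it as $2\nu-2+4\gamma$ with $\nu=\eta-(s^*-\bar s(p)+d/p)$) is correct.

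The genuine gap is the one you flag yourself and defer: the bound $\norm{K_{\delta,x_0}}_{-\beta,q}=\cO(\delta^{\beta+d(1/2-1/p)})$ is \emph{not} true for the $\beta$ you use. It is equivalent to $\sup_{0<\delta\leq1}\norm{(-\Delta_{\delta,x_0})^{-\beta/2}K}_{L^q(\Lambda_{\delta,x_0})}<\infty$, which by Lemma~\ref{lem:fracDelta_K}(i) (resting on Lemma~\ref{lem:delta_frac_bound_conv}(ii)) requires $\beta<2\lceil\gamma\rceil+d/p$ for the $K$-pairing, and $\beta-2<2\lceil\gamma\rceil+d/p$ for the $\Delta K$-pairing. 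With $\beta=\bar s(p)+\eta$ this fails precisely in the flagship applications: e.g.\ for $d=1$, $\gamma=0$, $\eta$ close to $2$ and $p$ large one has $\beta-2\approx 1/2$ while $2\lceil\gamma\rceil+d/p=1/p$ is small. The obstruction is not the smoothness of $K$ but the failure of uniform-in-$\delta$ boundedness of $(-\Delta_{\delta,x_0})^{-h}$ on the growing domains once $h\geq\frac d2(1-\frac1q)$ beyond the $\lceil\gamma\rceil$ vanishing-moment reserve built into Assumption~\ref{assump:K} (the heat-semigroup decay $t^{-\frac d2(1-1/q)}$ is no longer integrable against $t^{h-1}$ at infinity in \eqref{eq:neg_frac_laplace_formula}). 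The paper's resolution is not a separate tracking of a saturation effect but a different choice of duality index: one sets $r:=s^*+\nu+d/p$ with $\nu$ already capped at $5/4$, notes $r<\bar s(p)+\eta$ so that $\int_0^T\norm{\widetilde X(t)}_{r,p}^2\,\mathrm dt<\infty$ and $\sup_t\norm{F(X(t))}_{r-2,p}<\infty$ still follow from Proposition~\ref{prop:ExcessRegularity} and Assumption~\ref{assump:A}, and checks that the cap guarantees $r-2<2\lceil\gamma\rceil+d/p$ (indeed $\nu\leq 5/4<2(\lceil\gamma\rceil-\gamma)+1+d/2$), so Lemma~\ref{lem:fracDelta_K}(i) applies. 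You should therefore run your duality argument at the index $r$ rather than at $\bar s(p)+\eta$; as written, your chain proves the theorem only when $\bar s(p)+\eta-2<2\lceil\gamma\rceil+d/p$ and leaves the general case, including the origin of the $\wedge\,5/4$, unproved.
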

\begin{proof}
Fix $\nu>0$ as in the statement and let $r:= s^* + \nu +d/p \leq \bar{s}(p)+\eta$. According to Lemma \ref{lem:regularity} we only have to observe that $\widetilde{X}\in C([0,T];W^{r,p}(\Lambda))$ by Proposition \ref{prop:ExcessRegularity} and that 
\begin{equation*}
\sup_{0\leq t\leq T}\norm{F(X(t))}_{r-2, p} \leq g\left(\sup_{0\leq t\leq T}\norm{X(t)}_{s, p} \right) < \infty,
\end{equation*} 
with $\epsilon$ and $g$ from Assumption \ref{assump:A} with $s=\bar{s}(p)-\epsilon'$, $0<\epsilon'<\epsilon$  such that $r-2<s+\eta-2+\epsilon$ and noting $X\in C([0,T]; W^{s, p}(\Lambda))$ by Proposition \ref{prop:ExcessRegularity}.
\end{proof}

In the setting of Theorem \ref{thm:rates}, this result means that asymptotic normality of $\widehat{\vartheta}_{\delta}$ holds as soon as the excess regularity $\eta$ is larger than $s^*-\bar{s}(p)+d/p+1$, while $\widehat{\vartheta}_{\delta}$ is consistent if $\eta>s^*-\bar{s}(p)+d/p$. 
If $\bar{s}(p)=s^*$ and if Proposition \ref{prop:ExcessRegularity} can be applied for all $p\geq 2$, then $\nu = \eta \wedge 5/4$ is independent of the dimension $d$. Compared to this, the $L^2$-perturbation results for the spectral approach of \cite{IgorNathanAditiveNS2010}, \cite{PasemannStannat2019} depend heavily on the dimension, with slower convergence rates for estimators of $\vartheta$ in higher dimensions. It is an interesting question if $L^p$-regularity for $p>2$ can improve results also for the spectral approach.

\subsection{Results for particular equations}\label{sec:particularEquations}

Let us apply Theorems \ref{thm:rates} and \ref{thm:regularity} to SPDEs  with specific nonlinearities. We always assume that Assumptions~\ref{assump:B}, \ref{assump:K}, \ref{assump:ND} are satisfied, which already implies well-posedness of the linear part $\bar{X}$ and allows us to define the `linear regularity gap'
\begin{align*}
	s_\mathrm{gap} = s^*-\inf_{p\geq 2}\bar s(p),
\end{align*}
which satisfies $0\leq s_\mathrm{gap}\leq d/2$; cf. Supplement  \ref{app:RegularityLinear}. Recall also that $s_{\mathrm{gap}}=0$ for rectangular domains, in particular when $d=1$. The initial value $X_0$ is always assumed to satisfy $X_0\in W^{\bar{s}(p)+\eta,p}(\Lambda)$ for all $p\geq 2$ and with $\eta$ to be determined, in order to apply Proposition \ref{prop:ExcessRegularity}. Verification of Assumption \ref{assump:A} will follow mainly by the following simple but convenient result.

\begin{lemma}\label{lemLpReactionDiffusion}
For $0\leq\alpha<2$, $m\in\N_0$ and $p\geq 2$, suppose that $F(u)=D_{\alpha}Q_{m}(u)$, where $Q_{m}:\R\rightarrow\R$ is a polynomial of degree at most $m$, and where $D_\alpha$ is a differential operator of order $\alpha$, i.e. $D_\alpha:W^{s+\alpha, p}(\Lambda)\rightarrow W^{s, p}(\Lambda)$ is bounded for all $s\in\R$. Then Assumption \ref{assump:A} holds for $0\leq\eta<2-\alpha$ and $s > d/p$. When $m\leq 1$, it holds for all $s\geq 0$.
\end{lemma}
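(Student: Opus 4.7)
The plan is to combine three ingredients: the boundedness of $D_\alpha$ on the $W^{s,p}$-scale, a Sobolev inclusion to absorb the remaining regularity loss, and the algebra property of $W^{s,p}(\Lambda)$ for $s>d/p$ to control the polynomial composition $Q_m(u)$.

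First I would pick $\epsilon>0$ small enough so that $\eta-2+\epsilon+\alpha\leq 0$, which is possible since $\eta<2-\alpha$. Applying the assumed boundedness $D_\alpha:W^{s+\eta-2+\epsilon+\alpha,p}(\Lambda)\to W^{s+\eta-2+\epsilon,p}(\Lambda)$ with a shifted index, followed by the continuous inclusion $W^{s,p}(\Lambda)\hookrightarrow W^{s+\eta-2+\epsilon+\alpha,p}(\Lambda)$ (valid by monotonicity of the scale, since the target index is $\leq s$), yields
\begin{equation*}
\norm{F(u)}_{s+\eta-2+\epsilon,p}\leq C\,\norm{Q_m(u)}_{s+\eta-2+\epsilon+\alpha,p}\leq C'\norm{Q_m(u)}_{s,p}.
\end{equation*}
For $m\leq 1$ the polynomial $Q_m(u)=a_1u+a_0$ is affine, so $\norm{a_1 u}_{s,p}\leq|a_1|\norm{u}_{s,p}$ is immediate for every $s\geq 0$ and no algebra property is needed. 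For general $m$, I would invoke the algebra estimate $\norm{uv}_{s,p}\leq C\norm{u}_{s,p}\norm{v}_{s,p}$, valid for $s>d/p$ by combining the Sobolev embedding $W^{s,p}(\Lambda)\hookrightarrow L^\infty(\Lambda)$ with a Moser/Kato--Ponce type product estimate in Bessel potential spaces (available in the framework of \cite{Yagi10,Triebel1983book}). Iterating gives $\norm{u^k}_{s,p}\leq C^{k-1}\norm{u}_{s,p}^k$ for $1\leq k\leq m$, and summing against the coefficients of $Q_m$ produces a polynomial bound $\norm{Q_m(u)}_{s,p}\leq g(\norm{u}_{s,p})$ of degree $m$, which matches Assumption \ref{assump:A}.

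The principal technical obstacle is justifying the algebra property in the paper's Dirichlet-based spaces $W^{s,p}(\Lambda)$, which are strict subspaces of the usual Bessel potential spaces. The product of two functions in $W^{s,p}(\Lambda)$ still belongs to the ambient Bessel potential space with the claimed bound and inherits the vanishing boundary behaviour of its factors, so the estimate transfers; pinning down this passage relies on the space description in \cite{Yagi10}. A minor side issue is the constant term $a_0$ of $Q_m$, which need not satisfy the Dirichlet condition when $s\geq 1$; this is resolved either by assuming $Q_m(0)=0$ (standard in reaction--diffusion applications) or by noting that $D_\alpha$ annihilates constants whenever it consists solely of derivatives, so that the constant contribution can be absorbed into $g$.
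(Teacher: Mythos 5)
Your proof is correct and takes essentially the same route as the paper's: the paper simply sets $\epsilon=2-\alpha-\eta$ so that $s+\eta-2+\epsilon=s-\alpha$ and the boundedness of $D_\alpha$ applies directly (no intermediate embedding needed), then invokes the same algebra property of $W^{s,p}(\Lambda)$ for $s>d/p$, citing \cite{Triebel1983book}, with the affine case handled immediately for all $s\geq 0$. Your extra care about the Dirichlet-space product estimate and the constant term $a_0$ goes beyond what the paper records but does not alter the argument.
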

\begin{proof}
Set $\epsilon:=2-\alpha-\eta$ such that $\norm{D_\alpha Q_m(u)}_{s+\eta-2+\epsilon, p}\leq C\norm{Q_m(u)}_{s, p}$ for an absolute constant $C<\infty$. This already implies the claim when $m\leq 1$. When $m>1$, it is enough to consider $Q_m(x)=x^m$. For $s>d/p$, the space $W^{s, p}(\Lambda)$ is closed under multiplication; cf.   \cite{Triebel1983book}. This yields $\norm{x^m}_{s, p}\leq \tilde{C} \norm{x}_{s, p}^m$ for another absolute constant $\tilde{C}<\infty$, implying Assumption \ref{assump:A}.
\end{proof}

The results discussed in this and the next section can be combined to apply to more general SPDEs by considering composite nonlinearities of the form $F(u)=a_1 F_1(u) + a_2 F_2(u)$ for sufficiently smooth functions $a_1,a_2$. In this case $a_1,a_2$ are pointwise multipliers on the Bessel potential spaces, cf. \cite[Theorem 3.3.2]{Triebel1983book}, and so $F$ satisfies Assumption $A_{s,\eta_1\wedge\eta_2,p}$ for $p\geq 2$ and $s,\eta_1,\eta_2 \in \R$, as soon as $F_1$ and $F_2$ satisfy Assumptions $A_{s, \eta_1, p}$ and $A_{s, \eta_2, p}$, respectively. In this sense, the results are robust under misspecification of certain lower order terms in the nonlinear part.

\subsubsection{Linear perturbations}\label{sec:linearPerturbations}

Let $D_{\alpha}$ be a differential operator of order $0\leq \alpha<2$ as in Lemma \ref{lemLpReactionDiffusion}, and consider the linear equation
\begin{align}\label{eq:linearSPDE}
	\mathrm{d}X(t) = (\vartheta\Delta X(t) + D_{\alpha}X(t))\mathrm{d}t + B\mathrm{d}W(t).
\end{align}
Examples for $D_{\alpha}$ are $(-\Delta)^{\alpha/2}$ or first order differential operators such as $u\mapsto \sc{b}{\nabla u}_{\R^d} + cu$ with $\alpha=1$, $b\in C^{\infty}(\R^d;\R^d)$, $c\in C^{\infty}(\R^d)$. For  applications of linear SPDEs see e.g. \cite{Walsh1981}, \cite{Frankignoul1985}, \cite{Cont2005}. 
Well-posedness follows as for $\bar{X}$, cf. Supplement \ref{app:RegularityLinear}, as long as the operator $\vartheta \Delta + D_{\alpha}$ generates an analytic semigroup. To satisfy Assumption \ref{assump:F} we further require $D_{\alpha}X(t)\in L^2(\Lambda)$, that is $X(t)\in W^{\alpha}(\Lambda)$. Note that $\vartheta$ is not identifiable for $\alpha=2$, for example when $D_{2}=b \Delta$ for unknown $b$. For simplicity, we consider only $s_{\mathrm{gap}}=0$.

\begin{theorem} Let $0\leq \alpha<2$, $s_{\mathrm{gap}}=0$ and assume that \eqref{eq:linearSPDE} is well-posed in $C([0,T];W^{s, p}(\Lambda))$ for all $\alpha<s<s^*$ and all $p\geq 2$. Then:
\begin{enumerate}
\item $\widehat\vartheta_\delta$ is a consistent estimator of $\vartheta$ with $\widehat\vartheta_\delta = \vartheta + O_{\P}(\delta^{(2-\alpha')\wedge 1})$ for any $\alpha'>\alpha$.
\item If $\alpha < 1$, then $\widehat\vartheta_\delta$ is also an asymptotically normal estimator of $\vartheta$ satisfying \eqref{eq:ThMain-AsNorm}. 
\end{enumerate}
\end{theorem}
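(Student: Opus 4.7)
The plan is to verify the abstract conditions of Section~\ref{sec:AsympAnEst} for the linear perturbation $F(u)=D_\alpha u$ and then invoke Theorems~\ref{thm:rates} and~\ref{thm:regularity}. First, I write $F(u)=D_\alpha Q_1(u)$ with the polynomial $Q_1(x)=x$ of degree $m=1$, so Lemma~\ref{lemLpReactionDiffusion} immediately yields Assumption~\hyperref[assump:A]{$A_{s,\eta,p}$} for every $s\geq 0$, $p\geq 2$ and every $\eta\in[0,2-\alpha)$; the side condition $s>d/p$ is unnecessary here because $m\leq 1$.

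Next, I apply Proposition~\ref{prop:ExcessRegularity} together with Theorem~\ref{thm:regularity}. The standing well-posedness hypothesis supplies $\widetilde X=X-\bar X\in C([0,T];W^{s_1,p_1}(\Lambda))$ for any $s_1\geq 0$ and $p_1\geq 2$, while $X_0$ is assumed to have sufficient Sobolev regularity for every $p\geq 2$. Because $s_\mathrm{gap}=0$, we have $\bar s(p)=s^\ast$ for every $p$, so the criterion $\eta>s^\ast-\bar s(p)+d/p$ of Theorem~\ref{thm:regularity} collapses to $\eta>d/p$. Choosing $\eta$ just below $2-\alpha$ and $p$ arbitrarily large thus realizes Assumption~\ref{assump:F} with any exponent $\nu<(2-\alpha)\wedge 5/4$.

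Part~(i) now follows from Theorem~\ref{thm:rates}(i), which gives $\widehat\vartheta_\delta=\vartheta+O_\P(\delta^\nu)+O_\P(\delta)$ for each such $\nu$ and therefore the consistency at the advertised rate (modulo an arbitrarily small loss in the exponent when $\alpha\geq 1$, absorbed into the $O_\P$). For part~(ii), when $\alpha<1$ one has $(2-\alpha)\wedge 5/4>1$, so $\nu>1$ is admissible and Theorem~\ref{thm:rates}(ii) delivers the asymptotic normality~\eqref{eq:ThMain-AsNorm} with variance $\vartheta\Sigma$ from Proposition~\ref{prop:pseudo_Fisher_info}(i). The only genuinely nontrivial input is the hypothesized well-posedness of~\eqref{eq:linearSPDE} in every $C([0,T];W^{s,p}(\Lambda))$, which in practice follows from analytic semigroup theory since $\vartheta\Delta+D_\alpha$ stays sectorial for $\alpha<2$; I do not anticipate any real obstacle beyond the bookkeeping of Sobolev indices outlined above.
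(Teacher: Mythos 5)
Your proposal is correct and follows essentially the same route as the paper: verify Assumption \hyperref[assump:A]{$A_{s,\eta,p}$} via Lemma~\ref{lemLpReactionDiffusion} with $m=1$ and $\eta<2-\alpha$ (valid for all $s\geq 0$, $p\geq 2$), then feed this into Proposition~\ref{prop:ExcessRegularity}, Theorem~\ref{thm:regularity} (yielding Assumption~\ref{assump:F} with any $\nu<(2-\alpha)\wedge 5/4$, which exceeds $1$ when $\alpha<1$) and Theorem~\ref{thm:rates}. The extra bookkeeping you supply (using $s_{\mathrm{gap}}=0$ and large $p$ to make the criterion $\eta>d/p$ harmless) is exactly what the paper leaves implicit.
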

\begin{proof} By Theorems \ref{thm:rates} and \ref{thm:regularity} choosing $p=2d/(\alpha'-\alpha)$, $\eta = 2-\alpha'+d/p<2-\alpha$ and $\nu=(2-\alpha')\wedge 5/4$ in the notation therein, it is enough to check Assumption \ref{assump:A}, which holds by Lemma \ref{lemLpReactionDiffusion} with $\eta < 2-\alpha$, $s\geq 0$ for all $p\geq 2$. 
\end{proof}

In the critical case $\alpha=1$, that is with $\nu=1$, it is \textit{a-priori} not clear if a CLT for $\widehat{\vartheta}_{\delta}$ holds at the optimal rate $\delta$. For the examples mentioned after \eqref{eq:linearSPDE}, however, this can be shown to be true by an explicit computation for the nonlinear bias as in \cite[Theorem 5.3]{AltmeyerReiss2019}, and we leave the details to the reader; cf. also the proof of Theorem \ref{thm:Burgers_CLT} below. It is worth mentioning that the results of \cite{AltmeyerReiss2019} are obtained for linear equations of the form \eqref{eq:linearSPDE} with $\vartheta\Delta+D_{\alpha}$ being a second order elliptic  operator, $\gamma=0$, any dimension $d$ and assuming only a $C^2$-boundary for $\Lambda$. 

\subsubsection{Stochastic reaction-diffusion equations}\label{sec:ReactionDiffMain}

Let us consider the equation
\begin{align}\label{eq:SPDERepeated}
	\mathrm{d}X(t,x) = (\vartheta\Delta X(t,x) + f(X(t,x)))\mathrm{d}t + B\mathrm{d}W(t,x),\quad x\in\Lambda,
\end{align}
where the nonlinearity $F(u)(x)=f(u(x))$ is a Nemytskii operator for a function $f:\R\rightarrow \R$. These equations are ubiquitous in physics, chemistry, biology and neuroscience, see e.g. \cite{AlonsoStangeBeta2018}, \cite{NagumoEtAl1962}, \cite{Fitzhugh1961}, \cite{Schloegl1972}, \cite{CahnAllen1977}.

Important examples are polynomial nonlinearities
\begin{equation}
	f(x) = a_m x^m + \dots + a_1x  + a_0,\quad x\in\R,\label{eq:poly}
\end{equation}
with $m\in 2\N+1$ and $a_m < 0$. For a numerical example see Section \ref{sec:Discussion-and-numerical}. Theorem \ref{thm:ExistenceReactionDiffusionComplete} gives sufficient conditions to guarantee that \eqref{eq:SPDERepeated} is well-posed in $C([0,T];W^{s, p}(\Lambda))$ for some $p\geq 2$ and $s>d/p$ in $d\leq 3$. 

For a second class of stochastic reaction diffusion equations consider $f\in C_b^\infty(\R)$, which is the space of smooth functions with bounded derivatives. For a concrete application see \cite{LockleyEtAl2015}. In this case, well-posedness of \eqref{eq:SPDERepeated} in $C([0,T];W^{s, p}(\Lambda))$ for some $p\geq 2$ and $s>d/p$ follows from Theorem \ref{thm:compOperators}.

\begin{theorem}\label{thm:reactDiff} Assume that \eqref{eq:SPDERepeated} is well-posed in $C([0,T];W^{s, p}(\Lambda))$ for all $p\geq 2$ and $d/p<s<s^*-s_\mathrm{gap}$, where $f$ is either as in \eqref{eq:poly} or $f\in C_b^\infty(\R)$. The following assertions hold true:
\begin{enumerate}
\item If $s_\mathrm{gap} < 2$, in particular if $d\leq 3$, then $\widehat\vartheta_\delta$ is a consistent estimator of $\vartheta$, and for $\nu < (2-s_\mathrm{gap})\wedge 1$, $\widehat\vartheta_\delta = \vartheta + O_{\P}(\delta^\nu)$, as $\delta\to0$. 
\item If $s_\mathrm{gap}<1$, in particular if $d=1$, then $\widehat\vartheta_\delta$ is also an asymptotically normal estimator of $\vartheta$ satisfying \eqref{eq:ThMain-AsNorm}. 
\end{enumerate}
\end{theorem}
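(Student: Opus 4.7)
The overall strategy is to invoke Theorem \ref{thm:rates} via Theorem \ref{thm:regularity}; the only substantive task is to verify Assumption $A_{s,\eta,p}$ for the Nemytskii nonlinearity $F(u)(x)=f(u(x))$ and then to run the regularity bootstrap of Proposition \ref{prop:ExcessRegularity}, translating the outcome into the $s_\mathrm{gap}$-dictated rate.

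I would first verify Assumption $A_{s,\eta,p}$ for every $0\leq\eta<2$, $p\geq 2$ and $s>d/p$. In the polynomial case, this is immediate from Lemma \ref{lemLpReactionDiffusion} applied with $\alpha=0$ and $D_\alpha=\mathrm{Id}$. For $f\in C_b^\infty(\R)$ the same conclusion holds: since $W^{s,p}(\Lambda)$ is a Banach algebra whenever $s>d/p$ (cf.\ \cite{Triebel1983book}), a standard Nemytskii-type estimate obtained by Taylor-expanding $f$ and using the boundedness of its derivatives gives $\norm{f(u)}_{s,p}\leq g(\norm{u}_{s,p})$ for some continuous non-decreasing $g$, and choosing $\epsilon=2-\eta$ yields $\norm{F(u)}_{s+\eta-2+\epsilon,p}=\norm{f(u)}_{s,p}\leq g(\norm{u}_{s,p})$, which is precisely Assumption \ref{assump:A}.

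Next, I would fix $p\geq 2$ and $\eta<2$ and apply Proposition \ref{prop:ExcessRegularity}. The well-posedness hypothesis gives $X\in C([0,T];W^{s,p}(\Lambda))$ for every $s>d/p$, and the linear regularity of $\bar X$ recalled in Appendix \ref{app:RegularityLinear} places $\bar X\in C([0,T];W^{s_1,p_1}(\Lambda))$ for $s_1<\bar s(p_1)$, so $\widetilde X=X-\bar X$ lies in $C([0,T];W^{s_1,p_1}(\Lambda))$ for admissible $p_1\leq p$ and $s_1\in(d/p_1,\bar s(p))$; if $\bar s(p)-d/p$ is small, one may need to iterate Proposition \ref{prop:ExcessRegularity} once or twice, each step raising the regularity by up to $2$. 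The final output $\widetilde X\in C([0,T];W^{\bar s(p)+\eta,p}(\Lambda))$ feeds into Theorem \ref{thm:regularity}, producing Assumption \ref{assump:F} with $\nu=(\eta-(s^*-\bar s(p)+d/p))\wedge 5/4$. Letting $\eta\uparrow 2$ and passing to the infimum in $p$ in the definition of $s_\mathrm{gap}$ (so $s^*-\bar s(p)\to s_\mathrm{gap}$ and $d/p\to 0$), any exponent $\nu<(2-s_\mathrm{gap})\wedge 5/4$ becomes achievable.

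Theorem \ref{thm:rates}(i) then gives $\widehat\vartheta_\delta=\vartheta+\cO_\bP(\delta^\nu)+\cO_\bP(\delta)$, whose dominating rate is $\delta^{\nu\wedge 1}$; this proves (i) for every $\nu<(2-s_\mathrm{gap})\wedge 1$ under the condition $s_\mathrm{gap}<2$, which is implied by $d\leq 3$ since $s_\mathrm{gap}\leq d/2\leq 3/2$. Part (ii) follows from Theorem \ref{thm:rates}(ii) as soon as $\nu>1$ can be arranged, i.e.\ $s_\mathrm{gap}<1$, which holds on rectangular domains and in particular for $d=1$ where $\Lambda$ is an interval and $s_\mathrm{gap}=0$. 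I expect the main technical difficulty to be the $C_b^\infty$ case of the Nemytskii estimate, since the fractional spaces $W^{s,p}(\Lambda)$ used here are not the classical Bessel potential spaces on a bounded domain and the algebra property for non-integer $s>d/p$ must be invoked with care (including some attention to the value $f(0)$ under zero boundary conditions); a secondary delicacy is non-emptiness of the parameter window $(d/p_1,\bar s(p))$ in the bootstrap, which in low-regularity regimes may necessitate an iterated application of Proposition \ref{prop:ExcessRegularity}.
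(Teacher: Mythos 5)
Your proposal is correct and follows essentially the same route as the paper: verify Assumption \ref{assump:A} for all $\eta$ close to $2$, $p\geq 2$ and $s>d/p$ (via Lemma \ref{lemLpReactionDiffusion} with $\alpha=0$ in the polynomial case), then conclude by Theorems \ref{thm:rates} and \ref{thm:regularity}. The one point you leave as a "standard Nemytskii-type estimate" for $f\in C_b^\infty(\R)$ is precisely what the paper's Lemma \ref{lem:ConditionsBoundedPerturbation}(i) supplies, resting on the composition theorem of Adams--Frazier rather than a naive Taylor expansion, and you correctly flag this as the main technical point.
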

\begin{proof}
It is enough to show that Assumption \ref{assump:A} holds for all $\eta$ close to $2$ and all $p\geq 2$, $s>d/p$, since then the result is obtained by Theorems \ref{thm:rates} and \ref{thm:regularity} (with $p=2d/(2-s_\mathrm{gap}-\nu)$ and $\eta=\nu + (s^*-\bar s(p)) + d/p \leq \nu + s_\mathrm{gap} + d/p < 2$ in the notation therein, where we can take any $1<\nu<(1-s_\mathrm{gap})\wedge 5/4$ in (ii)). With respect to $f$ in  \eqref{eq:poly}, this follows from Lemma \ref{lemLpReactionDiffusion} with $\alpha=0$, and for $f\in C_b^\infty(\R)$ from Lemma \ref{lem:ConditionsBoundedPerturbation}(i).
\end{proof} 

\subsection{An example for the critical case: The stochastic Burgers equation}\label{sec:BurgersMain}

As a prototypical example for an SPDE with first order nonlinearity, let us consider the stochastic Burgers equation in dimension $d=1$,
\begin{equation}\label{eq:Burgers}
\mathrm{d}X(t)=(\vartheta\Delta X(t)-X(t)\partial_{x}X(t))\mathrm{d}t+B\mathrm{d}W(t).
\end{equation}
This equation serves as a simple model for turbulence and is the one-dimensional analogue to the Navier-Stokes equations; for applications see e.g. the references in \cite{HairerVoss2011}. Note that the nonlinearity is given by 
\begin{equation}
F(u)=-u\partial_{x}u=\partial_{x}\left(-\frac{1}{2}u^{2}\right).
\end{equation}
It can be shown that \eqref{eq:Burgers} has a mild solution when $B=I$; cf. \cite{DaPratoDebusscheTemam1994}. In order to obtain higher regularity of the solution let us assume Assumption~\ref{assump:B} with $\gamma>\frac{1}{4}$. Theorem~\ref{thm:ExistenceBurgersComplete} yields $X\in C([0,T];W^{s,p}(\Lambda))$ for all $p\geq 2$ and $1<s<1/2+2\gamma$. With $d=1$ we find that $s_{\mathrm{gap}}=0$ and Lemma \ref{lemLpReactionDiffusion} with $\alpha=1$ implies Assumption \ref{assump:A} for any $\eta<1$. This is not enough to obtain asymptotic normality of $\widehat{\vartheta}_{\delta}$ using Theorems \ref{thm:rates} and \ref{thm:regularity}. 
Instead, we tackle the nonlinear bias $R_{\delta}$ directly and show that $\delta^{-1}\cI_{\delta}^{-1}R_{\delta}=o_{\P}(1)$. The proof is based on decomposing $F(X)$ using the splitting argument $X=\bar{X}+\tilde{X}$. Terms involving only $\bar{X}$ are treated by Gaussian calculus, similar to $\bar{\cI}_{\delta}$ in Proposition \ref{prop:pseudo_Fisher_info}. Moreover, $\bar{X}$ and $\tilde{X}$ are decoupled using the higher regularity of $\tilde{X}$ over $\bar{X}$ according to Proposition \ref{prop:ExcessRegularity} and by a Wiener-chaos decomposition of $\tilde{X}(t,x_0)$. For the proof we assume $B=(-\Delta)^{-\gamma}$ and a slightly stronger condition on the kernel $K$ to shorten technical arguments, but this can likely be relaxed; see also the numerical study in the next section. For a proof see Supplement \ref{sec:ProofOfBurgers}. 

\begin{theorem}\label{thm:Burgers_CLT} Assume $B=(-\Delta)^{-\gamma}$
for $\gamma>1/4$. Grant Assumption \ref{assump:K} and assume in
addition that $\widetilde{K}=\partial_{x}L$ for $L\in H^{2\lceil\gamma\rceil+3}(\R)$
having compact support. Then $\delta^{-1}\cI_{\delta}^{-1}R_{\delta}=o_{\P}(1)$. 
\end{theorem}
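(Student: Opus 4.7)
The plan is to prove $\delta R_\delta = o_\P(1)$, which is sufficient because Proposition~\ref{prop:pseudo_Fisher_info}(i)--(iii) gives $\cI_\delta \sim \delta^{-2}$ in probability, so $\delta^{-1}\cI_\delta^{-1}R_\delta$ differs from $\delta R_\delta$ only by a stochastically bounded factor. The starting point is integration by parts: since $F(u) = -\tfrac{1}{2}\partial_x(u^2)$ and $K_{\delta,x_0}$ has compact support in $\Lambda$,
\[
\langle F(X(t)), K_{\delta,x_0}\rangle = \tfrac{1}{2}\langle X(t)^2, \partial_x K_{\delta,x_0}\rangle.
\]
Splitting $X^\Delta_{\delta,x_0}=\bar X^\Delta_{\delta,x_0}+\tilde X^\Delta_{\delta,x_0}$ and $X^2 = \bar X^2+2\bar X\tilde X+\tilde X^2$ then decomposes $R_\delta$ into six pieces which I would treat separately.

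Four of the six pieces, namely those containing either a $\tilde X^\Delta$ factor or a $\tilde X^2$ factor, are handled by the excess regularity of $\tilde X$. For $d=1$ one has $s_{\mathrm{gap}}=0$ and $s^* = \tfrac12+2\gamma$, so Proposition~\ref{prop:ExcessRegularity} together with Lemma~\ref{lemLpReactionDiffusion} at $\alpha=1$ yields $\tilde X\in C([0,T]; W^{s,p}(\Lambda))$ for any $s<s^*+\eta$, any $\eta<1$ and any $p\geq 2$. Inserting this into the scaling estimate Lemma~\ref{lem:scalar_ineq}, exactly as in the proof of Theorem~\ref{thm:regularity}, and using the factored structure $\widetilde K = \partial_x L$ to absorb one additional factor of $\delta$ by moving a derivative from $X^2$ onto the kernel, each of these four pieces is shown to be $o_\P(\delta^{-1}\|B^* K_{\delta,x_0}\|^2)$ for $\eta$ close enough to $1$, which after the $\|B^* K_{\delta,x_0}\|^{-2}$ prefactor and the additional $\delta$ gives the desired $o_\P(1)$ contribution.

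The main obstacle is the purely Gaussian piece
\[
R_\delta^{\mathrm{lin}} := \|B^* K_{\delta,x_0}\|^{-2}\int_0^T \bar X^\Delta_{\delta,x_0}(t)\,\tfrac{1}{2}\langle \bar X(t)^2,\partial_x K_{\delta,x_0}\rangle\, dt,
\]
a cubic functional of the driving white noise. I would decompose pointwise $\bar X(t,x)^2 = {:}\bar X(t,x)^2{:} + \E[\bar X(t,x)^2]$ into its Wick-square (second Wiener chaos) and its variance (deterministic). Multiplied by the first-chaos factor $\bar X^\Delta_{\delta,x_0}(t)$, the variance contribution is a centered first-chaos integral that gains a factor of $\delta$ from $\int \partial_x K_{\delta,x_0}=0$ and the smoothness of $x\mapsto\E[\bar X(t,x)^2]$ at the interior point $x_0$. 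The Wick-square contribution has components only in the first and third chaos, and by Wick's isometry its $L^2(\P)$-norm collapses to explicit integrals of triple products of heat kernels tested against $K_{\delta,x_0}$ and $\partial_x K_{\delta,x_0}$, which can be evaluated by the scaling calculus of Section~\ref{append:ScaleCov}; the assumption $\widetilde K = \partial_x L$ is precisely what provides an extra derivative to be absorbed into one of those heat-kernel factors and upgrades the bound from critical $O_\P(1)$ to $o_\P(\delta^{-1})$.

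The remaining fifth piece $\int \bar X^\Delta_{\delta,x_0}\langle \bar X\tilde X,\partial_x K_{\delta,x_0}\rangle\,dt$ is where the Wiener-chaos expansion of $\tilde X(t,x_0)$ is used in its sharp form: substituting the Picard representation from \eqref{eq:NonlinearMild} gives $\tilde X(t,x_0)$ as the second-chaos integral $\int_0^t S_\vartheta(t-s)F(\bar X(s))\,ds$ at $x_0$ plus higher-chaos remainders. Plugging the second-chaos leading term into this piece yields an explicit trilinear Gaussian functional controllable by the same Wick-isometry and kernel-scaling estimates used for $R_\delta^{\mathrm{lin}}$, while the higher-chaos remainder is absorbed by the regularity argument of step two. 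I expect the main technical hurdle to be a careful $\delta$-accounting through Wick's isometry for both $R_\delta^{\mathrm{lin}}$ and this fifth piece, checking in each case that the derivative carried by $\widetilde K=\partial_x L$ is indeed absorbable into a heat-kernel factor rather than onto $\bar X^\Delta_{\delta,x_0}$ (where it would cancel the gain). Once that scaling is pinned down, the rest of the argument is bookkeeping.
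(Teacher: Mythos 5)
Your overall architecture matches the paper's: reduce to $\delta^{1-4\gamma}\norm{B^*K_{\delta,x_0}}^2 R_\delta\to 0$ via Proposition~\ref{prop:pseudo_Fisher_info}, integrate by parts, split $X=\bar X+\widetilde X$, kill the pieces carrying $\widetilde X^\Delta$ or $\widetilde X^2$ by excess regularity, and treat the purely Gaussian cubic term by Wick calculus. Two remarks on the latter: the decisive cancellation there is not really the absorption of the derivative from $\widetilde K=\partial_x L$ into a heat kernel factor (that hypothesis is used to make the scaling limits of $(-\Delta_{\delta})^{-r/2}\partial_x K$ exist and stay bounded for $r$ up to $2+2\lceil\gamma\rceil$), but the antisymmetry identity $\langle(-\Delta_0)^a\widetilde K,(-\Delta_0)^a\partial_x\widetilde K\rangle=\tfrac12\int\partial_x((-\Delta_0)^a\widetilde K)^2=0$, which makes the critical limiting constants vanish. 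Your sketch of $U_{1,\delta}$ is otherwise consistent with the paper's Wick-partition computation.

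The genuine gap is in your fifth piece, $\int_0^T\bar X^{\Delta}_{\delta,x_0}(t)\langle\bar X(t)\widetilde X(t),\partial_x K_{\delta,x_0}\rangle\,dt$. You propose to replace $\widetilde X$ by the leading (second-chaos) term of its Picard expansion and to dispose of the remainder ``by the regularity argument of step two.'' This cannot work: the remainder $\widetilde X - \int_0^t S_\vartheta(t-s)F(\bar X(s))\,ds$ equals $-\tfrac12\int_0^t S_\vartheta(t-s)\partial_x\bigl(2\bar X\widetilde X+\widetilde X^2\bigr)(s)\,ds$ plus the initial-value term, and since $\bar X\widetilde X$ has only the regularity of $\bar X$ (about $2\gamma+\tfrac12$), the gain of $2^-$ from the time convolution minus the loss of $1$ from $\partial_x$ leaves the remainder with exactly the regularity $2\gamma+\tfrac32^-$ of $\widetilde X$ itself. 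Plugging a term of that regularity back into the fifth piece reproduces the critical bound $O_{\P}(\delta^{-\epsilon})$ after the $\delta^{1-4\gamma}$ scaling --- precisely the bound you are trying to beat --- and iterating the expansion does not improve matters. The paper circumvents this with a different splitting: write $\widetilde X(t,x)=(\widetilde X(t,x)-\widetilde X(t,x_0))+\widetilde X(t,x_0)$. The increment contributes the term $V_{3,\delta}$, which gains smallness from spatial localization (after expanding in the eigenbasis, the factors $(\Phi_k-\Phi_k(x_0))\partial_x K_{\delta,x_0}$ vanish in the scaling limit of Lemma~\ref{lem:Kprime}); only the \emph{scalar} $\widetilde X(t,x_0)\in L^2(\Omega)$ is chaos-expanded, abstractly in a complete orthonormal system $(\xi_i)$ with deterministic coefficients $b_i(t)$, so that the tail of the expansion is controlled uniformly in $\delta$ by Cauchy--Schwarz and each truncated term reduces to a purely Gaussian quadratic $\int_0^T b_i(t)\bar X^{\Delta}_{\delta,x_0}(t)\langle\bar X(t),\partial_x K_{\delta,x_0}\rangle\,dt$, handled again by Wick's formula and the antisymmetry cancellation. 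Without some substitute for this localization-plus-scalar-expansion step, your plan for the mixed term does not close.
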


Combining this with the discussion above, Theorems \ref{thm:rates} and \ref{thm:regularity} (with any $0<\nu<1$, $p=2/(1-\nu)$ and $\eta = \nu+1/p<1$) yield immediately:

\begin{theorem}
Assume that \eqref{eq:Burgers} is well-posed in $C([0,T];W^{s, p}(\Lambda))$ for all $p\geq 2$ and $1<s<1/2+2\gamma$. Then the following holds:
\begin{enumerate}
\item The estimator $\widehat\vartheta_\delta$ is consistent with $\widehat\vartheta_\delta = \vartheta + O_\mathbb{P}(\delta^\nu)$ for any $\nu<1$.  
\item If the additional hypotheses from Theorem \ref{thm:Burgers_CLT} are satisfied, then $\widehat\vartheta_\delta$ is also an asymptotically normal estimator of $\vartheta$ satisfying \eqref{eq:ThMain-AsNorm}. 
\end{enumerate}
\end{theorem}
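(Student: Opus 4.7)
The plan is to apply the general machinery of Theorems \ref{thm:rates} and \ref{thm:regularity} after identifying the structural parameters for the Burgers nonlinearity, and then to handle the critical case $\nu=1$ in the asymptotic normality part by invoking Theorem \ref{thm:Burgers_CLT} to control the nonlinear bias directly.

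First, I would recognize the Burgers nonlinearity $F(u)=-u\partial_x u = \partial_x(-u^2/2)$ as being of the form of Lemma \ref{lemLpReactionDiffusion} with differential order $\alpha=1$ and polynomial degree $m=2$. Since $d=1$, the rectangular domain gives $s_\mathrm{gap}=0$, and thus $\bar s(p)=s^*=1/2+2\gamma$ for every $p\geq 2$. Lemma \ref{lemLpReactionDiffusion} then supplies Assumption \hyperref[assump:A]{$A_{s,\eta,p}$} for every $s>1/p$ and every $\eta<2-\alpha=1$. The hypothesis of well-posedness in $C([0,T];W^{s,p}(\Lambda))$ for $1<s<1/2+2\gamma$ and all $p\geq 2$ in particular guarantees $\widetilde{X}=X-\bar X\in C([0,T];W^{s_1,p_1}(\Lambda))$ for some $s_1>1$, so that Proposition \ref{prop:ExcessRegularity} applies. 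Theorem \ref{thm:regularity} then verifies Assumption \ref{assump:F} with
\[
\nu = \bigl(\eta-(s^*-\bar s(p)+d/p)\bigr)\wedge 5/4 = (\eta - 1/p)\wedge 5/4,
\]
and by taking $\eta$ close to $1$ and $p$ large this can be made arbitrarily close to (but strictly less than) $1$.

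For part (i), Theorem \ref{thm:rates}(i) then yields consistency with the stated rate $\widehat\vartheta_\delta = \vartheta + \cO_\bP(\delta^\nu)$ for any $\nu<1$, since the $\cO_\bP(\delta)$ summand from the martingale part in \eqref{eq:thMain1-cons} dominates neither term when $\nu<1$.

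For part (ii), the general framework fails because $\nu>1$ is unavailable, so I would revisit the error decomposition \eqref{eq:augm_decomp} directly. Under the additional hypotheses from Theorem \ref{thm:Burgers_CLT} (the form $B=(-\Delta)^{-\gamma}$ and the refined condition on the kernel), that theorem provides $\delta^{-1}\cI_\delta^{-1}R_\delta = o_\bP(1)$, eliminating the nonlinear bias at the CLT scale. The martingale term $\delta^{-1}\cI_\delta^{-1}M_\delta$ is then handled exactly as in the proof of Theorem \ref{thm:rates}(ii): Proposition \ref{prop:pseudo_Fisher_info}(i)--(iii) together with the continuous martingale CLT give $M_\delta/\E[\bar\cI_\delta]^{1/2}\xrightarrow{d}\cN(0,1)$ and $\delta^{1/2}\E[\bar\cI_\delta]^{1/2}\to(\vartheta\Sigma)^{-1/2}$, and Slutsky's lemma produces the limit $\cN(0,\vartheta\Sigma)$. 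The genuine obstacle, which is entirely deferred to Theorem \ref{thm:Burgers_CLT}, is the sharper control of $R_\delta$ at order $o_\bP(\delta)$; everything else is an essentially mechanical application of the general framework once the parameters $\alpha=1$, $s^*=\bar s(p)=1/2+2\gamma$, and $\eta<1$ are identified.
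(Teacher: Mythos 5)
Your proposal is correct and follows essentially the same route as the paper: Lemma \ref{lemLpReactionDiffusion} with $\alpha=1$ gives Assumption \ref{assump:A} for $\eta<1$, $s_{\mathrm{gap}}=0$ in $d=1$ makes Theorem \ref{thm:regularity} yield $\nu$ arbitrarily close to $1$ for part (i), and part (ii) replaces the unavailable bias bound by the direct estimate $\delta^{-1}\cI_\delta^{-1}R_\delta=o_{\P}(1)$ from Theorem \ref{thm:Burgers_CLT} while reusing the martingale CLT from the proof of Theorem \ref{thm:rates}(ii). This matches the paper's argument, which is exactly the discussion preceding the theorem combined with Theorems \ref{thm:rates}, \ref{thm:regularity} and \ref{thm:Burgers_CLT}.
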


\section{Numerical examples}\label{sec:Discussion-and-numerical} 
In this section we illustrate the theoretical results by some simple numerical experiments. A detailed numerical analysis is beyond the scope of this manuscript. 

Let $T=1$ and $\Lambda=(0,1)$ and consider the stochastic Allen-Cahn equation, a stochastic reaction diffusion equation of the form
\begin{align}\label{eq:AllenCahn}
dX(t)=(\vartheta\Delta X(t)+10X(t)(1-X(t))(X(t)-0.5))\dif t+\sigma \dif W(t),
\end{align}
with zero boundary conditions, $\vartheta=0.01$, $\sigma=0.05$, and driven by space-time white noise, i.e. $B=\sigma I$ and $\gamma=0$. The initial value $X_0$ is assumed to be smooth, equal to $1$ on $[0.3,0.7]$ and vanishing outside of $[0.3-\epsilon,0.7+\epsilon]$ for a small $\epsilon>0$. 

To approximate the solution of \eqref{eq:AllenCahn} we use a finite difference scheme, cf. \cite[Example 10.31]{Lord2014}, with respect to a regular time-space grid $\{(t_{k},y_{j}):t_{k}=k/N,y_{j}=j/M,k=0,\dots,N,j=0,\dots,M\}$, with $M=500$, $N=10^{5}$. The heat map of a typical realization of the solution is presented in Figure~\ref{fig:1} (top left).  We see that the bistable nonlinearity in \eqref{eq:AllenCahn} leads to a persistent phase separation by the solution trajectory, up to stochastic fluctuations.

\begin{figure}
	\includegraphics[scale=0.5]{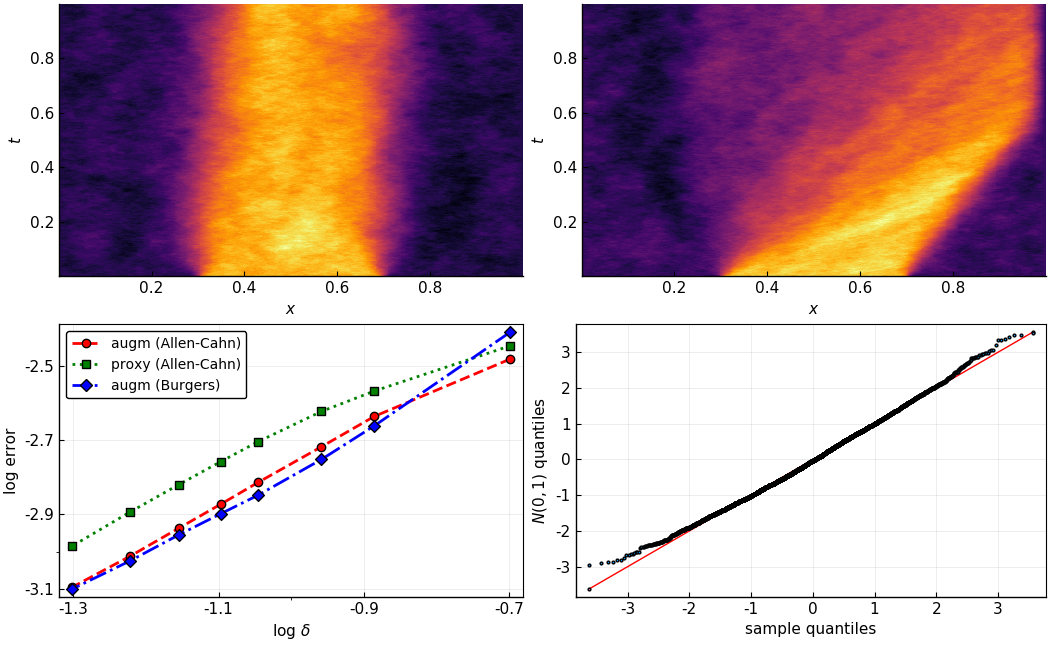}
	\caption{\label{fig:Estimation} Top row: heat maps for typical realizations of the stochastic Allen-Cahn (left panel) and Burgers (right panel) equations. Bottom row:  $\log_{10}$-$\log_{10}$ plot of root mean squared estimation errors at $x_0=0.4$ (left panel) and Normal Q-Q plot for Allen-Cahn at $x_0=0.4$, $\delta=0.05$ (right panel).}
	\label{fig:1}
\end{figure}

Consider the kernel $K=\widetilde{K}=\varphi'''$ from \cite{AltmeyerReiss2019} with a smooth bump function
\[
\varphi(x) :=\exp(-\frac{12}{1-x^{2}}),\,\,\,\,x\in(-1,1).
\]
For $\delta\in [0.05,0.2]$ and $x_{0}\in(0,1)$
we then obtain approximate local measurements $X_{\delta,x_{0}}$,
$X_{\delta,x_{0}}^{\Delta}$, from which the augmented MLE $\widehat{\vartheta}_{\delta}$ is computed. For $x_{0}<\delta$ set $K_{\delta,x_{0}}:=K_{\delta,\delta}$ and for $x_{0}>\delta$ set $K_{\delta,x_{0}}:=K_{\delta,1-\delta}$. Note that the theoretical asymptotic variance $\vartheta \Sigma$ of Theorem \ref{thm:reactDiff} is available by Example \ref{ex:concreteCLT}. 

Using $5000$ Monte-Carlo runs, in Figure \ref{fig:1} (bottom right) we display a Normal Q-Q plot for the approximate distribution of $(\vartheta \Sigma)^{-1/2}\delta^{-1}(\widehat{\vartheta}_{\delta}-\vartheta)$ obtained at $x_{0}=0.4$ and for $\delta=0.05$. Clearly, the sample distribution is very close to the theoretical asymptotic distribution. Moreover, in Figure \ref{fig:1} 
(bottom left)
we present a $\log_{10}$-$\log_{10}$ plot of root mean squared estimation errors for $\delta\rightarrow0$, demonstrating that the rate of convergence indeed approaches $\delta$ as the resolution tends to zero. For comparison, we also include results for another estimator - the proxy MLE introduced in \cite{AltmeyerReiss2019} - that is based 
on observing only $X_{\delta,x_0}$, with the same $K$ as above. 
We note that the performance of the proxy MLE  is comparable to the augmented MLE, which suggests that results similar to Theorem \ref{thm:rates} may hold true for the proxy MLE. 

At last, we consider the same steps for the stochastic Burgers equation \eqref{eq:Burgers} with the same $\vartheta = 0.01$ and  driven by the same noise as in \eqref{eq:AllenCahn}. The heat map for a typical realization is given in Figure~\ref{fig:1} (top right). Notice that the interface of the traveling wave therein is smooth as the equation is viscous (meaning that $\theta>0$). We remark that the finite difference scheme has to be adjusted, see \cite{HairerVoss2011} for details, but this adjustment does not affect the estimation of $\vartheta$, as it is of zero differential order and therefore negligible compared to the Laplacian under scaling with $\delta$; cf. Lemma~\ref{lem:scaling}.  The Normal Q-Q plot remains essentially unchanged (not shown). 
The root mean squared estimation errors for small $\delta$  are displayed in Figure~\ref{fig:1} (bottom left, diamond marked line), which essentially coincide with the one corresponding to the Allen-Cahn equation. Although Theorem~\ref{thm:Burgers_CLT} was proved under the additional assumption $\gamma > 1/4$, the numerical results suggest that the asymptotic results for $\widehat{\vartheta}_\delta$ remain valid  under weaker assumptions on the noise, in particular for $\gamma=0$. 

Similar results were obtained for other sets of parameters. The numerical simulations were performed using Julia and the source code can be obtained from the authors upon request. 

\section{Concluding remarks}\label{sec:discussion}
We showed that the augmented MLE provides a unified approach for estimating the diffusivity coefficient from local measurements for nonlinear SPDEs that can be used in modeling a large variety of dynamical phenomena. Remarkably, the proposed estimator does not depend on the specific parts of the nonlinearity $F$, the noise operator $B$ or the initial condition. 
Practically speaking, this estimator is easy to implement, and it was successfully applied recently to experimental data in cell biology \cite{altmeyer2020parameter} showing promising results in comparison to some more traditional fitting methods.

In contrast to the spectral approach to statistical inference for SPDEs, that is inherently based on global spatial measurements and global $L^2$-regularity properties of the solution, the augmented MLE uses only spatially localized measurements $X_{\delta,x_0}$ and $X_{\delta,x_0}^\Delta$, and can exploit the local regularity of the solution. On the other hand, the minimax-lower bound in Theorem \ref{thm:minimax} assumes only observation of $X_{\delta,x_0}$, and we conjecture that the ideas  developed here can be extended to the proxy MLE of \cite{AltmeyerReiss2019}.  

We have mostly focused on equations with nonlinearities satisfying the growth condition \ref{assump:A}, primarily because this allows for a straightforward regularity analysis. Assumption \ref{assump:F}, which is implied by \ref{assump:A}, holds likely in much more general situations, for example for non-Markovian dynamics (cf. \cite{Pasemann2020}) or for SPDEs with multiplicative noise. In addition, the augmented MLE is well-defined assuming only a weak solution of \eqref{eq:SPDE}. This suggests that the obtained results on statistical inference may hold also for SPDEs with rougher noise (e.g. space-time white noise).  



\begin{appendix}

\section{Proofs of the main results}\label{append:A}

Here we give the proof of our main results, further technical statements are formulated and proven in the supplement.
From now on, without loss of generality, we assume that $x_{0}=0$,
or formally we replace $\Lambda$ by $\Lambda-x_{0}$. To ease the
notations, we also remove $x_{0}$ whenever necessary, for example
by writing $\Lambda_{\delta}, \ z_{\delta}, \ \Delta_{\delta}, \ S_{\vartheta,\delta}$ instead of
$\Lambda_{\delta,x_{0}}$, $\ z_{\delta,x_{0}}$, $\Delta_{\delta,x_0}$, $S_{\vartheta,\delta,x_0}$. We further write $\Delta = \Delta_1$, $S(t) = S_{1,1}(t)$, $S_{\delta}(t) = S_{1,\delta}(t)$. Note that $S_{\vartheta,\delta}(t)=S_{\delta}(\vartheta t)$.  As usual, we will denote by $C$ a generic positive constant, which may change from line to line and depend on $T$, but not on $\delta$. In addition, $a\lesssim b$ means $a\leq Cb$ for $a,b\in\R$. If not mentioned otherwise, all limits are taken as $\delta\rightarrow0$. 

\subsection{On semigroups and the fractional Laplacian}\label{append:Semigroups}

The Laplacian and its semigroup satisfy a certain scaling property with respect
to localized functions. The proof is straightforward; see \cite[Lemma~ 3.1]{AltmeyerReiss2019}
for details when $p=2$, the general case is analogous. 

\begin{lemma}\label{lem:scaling}For $2\leq p <\infty$, $\delta>0$:
	\begin{enumerate}
		\item If $z\in W^{2,p}(\Lambda_{\delta})$,
		then $\Delta z_{\delta}=\delta^{-2}(\Delta_{\delta}z)_{\delta}$. 
		\item If $z\in L^{p}(\Lambda_{\delta})$, then $S(t)z_{\delta}=(S_{\delta}(t\delta^{-2})z)_{\delta}$,
		$t\geq0$.
	\end{enumerate}
\end{lemma}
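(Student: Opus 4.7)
The plan is to reduce both claims to the change of variables $y = \delta^{-1}(x-x_0)$, which realizes a bijection $\Lambda \leftrightarrow \Lambda_\delta$ under which the two Dirichlet Laplacians correspond.

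For (i), I would first verify the identity on smooth compactly supported test functions $z \in C_c^\infty(\Lambda_\delta)$ by a direct chain-rule computation: writing $y = \delta^{-1}(x-x_0)$, one has $\partial_{x_i} z_\delta(x) = \delta^{-d/2-1}(\partial_i z)(y)$, and differentiating again yields
\begin{equation*}
\Delta z_\delta(x) = \delta^{-d/2-2} (\Delta z)(y) = \delta^{-2} (\Delta_\delta z)_\delta(x).
\end{equation*}
Since $C_c^\infty(\Lambda_\delta)$ is dense in $W^{2,p}(\Lambda_\delta)$ (which incorporates the zero-trace boundary condition) and the rescaling $z \mapsto z_\delta$ scales $L^p$-norms by the explicit factor $\delta^{d/p - d/2}$, both sides of the displayed identity are continuous from $W^{2,p}(\Lambda_\delta)$ into $L^p(\Lambda)$, so the identity extends by continuity to all of $W^{2,p}(\Lambda_\delta)$.

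For (ii), I would recast (i) as the intertwining relation $\Delta R_\delta = \delta^{-2} R_\delta \Delta_\delta$ on $D(\Delta_\delta)$, where $R_\delta : z \mapsto z_\delta$ is the bounded rescaling map from $L^p(\Lambda_\delta)$ to $L^p(\Lambda)$. For a fixed $z \in D(\Delta_\delta)$, define $u(t) := R_\delta S_\delta(t\delta^{-2}) z$. Differentiating in $t$ and invoking (i),
\begin{equation*}
\tfrac{d}{dt} u(t) = \delta^{-2} R_\delta \Delta_\delta S_\delta(t\delta^{-2}) z = \Delta R_\delta S_\delta(t\delta^{-2}) z = \Delta u(t),
\end{equation*}
with $u(0) = z_\delta$. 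By uniqueness for the abstract Cauchy problem associated with the analytic semigroup $(S(t))_{t\geq 0}$ on $L^p(\Lambda)$, this forces $u(t) = S(t) z_\delta$, which is exactly the claimed identity on $D(\Delta_\delta)$. Since $D(\Delta_\delta)$ is dense in $L^p(\Lambda_\delta)$ and both $S(t)R_\delta$ and $R_\delta S_\delta(t\delta^{-2})$ are bounded on $L^p(\Lambda_\delta) \to L^p(\Lambda)$, the identity extends by density to every $z \in L^p(\Lambda_\delta)$.

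The main obstacle is essentially bookkeeping: one must verify that the Dirichlet boundary condition on $\partial\Lambda_\delta$ transfers to the correct boundary condition on $\partial\Lambda$ under $R_\delta$ (so that $R_\delta$ maps $D(\Delta_\delta)$ into $D(\Delta)$), and that the scaling factors in $t$, in space, and in the function values balance correctly. Both are immediate from the definitions of $z_\delta$ and $\Lambda_\delta$, so no genuine difficulty arises beyond the chain rule and uniqueness of the heat semigroup; this is why the authors only refer to the $p=2$ argument in \cite{AltmeyerReiss2019}, as nothing in the argument is sensitive to the value of $p \in [2,\infty)$.
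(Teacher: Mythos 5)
Your overall strategy --- chain rule under the dilation for (i), then uniqueness for the abstract Cauchy problem for (ii) --- is the standard one and is what the paper intends (it gives no proof of its own, deferring to \cite[Lemma 3.1]{AltmeyerReiss2019} for $p=2$). Part (ii) is fine as written, granted (i). The one genuine flaw is the density step in (i): $C_c^\infty(\Lambda_\delta)$ is \emph{not} dense in $W^{2,p}(\Lambda_\delta)$, which in this paper denotes the domain of the Dirichlet Laplacian $\Delta_\delta$, normed by $\norm{\Delta_\delta\,\cdot\,}_{L^p(\Lambda_\delta)}$. On $C_c^\infty$ this graph norm is equivalent to the full classical $W^{2,p}$-norm (Calder\'on--Zygmund for compactly supported functions plus invertibility of the Dirichlet Laplacian), so the graph-norm closure of $C_c^\infty(\Lambda_\delta)$ is $W_0^{2,p}(\Lambda_\delta)$, i.e.\ functions whose gradient also vanishes at the boundary. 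This is strictly smaller than the Dirichlet domain $W^{2,p}\cap W_0^{1,p}$: already the first eigenfunction $\sin(\pi y)$ on an interval has nonzero normal derivative at the endpoints and cannot be approximated in graph norm by compactly supported smooth functions. (Relatedly, graph-norm continuity of the left-hand side $z\mapsto\Delta z_\delta$ is essentially the assertion being proved, so the ``both sides are continuous'' part of the extension argument is also somewhat circular.)

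The repair is immediate and makes the density detour unnecessary: the dilation $x\mapsto\delta^{-1}(x-x_0)$ is an affine diffeomorphism between $\Lambda$ and $\Lambda_\delta$, so $z\mapsto z_\delta$ maps the classical space $W^{2,p}(\Lambda_\delta)\cap W_0^{1,p}(\Lambda_\delta)$ bijectively onto $W^{2,p}(\Lambda)\cap W_0^{1,p}(\Lambda)$ (Sobolev regularity and the zero trace are preserved), and weak derivatives transform under affine changes of variables exactly as classical ones do. Hence $\Delta z_\delta=\delta^{-2}(\Delta_\delta z)_\delta$ holds directly, as an identity of $L^p$-functions, for every $z$ in the domain of $\Delta_\delta$. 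With (i) so established, your argument for (ii) goes through unchanged: $R_\delta$ maps $D(\Delta_\delta)$ into $D(\Delta)$, the intertwining shows $u(t)=R_\delta S_\delta(t\delta^{-2})z$ solves the Cauchy problem for $\Delta$ with initial datum $z_\delta$, uniqueness identifies it with $S(t)z_\delta$, and the final extension from $D(\Delta_\delta)$ to $L^p(\Lambda_\delta)$ is legitimate because domains of generators \emph{are} dense in the underlying space and both operators involved are bounded.
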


In order to extend the scaling property to the fractional Laplacian, recall (from \cite[Chapter~2.6]{Pazy:1983us}, for example) that the fractional Laplacian can be represented as 
\begin{align}
(-\Delta)^{-h} & =\frac{1}{\Gamma(h)}\int_{0}^{\infty}t^{h-1}S(t)\dif t,\quad h>0,\label{eq:neg_frac_laplace_formula}\\
(-\Delta)^{h} & =-\frac{\sin\pi h}{\pi}\int_{0}^{\infty}t^{h-1}\Delta(t-\Delta)^{-1}\,\dif t\nonumber \\
& =-\Gamma(h)\frac{\sin\pi h}{\pi}\int_{0}^{\infty}(t')^{-h}\Delta S(t')\,\dif t',\quad0<h<1,\label{eq:frac_Laplace_formula}
\end{align}
where $\Gamma$ is the gamma function and using the resolvent
equality $(t-\Delta)^{-1}=\int_{0}^{\infty}e^{-tt'}S(t')\dif t'$
in the last line. Note that these formulas also apply to the fractional Laplacian on $\R^d$.

\begin{lemma} \label{lem:delta_neg_frac_scaling} Let $2\leq p < \infty$, $\delta>0$.
	If $h>0$ and $z\in L^{p}(\Lambda_{\delta})$ (or $h\leq0$ and $z\in W^{-2h,p}(\Lambda_{\delta})$),
	then $(-\Delta)^{-h}z_{\delta}=\delta^{2h}((-\Delta_{\delta})^{-h}z)_{\delta}$. 
	
\end{lemma}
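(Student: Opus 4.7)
The plan is to first establish the identity for $h>0$ using the integral representation \eqref{eq:neg_frac_laplace_formula}, and then reduce the case $h<0$ to the positive case by an inversion argument (the case $h=0$ being trivial). Lemma~\ref{lem:scaling}(ii) is the crucial ingredient that supplies the scaling under the semigroup.

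\textbf{Step 1 (Case $h>0$).} Fix $z\in L^{p}(\Lambda_{\delta})$ and apply \eqref{eq:neg_frac_laplace_formula} to $z_\delta\in L^{p}(\R^{d})$. Using Lemma~\ref{lem:scaling}(ii), we have $S(t)z_{\delta}=(S_{\delta}(t\delta^{-2})z)_{\delta}$ for each $t>0$. Since $v\mapsto v_{\delta}$ is a bounded linear map on $L^{p}(\R^{d})$, we may pull it outside the Bochner integral, so that
\begin{equation*}
(-\Delta)^{-h}z_{\delta}=\frac{1}{\Gamma(h)}\int_{0}^{\infty}t^{h-1}\bigl(S_{\delta}(t\delta^{-2})z\bigr)_{\delta}\,\mathrm{d}t=\left(\frac{1}{\Gamma(h)}\int_{0}^{\infty}t^{h-1}S_{\delta}(t\delta^{-2})z\,\mathrm{d}t\right)_{\!\delta}.
\end{equation*}
Substituting $s=t\delta^{-2}$ (so $\mathrm{d}t=\delta^{2}\mathrm{d}s$, $t^{h-1}=\delta^{2(h-1)}s^{h-1}$) and applying \eqref{eq:neg_frac_laplace_formula} on $\Lambda_{\delta}$ to the operator $(-\Delta_{\delta})^{-h}$ yields
\begin{equation*}
(-\Delta)^{-h}z_{\delta}=\delta^{2h}\left(\frac{1}{\Gamma(h)}\int_{0}^{\infty}s^{h-1}S_{\delta}(s)z\,\mathrm{d}s\right)_{\!\delta}=\delta^{2h}\bigl((-\Delta_{\delta})^{-h}z\bigr)_{\delta}.
\end{equation*}
Justification of Fubini/Bochner requires the integrand to be absolutely integrable; this follows from the exponential decay of $S_\delta(t)$ on $L^p(\Lambda_\delta)$ (since $\Lambda_\delta$ is bounded) for large $t$ together with the integrable singularity $t^{h-1}$ near $0$.

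\textbf{Step 2 (Case $h<0$).} Write $h'=-h>0$ and let $z\in W^{2h',p}(\Lambda_{\delta})=\dom((-\Delta_{\delta})^{h'})$. Set $w:=(-\Delta_{\delta})^{h'}z\in L^{p}(\Lambda_{\delta})$, so that $z=(-\Delta_{\delta})^{-h'}w$. By Step~1 applied to $w$,
\begin{equation*}
z_{\delta}=\bigl((-\Delta_{\delta})^{-h'}w\bigr)_{\delta}=\delta^{-2h'}(-\Delta)^{-h'}w_{\delta}.
\end{equation*}
Applying $(-\Delta)^{h'}$ on both sides (note $w_{\delta}\in L^{p}(\R^{d})$, hence in the range of $(-\Delta)^{-h'}$), we deduce $(-\Delta)^{h'}z_{\delta}=\delta^{-2h'}w_{\delta}=\delta^{-2h'}\bigl((-\Delta_{\delta})^{h'}z\bigr)_{\delta}$, which is exactly the stated identity with $h=-h'$.

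\textbf{Main obstacle.} The technical care required is entirely in Step~1, namely justifying the interchange of the $(\cdot)_{\delta}$-operation with the Bochner integral and the absolute convergence of the integral; the closedness of the domain on which the identity is initially established (on, say, $C_{c}^{\infty}(\Lambda_{\delta})$) and its extension to $L^{p}(\Lambda_{\delta})$ by continuity is routine. Step~2 is then a short algebraic manipulation using the inverse relationship between $(-\Delta_{\delta})^{h'}$ and $(-\Delta_{\delta})^{-h'}$, and so it poses no new difficulty.
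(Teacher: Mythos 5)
Your proposal is correct. Step~1 coincides with the paper's argument for $h>0$: both use the representation \eqref{eq:neg_frac_laplace_formula}, the semigroup scaling from Lemma~\ref{lem:scaling}(ii), and the substitution $s=t\delta^{-2}$. For $h\le 0$ you take a genuinely different route. The paper restricts to smooth $z$ supported in $\Lambda_{\delta}$, splits off the integer part of $\tilde h=-h$ via Lemma~\ref{lem:scaling}(i), and treats the remaining fractional part $0<\tilde h<1$ with the Balakrishnan-type formula \eqref{eq:frac_Laplace_formula}, again scaling the semigroup inside the integral. You instead invert: writing $z=(-\Delta_{\delta})^{-h'}w$ with $w=(-\Delta_{\delta})^{h'}z\in L^{p}(\Lambda_{\delta})$, you apply the already-proved positive-power identity to $w$ and then hit both sides with $(-\Delta)^{h'}$, using that $(-\Delta)^{h'}(-\Delta)^{-h'}=I$ on $L^{p}(\Lambda)$ and that the range of $(-\Delta)^{-h'}$ is exactly $\dom((-\Delta)^{h'})$. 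This is shorter and avoids both the density argument and the second integral representation; the paper's version, on the other hand, exercises formula \eqref{eq:frac_Laplace_formula} on localized functions, a computation that is reused in the same spirit in Lemma~\ref{lem:delta_frac_bound_conv}(i). Your parenthetical justification in Step~2 (``$w_{\delta}\in L^{p}(\R^{d})$, hence in the range of $(-\Delta)^{-h'}$'') is slightly misphrased --- the relevant point is that $(-\Delta)^{-h'}w_{\delta}$ lies in $\dom((-\Delta)^{h'})$, which is immediate --- but this is cosmetic and does not affect the argument.
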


\begin{proof} Let first $h>0$ and $z\in L^{p}(\Lambda_{\delta})$.
	In view of \eqref{eq:neg_frac_laplace_formula} and Lemma \ref{lem:scaling}(ii)
	we have 
	\begin{align*}
	(-\Delta)^{-h}z_{\delta} & =\left(\frac{1}{\Gamma(h)}\int_{0}^{\infty}t^{h-1}S_{\delta}(t\delta^{-2})z\,\dif t\right)_{\delta}\\
	\  & =\delta^{2h}\left(\frac{1}{\Gamma(h)}\int_{0}^{\infty}t^{h-1}S_{\delta}(t)z\,\dif t\right)_{\delta}=\delta^{2h}\left((-\Delta_{\delta})^{-h}z\right)_{\delta}.
	\end{align*}
	For $h\leq0$ it is enough to consider smooth $z$ supported in $\Lambda_{\delta}$.
	Set $\tilde{h}=-h\geq 0$. By Lemma \ref{lem:scaling}(i) and $(-\Delta)^{\tilde{h}}=(-\Delta)^{\tilde{h}-\lfloor\tilde{h}\rfloor}(-\Delta)^{\lfloor\tilde{h}\rfloor}$,
	the problem reduces to $0\leq\tilde{h}\leq1$. The result follows
	using \eqref{eq:frac_Laplace_formula} and Lemma \ref{lem:scaling}(i,ii)
	from 
	\begin{align*}
	(-\Delta)^{\tilde{h}}z_{\delta} & =\delta^{-2\tilde{h}}\left(-\Gamma(\tilde{h})\frac{\sin\pi\tilde{h}}{\pi}\int_{0}^{\infty}(t')^{-\tilde{h}}\Delta_{\delta}S_{\delta}(t')z\,\dif t'\right)_{\delta}=\delta^{-2\tilde{h}}((-\Delta_{\delta})^{\tilde{h}}z)_{\delta}.
	\end{align*}\end{proof}

The proof of this lemma suggests that convergence of the operators
$(-\Delta_{\delta})^{h}$ can be obtained from the underlying semigroup.

\begin{proposition} \label{prop:semigroup_props}Let $2\leq p<\infty$, $t>0$. Then: 
	\begin{enumerate}
		\item For any $h\geq 0$ there exists a universal constant $M_{h}<\infty$ such
		that $\sup_{t>0, 0 < \delta \leq 1}\norm{(-t\Delta_{\delta})^{h}S_{\delta}(t)}_{L^{p}(\Lambda_{\delta})}\le M_{h}$. 
		\item If $z\in L^{p}(\R^{d})$, then $S_{\delta}(t)(z|_{\Lambda_{\delta}})\rightarrow e^{t\Delta_{0}}z$
		in $L^{p}(\R^{d})$ as $\delta\rightarrow0$. 
	\end{enumerate}
\end{proposition}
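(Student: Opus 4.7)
For part (i), my plan is to exploit uniform sectoriality of $\Delta_\delta$ on $L^p(\Lambda_\delta)$. The starting point is the standard Gaussian upper bound on the Dirichlet heat kernel, $0 \leq p_{\Lambda_\delta}(t,x,y) \leq (4\pi t)^{-d/2}\exp(-|x-y|^2/(4t))$, which follows from the parabolic maximum principle by comparison with the free heat kernel on $\R^d$. Young's convolution inequality then gives $\|S_\delta(t)\|_{L^p(\Lambda_\delta)\to L^p(\Lambda_\delta)}\leq 1$ for every $p\in[1,\infty]$, uniformly in $t>0$ and $\delta>0$. Laplace-transforming yields the half-plane resolvent bound $\|(\lambda-\Delta_\delta)^{-1}\|_{L^p\to L^p}\leq 1/\mathrm{Re}\,\lambda$; self-adjointness on $L^2$ together with Stein interpolation upgrade this to analyticity on a sector of angle depending only on $p$ and $d$. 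With $-\Delta_\delta$ uniformly sectorial, the classical analytic-semigroup estimate $\|(-tA)^h e^{-tA}\|\leq M_h$ (cf.\ Pazy, Ch.~2.6) applied to $A=-\Delta_\delta$ delivers the uniform bound claimed.

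For part (ii), my plan is a density argument combined with heat-kernel convergence. Since by convention $x_0=0\in\Lambda$, there is $r>0$ with $B(0,r)\subset\Lambda$, so $\Lambda_\delta\supset B(0,r/\delta)$ exhausts $\R^d$ monotonically as $\delta\to 0$. I first handle $z\in C_c^\infty(\R^d)$: for $\delta$ sufficiently small $\supp z\subset\Lambda_\delta$, so $S_\delta(t)(z|_{\Lambda_\delta})(x)=\int p_{\Lambda_\delta}(t,x,y)z(y)\,dy$, extended by zero outside $\Lambda_\delta$. Domain monotonicity yields the pointwise monotone convergence $p_{\Lambda_\delta}(t,x,y)\uparrow p_{\R^d}(t,x,y)$, while the Gaussian majorant from part (i) supplies an integrable envelope. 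Dominated convergence then produces pointwise convergence $S_\delta(t)(z|_{\Lambda_\delta})(x)\to (e^{t\Delta_0}z)(x)$, upgraded to $L^p(\R^d)$-convergence by a second dominated-convergence step with an $L^p$-Gaussian envelope. Extension to arbitrary $z\in L^p(\R^d)$ is a standard $\varepsilon/3$ argument using the uniform bound $\|S_\delta(t)\|_{L^p\to L^p}\leq 1$ from part (i).

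The main obstacle is ensuring uniformity in $\delta$ throughout part (i): the sectorial constants for $-\Delta_\delta$ must not degenerate as $\delta\to 0$, i.e.\ as the domain $\Lambda_\delta$ blows up. This rests on the observation that the sectorial data of the Dirichlet Laplacian on any open subset of $\R^d$ can be chosen universally, thanks to the domain-free Gaussian majorant. Everything else in both parts then reduces to classical semigroup theory. As a minor aside, the factor $\vartheta$ in the limit $e^{t\vartheta\Delta_0}$ in (ii) appears inconsistent with the appendix's convention $S_\delta(t)=S_{1,\delta}(t)$, and I would read it as a typographical slip with the intended limit being $e^{t\Delta_0}z$.
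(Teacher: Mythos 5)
Your proof is correct in substance but follows a genuinely different route from the paper's. For (i), the paper does not re-derive sectoriality at all: it invokes the classical bound $\sup_{t>0}\norm{(-t\Delta)^{h}S(t)}_{L^p(\Lambda)}\le M_h$ only on the \emph{fixed} domain $\Lambda$ (i.e.\ $\delta=1$), and then transfers it to every $\delta$ via the exact dilation identities $S(t)z_{\delta}=(S_{\delta}(t\delta^{-2})z)_{\delta}$ and $\norm{z_\delta}_{0,p}=\delta^{-d(1/2-1/p)}\norm{z}_{L^p(\Lambda_\delta)}$, so uniformity in $\delta$ is free. Your argument instead establishes domain-independent sectorial constants from the Gaussian heat-kernel majorant, $L^p$-contractivity and Stein interpolation off the self-adjoint $L^2$ case. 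Both work; the paper's version is shorter and reuses scaling lemmas needed elsewhere, while yours is self-contained, makes the domain-independence of $M_h$ transparent, and would apply to families of open sets with no dilation structure. For (ii) the paper simply cites \cite[Proposition 3.5(ii)]{AltmeyerReiss2019} with $L^2$ replaced by $L^p$, so your kernel-convergence argument is a genuine replacement. One small repair there: the sets $\Lambda_\delta=\delta^{-1}(\Lambda-x_0)$ need not be nested unless $\Lambda$ is star-shaped about $x_0$, so you should not claim monotone convergence of $p_{\Lambda_\delta}$ directly; instead sandwich $p_{B(0,r/\delta)}(t,x,y)\le p_{\Lambda_\delta}(t,x,y)\le p_{\R^d}(t,x,y)$ and use monotone convergence along the balls. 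Your reading of the $\vartheta$ in the limit as a notational slip is consistent with the appendix convention $S_\delta(t)=S_{1,\delta}(t)$; the factor $\vartheta$ enters only when the result is applied to $S_{\vartheta,\delta}(t)=S_\delta(\vartheta t)$.
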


\begin{proof} For $\delta=1$, (i) is a well-known result due to
	the spectrum of the Laplacian being bounded away from zero; cf. \cite[Theorem 2.6.13]{Pazy:1983us}.
	Recall the scaling properties from Lemmas \ref{lem:scaling}(ii) and \ref{lem:delta_neg_frac_scaling}.
	Using $\delta^{d(1/2-1/p)}\norm{z_{\delta}}_{0,p}=\norm z_{L^{p}(\Lambda_{\delta})}$
	for $z\in L^{p}(\Lambda_{\delta})$, we then have 
	\begin{align*}
	& \norm{(-t\Delta_{\delta})^{h}S_{\delta}(t)}_{L^{p}(\Lambda_{\delta})}  =\sup_{\norm z_{L^{p}(\Lambda_{\delta})}=1}\norm{(-(t\delta^{2})\delta^{-2}\Delta_{\delta})^{h}S_{\delta}(t)z}_{L^{p}(\Lambda_{\delta})}\\
	& \quad =\sup_{\norm z_{L^{p}(\Lambda_{\delta})}=1}\norm{((-(t\delta^{2})\delta^{-2}\Delta_{\delta})^{h}S_{\delta}(t)\delta^{d(\frac{1}{2}-\frac{1}{p})}z)_{\delta}}_{0,p}\\
	& \quad =\sup_{\delta^{d(1/2-1/p)}\norm{z_{\delta}}_{0,p}=1}\delta^{d(\frac{1}{2}-\frac{1}{p})}\norm{(-t\delta^{2}\Delta)^{h}S(t\delta^{2})z_{\delta}}_{0,p}\leq M_{h},
	\end{align*}
by applying the statement in (i) for $\delta=1$ in the last inequality. This proves (i). Part (ii) follows from Proposition 3.5(ii) of \cite{AltmeyerReiss2019} (with $A_{\vartheta,\delta,0}^{*}=\vartheta\Delta_{\delta}$) by replacing $L^2(\R^d)$ in the proof with $L^p(\R^d)$.
\end{proof}

\begin{lemma} \label{lem:delta_frac_bound_conv} Let $2\leq p<\infty$,
	$h\geq0$ and let $z\in L^{p}(\R^{d})$ have compact support in
	$\Lambda_{\delta'}$ for some $\delta'>0$. Then we have for $\delta\leq \delta'$:
\begin{enumerate}
\item If $z\in W^{2\lceil h\rceil,p}(\Lambda_{\delta'})$, then $(-\Delta_{\delta})^{h}z\rightarrow(-\Delta_{0})^{h}z$
	in $L^{p}(\R^{d})$ as $\delta\rightarrow0$. 
\item $\sup_{0<\delta\leq 1}\norm{(-\Delta_{\delta})^{-h}z}_{L^{p}(\Lambda_{\delta})}\lesssim\max(\norm z_{L^{1}(\R^{d})},\norm z_{L^{p}(\R^{d})})$, $h<\frac{d}{2}(1-\frac{1}{p})$.
\end{enumerate}
 
\end{lemma}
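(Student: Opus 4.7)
The plan is to use the semigroup representations~\eqref{eq:neg_frac_laplace_formula} and~\eqref{eq:frac_Laplace_formula} together with Proposition~\ref{prop:semigroup_props}, exploiting that $z$ is compactly supported in $\Lambda_{\delta'}$ and hence lies at a strictly positive distance from $\partial \Lambda_\delta$ whenever $\delta \le \delta'$. For part (i), I first reduce to the fractional range $0 < h' < 1$ by writing $h = n + h'$ with $n = \lfloor h\rfloor \in \N_0$ and $h' \in [0, 1)$: since $\mathrm{supp}(z) \subset \Lambda_{\delta'}$ sits well inside $\Lambda_\delta$, the Dirichlet operator $(-\Delta_\delta)^n$ acts on $z$ as the classical iterated Laplacian, so that $w := (-\Delta_\delta)^n z = (-\Delta_0)^n z$ is independent of $\delta$ and still compactly supported in $\Lambda_{\delta'}$. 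It then suffices to show $(-\Delta_\delta)^{h'} w \to (-\Delta_0)^{h'} w$ in $L^p(\R^d)$ for $0 < h' < 1$.

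For this fractional step I invoke~\eqref{eq:frac_Laplace_formula} and, using $\Delta_\delta w = \Delta_0 w$ for $\delta \le \delta'$, rewrite the integrand as $t^{-h'} S_\delta(t)\Delta_0 w$. Proposition~\ref{prop:semigroup_props}(ii) supplies the pointwise convergence $S_\delta(t)\Delta_0 w \to e^{t\Delta_0}\Delta_0 w$ in $L^p(\R^d)$ for each $t > 0$, so dominated convergence will finish the argument once a $\delta$-uniform integrable majorant is in hand. I construct one by splitting at $t = 1$: on $(0, 1]$, $L^p$-contractivity of $S_\delta$ yields the bound $t^{-h'}\|\Delta_0 w\|_{L^p(\R^d)}$, integrable near $0$ since $h' < 1$; on $[1, \infty)$, I revert to the representation $t^{-h'}\Delta_\delta S_\delta(t) w$ and apply Proposition~\ref{prop:semigroup_props}(i) with $h = 1$ to obtain $\|\Delta_\delta S_\delta(t) w\|_{L^p} \le M_1 t^{-1}\|w\|_{L^p}$, producing the majorant $M_1 t^{-h'-1}\|w\|_{L^p}$, integrable at infinity since $h' > 0$.

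For part (ii), I use representation~\eqref{eq:neg_frac_laplace_formula} together with the pointwise domination $|S_\delta(t) z| \le e^{t\Delta_0}|z|$ of the Dirichlet heat semigroup by the free one (after extending $z$ by zero to $\R^d$), which reduces the task to bounding $\|(-\Delta_0)^{-h}|z|\|_{L^p(\R^d)}$ uniformly in $\delta$. Splitting $\int_0^\infty t^{h-1} e^{t\Delta_0}|z|\,dt$ at $t = 1$, the small-time piece is controlled by $L^p$-contractivity of $e^{t\Delta_0}$, contributing $\lesssim \|z\|_{L^p}$, while the large-time piece is treated by Young's inequality $\|e^{t\Delta_0}|z|\|_{L^p(\R^d)} \le c_p t^{-d/(2p')}\|z\|_{L^1}$, contributing $\lesssim \|z\|_{L^1}$, with integrability at infinity requiring precisely the hypothesis $h < d/2 \cdot (1 - 1/p)$. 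The compact support of $z$ in the fixed set $\Lambda_{\delta'}$ then lets Hölder's inequality trade one norm for the other where necessary to produce the stated $\min$.

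The main obstacle is the vanishing spectral gap of $\Lambda_\delta$ as $\delta \to 0$, which forbids using any uniform exponential decay of $S_\delta(t)$; both parts circumvent this by relying solely on the $\delta$-uniform smoothing bound of Proposition~\ref{prop:semigroup_props}(i) and on pointwise domination by the free-space heat semigroup, both insensitive to the enlarging domain. A secondary technicality in (i) is the low-regularity case $h - \lfloor h\rfloor \in (0, 1/2)$, where the hypothesis yields only $w \in W^{1, p}$ and the commutation $\Delta_\delta S_\delta(t) w = S_\delta(t)\Delta_\delta w$ is not immediately justified at small $t$; this is resolved by the further factorization $\Delta_\delta S_\delta(t) = (-\Delta_\delta)^{1/2} S_\delta(t/2)\cdot(-\Delta_\delta)^{1/2} S_\delta(t/2)$, which distributes the loss of regularity across the semigroup and keeps the small-$t$ majorant integrable.
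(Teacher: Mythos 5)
Your proposal follows essentially the same route as the paper's proof: for (i), reduction to the fractional exponent $0<h'<1$ via locality of the integer powers, the integral representation \eqref{eq:frac_Laplace_formula}, pointwise convergence from Proposition~\ref{prop:semigroup_props}(ii) and a dominated-convergence majorant from Proposition~\ref{prop:semigroup_props}(i); for (ii), the representation \eqref{eq:neg_frac_laplace_formula}, pointwise domination of $S_\delta(t)$ by the free heat semigroup, and the split at $t=1$ with contractivity and Young's inequality. Your construction of the integrable majorant in (i) (and the flag on the low-regularity case $\lceil 2h\rceil$ odd) is in fact more explicit than the paper's one-line appeal to uniform integrability, but the argument is the same.
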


\begin{proof} (i) The claim is clear when $h\in\N_{0}$, because $\Delta_{\delta}^h z=\Delta^h z$ for all $\delta\leq \delta'$. For non-integer
	$h$ write $h=m+h'$ with $m\in\N_{0}$ and $0<h'<1$. Then $z'=\Delta^{m}z$
	and $(-\Delta_{\delta})^{h}z=(-\Delta_{\delta})^{h'}z'$.
	It is therefore enough to prove the claim for $0<h<1$. Recall the
	formula for the fractional Laplacian in \eqref{eq:frac_Laplace_formula}. By Proposition \ref{prop:semigroup_props}(ii), we have pointwise for fixed $t>0$ that $t^{-h}\Delta_\delta S_\delta(t)z\rightarrow t^{-h}\Delta_0S_0(t)z$ as $\delta\rightarrow 0$. Since the formula in \eqref{eq:frac_Laplace_formula} also holds
	for the fractional Laplacian $\Delta_{0}$ on $\R^{d}$, the result follows from the dominated convergence theorem, noting 
	\begin{align*}
		\norm{t^{-h}\Delta_\delta S_\delta(t)z}_{L^p(\Lambda_\delta)}
		&\leq \min\left(M_1\norm{z}_{L^p(\R^d)}t^{-h-1}, M_0\norm{\Delta z}_{L^p(\R^d)}t^{-h}\right)
	\end{align*}
	with $M_0, M_1$ from Proposition \ref{prop:semigroup_props}(i). 
	
	(ii)  Since $z\in L^{1}(\R^{d})$ by its compact support and because $(-\Delta_{\delta})^{-h}$ is a bounded operator on $L^{p}(\R^{d})$,
we can assume $z\in C(\bar{\Lambda}_{\delta'})$. By \eqref{eq:neg_frac_laplace_formula} it is enough to show for $t\geq0$ that
\[
\sup_{0<\delta\leq 1}\norm{S_{\delta}(t)z}_{L^{p}(\Lambda_{\delta})}\lesssim\min(1,t^{-\frac{d}{2}(1-\frac{1}{p})})\max(\norm z_{L^{1}(\R^{d})},\norm z_{L^{p}(\R^{d})}).
\]

Proposition \ref{prop:semigroup_props}(i) already gives the bound
$\sup_{0<\delta\leq 1}\norm{S_{\delta}(t)z}_{L^{p}(\Lambda_{\delta})}\lesssim\norm z_{L^{p}(\R^{d})}$
for $0\leq t\leq1$. For $t>1$, Proposition 3.5(i) of \cite{AltmeyerReiss2019}
shows $|(S_{\delta}(t)z)(x)|\leq c_{1}e^{c_{2}t\Delta_{0}}|z|(x)$,
$x\in\Lambda_{\delta}$, with universal constants $c_{1},c_{2}>0$.
The result follows therefore from representing the $e^{c_2t\Delta_0}$ as a convolution operator using the heat kernel on
$\R^{d}$ such that by Young's inequality and hypercontractivity of the heat kernel
\[
\norm{e^{c_{2}t\Delta_{0}}|z|}_{L^{p}(\R^{d})}\lesssim\min\left(\norm z_{L^{p}(\R^{d})},t^{-\frac{d}{2}(1-\frac{1}{p})}\norm z_{L^{1}(\R^{d})}\right).\qedhere
\]	
\end{proof}

Next lemma is a simple application of the scaling property of
the fractional Laplacian and relates regularity to decay as $\delta\rightarrow0$. 

\begin{lemma}\label{lem:scalar_ineq} Let $u\in W^{r,p}(\Lambda)$,
	$z\in W^{-r,q}(\Lambda_{\delta})$ for some $\delta>0$, $r\geq 0$ and
	$\frac{1}{p}+\frac{1}{q}=1$. Then 
	\begin{align*}
	|\sc u{z_{\delta}}| & \leq\delta^{r+d(\frac{1}{2}-\frac{1}{p})}\norm u_{r,p}\norm{(-\Delta_{\delta})^{-r/2}z}_{L^{q}(\Lambda_{\delta})}.
	\end{align*}
\end{lemma}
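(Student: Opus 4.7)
The plan is to combine a duality pairing for the fractional Laplacian with its scaling behavior under localization (Lemma~\ref{lem:delta_neg_frac_scaling}), Hölder's inequality, and a simple change of variables for the $L^q$-norm of a localized function $w_\delta$. The case $r=0$ is immediate from Hölder, so throughout I focus on $r>0$.

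First, I would split the inner product via the self-adjointness of $(-\Delta)^{r/2}$ on $L^2(\Lambda)$, writing
\begin{equation*}
\langle u, z_\delta\rangle \;=\; \bigl\langle (-\Delta)^{r/2} u,\; (-\Delta)^{-r/2} z_\delta\bigr\rangle.
\end{equation*}
Then I apply Lemma~\ref{lem:delta_neg_frac_scaling} with $h=r/2>0$ to transfer the fractional Laplacian from the ambient domain to $\Lambda_\delta$, obtaining the identity $(-\Delta)^{-r/2} z_\delta = \delta^{r} \bigl((-\Delta_\delta)^{-r/2} z\bigr)_\delta$.

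Next, I would apply Hölder's inequality with exponents $p,q$ on $\Lambda$. The first factor is $\|(-\Delta)^{r/2} u\|_{L^p(\Lambda)} = \|u\|_{r,p}$ by definition, so it remains to express $\|w_\delta\|_{L^q(\Lambda)}$ in terms of $\|w\|_{L^q(\Lambda_\delta)}$ where $w = (-\Delta_\delta)^{-r/2} z$. A change of variables $y = \delta^{-1} x$ in
\begin{equation*}
\|w_\delta\|_{L^q(\Lambda)}^q \;=\; \int_\Lambda \delta^{-dq/2} |w(\delta^{-1} x)|^q \, dx \;=\; \delta^{d - dq/2} \|w\|_{L^q(\Lambda_\delta)}^q
\end{equation*}
yields, using $1/q = 1 - 1/p$,
\begin{equation*}
\|w_\delta\|_{L^q(\Lambda)} \;=\; \delta^{d(1/2 - 1/p)} \|w\|_{L^q(\Lambda_\delta)}.
\end{equation*}

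Putting these pieces together gives the desired bound with the exact power $\delta^{r + d(1/2 - 1/p)}$. There is no serious obstacle here — the whole argument is just bookkeeping of the two scalings, one from the fractional Laplacian (contributing $\delta^r$) and one from the change of variables in the $L^q$-norm (contributing $\delta^{d(1/2-1/p)}$), which combine additively in the exponent exactly as claimed.
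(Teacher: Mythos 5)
Your proposal is correct and follows essentially the same route as the paper: duality pairing $\langle u, z_\delta\rangle = \langle (-\Delta)^{r/2}u, (-\Delta)^{-r/2}z_\delta\rangle$, the scaling identity from Lemma~\ref{lem:delta_neg_frac_scaling} contributing $\delta^r$, Hölder's inequality, and a change of variables contributing $\delta^{d(1/2-1/p)}$. The only difference is the order in which Hölder and the scaling identity are applied, which is immaterial.
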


\begin{proof} The Hölder inequality shows 
	\begin{align*}
	|\sc u{z_{\delta}}| & =|\sc{(-\Delta)^{r/2}u}{(-\Delta)^{-r/2}z_{\delta}}|\leq\norm u_{r,p}\norm{z_{\delta}}_{-r,q}.
	\end{align*}
	Lemma \ref{lem:delta_neg_frac_scaling} yields the identity $\norm{z_{\delta}}_{-r,q}=\delta^{r}\norm{((-\Delta_{\delta})^{-r/2}z)_{\delta}}_{0,q}$
	and the result follows by a change of variables.
\end{proof}

\begin{lemma}\label{lem:fracDelta_K} Grant Assumption \ref{assump:K} and let $1<q<\infty$. The following hold true:
\begin{enumerate}
	\item If $r < 2\lceil\gamma\rceil+d(1-1/q)$, then $\sup_{0<\delta\leq 1}\norm{(-\Delta_{\delta})^{-r/2}K}_{L^{q}(\Lambda_{\delta})}<\infty$.
	
\item If $r\leq 2 \lceil\gamma\rceil$, then, as $\delta\rightarrow0$,  $(-\Delta_{\delta})^{-r/2}K\rightarrow(-\Delta_{0})^{\lceil\gamma\rceil-r/2}\widetilde{K}$
	and $(-\Delta_{\delta})^{-r/2}\Delta K\rightarrow(-\Delta_{0})^{\lceil\gamma\rceil-r/2}\Delta\widetilde{K}$
	in $L^{q}(\R^{d})$.
\end{enumerate}

\end{lemma}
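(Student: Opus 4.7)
The overall strategy is to reduce both statements to a single computation involving $(-\Delta_\delta)^\alpha\widetilde K$ for a range of real exponents $\alpha$. By Assumption~\ref{assump:K}, $K = \Delta^{\lceil\gamma\rceil}\widetilde{K}$ with $\widetilde K \in H^{2\lceil\gamma\rceil+2}(\R^d)$ compactly supported in $\Lambda_{\delta'}$ for some $\delta'>0$, and commutativity of the fractional Laplacian with integer powers of $\Delta_\delta$ yields, up to a fixed sign depending only on the parity of $\lceil\gamma\rceil$,
$$
(-\Delta_\delta)^{-r/2}K = \pm (-\Delta_\delta)^{\lceil\gamma\rceil - r/2}\widetilde K,\qquad (-\Delta_\delta)^{-r/2}\Delta K = \pm(-\Delta_\delta)^{\lceil\gamma\rceil+1 - r/2}\widetilde K.
$$
Thus the lemma reduces to understanding the action of $(-\Delta_\delta)^\alpha$ on the fixed smooth compactly supported function $\widetilde K$.

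For (ii), the assumption $r\leq 2\lceil\gamma\rceil$ makes both relevant exponents non-negative: $\alpha_1=\lceil\gamma\rceil-r/2\geq 0$ and $\alpha_2 = \lceil\gamma\rceil+1-r/2\geq 1$. Since $\lceil 2\alpha_i\rceil \leq 2\lceil\gamma\rceil+2$, we have $\widetilde K\in W^{\lceil 2\alpha_i\rceil,q}$ for every $q$ (using compact support), so Lemma~\ref{lem:delta_frac_bound_conv}(i) applied to $z = \widetilde K$ yields the stated convergence in $L^q(\R^d)$ in both cases.

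For (i), I would split according to the sign of $\lceil\gamma\rceil - r/2$. If $r\leq 2\lceil\gamma\rceil$, the uniform boundedness is immediate from the $L^q$-convergence just established in (ii). If $r>2\lceil\gamma\rceil$, set $h:=r/2-\lceil\gamma\rceil>0$; the standing hypothesis $r < 2\lceil\gamma\rceil + d(1-1/q)$ rewrites precisely as $h<\tfrac{d}{2}(1-\tfrac{1}{q})$, which is the admissibility condition of Lemma~\ref{lem:delta_frac_bound_conv}(ii). Applying that lemma with $z = \widetilde K$ delivers $\sup_{0<\delta\leq 1}\norm{(-\Delta_\delta)^{-h}\widetilde K}_{L^q(\Lambda_\delta)}<\infty$, which is the desired bound.

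The only non-routine point, and the one I expect to require some care, is that Lemma~\ref{lem:delta_frac_bound_conv} is formulated for exponents $p\geq 2$, while here $q$ ranges in $(1,\infty)$. For $1<q<2$, I would revisit the proof of that lemma: its two ingredients, namely the pointwise semigroup bound $|S_\delta(t)z|\leq c_1 e^{c_2 t\Delta_0}|z|$ and Young's inequality for the heat kernel, extend to any $1<q<\infty$ precisely because the compactness of $\supp\widetilde K$ supplies the $L^1$-bound $\norm{\widetilde K}_{L^1(\R^d)}<\infty$ and hence the sharp small-time/large-time split $\norm{e^{c_2 t\Delta_0}|\widetilde K|}_{L^q(\R^d)} \lesssim \min(1,t^{-\frac{d}{2}(1-\frac{1}{q})})$. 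Substituting into \eqref{eq:neg_frac_laplace_formula} reproduces the convergence and boundedness claims uniformly in $\delta$, completing both (i) and (ii) for the full range $q\in(1,\infty)$.
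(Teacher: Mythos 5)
Your proposal follows the paper's own proof essentially verbatim: rewrite $(-\Delta_\delta)^{-r/2}K$ and $(-\Delta_\delta)^{-r/2}\Delta K$ via Assumption~\ref{assump:K} as fractional powers of $\Delta_\delta$ applied to the compactly supported $\widetilde K$ (resp.\ $\Delta\widetilde K$), then invoke Lemma~\ref{lem:delta_frac_bound_conv}(i) when $r\le 2\lceil\gamma\rceil$ and Lemma~\ref{lem:delta_frac_bound_conv}(ii) with $h=r/2-\lceil\gamma\rceil<\tfrac{d}{2}(1-\tfrac1q)$ otherwise. You are in fact slightly more careful than the paper on one point: the paper cites Lemma~\ref{lem:delta_frac_bound_conv}, stated only for exponents $p\ge 2$, without comment even though the lemma is later needed for $q=p/(p-1)\in(1,2)$, whereas you correctly observe that its proof (pointwise semigroup domination plus Young's inequality for the heat kernel) carries over to all $1<q<\infty$.
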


\begin{proof} Note that $(-\Delta_{\delta})^{-r/2}K=(-\Delta_{\delta})^{\lceil\gamma\rceil-r/2}\widetilde{K}$,
	$(-\Delta_{\delta})^{-r/2}\Delta K=(-\Delta_{\delta})^{\lceil\gamma\rceil-r/2}\Delta\widetilde{K}$
	with $\widetilde{K}$ and $\Delta\widetilde{K}$ having compact support.
	The two claims follow therefore from Lemma \ref{lem:delta_frac_bound_conv}.
	
\end{proof}

\subsection{Scaling of the covariance function}\label{append:ScaleCov}

In this section, we study the properties of the covariance function of the Gaussian
process $(t,z)\mapsto\sc{\bar{X}(t)}z, t\geq0,z\in L^2(\Lambda)$,  for localized functions $z_{\delta}$, as well as its limit behavior when $\delta\to0$. Repeatedly and sometimes without mentioning it explicitly, we will use properties of the fractional Laplacian $\Delta_{\delta}$ and the semigroup operators $S_{\vartheta,\delta}(t)=S_{\delta}(\vartheta t)$, $t\geq 0$, from Section~\ref{append:Semigroups}. For $t,t'\geq 0$, we use the notations
\begin{align*}
c(t,z,t',z') & :=\text{Cov}(\sc{\bar{X}(t)}z,\sc{\bar{X}(t')}{z'}),& z,z'\in L^2(\Lambda),\\
f_{\delta}(t,u,t',u') & :=\sc{B_{\delta}^{*}S_{\vartheta,\delta}(t)u}{B_{\delta}^{*}S_{\vartheta,\delta}(t')u'}_{L^{2}(\Lambda_{\delta})},& u,u'\in L^{2}(\Lambda_{\delta}),
\end{align*}
and set $c(t,z):=c(t,z,t,z)$, $f_{\delta}(t,u):=f_{\delta}(t,u,t,u)$.

\begin{lemma}\label{lem:covFun1}Grant Assumption \ref{assump:B}
	and let $z,z'\in L^{2}(\Lambda_{\delta})$ for $\delta>0$. Then, for $0\leq t'\leq t\leq T$,
	\[
	c(t,z_{\delta},t',z'_{\delta}) = \delta^{2+4\gamma} \int_0^{t'\delta^{-2}}f_{\delta}((t-t')\delta^{-2}+s,(-\Delta_{\delta})^{-\gamma}z,s,(-\Delta_{\delta})^{-\gamma}z')\dif s.
	\]
\end{lemma}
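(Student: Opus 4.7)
My plan is to expand the covariance via the Itô isometry for the stochastic convolution defining $\bar{X}$, and then use the scaling properties of the Laplacian and its semigroup from Section~\ref{append:Semigroups} together with Assumption~\ref{assump:B} to transport everything from $L^2(\Lambda)$ to $L^2(\Lambda_\delta)$.

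First, since $\bar{X}$ is the Gaussian stochastic convolution from \eqref{eq:mildSolutionLinear}, the covariance formula for Wiener integrals gives, for $0\leq t'\leq t\leq T$,
\begin{equation*}
c(t,z_\delta,t',z'_\delta)=\int_0^{t'}\langle B^{*}S_\vartheta(t-s)z_\delta,B^{*}S_\vartheta(t'-s)z'_\delta\rangle\,\mathrm{d}s.
\end{equation*}
(The upper limit is $t'$ because $W$ is uncorrelated on disjoint intervals and $t'\leq t$.) The next step is to rewrite $B^{*}S_\vartheta(r)z_\delta$ for $r\geq 0$ and $z\in L^2(\Lambda_\delta)$ in a localized form. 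By Lemma~\ref{lem:scaling}(ii), $S_\vartheta(r)z_\delta=(S_{\vartheta,\delta}(r\delta^{-2})z)_\delta$. Writing $S_{\vartheta,\delta}(r\delta^{-2})z=(-\Delta_\delta)^{\gamma}(-\Delta_\delta)^{-\gamma}S_{\vartheta,\delta}(r\delta^{-2})z$ and applying Lemma~\ref{lem:delta_neg_frac_scaling} (with $h=-\gamma$) yields
\begin{equation*}
(S_{\vartheta,\delta}(r\delta^{-2})z)_\delta=\delta^{2\gamma}(-\Delta)^{\gamma}\bigl((-\Delta_\delta)^{-\gamma}S_{\vartheta,\delta}(r\delta^{-2})z\bigr)_\delta.
\end{equation*}
Now Assumption~\ref{assump:B} applied to the smooth compactly supported localized function $(-\Delta_\delta)^{-\gamma}S_{\vartheta,\delta}(r\delta^{-2})z$ (first assuming $z$ smooth with compact support in $\Lambda_\delta$, then extending by density using \eqref{eq:BanachSteinhaus}) gives
\begin{equation*}
B^{*}S_\vartheta(r)z_\delta=\delta^{2\gamma}\bigl(B_\delta^{*}(-\Delta_\delta)^{-\gamma}S_{\vartheta,\delta}(r\delta^{-2})z\bigr)_\delta.
\end{equation*}

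With this identity in hand (and the analogous one for $z'$), I then use the fundamental isometry $\langle u_\delta,v_\delta\rangle_{L^2(\Lambda)}=\langle u,v\rangle_{L^2(\Lambda_\delta)}$, which follows by a change of variables $y=\delta^{-1}(x-x_0)$ together with the $\delta^{-d/2}$ normalization in the definition of $z_\delta$. Because $(-\Delta_\delta)^{-\gamma}$ is a spectral function of $\Delta_\delta$, it commutes with $S_{\vartheta,\delta}$, so the inner product collapses into $f_\delta$:
\begin{equation*}
\langle B^{*}S_\vartheta(t-s)z_\delta,B^{*}S_\vartheta(t'-s)z'_\delta\rangle=\delta^{4\gamma}f_\delta\bigl((t-s)\delta^{-2},(-\Delta_\delta)^{-\gamma}z,(t'-s)\delta^{-2},(-\Delta_\delta)^{-\gamma}z'\bigr).
\end{equation*}

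Finally, I would substitute $u=(t'-s)\delta^{-2}$, so that $\mathrm{d}s=-\delta^{2}\mathrm{d}u$, the bounds $s\in[0,t']$ become $u\in[0,t'\delta^{-2}]$ (with reversed orientation), and $(t-s)\delta^{-2}=(t-t')\delta^{-2}+u$. This produces the extra $\delta^{2}$ factor and delivers the claimed identity. The only potentially delicate point is applying Assumption~\ref{assump:B} to a function that is not a priori smooth; this is handled by first proving the identity for $z,z'\in C_c^\infty(\Lambda_\delta)$ where all the semigroup orbits are smooth, and then extending both sides to $z,z'\in L^2(\Lambda_\delta)$ by continuity, using the uniform bound \eqref{eq:BanachSteinhaus} on $B_\delta^{*}$ and the contractivity of $S_{\vartheta,\delta}$.
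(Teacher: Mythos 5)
Your proof is correct and follows essentially the same route as the paper's: Itô isometry for the stochastic convolution, the scaling identities of Lemmas~\ref{lem:scaling}(ii) and \ref{lem:delta_neg_frac_scaling} combined with Assumption~\ref{assump:B} to obtain $B^{*}S_{\vartheta}(r)z_{\delta}=\delta^{2\gamma}(B_{\delta}^{*}S_{\vartheta,\delta}(r\delta^{-2})(-\Delta_{\delta})^{-\gamma}z)_{\delta}$, and the substitution $u=(t'-s)\delta^{-2}$. The only differences are cosmetic (you scale before commuting $(-\Delta)^{\pm\gamma}$ through the semigroup, the paper does it the other way) plus your explicit density argument for extending Assumption~\ref{assump:B} from smooth $z$ to $L^2(\Lambda_\delta)$, which the paper leaves implicit.
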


\begin{proof}Assumption \ref{assump:B} combined with Lemmas~\ref{lem:scaling}(ii) and \ref{lem:delta_neg_frac_scaling} imply the  identities 
	\begin{align*}
	B^{*}S_{\vartheta}(t-s)z_{\delta} & =B^{*}(-\Delta)^{\gamma}S_{\vartheta}(t-s)(-\Delta)^{-\gamma}z_{\delta}\\
	& =\delta^{2\gamma}B^{*}(-\Delta)^{\gamma}(S_{\vartheta,\delta}((t-s)\delta^{-2})(-\Delta_{\delta})^{-\gamma}z)_{\delta}\\
	& =\delta^{2\gamma}(B_{\delta}^{*}S_{\vartheta,\delta}((t-s)\delta^{-2})(-\Delta_{\delta})^{-\gamma}z)_{\delta}.
	\end{align*}
This and Itô isometry (\cite[Proposition 4.28]{DaPratoZabczykBook2014}) yield
	\begin{align*}
	& c(t,z_{\delta},t',z'_{\delta}) =\int_{0}^{t'}\sc{B^{*}S_{\vartheta}(t-s)z_{\delta}}{B^{*}S_{\vartheta}(t'-s)z'_{\delta}}\dif s\\
	& \quad = \delta^{4\gamma}\int_{0}^{t'}f_{\delta}((t-s)\delta^{-2},(-\Delta_{\delta})^{-\gamma}z,(t'-s)\delta^{-2},(-\Delta_{\delta})^{-\gamma}z') \dif s.
	\end{align*}
The result follows by a change of variables. 
\end{proof}

\begin{lemma}\label{lem:covFun2} Grant Assumption \ref{assump:B}
	and let $z,z'\in L^{2}(\Lambda_{\delta})$ for $\delta>0$. Set $z^{(\delta)}:=(-\Delta_{\delta})^{-1/2-\gamma}z$ and 
	$z'^{(\delta)}:=(-\Delta_{\delta})^{-1/2-\gamma}z'$. Then
\begin{align}
\left|c(t,z_{\delta},t',z'_{\delta})\right| 
& \lesssim\delta^{2+4\gamma}\norm{z^{(\delta)}}_{L^{2}(\Lambda_{\delta})} \norm{z'^{(\delta)}}_{L^{2}(\Lambda_{\delta})},\quad 0\leq t'\leq t\leq T \label{eq:CovLinear}, \\
\int_{0}^{t}c(t,z_{\delta},t',z'_{\delta})^{2}\dif t' 
& \lesssim\delta^{6+8\gamma}\norm{(-\Delta_{\delta})^{-1/2}z^{(\delta)}}^2_{L^2(\Lambda_{\delta})} \norm{z'^{(\delta)}}^2_{L^2(\Lambda_{\delta})}. \label{eq:IntCovLinear}
\end{align}
\end{lemma}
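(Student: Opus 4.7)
The plan is to combine the integral representation in Lemma~\ref{lem:covFun1} with two elementary spectral identities for the contraction semigroup $S_{\vartheta,\delta}(s)=e^{s\vartheta\Delta_\delta}$ on $L^2(\Lambda_\delta)$:
\begin{equation*}
\int_0^\infty \norm{S_{\vartheta,\delta}(s)u}_{L^2(\Lambda_\delta)}^2\,\dif s=\tfrac{1}{2\vartheta}\norm{(-\Delta_\delta)^{-1/2}u}_{L^2(\Lambda_\delta)}^2,\quad \int_0^\infty s\,\norm{S_{\vartheta,\delta}(s)u}_{L^2(\Lambda_\delta)}^2\,\dif s=\tfrac{1}{4\vartheta^2}\norm{(-\Delta_\delta)^{-1}u}_{L^2(\Lambda_\delta)}^2,
\end{equation*}
which follow directly from the spectral decomposition of $-\Delta_\delta$. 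Combined with the uniform bound $\norm{B_\delta^*}\lesssim 1$ from \eqref{eq:BanachSteinhaus}, these yield $\norm{B_\delta^* S_{\vartheta,\delta}(s)u}_{L^2(\R^d)}\lesssim \norm{S_{\vartheta,\delta}(s)u}_{L^2(\Lambda_\delta)}$ uniformly in $s\geq 0$ and $0<\delta\leq 1$, and they will supply exactly the two half-integer shifts $(-\Delta_\delta)^{-1/2}$ and $(-\Delta_\delta)^{-1}$ appearing on the right-hand sides of \eqref{eq:CovLinear} and \eqref{eq:IntCovLinear}, respectively. Throughout, I write $u_1:=(-\Delta_\delta)^{-\gamma}z$, $u_2:=(-\Delta_\delta)^{-\gamma}z'$, and $\tau:=(t-t')\delta^{-2}\geq 0$.

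For \eqref{eq:CovLinear}, I first apply Cauchy--Schwarz to the inner product defining $f_\delta$ inside the integral from Lemma~\ref{lem:covFun1}, then apply Cauchy--Schwarz in the $s$-variable to split the $s$-integral as a product of two $L^2(\dif s)$-norms. Enlarging the integration range to $[0,\infty)$ and using semigroup contractivity to drop the shift by $\tau$ in the first factor, the first spectral identity above converts the two factors into $\norm{(-\Delta_\delta)^{-1/2}u_1}=\norm{z^{(\delta)}}$ and $\norm{(-\Delta_\delta)^{-1/2}u_2}=\norm{z'^{(\delta)}}$, producing exactly \eqref{eq:CovLinear} after tallying the $\delta^{2+4\gamma}$ factor from Lemma~\ref{lem:covFun1}.

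For \eqref{eq:IntCovLinear}, I apply Cauchy--Schwarz in $s$ \emph{before} squaring, so that
\begin{equation*}
c(t,z_\delta,t',z'_\delta)^2 \lesssim \delta^{4+8\gamma}\Bigl(\int_0^{t'\delta^{-2}}\norm{S_{\vartheta,\delta}(s+\tau)u_1}^2\,\dif s\Bigr)\Bigl(\int_0^{t'\delta^{-2}}\norm{S_{\vartheta,\delta}(s)u_2}^2\,\dif s\Bigr).
\end{equation*}
The second factor is uniformly bounded by $\lesssim \norm{z'^{(\delta)}}^2$ via the first spectral identity, as in the previous step. Substituting $s'=s+\tau$ in the first factor gives $\int_\tau^{t\delta^{-2}}\norm{S_{\vartheta,\delta}(s')u_1}^2\,\dif s'$ (using $\tau+t'\delta^{-2}=t\delta^{-2}$). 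Integrating this in $t'$ over $[0,t]$ with the change of variables $\tau=(t-t')\delta^{-2}$ and then applying Fubini on the region $\{0\leq\tau\leq s'\leq t\delta^{-2}\}$ collapses the double integral to $\delta^2\int_0^{t\delta^{-2}} s'\,\norm{S_{\vartheta,\delta}(s')u_1}^2\,\dif s'$. The second spectral identity above now gives $\lesssim \norm{(-\Delta_\delta)^{-1}u_1}^2=\norm{(-\Delta_\delta)^{-1/2}z^{(\delta)}}^2$, and collecting the powers $\delta^{4+8\gamma}\cdot\delta^2=\delta^{6+8\gamma}$ yields \eqref{eq:IntCovLinear}.

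The main obstacle is purely bookkeeping: making sure that the $\delta^{-2}$-rescaling of time built into Lemma~\ref{lem:covFun1} is matched correctly against the semigroup spectral identities, that the shift $\tau$ can be absorbed via contractivity rather than requiring a sharp bound for $\norm{S_{\vartheta,\delta}(\tau)}$, and that the swap of integration order in \eqref{eq:IntCovLinear} indeed produces the weight $s'$ that unlocks the second (stronger) spectral identity. No additional analytic input beyond Assumption~\ref{assump:B} (through \eqref{eq:BanachSteinhaus}) and the self-adjoint functional calculus for $\Delta_\delta$ is required.
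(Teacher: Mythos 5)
Your proof is correct and takes essentially the same route as the paper: starting from Lemma~\ref{lem:covFun1}, applying Cauchy--Schwarz first to the inner product defining $f_\delta$ (with the uniform bound \eqref{eq:BanachSteinhaus}) and then in $s$, and evaluating the resulting time integrals by the spectral identity $\int_0^a\norm{S_{\vartheta,\delta}(s)u}^2_{L^2(\Lambda_\delta)}\dif s\le\frac{1}{2}\norm{(-\vartheta\Delta_\delta)^{-1/2}u}^2_{L^2(\Lambda_\delta)}$. The only (cosmetic) difference concerns \eqref{eq:IntCovLinear}: the paper applies that identity twice in succession, retaining the factor $\norm{S_{\vartheta,\delta}((t-t')\delta^{-2})z^{(\delta)}}_{L^2(\Lambda_\delta)}$ after the first application and then integrating its square in $t'$, whereas you interchange the order of integration to produce the weight $s'$ and invoke the second-moment identity $\int_0^\infty s\norm{S_{\vartheta,\delta}(s)u}^2\dif s=\frac{1}{4\vartheta^2}\norm{(-\Delta_\delta)^{-1}u}^2$ --- the two computations are spectrally identical.
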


\begin{proof} 
Assumption~\ref{assump:B} and the Banach-Steinhaus theorem imply 
\begin{equation}\label{eq:BanachSteinhaus}
\sup_{0\leq\delta\leq 1}\norm{B_{\delta }^{*}}_{L^2(\R^d)}<\infty.
\end{equation}
By Lemma~\ref{lem:covFun1} and the Cauchy-Schwarz inequality, $\delta^{-2-4\gamma}\left|c(t,z_{\delta},t',z'_{\delta})\right|$ is up to a constant bounded by
\begin{align}
 &  \left(\int_{0}^{t'\delta^{-2}} \norm{S_{\vartheta,\delta}(s)S_{\vartheta,\delta}((t-t')\delta^{-2})(-\Delta_{\delta})^{1/2}z^{(\delta)}}_{L^{2}(\Lambda_{\delta})}^{2}\dif s\right)^{1/2} \nonumber\\ 
& \qquad \cdot \left(\int_{0}^{t'\delta^{-2}} \norm{S_{\vartheta,\delta}(s)(-\Delta_{\delta})^{1/2}z'^{(\delta)}}_{L^{2}(\Lambda_{\delta})}^{2}\dif s\right)^{1/2}. \label{eq:Temp1}
\end{align}
Note that 
$\int_{0}^{a}S_{\vartheta,\delta}(2s')u\dif s' = \frac{1}{2}(I-S_{\vartheta,\delta}(2a))(-\vartheta\Delta_{\delta})^{-1}u
$ for $a>0$ and $u\in L^{2}(\Lambda_{\delta})$, 
which consequently implies that
\begin{align}
& \int_{0}^{a}\norm{S_{\vartheta,\delta}(s')u}_{L^{2}(\Lambda_{\delta})}^{2}\dif s' 
 = \int_{0}^{a}\sc{S_{\vartheta,\delta}(2s')u}u_{L^{2}(\Lambda_{\delta})}\dif s' \nonumber\\
& \quad = \frac{1}{2}\sc{(-\vartheta\Delta_{\delta})^{-1}u}u_{L^{2}(\Lambda_{\delta})} -\frac{1}{2}\sc{S_{\vartheta,\delta}(2a)(-\vartheta\Delta_{\delta})^{-1}u}u_{L^{2}(\Lambda_{\delta})} \nonumber \\
& \quad \leq\frac{1}{2}\norm{(-\vartheta\Delta_{\delta})^{-1/2}u}_{L^{2}(\Lambda_{\delta})}^{2}. 
\label{eq:Temp2}
\end{align}
Applying this to \eqref{eq:Temp1} yields
\begin{equation}
\delta^{-2-4\gamma}|c(t,z_{\delta},t',z'_{\delta})|\lesssim\norm{S_{\vartheta,\delta}((t-t')\delta^{-2})z^{(\delta)}}_{L^{2}(\Lambda_{\delta})}\norm{z'^{(\delta)}}_{L^{2}(\Lambda_{\delta})}.\label{eq:covInequ}
\end{equation}
Clearly, \eqref{eq:CovLinear} follows from \eqref{eq:covInequ} by taking $t=t'$. On the other hand, by integrating  \eqref{eq:covInequ} with respect to $t'$, making a change of variables, and applying  \eqref{eq:Temp2}, we obtain
\begin{align*}
& \frac{1}{\delta^{4+8\gamma}}\int_{0}^{t}c(t,z_{\delta},t',z'_{\delta})^{2}\dif t'\lesssim\delta^{2}\int_{0}^{t\delta^{-2}}\norm{S_{\vartheta,\delta}(t')z^{(\delta)}}_{L^{2}(\Lambda_{\delta})}^{2}\dif t'\norm{z'^{(\delta)}}_{L^{2}(\Lambda_{\delta})}^{2}\\
	& \quad\lesssim\delta^{2}\norm{(-\Delta_{\delta})^{-1/2}z{}^{(\delta)}}_{L^{2}(\Lambda_{\delta})}^{2}\norm{z'^{(\delta)}}_{L^{2}(\Lambda_{\delta})}^{2},
	\end{align*}
which implies \eqref{eq:IntCovLinear} at once. 
This concludes the proof. 
\end{proof}

\subsection{Proofs of results in Section~\ref{sec:Main-results}}
\label{append:ProofMainResults}

\begin{proposition} \label{prop:var_cov} Grant Assumptions \ref{assump:B},
	\ref{assump:K}. Then with $\Psi$ from Assumption \ref{assump:ND}, we have as $\delta\rightarrow0$:
	\begin{enumerate}
		\item $\delta^{2-4\gamma}\int_{0}^{T}\E[\bar{X}_{\delta}^{\Delta}(t)^{2}]\dif t\rightarrow T\vartheta{}^{-1}\Psi((-\Delta_{0})^{\lceil\gamma\rceil-\gamma}\Delta\widetilde{K})$, 
		\item $\Var(\int_{0}^{T}\bar{X}_{\delta}^{\Delta}(t)^{2}\dif t)\lesssim\delta^{-2+8\gamma}$. 
	\end{enumerate}
\end{proposition}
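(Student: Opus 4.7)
For (i), the idea is to combine the scaling $\Delta K_{\delta}=\delta^{-2}(\Delta_{\delta}K)_\delta$ from Lemma~\ref{lem:scaling}(i) with Lemma~\ref{lem:covFun1} (taking $t=t'$ and $z=z'=\Delta_\delta K$). Using Assumption~\ref{assump:K} to write $K=(-\Delta)^{\lceil\gamma\rceil}\widetilde K$ and the compact support of $\widetilde K$ to identify $\Delta\widetilde K=\Delta_\delta\widetilde K$, the scaling exponents collapse to
\[
\delta^{2-4\gamma}\,\E\!\left[\bar X_{\delta}^{\Delta}(t)^{2}\right]=\int_{0}^{t\delta^{-2}}\norm{B_{\delta}^{*}S_{\vartheta,\delta}(s)\,v_\delta}_{L^2(\Lambda_\delta)}^{2}\dif s,
\]
with $v_\delta:=(-\Delta_\delta)^{\lceil\gamma\rceil-\gamma}\Delta\widetilde K$. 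By Lemma~\ref{lem:delta_frac_bound_conv}(i), Proposition~\ref{prop:semigroup_props}(ii) and Assumption~\ref{assump:B}, the integrand converges pointwise in $s$ to $\norm{B_{0,x_{0}}^{*}e^{\vartheta s\Delta_{0}}v_0}^{2}_{L^2(\R^d)}$ with $v_0=(-\Delta_0)^{\lceil\gamma\rceil-\gamma}\Delta\widetilde K$; the substitution $s'=\vartheta s$ then produces the target constant $\vartheta^{-1}\Psi((-\Delta_{0})^{\lceil\gamma\rceil-\gamma}\Delta\widetilde{K})$.

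The delicate step is passing to the limit despite the moving upper limit $t\delta^{-2}$. My plan is to exhibit a single, $\delta$-uniform and integrable majorant for the integrand. Combining the contraction bound $\norm{B_{\delta}^{*}S_{\vartheta,\delta}(s)v_\delta}\lesssim 1$ (from \eqref{eq:BanachSteinhaus}) with the analyticity estimate $\norm{S_{\vartheta,\delta}(s)v_\delta}\lesssim s^{-1}\norm{(-\Delta_\delta)^{-1}v_\delta}$ from Proposition~\ref{prop:semigroup_props}(i), together with $\norm{(-\Delta_\delta)^{-1}v_\delta}=\norm{(-\Delta_\delta)^{\lceil\gamma\rceil-\gamma}\widetilde K}_{L^2(\Lambda_\delta)}\lesssim 1$ by Lemma~\ref{lem:delta_frac_bound_conv}(i), yields $\norm{B_{\delta}^{*}S_{\vartheta,\delta}(s)v_\delta}^{2}\lesssim\min(1,s^{-2})$. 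Dominated convergence then handles the fully extended integral, while the tail $\int_{t\delta^{-2}}^{\infty}$ is of order $\delta^{2}/t$ and vanishes. A second dominated convergence argument on $t\in[0,T]$, using the $t$-uniform bound inherited from the majorant, concludes (i).

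For (ii), I plan to exploit that $\bar X_\delta^\Delta$ is centered Gaussian. The Isserlis/Wick identity gives $\Cov(\bar X_\delta^\Delta(t)^{2},\bar X_\delta^\Delta(t')^{2})=2c_\delta(t,t')^{2}$ with $c_\delta(t,t'):=\Cov(\bar X_\delta^\Delta(t),\bar X_\delta^\Delta(t'))$, whence
\[
\Var\!\left(\int_{0}^{T}\bar X_\delta^\Delta(t)^{2}\dif t\right)=4\int_{0}^{T}\!\int_{0}^{t}c_\delta(t,t')^{2}\dif t'\dif t.
\]
After the scaling $c_\delta(t,t')=\delta^{-4}c(t,(\Delta_\delta K)_\delta,t',(\Delta_\delta K)_\delta)$ and applying Lemma~\ref{lem:covFun2}, equation~\eqref{eq:IntCovLinear}, with $z=z'=\Delta_\delta K$, the auxiliary norms become $\norm{z^{(\delta)}}=\norm{(-\Delta_\delta)^{1/2+\lceil\gamma\rceil-\gamma}\widetilde K}$ and $\norm{(-\Delta_\delta)^{-1/2}z^{(\delta)}}=\norm{(-\Delta_\delta)^{\lceil\gamma\rceil-\gamma}\widetilde K}$, both uniformly bounded via Lemma~\ref{lem:delta_frac_bound_conv}(i) and the hypothesis $\widetilde K\in H^{2\lceil\gamma\rceil+2}$ from Assumption~\ref{assump:K}. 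Tracking powers of $\delta$ yields $\int_{0}^{t}c_\delta(t,t')^{2}\dif t'\lesssim\delta^{-8}\cdot\delta^{6+8\gamma}=\delta^{-2+8\gamma}$ uniformly in $t\leq T$, and integrating over $t\in[0,T]$ concludes (ii).

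The principal obstacle is the uniform-in-$\delta$ majoration step in (i); once a dominating function in $s$ is pinned down, both statements reduce to scaling bookkeeping and appeals to the already established results of Appendices~\ref{append:Semigroups} and \ref{append:ScaleCov}.
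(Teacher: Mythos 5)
Your proposal is correct and follows essentially the same route as the paper's proof: for (i), the reduction via Lemma~\ref{lem:covFun1} to $\int_0^{t\delta^{-2}}\norm{B_\delta^*S_{\vartheta,\delta}(s)(-\Delta_\delta)^{\lceil\gamma\rceil-\gamma}\Delta\widetilde K}^2\dif s$, the $\delta$-uniform majorant $1\wedge s^{-2}$ obtained from Proposition~\ref{prop:semigroup_props}(i) and the uniform bounds on $(-\Delta_\delta)^{-r/2}K$, pointwise convergence of the integrand, and dominated convergence; for (ii), Wick's formula followed by \eqref{eq:IntCovLinear} with $z=z'=\Delta K$ and the same bookkeeping of powers of $\delta$. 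No gaps.
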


\begin{proof}(i) By Lemma~\ref{lem:covFun1} applied to $t=t'$ and $z_{\delta}=z'_{\delta}=\delta^{-2}(\Delta K)_{\delta}$ we write 
$$
\delta^{2-4\gamma}\int_{0}^{T}\E[\bar{X}_{\delta}^{\Delta}(t)^{2}]\dif t  
=\int_{0}^{T}\int_{0}^{\infty}f_{\delta}(s,(-\Delta_{\delta})^{-\gamma}\Delta K)\I_{\{s\leq t\delta^{-2}\}}\dif s\dif t.
$$
Next, we set  $f(s):=\norm{B_{0}^{*}e^{s\vartheta\Delta_{0}}(-\Delta_{0})^{\lceil\gamma\rceil-\gamma}\Delta\widetilde{K}}^{2}$, and note that $\int_{0}^{\infty}f(s)ds = \vartheta{}^{-1}\Psi((-\Delta_{0})^{\lceil\gamma\rceil-\gamma}\Delta\widetilde{K})$,
which clearly follows after substituting $s'=\vartheta s$. 
Recalling that $\sup_{0<\delta\leq 1}\norm{B_{\delta}^{*}}<\infty$ from \eqref{eq:BanachSteinhaus}, and since $\Delta K=\Delta_{\delta}K$, by 
Proposition~\ref{prop:semigroup_props}(i) obtain with $f_\delta(s)\equiv f_\delta(s, (-\Delta_\delta)^{-\gamma}\Delta K)$:
\begin{align*}
|f_{\delta}(s)| & \lesssim \norm{\Delta_{\delta}S_{\vartheta,\delta}(s)(-\Delta_{\delta})^{-\gamma}K}_{L^{2}(\Lambda_{\delta})}^{2}\\
	& \lesssim(1\wedge s^{-2}) \left(\norm{(-\Delta_{\delta})^{-\gamma}K}_{L^{2}(\Lambda_{\delta})}^{2}+\norm{(-\Delta_{\delta})^{-\gamma}\Delta K}_{L^{2}(\Lambda_{\delta})}^{2}\right).
\end{align*}
Consequently, by Lemma~\ref{lem:fracDelta_K},  $|f_{\delta}(s)| \lesssim 1\wedge s^{-2}$, uniformly in $0<\delta\leq 1$, and thus $\sup_{0<\delta \leq 1}|f_{\delta}|\in L^{1}([0,\infty))$.
Setting $K^{(\delta)}:=(-\Delta_{\delta})^{-\gamma}\Delta K$, $K^{(0)}:=(-\Delta_{0})^{-\gamma}\Delta K$, we further have
\begin{align*}
 \norm{B_{\delta}^{*}S_{\vartheta,\delta}(s)K^{(\delta)}-B_{0}^{*}&e^{\vartheta s\Delta_{0}} K^{(0)}}_{L^{2}(\R^{d})}  \lesssim  
\norm{S_{\vartheta,\delta}(s)(K^{(0)}|_{\Lambda_{\delta}})-e^{\vartheta s\Delta_{0}}K^{(0)}}_{L^{2}(\R^{d})} \\
& + \norm{K^{(\delta)}-K^{(0)}}_{L^{2}(\R^{d})} 
 +\norm{(B_{\delta}^{*}-B_{0}^{*}) e^{\vartheta s\Delta_{0}}K^{(0)})}_{L^{2}(\R^{d})}.
\end{align*}
Therefore, the pointwise convergence $f_{\delta}(s)\rightarrow f(s)$, as $\delta\rightarrow0$, follows from Proposition~\ref{prop:semigroup_props}(ii), Lemma~\ref{lem:fracDelta_K}, and Assumption~\ref{assump:B}. Finally, by the dominated convergence theorem (i) is proved.

\medskip \noindent	
(ii) Note that the random variables $\{\bar{X}_{\delta}^{\Delta}(t)\,|\,t\ge0\}$ are centered and jointly Gaussian. Thus, in view of Wick's formula, cf. \cite[Theorem 1.28]{Janson:1997uy}, it follows that 
\begin{align*}
\Var(\int_{0}^{T}\bar{X}_{\delta}^{\Delta}(t)^{2}dt) &=\int_{0}^{T}\int_{0}^{T}\text{Cov}(\bar{X}_{\delta}^{\Delta}(t)^{2},\bar{X}_{\delta}^{\Delta}(t')^{2})\dif t'\dif t\\
& = 4\int_{0}^{T}\int_{0}^{t}\text{Cov}(\bar{X}_{\delta}^{\Delta}(t), \bar{X}_{\delta}^{\Delta}(t'))^{2}\dif t'\dif t\\
& =4\delta^{-8}\int_{0}^{T}\int_{0}^{t}\E[\sc{\bar{X}(t)}{(\Delta K)_{\delta}}\sc{\bar{X}(t')}{(\Delta K)_{\delta}}]^{2}\dif t'\dif t.
\end{align*}
Consequently, by Lemma~\ref{lem:covFun2} with $z=z'=\Delta K$, we continue 
\[
\Var(\int_{0}^{T}\bar{X}_{\delta}^{\Delta}(t)^{2}dt)\lesssim\delta^{-2+8\gamma}\norm{(-\Delta_{\delta})^{-\gamma}K}^2_{L^{2}(\Lambda_{\delta})}\norm{(-\Delta_{\delta})^{1/2-\gamma}K}^2_{L^{2}(\Lambda_{\delta})}.
\]
Invoking Lemma \ref{lem:fracDelta_K}, we conclude the proof. 
\end{proof}

\begin{proof}[Proof of Proposition \ref{prop:pseudo_Fisher_info}]
(i) By Assumption~\ref{assump:B}, Assumption~\ref{assump:K} and Lemma~\ref{lem:delta_neg_frac_scaling}, it follows that 
\begin{align*}
\delta^{-2\gamma}\norm{B^{*}K_{\delta}} & =\norm{B^{*}(-\Delta)^{\gamma}((-\Delta_{\delta})^{-\gamma}K)_{\delta}}=\norm{B_{\delta}^{*}(-\Delta_{\delta})^{\lceil\gamma\rceil-\gamma}\widetilde{K}}_{L^{2}(\Lambda_{\delta})}.
	\end{align*}
Since $\sup_{0<\delta\leq 1}\norm{B_{\delta}^{*}}<\infty$,  cf. \eqref{eq:BanachSteinhaus}, using 
Lemma~\ref{lem:fracDelta_K} we have 
\begin{equation}
	\delta^{-2\gamma}\norm{B^{*}K_{\delta}}\rightarrow\norm{B_{0}^{*}(-\Delta_{0})^{\lceil\gamma\rceil-\gamma}\widetilde{K}}_{L^{2}(\R^{d})}.\label{eq:B_K_convergence}
\end{equation}
Noting that 
\[ \delta^{2}\E[\bar{\c I}_{\delta}] = (\delta^{-2\gamma}\norm{B^{*}K_{\delta}})^{-2}\delta^{2-4\gamma}\int_{0}^{T}\E[\bar{X}_{\delta}^{\Delta}(t)^{2}]\dif t,
\]
and using \eqref{eq:B_K_convergence} and Proposition~\ref{prop:var_cov}(i), the desired result follows at once.

\medskip\noindent
(ii) Convergence~\eqref{eq:B_K_convergence} and Proposition~\ref{prop:var_cov}(ii) imply 
\begin{align*}
\text{\ensuremath{\delta^{4}}Var}(\bar{\c I}_{\delta}) &  = (\delta^{-2\gamma}\norm{B^{*}K_{\delta}})^{-4}\delta^{4-8\gamma}\text{Var}(\int_{0}^{T}\bar{X}_{\delta}^{\Delta}(t)^{2}\dif t)\rightarrow0.
\end{align*}
From this and (i) we get $\Var(\bar{\c I}_{\delta})/\E[\bar{\c I}_{\delta}]^{2}\rightarrow 0$, which gives the result.
	
\medskip\noindent
(iii)  Using the decomposition $X_{\delta}^{\Delta} = \bar{X}_{\delta}^{\Delta}+\widetilde{X}_{\delta}^{\Delta}$ we write 
	\[
	\mathcal{I}_{\delta}-\mathcal{\bar{I}}_{\delta}=\norm{B^{*}K_{\delta}}^{-2}\int_{0}^{T}(\widetilde{X}_{\delta}^{\Delta}(t)^{2}+2\widetilde{X}_{\delta}^{\Delta}(t)\bar{X}_{\delta}^{\Delta}(t))\mathrm{d}t.
	\]
Hence, by (i), \eqref{eq:B_K_convergence} and the Cauchy-Schwarz inequality
	it is enough to have 
	\begin{equation}
	\delta^{2-4\gamma}\int_{0}^{T}\widetilde{X}_{\delta}^{\Delta}(t)^{2}\mathrm{d}t\r{\P}0,
	\end{equation}
which is \eqref{eq:assumptionF_tilde} from Assumption~\ref{assump:F}.

\medskip\noindent
(iv)  The Cauchy-Schwarz inequality implies that  
\[
\c I_{\delta}^{-1}|R_{\delta}|\lesssim\c I_{\delta}^{-1/2}\norm{B^{*}K_{\delta}}^{-1}\left(\int_{0}^{T}\sc{F(t,X(t))}{K_{\delta}}^{2}dt\right)^{1/2}.
\]
By \eqref{eq:B_K_convergence} and items (ii,iii) we find that $\c I_{\delta}^{-1/2}\norm{B^{*}K_{\delta}}^{-1}=O_{\P}(\delta^{1-2\gamma})$.
	Similar to the previous case, (\ref{eq:assumptionF_F}) gives the result.
\end{proof}

\begin{proof}[Proof of Theorem \ref{thm:minimax}]
For both regimes of $\nu$ it is enough to find a lower bound for 
\begin{equation}
	\inf_{\hat{\theta}}\sup_{\kappa\in\{\kappa_1,\kappa_2\}} \E_{\kappa}\left[(\hat{\vartheta}-\vartheta)^2 \right]^{1/2}\label{eq:lowerBound_1}
\end{equation}
 for two suitable alternatives $\kappa_1$, $\kappa_2\in \Theta_{\nu,\gamma}$. When $\nu>1$, fix any $\theta_0>0$ and consider for some $c>0$ the alternatives $\kappa_1=(\theta_0,0,(-\Delta)^{-\gamma},X^{\theta_0}_0)$, $\kappa_2=(\theta_0+c\delta,0,(-\Delta)^{-\gamma},X^{\theta_0+c\delta}_0)$ for the random initial conditions $X^{\theta}_0 = \int_{-\infty}^0 S_{\theta}(-s)(-\Delta)^{-\gamma}dW(s)$ with $\theta\in\{\theta_0,\theta_0+c\delta\}$ and where $(W(s))_{s\in\R}$ is now a two sided cylindrical Brownian motion. As in Proposition \ref{prop:LinearRegularity} one shows that $X_0$ is square integrable. With this initial condition the process $X$ in \eqref{eq:mildSolution} is stationary under $\P_{\theta}$ and Assumption \ref{assump:F} is satisfied, because $F=0$ and $\widetilde{X}(t)=S_{\theta}(t)X_0$ such that  for $\kappa\in\{\kappa_1,\kappa_2\}$
\begin{align*}
	& \E_{\kappa}\left[\int_0^T (\widetilde{X}^{\Delta}_{\delta}(t))^2\mathrm{d}t \right]
	= \int_0^T \E_{\kappa}\left[ \left(\int_{-\infty}^{0}\sc{S_{\theta}(t-s)(-\Delta)^{1-\gamma} K_{\delta}}{dW(s)}\right)^2\right] \mathrm{d}t\\
	& \quad = -\frac{1}{2\theta} \int_0^T \sc{S_{\theta}(2t)(-\Delta)^{1-\gamma} K_{\delta}}{(-\Delta)^{-\gamma} K_{\delta}} \mathrm{d}t
	\leq \frac{1}{4\theta^2} \norm{(-\Delta)^{-\gamma} K_{\delta}}^2\lesssim \delta^{4\gamma},
\end{align*}
concluding by the scaling in Lemma \ref{lem:scaling} and using Lemma \ref{lem:fracDelta_K}. Writing $\bar{K}=\delta^{2\gamma}(-\Delta_{\delta})^{-\gamma}K$ such that $(-\Delta)^{-\gamma}K_{\delta} =\bar{K}_{\delta}$, we observe that also
\[
	X_{\delta}(t) = \int_{-\infty}^t \sc{S_{\theta}(t-s)\bar{K}_{\delta}}{\mathrm{d}W(s)}
\]
is stationary. Arguing exactly as in the proof of Proposition 5.12 and Lemma A.1 of \cite{AltmeyerReiss2019} with respect to $\bar{ K}$ we then obtain for sufficiently small $\delta$ and a suitable constant $c>0$ for \eqref{eq:lowerBound_1} the lower bound
\[
	c\frac{\norm{(I-\Delta_{\delta})^{-1}\bar{K}}^2_{L^2(\Lambda_{\delta})}}{\sqrt{T}\norm{(-\Delta_{\delta})^{1/2}\bar K}^2_{L^2(\Lambda_{\delta})}} \delta = c\frac{\norm{(I-\Delta_{\delta})^{-1}(-\Delta_{\delta})^{-\gamma}K}^2_{L^2(\Lambda_{\delta})}}{\sqrt{T}\norm{(-\Delta_{\delta})^{1/2-\gamma}K}^2_{L^2(\Lambda_{\delta})}} \delta.
\]
The assumed compact support of $\bar{K}$ in Lemma A.1 of \cite{AltmeyerReiss2019} is only necessary to find the limit of the expression before $\delta$ in the last display as $\delta\rightarrow 0$. Here, however, Lemma \ref{lem:fracDelta_K} shows the $L^2(\R^d)$-convergence of $(-\Delta_{\delta})^{1/2-\gamma}K$ and $(-\Delta_{\delta})^{-\gamma}K$, which in turn implies by dominated convergence and the resolvent identity the $L^2(\R^d)$-convergence of
\[
	(I-\Delta_{\delta})^{-1} (-\Delta_{\delta})^{-\gamma}K = \int_0^{\infty}e^{-t}S_{\delta}(t)(-\Delta_{\delta})^{-\gamma}K \mathrm{d}t.
\]
This proves the wanted lower bound for $\nu>1$. 

Let now $\nu\leq 1$ and consider the alternatives $\kappa_1=(\theta_0,\delta^{\nu}\Delta,(-\Delta)^{-\gamma},0)$, $\kappa_2=(\theta_0+\delta^{\nu},0,(-\Delta)^{-\gamma},0)$. Note that the two alternatives correspond to the same linear SPDE \eqref{eq:SPDE}, and the mild solutions \eqref{eq:mildSolution} coincide. In particular, $\P_{\kappa_1}=\P_{\kappa_2}$. We infer from Proposition \ref{prop:LinearRegularity} with $\gamma>1/2+d/4$ that $F(t,X(t))=\delta^{\nu}\Delta X(t)\in L^2(\Lambda)$.  For $\kappa_2$, Assumption \ref{assump:F} is clearly satisfied, and for $\kappa_1$ we see
\begin{align*}
& \E_{\kappa_1}\left[\sc{F(t,X(t))}{K_{\delta,x_0}}^{2}\right] 
	= \delta^{2\nu}\E_{\kappa_2}\left[\bar{X}^{\Delta}_{\delta}(t)^{2}\right]\lesssim \delta^{2\nu-2+4\gamma},
\end{align*}
cf. Proposition \ref{prop:var_cov}(i). Moreover, by the stochastic Fubini theorem (\cite[Theorem 4.33]{DaPratoZabczykBook2014}) and $S_{\theta_0}(t-s)S_{\theta_0+\delta^{\nu}}(s-r)=S_{\theta_0}(t-r)S_{\delta^{\nu}}(s-r)$ we have
\begin{align*}
	\widetilde{X}(t) & =\int_0^t S_{\theta_0}(t-s)\delta^{\nu}\Delta X(s)\mathrm{d}s\\
		& = \int_0^t S_{\theta_0}(t-s)\delta^{\nu}\Delta \int_0^s S_{\theta_0+\delta^\nu}(s-r)(-\Delta)^{-\gamma}\mathrm{d}W(r)\mathrm{d}s\\
		& = \int_0^t S_{\theta_0}(t-r)(S_{\delta^{\nu}}(t-r)-I)(-\Delta)^{-\gamma}\mathrm{d}W(r).
\end{align*}
Itô's isometry and the scaling in Lemmas \ref{lem:scaling} and \ref{lem:delta_neg_frac_scaling} therefore imply
\begin{align*}
	\E_{\kappa_1}\left[(\widetilde{X}^{\Delta}_{\delta}(t))^2 \right]
	& = \delta^{4\gamma-2}\int_0^{t\delta^{-2}} \norm{(S_{1,\delta}(\delta^{\nu}r)-I)S_{\theta_0,\delta}(r)(-\Delta_{\delta})^{1-\gamma} K}^2_{L^2(\Lambda_{\delta})} \mathrm{d}r,
\end{align*}
which is of order $o(\delta^{4\gamma-2})$ by the dominated convergence theorem, using the first parts in Proposition \ref{prop:semigroup_props} and Lemma \ref{lem:fracDelta_K} to find a dominating integrand, and with pointwise convergence following from the second parts of the same results. This means Assumption \ref{assump:F} holds also with respect to $\kappa_1$. Since the two alternatives induce the same law, the total variation distance between $\P_{\kappa_1}$ and $\P_{\kappa_2}$ vanishes. The result follows from equation (2.9) and \cite[Theorem~2.2(i)]{tsybakov2008introduction}, and noting that the two alternatives have distance $|\theta_0+\delta^{\nu}-\theta_0|=\delta^{\nu}$.
\end{proof}

\begin{proof}[Proof of Proposition \ref{prop:ExcessRegularity}] 
We start by proving a general statement. Choose $\epsilon$ and $g$ as in Assumption~\ref{assump:A}. Without loss of generality we can assume that $\epsilon < 2$. Then for any $s_1 \leq s < \bar{s}(p)$, $0\leq \epsilon'<\epsilon$ such that $s+\epsilon'\leq \bar{s}(p)$ and  $p_1\leq p' \leq p$ the following implication holds
\begin{equation}
\sup_{0\leq t\leq T}\norm{\widetilde X(t)}_{s, p'} < \infty \Rightarrow \sup_{0\leq t\leq T}\norm{\widetilde X(t)}_{s+\eta+\epsilon', p'} < \infty.\label{eq:tildeXImplication}
\end{equation}
We proceed as in \cite{DaPratoDebusscheTemam1994}. Use Proposition~\ref{prop:semigroup_props}(i) for $\delta = 1$ to deduce for any $t\in[0,T]$ that
\begin{align*}
\norm{\widetilde X(t)}_{s+\eta+\epsilon', p'} &\leq \norm{S_{\vartheta}(t)X_0}_{s+\eta+\epsilon', p'} + \int_0^t\norm{S_{\vartheta}(t-r)F(X(r))}_{s+\eta+\epsilon', p'}\mathrm{d}r \\
&\lesssim \norm{X_0}_{\bar{s}(p)+\eta, p} + \int_0^t(t-r)^{-1+\frac{\epsilon-\epsilon'}{2}}\norm{F(X(r))}_{s+\eta-2+\epsilon, p'}\mathrm{d}r.
\end{align*}
Assumption \hyperref[assump:A]{$A_{s,\eta,p'}$} and the monotonicity of $g$ allow upper bounding this by
\begin{equation*}
\norm{X_0}_{\bar{s}(p)+\eta, p} + \frac{2}{\epsilon-\epsilon'}T^{\frac{\epsilon-\epsilon'}{2}} g\left(\sup_{0\leq t\leq T}\norm{\bar X(t)}_{s, p'} + \sup_{0\leq t\leq T}\norm{\widetilde X(t)}_{s, p'}\right).
\end{equation*}
Since $X_0\in W^{\bar{s}(p)+\eta,p}(\Lambda)$, $\bar{X}\in C([0,T]; W^{s, p}(\Lambda))$, we obtain \eqref{eq:tildeXImplication}.

Let us now prove the theorem. Applying \eqref{eq:tildeXImplication} iteratively to $p'=p_1$ and all $s_1\leq s < \bar{s}(p)$ gives $\widetilde{X}\in C([0,T]; W^{s+\eta+\epsilon', p_1}(\Lambda))$ for all sufficiently small $\epsilon'\geq 0$ , and thus $\widetilde{X}\in C([0,T]; W^{s_1, p''}(\Lambda))$ by the Sobolev embedding for some suitable $p''>p_1$. Repeating these steps with $p'' > p_1$ instead of $p_1$ until $p''\geq p$ is reached, yields $\widetilde{X}\in C([0,T]; W^{\bar{s}(p)+\eta, p}(\Lambda))$ and $X \in C([0,T]; W^{s,p}(\Lambda))$. 
\end{proof}

\section{Additional Proofs}

\subsection{Local Asymptotics for the Multiplication Operator}

\begin{lemma}\label{lem:G_delta} Let $r \geq 0$, $p>1$ and consider the multiplication operator $M_{\sigma}u=\sigma \cdot u$ with $\sigma\in C^{2r'}(\R^{d})$ for $r'>r$. Let $z\in C^{\infty}(\R^{d})$ with compact support in $\Lambda_{\delta'}$ 
	for some $\delta'>0$ and define for $0<\delta\leq\delta'$ the operator
	$G_{\delta}(\sigma)z:=(-\Delta_{\delta})^{-r}M_{\sigma(\delta\cdot)}(-\Delta_{\delta})^{r}z$.
	Then: 
	\begin{enumerate}
		\item $\sup_{0<\delta\leq \delta'}\norm{G_{\delta}(\sigma)z}_{L^{p}(\R^{d})}\leq C\norm{\sigma}_{C^{2r'}(\R^d)}$, and in particular, $G_{\delta}(\sigma)$ extends to a bounded operator $G_{\delta}(\sigma):L^{p}(\Lambda_{\delta})\rightarrow L^{p}(\Lambda_{\delta})$. 
		\item If $\sigma\in C^{2r'+1}(\R^{d})$, then $G_{\delta}(\sigma)z\r{}G_{0}(\sigma)z:=\sigma(0)z$
		in $L^{2}(\R^{d})$ for $z\in L^{2}(\R^{d})$ as $\delta\rightarrow0$. 
	\end{enumerate}
\end{lemma}

\begin{proof} (i) For $r'$ as in the statement, $\sigma$ induces a bounded multiplication operator $M_{\sigma}$ on $W^{-2r,p}(\Lambda)$; cf. \cite[Theorem 3.3.2]{Triebel1983book}. This means
	\begin{align*}
	\norm{M_{\sigma}u}_{-2r,p} & \leq C\norm{\sigma}_{C^{2r'}(\R^d)}\norm u_{-2r,p},\,\,\,\,u\in W^{-2r,p}(\Lambda).
	\end{align*}
	Therefore we have by Lemma \ref{lem:delta_neg_frac_scaling} 
	\begin{align*}
    & \norm{G_{\delta}(\sigma)z}_{L^{p}(\Lambda_{\delta})} 
	 =\delta^{d(\frac{1}{2}-\frac{1}{p})}\norm{(-\Delta)^{-r}M_{\sigma}(-\Delta)^{r}z_{\delta}}_{L^p(\Lambda)}\\
	 & \quad =\delta^{d(\frac{1}{2}-\frac{1}{p})} \norm{M_{\sigma}(-\Delta)^{r}z_{\delta}}_{-2r,p} \leq C \norm{\sigma}_{C^{2r'}(\R^d)} \norm z_{L^{p}(\R^{d})}.
	\end{align*}
	
	(ii) Because of (i) it is enough to consider $z\in C^{\infty}(\R^{d})$ with
	compact support in $\Lambda_{\delta'}$. We can further restrict to
	$z=\Delta\widetilde{z}$ with $\widetilde{z}\in C^{\infty}(\R^{d})$
	also having compact support in $\Lambda_{\delta'}$. Indeed, assuming
	this holds, let $z\in C_{c}^{\infty}(\bar{\Lambda}_{\delta'})$. 
	Using the Fourier transform $\c Fu$ for $u\in L^{2}(\R^{d})$ define
	with $\epsilon>0$ functions 
	\[
	v_{\epsilon}:=\c F^{-1}[u_{\epsilon}](x)\,\,\,\text{with}\,u_{\epsilon}(\omega):=\frac{1}{\epsilon+|i\omega|^{2}}\c Fz(\omega),\,\,\omega\in\R^{d}.
	\]
	Note that $\c F(\Delta v_{\epsilon})(\omega)=|i\omega|^{2}(\epsilon+|i\omega|^{2})^{-1}\c Fz(\omega)$
	and therefore $\Delta v_{\epsilon}\rightarrow z$ in $L^{2}(\R^{d})$
	as $\epsilon\rightarrow0$. By the Paley-Wiener Theorem, \cite[Theorem II.7.22]{rudin2006functional}, $z$ satisfies the exponential growth condition
	$|\c Fz|(\omega)\leq\gamma_{N}(1+|\omega|)^{-N}\exp((\delta')^{-1}|\text{Im}(\omega)|)$,
	$\omega\in\mathbb{C}^{d}$, for all $N\in\N$ and suitable constants
	$\gamma_{N}$. A reverse application of the same theorem shows that
	$u_{\epsilon}\in C_{c}^{\infty}(\R^{d})$ is also supported in $\Lambda_{\delta'}$.
	Since both $G_{\delta}$ and $G_{0}$ are continuous, this means 
	\begin{align*}
	\norm{(G_{\delta}(\sigma)-G_{0}(\sigma))z}_{L^{2}(\R^{d})} & \lesssim\norm{z-\Delta v_{\epsilon}}+\norm{(G_{\delta}(\sigma)-G_{0}(\sigma))\Delta v_{\epsilon}}.
	\end{align*}
	The result follows from letting first $\delta\rightarrow0$ and then
	$\epsilon\rightarrow0$. 
	
	Assume therefore now that $z=\Delta\widetilde{z}$ with $\widetilde{z}$
	as above. By Taylor's theorem and Lemma \ref{lem:delta_neg_frac_scaling}
	we have 
	\begin{align*}
	(G_{\delta}(\sigma)z-G_{0}(\sigma)z)_{\delta} & =(-\Delta)^{-r}M_{\sigma(\cdot)-\sigma(0)}(-\Delta)^{r}z_{\delta}\\
	& =\sum_{i=1}^{d}\int_{0}^{1}(-\Delta)^{-r}M_{\partial_{i}\sigma(h\cdot)x_{i}}(-\Delta)^{r}z_{\delta}\,dh.
	\end{align*}
	From $M_{\partial_{i}\sigma(h\cdot)x_{i}}=M_{\partial_{i}\sigma(h\cdot)}(-\Delta)^{r}(-\Delta)^{-r}M_{x_{i}}$
	and (i) we find that 
	\[
	\norm{(G_{\delta}(\sigma)-G_{0}(\sigma))z}_{L^{2}(\R^{d})}\leq C \norm{\sigma}_{C^{2r'+1}(\R^d)} \sum_{i=1}^{d}\norm{(-\Delta)^{-r}M_{x_{i}}(-\Delta)^{r}z_{\delta}}.
	\]
	To prove the claim it is enough to show $\norm{(-\Delta)^{-r}M_{x_{i}}(-\Delta)^{r}z_{\delta}}\rightarrow0$,
	$i=1,\dots,d$ as $\delta\rightarrow0$. For this, write $r=m+r'$
	with $m\in\N_{0}$ and $0\leq r'<1$. Iterating the identity $x_{i}\Delta u=\Delta(x_{i}u)-2\partial_{i}u$
	for smooth $u$, we find $x_{i}\Delta^{m+1}\widetilde{z}_{\delta}=\Delta^{m+1}(x_{i}\widetilde{z}_{\delta})-2(m+1)\Delta^{m}\partial_{i}\widetilde{z}_{\delta}$
	such that
	\begin{align*}
	& (-\Delta)^{-r}M_{x_{i}}(-\Delta)^{r}z_{\delta}=\delta^2 (-1)^m (-\Delta)^{-r}M_{x_{i}}(-\Delta)^{r'}\Delta^{m+1}\widetilde{z}_{\delta}=:J_{1,\delta}+J_{2,\delta},\\
	& \quad \text{with}\quad J_{1,\delta}:=\delta^{2}(-1)^{m+1}(-\Delta)^{1-r'}M_{x_{i}}(-\Delta)^{r'}\widetilde{z}_{\delta},\\
	& \quad \qquad\quad J_{2,\delta}:=\delta^{2}(-1)^{m+1}2(m+1)(-\Delta)^{-r'}\partial_{i}(-\Delta)^{r'}\widetilde{z}_{\delta}.
	\end{align*}
	Lemma \ref{lem:delta_neg_frac_scaling} shows $\norm{J_{1,\delta}}=\norm{(-\Delta_{\delta})^{1-r'}M_{\delta x_{i}}(-\Delta_{\delta})^{r'}\widetilde{z}}_{L^{2}(\Lambda_{\delta})}$, and the moment inequality for the fractional Laplacian, cf. \cite[Chapter 2.7.4]{Yagi10}, gives 
	\[
	\norm{J_{1,\delta}} \lesssim \norm{\Delta_{\delta} M_{\delta x_i}(-\Delta_{\delta})^{r'}\tilde{z}}_{L^2(\Lambda_{\delta})}^{1-r'} \norm{M_{\delta x_i}(-\Delta_{\delta})^{r'}\tilde{z}}_{L^2(\Lambda_{\delta})}^{r'}. 
	\]
	Due to the convergence $(-\Delta_{\delta})^{\tilde{r}}\widetilde{z}\rightarrow(-\Delta_{0})^{\tilde{r}}\widetilde{z}$
	in $L^{2}(\R^{d})$ from Lemma \ref{lem:delta_frac_bound_conv}(i), we
	also have find for $\tilde{r}\geq 0$ that
	\[
	\norm{M_{\delta x_{i}}(-\Delta_{\delta})^{\tilde{r}}\widetilde{z}}_{L^{2}(\Lambda_{\delta})}\lesssim\norm{(-\Delta_{\delta})^{\tilde{r}}\widetilde{z}-(-\Delta_{0})^{\tilde{r}}\widetilde{z}}_{L^{2}(\R^{d})}+\norm{M_{\delta x_{i}}(-\Delta_{0})^{\tilde{r}}\widetilde{z}}_{L^{2}(\Lambda_{\delta})},
	\]
	which converges to zero using the dominated convergence theorem. Applying  the identity $x_{i}\Delta_{\delta} u=\Delta_{\delta}(x_{i}u)-2\partial_{i}u$ to $u=(-\Delta_{\delta})^{r'}\widetilde{z}$ thus yields $\norm{J_{1,\delta}}\rightarrow0$.
	With respect to $J_{2,\delta}$, note that $\partial_{i}(-\Delta)^{-1/2}:L^{2}(\Lambda)\rightarrow L^{2}(\Lambda)$ is continuous, as is its adjoint $(-\Delta)^{-1/2}\partial_{i}$ (extended
	to $L^{2}(\Lambda)$). If $r'<1/2$, then
	\begin{align*}
	 \norm{J_{2,\delta}} & \lesssim\delta^{2}\norm{\partial_{i}(-\Delta)^{r'}\widetilde{z}_{\delta}} \leq \delta^{2}\norm{(-\Delta)^{r'+1/2}\widetilde{z}_{\delta}} \\
	& = \delta^{2-2(r'+1/2)}\norm{(-\Delta_{\delta})^{r'+1/2}\widetilde{z}}_{L^2(\Lambda_{\delta})}.
	\end{align*}
	This vanishes as $\delta\rightarrow0$, using Lemmas \ref{lem:delta_neg_frac_scaling}
	and \ref{lem:delta_frac_bound_conv}. For $r'\geq1/2$, we have similarly
	\[
	\norm{J_{2,\delta}}\lesssim\delta^{2}\norm{(-\Delta)^{-1/2}\partial_{i}(-\Delta)^{r'}\widetilde{z}_{\delta}}\lesssim\delta^{2}\norm{(-\Delta)^{r'}\widetilde{z}_{\delta}}\rightarrow0.
	\]
	This finishes the proof.
	
\end{proof}

\subsection{Proof of Theorem \ref{thm:Burgers_CLT}: Bias in Burgers CLT}\label{sec:ProofOfBurgers}

As in Appendix~\ref{app:RegularityLinear}, let $(\lambda_{k},\Phi_k)_{k \in \N}$ denote the eigensystem of~$-\Delta$ on $\Lambda$ (recall that $\Lambda$ here corresponds to the shifted domain $(0,1)-x_0$, as assumed in the beginning of the proof section) such that in $d=1$, $\lambda_k=\pi^2 k^2$ and $\Phi_k=\sqrt{2}\sin(\pi k (x+x_0))$. Also, throughout this section we will assume that the assumptions of Theorem~\ref{thm:Burgers_CLT} are fulfilled. We frequently use that the space $W^{s,p}(\Lambda)$ is an algebra with respect to pointwise multiplication for $p\geq 2$ and $s>1/p$; cf. proof of Lemma \ref{lemLpReactionDiffusion}.

By Proposition~\ref{prop:pseudo_Fisher_info}(i-iii) and equation \eqref{eq:B_K_convergence}, it is enough to show that
\begin{equation}\label{eq:burgers_bias_claim}
\delta^{1-4\gamma}\norm{B^{*}K_{\delta}}^{2}R_{\delta}\r{\P}0,\quad \delta\to 0.
\end{equation}
Using integration by parts, we write 
\begin{align*}
\norm{B^{*}K_{\delta}}^{2}R_{\delta} & =\frac{1}{2}\int_{0}^{T}X_{\delta}^{\Delta}(t)\sc{X(t)^{2}}{\partial_{x}K_{\delta}}\dif t=\frac{1}{2}(U_{1,\delta}+U_{2,\delta}+U_{3,\delta}),
\end{align*}
where 
\begin{align*}
U_{1,\delta} & :=\int_{0}^{T}\bar{X}_{\delta}^{\Delta}(t)\sc{\bar{X}(t)^{2}}{\partial_{x}K_{\delta}}\dif t,  \\
U_{2,\delta} & :=  \int_{0}^{T}\widetilde{X}_{\delta}^{\Delta}(t)\sc{X(t)^{2}}{\partial_{x}K_{\delta}}\dif t +  \int_{0}^{T}\bar{X}_{\delta}^{\Delta}(t)\sc{\widetilde{X}(t)^{2}}{\partial_{x}K_{\delta}}\dif t\\
&  \quad + 2\int_{0}^{T}\bar{X}_{\delta}^{\Delta}(t)\sc{\bar{X}(t)(\widetilde{X}(t)-\widetilde{X}(t,0))}{\partial_{x}K_{\delta}}\dif t =: V_{1,\delta}+V_{2,\delta}+V_{3,\delta}, \\
U_{3,\delta} &:= 2\int_{0}^{T}\widetilde{X}(t,0)\bar{X}_{\delta}^{\Delta}(t) \sc{\bar{X}(t)}{\partial_{x}K_{\delta}}\dif t.
\end{align*}

We will treat each term separately in a series of lemmas below, and show that  $\delta^{1-4\gamma}U_{j,\delta}\r{\P}0$, for $j=1,2,3$. For  $U_{1,\delta}$, cf. Lemma~\ref{lem:U_1}, we use Gaussian calculus, while for $U_{2,\delta}$, we use the excess spatial regularity of $\widetilde X$ over $\bar X$, cf. Lemma~\ref{lem:U_2}. In Lemma~\ref{lem:U_3}, we treat $U_{3,\delta}$ by a Wiener-chaos decomposition of $\widetilde{X}(t,0)$.

\begin{lemma}\label{lem:Kprime}
Let $1<q<\infty$, $r\leq2+2\lceil\gamma\rceil$. Then, as $\delta\to0$, 
\begin{enumerate}
\item $(-\Delta_{\delta})^{-r/2}\partial_{x}K\rightarrow(-\Delta_{0})^{\lceil\gamma\rceil-r/2+1}L$
		in $L^{p}(\R^{d})$,  
\item $(-\Delta_{\delta})^{-r/2}\Phi_k(\delta\cdot)\partial_{x}K\r{}   \Phi_k(0)(-\Delta_{0})^{\lceil\gamma\rceil-r/2+1}L$
in $L^{2}(\R^{d})$. 
Moreover, $\sup_{0<\delta\leq 1}\norm{(-\Delta_{\delta})^{-r/2}\Phi_k(\delta\cdot)\partial_{x}K}_{L^{2}(\Lambda_{\delta})}\lesssim\lambda_{k}^{r'/2}$ for $r'>r$.
\end{enumerate}
\end{lemma}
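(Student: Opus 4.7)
\textbf{Part (i).} I plan to reduce the claim to Lemma~\ref{lem:delta_frac_bound_conv}(i). Under the hypotheses $K = \Delta^{\lceil\gamma\rceil}\widetilde K$ and $\widetilde K = \partial_x L$, one obtains $\partial_x K = \Delta^{\lceil\gamma\rceil+1}L$, hence, following the sign convention of Lemma~\ref{lem:fracDelta_K},
\[
(-\Delta_\delta)^{-r/2}\partial_x K = (-\Delta_\delta)^{\lceil\gamma\rceil+1-r/2}L.
\]
The bound $r \leq 2+2\lceil\gamma\rceil$ makes the exponent $\lceil\gamma\rceil+1-r/2$ nonnegative. Since $L \in H^{2\lceil\gamma\rceil+3}(\R)$ has compact support in some $\Lambda_{\delta'}$, the one-dimensional Sobolev embedding places $L$ in $W^{k,p}(\Lambda_{\delta'})$ for all $p \in (1,\infty)$ and $k \leq 2\lceil\gamma\rceil+2$; in particular $L \in W^{\lceil 2\lceil\gamma\rceil+2-r\rceil,p}(\Lambda_{\delta'})$. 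Lemma~\ref{lem:delta_frac_bound_conv}(i) then delivers the announced $L^p$-convergence.

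\textbf{Part (ii), convergence.} I would split $\Phi_k(\delta x) = \Phi_k(0) + \sigma_k(\delta x)$ with $\sigma_k := \Phi_k - \Phi_k(0)$, so that
\[
(-\Delta_\delta)^{-r/2}[\Phi_k(\delta\cdot)\partial_x K] = \Phi_k(0)(-\Delta_\delta)^{-r/2}\partial_x K + (-\Delta_\delta)^{-r/2}[\sigma_k(\delta\cdot)\partial_x K].
\]
The first summand converges by part (i). For the second, write $\partial_x K = (-\Delta_\delta)^{\lceil\gamma\rceil+1}L$ (on the support of $L$) and recognize the whole expression as $(-\Delta_\delta)^{\lceil\gamma\rceil+1-r/2}G_\delta(\sigma_k)L$, where $G_\delta(\sigma_k)$ is the operator from Lemma~\ref{lem:G_delta} with its parameter $r$ replaced by $\lceil\gamma\rceil+1$. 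Because $\sigma_k$ is smooth and $\sigma_k(0)=0$, Lemma~\ref{lem:G_delta}(ii) gives $G_\delta(\sigma_k)L \to 0$ in $L^2(\R)$. The positive fractional power $s := \lceil\gamma\rceil+1-r/2 \in [0,\lceil\gamma\rceil+1]$ is absorbed via the moment inequality for the fractional Laplacian (\cite[Chapter 2.7.4]{Yagi10}),
\[
\norm{(-\Delta_\delta)^s v}_{L^2} \lesssim \norm{(-\Delta_\delta)^{\lceil\gamma\rceil+1} v}_{L^2}^{s/(\lceil\gamma\rceil+1)} \norm{v}_{L^2}^{1-s/(\lceil\gamma\rceil+1)},
\]
applied to $v = G_\delta(\sigma_k)L$: the top-order factor equals $\norm{\sigma_k(\delta\cdot)\partial_x K}_{L^2} \lesssim \norm{\Phi_k}_{L^\infty}\norm{\partial_x K}_{L^2}\lesssim 1$ uniformly in $\delta$, while the $L^2$ factor vanishes (the boundary case $r=0$, in which the moment inequality trivializes, is handled directly from $\sigma_k(\delta\cdot)\partial_x K \to 0$ in $L^2$, which holds because $\vert\sigma_k(\delta x)\vert \lesssim \lambda_k^{1/2}\delta\vert x\vert$ on the compact support of $\partial_x K$).

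\textbf{Uniform bound and main obstacle.} For the estimate $\sup_{0<\delta\leq 1}\norm{(-\Delta_\delta)^{-r/2}[\Phi_k(\delta\cdot)\partial_x K]}_{L^2(\Lambda_\delta)} \lesssim \lambda_k^{r/2}$, my plan is to pass to the dual norm and apply a multiplier product estimate of the form $\norm{M_f g}_{-r,2} \lesssim \norm{f}_{B^r_{\infty,\infty}}\norm{g}_{-r,2}$ with $f = \Phi_k(\delta\cdot)$ and $g = \partial_x K$. Since $\norm{\Phi_k^{(j)}}_{L^\infty}\lesssim \lambda_k^{j/2}$ and $\delta \leq 1$, interpolation yields $\norm{\Phi_k(\delta\cdot)}_{B^r_{\infty,\infty}} \lesssim \lambda_k^{r/2}$; the remaining factor $\norm{\partial_x K}_{-r,2}$ is uniformly bounded by part~(i). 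The main obstacle here is precisely that $(-\Delta_\delta)^{-r/2}$ is \emph{not} uniformly bounded in $\delta$ on $L^2(\Lambda_\delta)$, because the bottom of the spectrum of $-\Delta_\delta$ tends to $0$ as $\delta\to 0$. Hence one cannot avoid exploiting the compact support of $\partial_x K$ together with a sharp multiplier argument; sharpening the loose estimate $\lambda_k^{\lceil r\rceil/2}$ that comes from plain integer-order Leibniz to the optimal $\lambda_k^{r/2}$ via a Besov-type product rule is the technically most delicate step.
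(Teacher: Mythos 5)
Parts (i) and the convergence claim in (ii) are correct. For (i) you follow exactly the paper's route: $\partial_x K=\Delta^{\lceil\gamma\rceil+1}L$, so $(-\Delta_\delta)^{-r/2}\partial_xK=(-\Delta_\delta)^{\lceil\gamma\rceil+1-r/2}L$ and Lemma~\ref{lem:delta_frac_bound_conv}(i) applies. For the convergence in (ii) you take a mild variant of the paper's argument: the paper writes the whole expression as $G_\delta(\Phi_k)K^{(\delta)}$ with $K^{(\delta)}=(-\Delta_\delta)^{-r/2}\partial_xK$ and combines Lemma~\ref{lem:G_delta}(i)--(ii) with part (i), whereas you split off the constant $\Phi_k(0)$ and kill the remainder $(-\Delta_\delta)^{\lceil\gamma\rceil+1-r/2}G_\delta(\sigma_k)L$ by interpolating between the uniformly bounded top-order norm and the vanishing $L^2$ norm. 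That is essentially the mechanism used inside the proof of Lemma~\ref{lem:G_delta}(ii) itself, so it is sound; it just reproves part of what Lemma~\ref{lem:G_delta} already packages.

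The uniform bound is where your argument has a genuine gap. You reduce it to the endpoint product estimate $\norm{M_f g}_{-r,2}\lesssim\norm{f}_{B^r_{\infty,\infty}}\norm{g}_{-r,2}$, but (a) this endpoint estimate is not a standard multiplier theorem --- the results you can actually cite (e.g.\ the one underlying Lemma~\ref{lem:G_delta}(i), from \cite[Section 2.8.2]{Triebel1983book}) require strictly more smoothness than $r$ of the multiplier, so they are asserted rather than available at the exponent you claim; and (b) you apply it on the $\delta$-dependent spaces $W^{-r,2}(\Lambda_\delta)$ without addressing uniformity in $\delta$, which is exactly the issue you yourself flag (the spectrum of $-\Delta_\delta$ collapses to $0$). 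The paper's route avoids both problems: by Lemma~\ref{lem:delta_neg_frac_scaling} the quantity equals $\norm{G_\delta(\Phi_k)K^{(\delta)}}_{L^2(\Lambda_\delta)}$, which after rescaling is controlled on the \emph{fixed} domain $\Lambda$ by the operator norm of $M_{\Phi_k}$ on $W^{-r,2}(\Lambda)$ times $\sup_\delta\norm{K^{(\delta)}}_{L^2(\Lambda_\delta)}$, the latter finite by part (i); the $\lambda_k$-dependence then comes from tracking the constant in Lemma~\ref{lem:G_delta}(i) through $\norm{\Phi_k}_{C^{2r'}}\lesssim\lambda_k^{r'}$ (if one wants the exact exponent $r/2$ rather than $r/2+\epsilon$, the clean way is to bound $\norm{M_{\Phi_k}}_{W^{m,2}(\Lambda)\to W^{m,2}(\Lambda)}\lesssim\lambda_k^{m/2}$ for integers $m$ by Leibniz and then interpolate the operator norm between consecutive integers, rather than invoking an endpoint Besov product rule). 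You should restructure the last step along these lines: rescale first, then apply a multiplier bound on the fixed domain.
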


\begin{proof}(i) Since $ (-\Delta_{\delta})^{-r/2}\partial_{x}K = (-\Delta_{\delta})^{\lceil\gamma\rceil-r/2+1}L$, the claim follows at once by  Lemma~\ref{lem:delta_frac_bound_conv}(i). 
	
(ii)  Let us write $(-\Delta_{\delta})^{-r/2}\Phi_k(\delta\cdot)\partial_{x}K=G_{\delta}(\Phi_k)K^{(\delta)}$ with $K^{(\delta)}:=(-\Delta_{\delta})^{-r/2}\partial_{x}K$ and $G_{\delta}(\cdot)$ from Lemma~\ref{lem:G_delta}. The two claims follow from (i) and Lemma~\ref{lem:G_delta}.	
\end{proof}

\begin{lemma}\label{lem:regularity_bounds}
For any small $\epsilon>0$, uniformly in $0\leq t\leq T$, $k\geq1$, $r\leq1+2\lceil\gamma\rceil$:
	\begin{enumerate}
		\item $|\widetilde{X}_{\delta}^{\Delta}(t)| \lesssim\delta^{2\gamma-\epsilon}$,  
	$|\bar{X}_{\delta}^{\Delta}(t)|\lesssim\delta^{2\gamma-1-\epsilon}$, 			\\ 	$|\sc{\widetilde{X}(t)^{2}}{\partial_{x}K_{\delta}}|\lesssim\delta^{2\gamma+1-\epsilon}$,
	$|\sc{X(t)^{2}}{\partial_{x}K_{\delta}}|\lesssim\delta^{2\gamma-\epsilon}$, 
	
		\item $|\sc{\Phi_k^2}{\partial_{x}K{}_{\delta}}|\lesssim\lambda_{k}^{r}\delta^{r-1/2-\epsilon}$, $|\sc{\widetilde{X}(t)}{\Phi_k}|\lesssim\lambda_{k}^{-\gamma-3/4+\epsilon}$. 
	\end{enumerate}
\end{lemma}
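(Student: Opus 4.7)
All four bounds follow by the same mechanism. The core tool is Lemma~\ref{lem:scalar_ineq}, which yields
\[
|\sc{u}{z_\delta}|\leq \delta^{r+1/2-1/p}\norm{u}_{r,p}\norm{(-\Delta_\delta)^{-r/2}z}_{L^q(\Lambda_\delta)}
\]
for $1/p+1/q=1$, $r\geq 0$, where $u$ is the ``solution part'' and $z\in\{K,\Delta K,\partial_x K\}$. The kernel factor is uniformly bounded in $\delta$: by Lemma~\ref{lem:fracDelta_K}(i) for $z=K$ or $\Delta K$ (using $(-\Delta_\delta)^{-r/2}\Delta K=-(-\Delta_\delta)^{1-r/2}K$), and by Lemma~\ref{lem:Kprime}(i) for $z=\partial_x K$, provided $r$ lies in the admissible range determined by $\lceil\gamma\rceil$. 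Regularity of $u$ is supplied by Proposition~\ref{prop:ExcessRegularity}: in the Burgers setting $d=1$ and $\Lambda$ is an interval, so $\bar s(p)=s^\ast=1/2+2\gamma$ for all $p\geq 2$, and Assumption \hyperref[assump:A]{$A_{s,\eta,p}$} holds for every $\eta<1$ by Lemma~\ref{lemLpReactionDiffusion} applied to $F(u)=\partial_x(-u^2/2)$ (order $\alpha=1$, $m=2$). Hence $\widetilde X\in C([0,T];W^{r,p}(\Lambda))$ for every $r<3/2+2\gamma$ and $p\geq 2$, while $\bar X\in C([0,T];W^{r,p}(\Lambda))$ for $r<1/2+2\gamma$.

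\textbf{Part (i), continued.} For the two bounds on $X^\Delta_\delta$, I rescale $X^\Delta_\delta=\delta^{-2}\sc{X}{(\Delta K)_\delta}$ via Lemma~\ref{lem:scaling}(i); choosing $r$ just below the regularity ceiling of $\widetilde X$ (respectively $\bar X$) and $p$ large enough so that $1/p<\epsilon/2$ turns the prefactor into $\delta^{r-3/2-1/p}$, which yields exactly $2\gamma-\epsilon$ (respectively $2\gamma-1-\epsilon$). For the bounds involving $\widetilde X^2$ and $X^2$, I will additionally use the algebra property $\norm{uv}_{r,p}\lesssim\norm{u}_{r,p}\norm{v}_{r,p}$ valid for $r>1/p$ in $d=1$ (from the proof of Lemma~\ref{lemLpReactionDiffusion}); after rewriting $\partial_x K_\delta=\delta^{-1}(\partial_x K)_\delta$, the same template gives the exponents $1+2\gamma-\epsilon$ and $2\gamma-\epsilon$, the latter forced by the $\bar X^2$ contribution to $X^2=\bar X^2+2\bar X\widetilde X+\widetilde X^2$.

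\textbf{Part (ii).} For $\sc{\Phi_k^2}{\partial_x K_\delta}$, I integrate by parts once and use the elementary $d=1$ identity $\partial_x(\Phi_k^2)=\sqrt{2}\,\pi k\,\Phi_{2k}$ to reduce to $\sc{\Phi_k^2}{\partial_x K_\delta}=-\sqrt{2}\,\pi k\,\sc{\Phi_{2k}}{K_\delta}$. Applying Lemma~\ref{lem:scalar_ineq} with $u=\Phi_{2k}$, $p=q=2$, using $\norm{\Phi_{2k}}_{s,2}=\lambda_{2k}^{s/2}$ and Lemma~\ref{lem:fracDelta_K}(i) for the kernel factor in the range $s<2\lceil\gamma\rceil+1/2$, then choosing $s=r-1/2-\epsilon$ and invoking $\lambda_{2k}\lesssim\lambda_k$, reproduces the announced $\lambda_k^r\delta^{r-1/2-\epsilon}$ form. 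For the second bound I exploit that $\Phi_k$ is a normalized eigenfunction of $-\Delta$ with eigenvalue $\lambda_k$: by self-adjointness, $\sc{\widetilde X(t)}{\Phi_k}=\lambda_k^{-r/2}\sc{(-\Delta)^{r/2}\widetilde X(t)}{\Phi_k}$, so Cauchy--Schwarz together with $\norm{\Phi_k}_{L^2}=1$ gives $|\sc{\widetilde X(t)}{\Phi_k}|\leq \lambda_k^{-r/2}\norm{\widetilde X(t)}_{r,2}$, and letting $r$ approach $3/2+2\gamma$ from below yields the exponent $-\gamma-3/4+\epsilon$.

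\textbf{Main obstacle.} The delicate part is the joint tuning of $(r,p,\epsilon)$: one must keep $r$ strictly below the regularity ceilings $3/2+2\gamma$ (for $\widetilde X$) and $1/2+2\gamma$ (for $\bar X$), so that Proposition~\ref{prop:ExcessRegularity} applies, while staying within the range where Lemmas~\ref{lem:fracDelta_K}(i) and~\ref{lem:Kprime}(i) furnish $\delta$-uniform kernel bounds, and simultaneously absorbing the $1/p$ loss into $\epsilon$ by taking $p\to\infty$. The algebra property $\norm{uv}_{r,p}\lesssim\norm u_{r,p}\norm v_{r,p}$ is the only non-Hilbertian ingredient needed and is automatic in the parameter regime that realizes the optimal exponents.
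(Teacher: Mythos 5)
Your proposal is correct and, for part (i) and the second bound of part (ii), follows essentially the paper's own route: obtain $\bar X\in C([0,T];W^{s,p})$ for $s<1/2+2\gamma$ and $\widetilde X\in C([0,T];W^{s,p})$ for $s<3/2+2\gamma$ (the paper cites Theorem~\ref{thm:ExistenceBurgersComplete}, which is exactly your Proposition~\ref{prop:ExcessRegularity} argument packaged), use the algebra property for the squared terms, and then apply Lemma~\ref{lem:scalar_ineq} with $r$ just below the relevant ceiling, $p$ large, and the kernel factors controlled by Lemmas~\ref{lem:fracDelta_K}(i) and~\ref{lem:Kprime}(i); the paper likewise uses $|\sc{\widetilde X(t)}{\Phi_k}|\leq\norm{\widetilde X(t)}_{r}\norm{\Phi_k}_{-r}=\lambda_k^{-r/2}\norm{\widetilde X(t)}_r$ with $r=3/2+2\gamma-2\epsilon$. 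The one place you genuinely diverge is the first bound of (ii): the paper estimates $\norm{\Phi_k^2}_{r,p}\lesssim\lambda_k^r$ directly and reuses the template of (i) with $\partial_x K_\delta$, whereas you integrate by parts and exploit $\partial_x(\Phi_k^2)=\sqrt2\,\pi k\,\Phi_{2k}$ to reduce to $\sc{\Phi_{2k}}{K_\delta}$. Your variant is arguably cleaner (it sidesteps estimating $\Phi_k^2$, which contains a constant component, in the Dirichlet fractional scale) and yields the sharper power $\lambda_k^{1/4+r/2-\epsilon/2}$; this implies the stated $\lambda_k^{r}$ only for $r\geq 1/2-\epsilon$, but that covers every value of $r$ at which the lemma is actually invoked later (namely $r=1/2+2\gamma-\varepsilon'$ and $r=1+2\gamma-\varepsilon'$ with $\gamma>1/4$), so there is no gap in substance.
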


\begin{proof}(i) We note that by $\gamma>1/4$ and Theorem~\ref{thm:ExistenceBurgersComplete}, for      $\epsilon'>0$ and $p\geq2$, 
\begin{equation}
\bar{X}\in C([0,T];W^{2\gamma+1/2-\epsilon',p}(\Lambda)),\quad\widetilde{X}\in C([0,T];W^{2\gamma+3/2-\epsilon',p}(\Lambda)).\label{eq:burgers_X_bar_tilde_regularities}
\end{equation}
Since the Sobolev spaces appearing herein are also algebras with respect to pointwise multiplication, we conclude that  $\widetilde{X}^{2}$ and, respectively, $X^{2}=(\bar{X}+\widetilde{X})^{2}$ belong to the same spaces as $\widetilde{X}$ and, respectively, $\bar{X}$.  
The first two inequalities follow from 	Lemma~\ref{lem:scalar_ineq}, applied to $\delta^{-2}(\Delta K)_{\delta}$
	with $r=1/2+2\gamma-\varepsilon'$ and by putting $\epsilon=\epsilon'+1/p$ for some small $\varepsilon'$ and large $p$.    
The last two inequalities follow similarly, by applying  Lemma~\ref{lem:scalar_ineq} to $\delta^{-1}(\partial_{x}K)_{\delta}$ with $r= 1+2\gamma -\varepsilon'$ and additionally invoking Lemmas~\ref{lem:fracDelta_K} and \ref{lem:Kprime}(i). 
	
(ii) Using the explict form of $\Phi_k$ and $\lambda_k$, by direct computations we deduce that $\norm{\Phi_k^2}_{r,p}\lesssim\lambda_{k}^{r}$. The first statement follows thus as in (i). The second one holds by \eqref{eq:burgers_X_bar_tilde_regularities} such
	that, with $r=3/2+2\gamma-2\epsilon$, $|\sc{\widetilde{X}(t)}{\Phi_k}|\leq\norm{\widetilde{X}(t)}_{r}\norm{\Phi_k}_{-r}\lesssim\lambda_{k}^{-r/2}$.
\end{proof}

\begin{lemma}\label{lem:U_2}
As $\delta\to0$, we have that  $\delta^{1-4\gamma}U_{2,\delta}\r{\P}0$.
	
\end{lemma}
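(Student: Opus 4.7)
My plan is to handle each of the three pieces $V_{1,\delta}$, $V_{2,\delta}$, $V_{3,\delta}$ in the decomposition of $U_{2,\delta}$ separately, showing $|V_{i,\delta}|\lesssim\delta^{4\gamma-2\epsilon}$ for any small $\epsilon>0$, from which $\delta^{1-4\gamma}U_{2,\delta}=o_{\P}(1)$ follows immediately. The first two pieces are direct applications of Lemma~\ref{lem:regularity_bounds}(i): for $V_{1,\delta}$ the uniform bounds $|\widetilde{X}_{\delta}^{\Delta}(t)|\lesssim\delta^{2\gamma-\epsilon}$ and $|\sc{X(t)^{2}}{\partial_{x}K_{\delta}}|\lesssim\delta^{2\gamma-\epsilon}$ give the required estimate, and for $V_{2,\delta}$ the bounds $|\bar{X}_{\delta}^{\Delta}(t)|\lesssim\delta^{2\gamma-1-\epsilon}$ and $|\sc{\widetilde{X}(t)^{2}}{\partial_{x}K_{\delta}}|\lesssim\delta^{2\gamma+1-\epsilon}$ yield the same order after integration in time.

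The main obstacle is $V_{3,\delta}$. A naive estimate using only $|\widetilde{X}(t,x)-\widetilde{X}(t,0)|\lesssim|x|$ on the support of $\partial_xK_\delta$ together with $\|\partial_xK_\delta\|_{L^1}\lesssim\delta^{-1/2}$ produces an inner product bound of order $\delta^{1/2}$, which is not sharp enough to absorb the factor $\delta^{1-4\gamma}$ when $\gamma>1/4$. The plan is to exploit two structural facts. First, the standing assumption $\widetilde{K}=\partial_{x}L$ with $L$ compactly supported implies $K=\partial_{x}^{2\lceil\gamma\rceil+1}L$ in $d=1$, so iterated integration by parts gives $\int y^{m}\partial_{x}K(y)\,\dif y=0$ for all integers $0\leq m\leq 2\lceil\gamma\rceil+1$; by a change of variables, $\partial_{x}K_{\delta}$ inherits the same vanishing moments. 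Second, the factorization $\widetilde{X}(t,x)-\widetilde{X}(t,0)=x\,g(t,x)$ with $g(t,x):=\int_{0}^{1}\partial_{x}\widetilde{X}(t,sx)\,\dif s$ allows an explicit factor of $x$ to be pulled out of the inner product.

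Combining these, I would write $\sc{\bar{X}(t)(\widetilde{X}(t)-\widetilde{X}(t,0))}{\partial_{x}K_{\delta}}=\sc{x\,\bar{X}(t)g(t)}{\partial_{x}K_{\delta}}$. The Sobolev embedding $W^{s,p}(\Lambda)\hookrightarrow C^{s-1/p}(\Lambda)$ in $d=1$, applied with the regularities $\bar{X}\in C([0,T];W^{2\gamma+1/2-\epsilon,p})$ and $\widetilde{X}\in C([0,T];W^{2\gamma+3/2-\epsilon,p})$ from \eqref{eq:burgers_X_bar_tilde_regularities}, shows $\bar{X}(t)g(t)\in C^{s}$ for $s$ arbitrarily close to $2\gamma+1/2$, uniformly in $t$ and $\omega$. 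Expanding $\bar{X}(t)g(t)$ by Taylor around $0$ to order $N=\lfloor s\rfloor$ gives a polynomial part plus a remainder $R$ satisfying $|R(t,x)|\lesssim|x|^{s}$. Multiplying by $x$ yields a polynomial of degrees $1,\dots,N+1$, and a case-by-case check shows $N+1\leq 2\lceil\gamma\rceil+1$ for all $\gamma>0$, so this polynomial pairs to zero with $\partial_{x}K_{\delta}$ by the vanishing-moment property. For the remainder, the support of $\partial_{x}K_{\delta}$ in $\{|x|\leq C\delta\}$ gives $|\sc{xR(t)}{\partial_{x}K_{\delta}}|\lesssim\delta^{s+1}\cdot\delta^{-1/2}\lesssim\delta^{2\gamma+1-\epsilon}$. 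Coupled with $|\bar{X}_{\delta}^{\Delta}(t)|\lesssim\delta^{2\gamma-1-\epsilon}$ from Lemma~\ref{lem:regularity_bounds}(i), this produces $|V_{3,\delta}|\lesssim\delta^{4\gamma-2\epsilon}$, completing the proof.

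The key subtle step---and the only genuinely non-routine one---is to Taylor-expand $\bar{X}g$ (not the full product $\bar{X}(\widetilde{X}-\widetilde{X}(t,0))$) and then multiply by $x$. This factorization trick is what buys the extra power of $\delta$ relative to a naive expansion of the product itself: expanding $\bar{X}(\widetilde{X}-\widetilde{X}(t,0))$ directly to the same order would only yield an inner product bound of order $\delta^{s-1/2}$, which is off by one power of $\delta$ and fails to kill $\delta^{1-4\gamma}$. This is precisely where the extra hypothesis $\widetilde{K}=\partial_xL$ in Theorem~\ref{thm:Burgers_CLT} enters, as it is what provides the additional vanishing moment of $\partial_xK$ needed to absorb the polynomial of degree $N+1$.
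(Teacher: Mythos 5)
Your proposal is correct, and for $V_{1,\delta}$, $V_{2,\delta}$ it coincides with the paper's argument; but for the critical term $V_{3,\delta}$ you take a genuinely different route. The paper expands $\widetilde{X}(t)=\sum_{k}\sc{\widetilde{X}(t)}{\Phi_k}\Phi_k$ in the Dirichlet eigenbasis, reduces $V_{3,\delta}$ to a sum over $k$ of $\int_0^T\sc{\widetilde{X}(t)}{\Phi_k}\,g_{k,\delta}(t)\,\dif t$ with $g_{k,\delta}(t)=\bar{X}_{\delta}^{\Delta}(t)\sc{\bar{X}(t)}{(\Phi_k-\Phi_k(0))\partial_{x}K_{\delta}}$, bounds $\E[|g_{k,\delta}(t)|]$ by the Gaussian covariance estimates of Lemma~\ref{lem:covFun2} together with the operator convergence $(-\Delta_{\delta})^{-1/2-\gamma}(\Phi_k(\delta\cdot)-\Phi_k(0))\partial_{x}K\to0$ from Lemmas~\ref{lem:Kprime}(ii) and \ref{lem:G_delta}, and concludes by dominated convergence in $k$ using $|\sc{\widetilde{X}(t)}{\Phi_k}|\lesssim\lambda_k^{-\gamma-3/4+\epsilon}$. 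You instead argue pathwise: factor $\widetilde{X}(t,x)-\widetilde{X}(t,0)=x\,g(t,x)$, Taylor-expand the H\"older cofactor $\bar{X}g$ to order $N=\lfloor 2\gamma+1/2-\epsilon\rfloor$, annihilate the resulting polynomial of degree at most $N+1\leq 2\lceil\gamma\rceil+1$ against the vanishing moments of $\partial_{x}K_{\delta}$ supplied by $K=\partial_x^{2\lceil\gamma\rceil+1}L$, and bound the remainder by the $O(\delta)$ support. Both routes consume the extra hypothesis $\widetilde{K}=\partial_x L$, but for different purposes (one more vanishing moment for you; representability of $\partial_xK$ as $\Delta^{\lceil\gamma\rceil+1}L$ so that Lemma~\ref{lem:delta_frac_bound_conv} applies for the paper). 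Your version avoids the eigenfunction expansion and Gaussian calculus entirely and produces an explicit rate $\delta^{1-4\gamma}V_{3,\delta}=O_{\P}(\delta^{1-\epsilon})$ rather than mere convergence to zero, and you correctly isolate the one non-routine step (expanding $\bar Xg$ rather than the full product, which is exactly what buys the missing power of $\delta$); the paper's version has the advantage of reusing the covariance machinery already built for $\bar{\cI}_{\delta}$ and $U_{1,\delta}$. One small caveat: the implied constants in your estimates are $\sup_t$ of random H\"older norms, hence a.s.\ finite rather than uniform in $\omega$, but since they are $\cO_{\P}(1)$ this is exactly what the $o_{\P}(1)$ conclusion requires.
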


\begin{proof}Lemma \ref{lem:regularity_bounds}(i) yields $V_{1,\delta}=O_{\P}(\delta^{4\gamma-\epsilon})$, $V_{2,\delta}=O_{\P}(\delta^{4\gamma-\epsilon})$ for any small $\epsilon>0$. With respect to $V_{3,\delta}$ expand $\widetilde{X}(t)=\sum_{k\geq1}\sc{\widetilde{X}(t)}{\Phi_k}\Phi_k$
	such that with $g_{k,\delta}(t):=\bar{X}_{\delta}^{\Delta}(t)\sc{\bar{X}(t)}{(\Phi_k-\Phi_k(0))\partial_{x}K_{\delta}}$, we deduce 
	\begin{align}
	|V_{3,\delta}| & =|2\sum_{k\geq1}\int_{0}^{T}\sc{\widetilde{X}(t)}{\Phi_k}g_{k,\delta}(t)\dif t| \lesssim \sum_{k\geq1}\lambda_{k}^{-3/4-\gamma+\epsilon}\int_{0}^{T}|g_{k,\delta}(t)|\dif t,
	\label{eq:burgers_g_k_delta_claims1}
	\end{align}
	where in the last inequality we used Lemma~\ref{lem:regularity_bounds}(ii). By the Cauchy-Schwarz inequality and Lemma \ref{lem:covFun2} 	with $z=z'=\Delta K$ and for $z=z'=v^{(\delta)}:=(\Phi_k(\delta\cdot)-\Phi_k(0))\partial_{x}K$
	we have
	\begin{align*}
	\delta^{1-4\gamma}\E[|g_{k,\delta}(t)|] &\leq\delta^{-2-4\gamma}\E[\sc{\bar{X}(t)}{(\Delta K)_{\delta}}^{2}]^{1/2}\E[\sc{\bar{X}(t)}{v_{\delta}^{(\delta)}}^{2}]^{1/2}\\
	& \lesssim\norm{(-\Delta_{\delta})^{1/2-\gamma}K}_{L^{2}(\Lambda_{\delta})}^{1/2}\norm{(-\Delta_{\delta})^{-1/2-\gamma}v^{(\delta)}}_{L^{2}(\Lambda_{\delta})}.
	\end{align*}
	Consequently, by Lemmas~\ref{lem:fracDelta_K} and \ref{lem:Kprime}(ii), we get 
	\begin{equation}
	\delta^{1-4\gamma}\E[|g_{k,\delta}(t)|]\rightarrow0,\quad\sup_{0<\delta\leq 1,k\geq1,0\leq t\leq T}(\delta^{1-4\gamma}\lambda_{k}^{-\gamma}\E[|g_{k,\delta}(t)|])<\infty,
	\end{equation}
which combined with \eqref{eq:burgers_g_k_delta_claims1} concludes the proof. 

\end{proof}

To deal with $U_{1,\delta}$ and $U_{3,\delta}$, we will prove two additional technical lemmas. For $x,x'\in\Lambda$ and $0 \leq t'\leq t\leq T$ set
\begin{align*}
c_{t,t'}^{\Delta}(x) & :=\E[\bar{X}_{\delta}^{\Delta}(t)\bar{X}(t',x)],\quad &c_{t,t'}(x,x')&:=\E[\bar{X}(t,x)\bar{X}(t',x')],\\
c_{t,t'}^{(1)}(x,x')&:=c_{t',t}^{\Delta}(x)c_{t,t'}^{\Delta}(x'), \quad &c_{t,t'}^{(2)}(x,x')&:=c_{t,t}^{\Delta}(x)c_{t',t'}^{\Delta}(x'). 
\end{align*}

\begin{lemma}\label{lem:gauss_regularity_bounds}
The following assertions hold true, with $c_{t, t}=c_{t, t}(x,x)$: 
	\begin{enumerate}
		\item $|\sc{c_{t,t}}{\partial_{x}K_{\delta}}|\lesssim\delta^{2\gamma-\epsilon}$
		and   $|\int_{0}^{t}\sc{c_{t,t'}}{\partial_{x}K_{\delta}}\dif t'|\lesssim\delta^{1/2+2\gamma-\epsilon}$,
		
		\item $|\int_{\Lambda^{2}}c_{t,t'}(x,x')^{2}\partial_{x}K_{\delta}(x)\partial_{x}K_{\delta}(x')\dif x\dif x'|\lesssim\delta^{4\gamma-\epsilon}$,
		
		\item $|\sc{c_{t,t}^{\Delta}c_{t',t}^{\Delta}}{\partial_{x}K_{\delta}}| \lesssim\delta^{6\gamma-1}$
		and $|\sc{c_{t',t'}^{\Delta}c_{t,t'}^{\Delta}}{\partial_{x}K_{\delta}}| \lesssim\delta^{6\gamma-1}$,
		
		\item for $i=1,2$, as $\delta\to0$,
		$$
		 \delta^{2-8\gamma}\int_{\Lambda^{2}}\int_{0}^{T}\int_{0}^{t}c_{t,t'}(x,x')c_{t,t'}^{(i)}(x,x')\partial_{x}K_{\delta}(x)\partial_{x}K_{\delta}(x')\dif t'\dif t\dif\, (x,x') \to 0. 
		$$

\end{enumerate}
\end{lemma}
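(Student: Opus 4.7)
Since $B=(-\Delta)^{-\gamma}$ commutes with the Laplacian, my plan is to diagonalize $\bar X$ in the eigensystem $(\lambda_k,\Phi_k)$. Writing $\bar X(t,\cdot)=\sum_{k\ge1}Z_k(t)\Phi_k$ where the $Z_k$ are independent real-valued Ornstein--Uhlenbeck processes with covariance
\[
\mu_k(t,t')=\E[Z_k(t)Z_k(t')]=\tfrac{1}{2\vartheta}\lambda_k^{-1-2\gamma}\bigl(e^{-\vartheta\lambda_k|t-t'|}-e^{-\vartheta\lambda_k(t+t')}\bigr),
\]
one obtains the expansions $c_{t,t'}(x,y)=\sum_k\mu_k(t,t')\Phi_k(x)\Phi_k(y)$ and $c_{t,t'}^\Delta(x)=\sum_k\mu_k(t,t')\sc{\Delta K_\delta}{\Phi_k}\Phi_k(x)$. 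In particular, the pairing $\sc{c_{t,t}}{\partial_xK_\delta}$ in (i) is to be read as the diagonal integral $\int c_{t,t}(y,y)\partial_xK_\delta(y)\dif y$ arising from the Wick decomposition in which the relevant Gaussian factors pair at equal spatial positions. Every integral in the lemma thus reduces to a (possibly multiple) weighted sum of inner products $\sc{\Phi_j\Phi_k}{\partial_xK_\delta}$ and $\sc{\Delta K_\delta}{\Phi_j}$, which I will then control via Lemma~\ref{lem:regularity_bounds}(ii) combined with Lemmas~\ref{lem:scalar_ineq} and \ref{lem:fracDelta_K}, choosing the Sobolev index $r$ as large as the summability constraint $\sum_k\lambda_k^{r-1-2\gamma}<\infty$ (using $\lambda_k\asymp k^2$ in $d=1$) permits.

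For (i), expanding $\int c_{t,t}(y,y)\partial_xK_\delta(y)\dif y=\sum_k\mu_k(t,t)\sc{\Phi_k^2}{\partial_xK_\delta}$ and applying $|\mu_k(t,t)|\lesssim\lambda_k^{-1-2\gamma}$ together with Lemma~\ref{lem:regularity_bounds}(ii) at $r$ slightly below $1/2+2\gamma$ produces the factor $\delta^{r-1/2-\epsilon}\lesssim\delta^{2\gamma-\epsilon'}$. The second estimate of (i) follows identically, using the improved bound $\int_0^t|\mu_k(t,t')|\dif t'\lesssim\lambda_k^{-2-2\gamma}$ that permits $r$ just below $3/2+2\gamma$; this actually yields $\delta^{1+2\gamma-\epsilon}$, stronger than claimed. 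For (ii), expanding $c_{t,t'}(x,y)^2=\sum_{j,k}\mu_j\mu_k\Phi_j(x)\Phi_k(x)\Phi_j(y)\Phi_k(y)$ rewrites the target as $\sum_{j,k}\mu_j(t,t')\mu_k(t,t')\sc{\Phi_j\Phi_k}{\partial_xK_\delta}^2$; bounding $|\sc{\Phi_j\Phi_k}{\partial_xK_\delta}|$ by integration by parts plus Lemma~\ref{lem:scalar_ineq} (using the product-to-sum formula to control $\norm{\Phi_j\Phi_k}_{r,p}\lesssim(j+k)^r$) and summing under a sharp choice of $r$ yields the claimed $\delta^{4\gamma-\epsilon}$.

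For (iii), an analogous expansion of $c_{t,t}^\Delta c_{t',t}^\Delta$ produces a double sum where the extra factors $|\sc{\Delta K_\delta}{\Phi_j}|=\delta^{-2}|\sc{(\Delta K)_\delta}{\Phi_j}|$ are controlled via Lemma~\ref{lem:scalar_ineq} with $u=\Phi_j$, $z=\Delta K$, combined with Lemma~\ref{lem:fracDelta_K}(i) applied to $(-\Delta_\delta)^{-r/2}\Delta K=-(-\Delta_\delta)^{1-r/2}K$; balancing the powers of $\delta$ and $\lambda_j$ yields $\delta^{6\gamma-1}$ for both variants. For (iv), the same expansion turns the integrand $c_{t,t'}c_{t,t'}^{(i)}$ into a triple sum over modes, bounded by the same inner product estimates; the double time integration $\int_0^T\int_0^t|\mu_j(t,t')\mu_k(t,t')\mu_l(t,t')|\dif t'\dif t$ provides the additional decay in the indices that is needed to restore summability. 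The prefactor $\delta^{2-8\gamma}$ then exactly cancels the $\delta^{8\gamma}$ accumulated from the four localized kernels, leaving a strictly positive power of $\delta$, so the expression vanishes as $\delta\to0$.

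The main obstacle will be the simultaneous balancing of two opposing requirements on the Sobolev index $r$: it must be large enough so that the scaling factors $\delta^{r-1/2}$ from Lemma~\ref{lem:regularity_bounds}(ii) produce the stated power of $\delta$, but small enough so that the mode-weighted sums $\sum_k\lambda_k^{r-1-2\gamma}$ and their double- and triple-indexed analogues remain summable, a tension that becomes sharpest when $\gamma$ sits just above the lower threshold $1/4$. A secondary subtlety arises in (iv), where the needed polynomial decay in $\lambda_j,\lambda_k$ must be extracted from the \emph{time-integrated} product of the $\mu$'s, since the pointwise bound $|\mu_k|\lesssim\lambda_k^{-1-2\gamma}$ alone is insufficient for a convergent triple sum.
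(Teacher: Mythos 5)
Your plan for (i)--(iii) is essentially workable and, for (i) and (ii), coincides with the paper's argument: expand via \eqref{eq:c_t_tprime_x_xprime} and invoke Lemma~\ref{lem:regularity_bounds}(ii). For (iii) the paper takes a shorter route than your double eigenfunction sum: it uses the closed semigroup form \eqref{eq:c_t_tprime_x} of $c^{\Delta}_{t,t'}$ to get $\norm{c^{\Delta}_{t',t}}_{2\gamma}\lesssim\delta^{2\gamma}$, the algebra property of $W^{2\gamma}(\Lambda)$ (valid since $2\gamma>1/2$), and then Lemma~\ref{lem:scalar_ineq} with $r=2\gamma$; this gives $\delta^{6\gamma-1}$ exactly, whereas your mode-by-mode balancing will typically cost an extra $\epsilon$ (harmless for the application in Lemma~\ref{lem:U_1}, but worth noting). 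Your diagonal reading of $\sc{c_{t,t}}{\partial_xK_\delta}$ in (i) matches the paper's usage.

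The genuine gap is in (iv). The claim that the prefactor $\delta^{2-8\gamma}$ leaves ``a strictly positive power of $\delta$'' is false: the scaling is exactly critical. Pairing $c^{\Delta}_{t,t}\Phi_k$ against $\partial_xK_\delta=\delta^{-1}(\partial_xK)_\delta$ via Lemma~\ref{lem:scalar_ineq} with $r=2\gamma$ and the algebra property gives a factor $\lesssim\delta^{4\gamma-1}\lambda_k^{\gamma}$ per kernel pair, so the full quadruple integral is only $\cO(\delta^{8\gamma-2})$ and the rescaled quantity is $\cO(1)$, not $o(1)$; no estimate that puts absolute values inside the sums and time integrals can do better, and the extra decay in $k$ you extract from the time integration only affects summability, never the power of $\delta$. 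What actually makes (iv) true is a cancellation: writing the expression as $\sum_k a_{k,\delta}$ with $b_{t,k,\delta}:=\delta^{1-4\gamma}\sc{c^{\Delta}_{t,t}}{\Phi_k\partial_xK_\delta}$, the paper shows via Proposition~\ref{prop:semigroup_props} and Lemmas~\ref{lem:fracDelta_K}, \ref{lem:Kprime}(ii) that $b_{t,k,\delta}$ converges to a constant multiple of $\Phi_k(0)\,\sc{(-\Delta_0)^{\lceil\gamma\rceil-\gamma}\widetilde{K}}{(-\Delta_0)^{\lceil\gamma\rceil-\gamma}\partial_x\widetilde{K}}_{L^2(\R)}$, and this limit vanishes because the fractional Laplacian on $\R$ commutes with $\partial_x$, so the pairing is $\tfrac12\int_\R\partial_x\bigl(((-\Delta_0)^{\lceil\gamma\rceil-\gamma}\widetilde{K})^2\bigr)\dif x=0$. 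This antisymmetry — the point at which the derivative structure of the Burgers nonlinearity and the hypotheses on $K$ enter — is the entire content of (iv) and is absent from your proposal; once it is in place, a uniform bound $|a_{k,\delta}|$ summable in $k$ plus dominated convergence finishes the proof.
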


\begin{proof}
	
By \eqref{eq:SPDElinear}, and using the representation $W(t)=\sum_{k\geq1}\Phi_k\beta_{k}(t)$, where  	$\beta_{k}, k\geq 1$, are  independent standard Brownian motions, we have that 
\begin{align*}
	\bar{X}(t,x) & =\sum_{k\geq1}\lambda_{k}^{-\gamma} \Phi_k(x) \int_{0}^{t}e^{-(t-r)\vartheta\lambda_{k}}\dif \beta_{k}(r),\\
	\bar{X}_{\delta}^{\Delta}(t) & =\sum_{k\geq1}\lambda_{k}^{-\gamma}\sc{\Phi_k}{\Delta K_{\delta}}\int_{0}^{t}e^{-(t-r)\vartheta\lambda_{k}}\dif \beta_{k}(r).
	\end{align*}
Consequently, using the independence of the $\beta_{k}$'s, we obtain 
\begin{align}
	c_{t,t'}(x,x') & = \frac{1}{2\vartheta}\sum_{k\geq1}e^{-(t-t')\vartheta\lambda_{k}}(1-e^{-2t'\vartheta\lambda_{k})}\lambda_{k}^{-1-2\gamma}\Phi_k(x)\Phi_k(x'),\label{eq:c_t_tprime_x_xprime}\\
	c_{t,t'}^{\Delta}(x) & =\frac{1}{2\vartheta}\left((S_{\vartheta}(2t')-I)S_{\vartheta}(t-t')(-\Delta)^{-2\gamma}K_{\delta}\right)(x).\label{eq:c_t_tprime_x}
	\end{align}	
\smallskip 
\noindent (i) By \eqref{eq:c_t_tprime_x_xprime} and  Lemma~\ref{lem:regularity_bounds}(ii) with $r'=1/2+2\gamma -\varepsilon'$ and $\varepsilon=\varepsilon'$, we deduce 
\begin{align*}
|\sc{c_{t,t}}{\partial_{x}K_{\delta}}| & \lesssim\sum_{k\geq1}\lambda_{k}^{-1-2\gamma}\left|\sc{\Phi_k^{2}}{\partial_{x}K_{\delta}}\right|
 \lesssim \delta^{2\gamma - 2\varepsilon'}.
\end{align*}
Analogously, the second result follows after integrating \eqref{eq:c_t_tprime_x_xprime} with respect to $t'$, and using  Lemma~\ref{lem:regularity_bounds}(ii) with $r'=1+2\gamma -\varepsilon'$, 
\begin{align*}
|\int_{0}^{t}\sc{c_{t,t'}}{\partial_{x}K_{\delta}}\dif t'|\lesssim\sum_{k\geq1}\lambda_{k}^{-2-2\gamma}\left|\sc{\Phi_k^{2}}{\partial_{x}K_{\delta}}\right| \lesssim \delta^{1/2+2\gamma - 2\varepsilon'}. 
\end{align*}

\smallskip
\noindent
(ii) The proof is analogous to (i).

\smallskip
\noindent 
(iii) By Lemmas~\ref{lem:delta_neg_frac_scaling} and \ref{lem:fracDelta_K},  $\norm{c_{t',t}^{\Delta}}_{2\gamma}\lesssim\norm{(-\Delta)^{-\gamma}K_{\delta}}\lesssim\delta^{2\gamma}$, and consequently by the algebra property of Sobolev spaces   $\norm{c_{t,t}^{\Delta}c_{t',t}^{\Delta}}_{2\gamma}\lesssim\delta^{4\gamma}$.  Using this and $\partial_{x}K_{\delta}=\delta^{-1}(\partial_{x}K)_{\delta}$, the desired result follows by applying Lemma~\ref{lem:scalar_ineq} with $r=2\gamma$ and $p=2$ and consequently using Lemma~\ref{lem:Kprime}(i).

\smallskip
\noindent
(iv)  We consider only the case $i=1$, and one can treat the case $i=2$ similarly.  
Using \eqref{eq:c_t_tprime_x_xprime} and \eqref{eq:c_t_tprime_x} we write
$$	 \delta^{2-8\gamma}\int_{\Lambda^{2}}\int_{0}^{t}c_{t,t'}(x,x')c_{t,t}^{\Delta}(x)c_{t',t'}^{\Delta}(x')\partial_{x}K_{\delta}(x)\partial_{x}K_{\delta}(x')\dif t'\dif x\dif x'=\sum_{k\geq1}a_{k,\delta}
$$
with $a_{k,\delta} := \int_{0}^{t}e^{-(t-t')\vartheta\lambda_{k}} (e^{-2t'\vartheta\lambda_{k}}-1)\lambda_{k}^{-1-2\gamma} b_{t,k,\delta}b_{t',k,\delta}\dif t'$, and where, using  Lemmas~\ref{lem:scaling} and \ref{lem:delta_neg_frac_scaling}, 
\begin{align*}
	b_{t,k,\delta}:= & \delta^{1-4\gamma}\sc{c_{t,t}^{\Delta}}{\Phi_k\partial_{x}K_{\delta}}=\delta^{1-4\gamma}\sc{(-\Delta)^{\gamma}c_{t,t}^{\Delta}}{(-\Delta)^{-\gamma}\Phi_k\partial_{x}K_{\delta}}\\
	= & \frac{1}{2\vartheta}\sc{(S_{\vartheta,\delta}(2t\delta^{-2})-I)(-\Delta_{\delta})^{-\gamma}K}{(-\Delta_{\delta})^{-\gamma}\Phi_k(\delta\cdot)\partial_{x}K}_{L^{2}(\Lambda_{\delta})}.
\end{align*}
Due to Proposition~\ref{prop:semigroup_props}(i) and Lemmas~\ref{lem:fracDelta_K},
\ref{lem:Kprime}(ii), note that $\sup_{\delta>0,0\leq t\leq T}|b_{t,k,\delta}|\lesssim\lambda_{k}^{\gamma}$. Moreover, using in addition Proposition~\ref{prop:semigroup_props}(iii), we also deduce that, 
as $\delta\rightarrow0$,
\begin{equation}\label{eq:b_t_k_delta}
b_{t,k,\delta}\rightarrow  -\frac{1}{2\vartheta}\Phi_k(0)\sc{(-\Delta_{0})^{\lceil\gamma\rceil-\gamma}\widetilde{K}}{(-\Delta_{0})^{\lceil\gamma\rceil-\gamma}\partial_{x}\widetilde{K}}_{L^{2}(\R^{d})}.
\end{equation}
Since the fractional Laplacian on $\R^{d}$ is a convolution operator and therefore commutes
with the derivative $\partial_{x}$, after integration by parts, we deduce that the limit in \eqref{eq:b_t_k_delta} vanishes. In all, we have shown that $\sup_{\delta>0,0\leq t\leq T}|a_{k,\delta}|\lesssim\lambda_{k}^{-2-\gamma}$ and $a_{k,\delta}\rightarrow0$, and hence, by the dominated convergence theorem the result follows.
\end{proof}

\begin{lemma}\label{lem:gauss_cov_bounds}
For any any $0\leq t,t'\leq T$, we have
\begin{enumerate}
\item  $|\E[\bar{X}_{\delta}^{\Delta}(t)\bar{X}_{\delta}^{\Delta}(t')]|\lesssim\delta^{4\gamma-1-\epsilon}|t-t'|^{-1/2+\epsilon}$, for $t\neq t'$, and $\epsilon>0$,  
\item $\delta^{-4\gamma}\E[\sc{\bar{X}(t)}{\partial_{x}K_{\delta}}\sc{\bar{X}(t')}{\partial_{x}K_{\delta}}]\rightarrow0$, 
\item $\delta^{1-4\gamma}\E[\bar{X}_{\delta}^{\Delta}(t')\sc{\bar{X}(t)}{\partial_{x}K_{\delta}}]\rightarrow0$, as $\delta\to 0$,.
\end{enumerate}
\end{lemma}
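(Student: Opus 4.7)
The plan is to derive closed-form eigenexpansions for all three expectations via Itô isometry, leveraging the assumption $B=(-\Delta)^{-\gamma}$ so that $B$ commutes with the Dirichlet semigroup $S_\vartheta$. Writing $\bar X(t,x)=\sum_{k\ge1}\lambda_k^{-\gamma}\Phi_k(x)\int_0^t e^{-\vartheta\lambda_k(t-s)}d\beta_k(s)$ with independent scalar Brownian motions $\beta_k$, every quantity reduces to a series of the form $\sum_k\lambda_k^{-2\gamma}(\text{spatial factor})\cdot G_k(t,t')$, with temporal factor $G_k(t,t')=[e^{-\vartheta\lambda_k|t-t'|}-e^{-\vartheta\lambda_k(t+t')}]/(2\vartheta\lambda_k)$, and the identity $\langle\Phi_k,\Delta K_\delta\rangle=-\lambda_k\langle\Phi_k,K_\delta\rangle$ will be used repeatedly in (i) and (iii).

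For (i) I would extract the $|t-t'|^{-1/2+\epsilon}$ factor from the elementary inequality $x^{1/2-\epsilon}e^{-x}\le C$ applied at $x=\vartheta\lambda_k|t-t'|$, which converts the series bound into $|t-t'|^{-1/2+\epsilon}\sum_k\lambda_k^{-3/2-2\gamma+\epsilon}\langle\Phi_k,\Delta K_\delta\rangle^{2}=|t-t'|^{-1/2+\epsilon}\|(-\Delta)^{1/4-\gamma+\epsilon/2}K_\delta\|^{2}$. The scaling identity Lemma~\ref{lem:delta_neg_frac_scaling} produces the $\delta^{4\gamma-1-\epsilon}$ factor, and Lemma~\ref{lem:fracDelta_K} applied through the representation $K=\Delta^{\lceil\gamma\rceil}\widetilde K$ bounds the remaining $L^{2}(\Lambda_\delta)$-norm uniformly in $\delta$, which yields the stated bound after ensuring that the resulting exponent constraint $1/2+\lceil\gamma\rceil-\gamma+\epsilon/2\le\lceil\gamma\rceil$ holds for small $\epsilon$, using $\gamma>1/4$.

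Parts (ii) and (iii) are the crux: the direct Cauchy--Schwarz bounds via Lemmas~\ref{lem:covFun2} and~\ref{lem:Kprime} yield exactly $\delta^{-4\gamma}\E[\cdot]=O(1)$ and $\delta^{1-4\gamma}\E[\cdot]=O(1)$, saturating the scaling, so the $o(1)$ conclusion must come from cancellation. The source of cancellation is the additional hypothesis $\widetilde K=\partial_x L$, which combined with Assumption~\ref{assump:K} forces $K=\partial_x^{2\lceil\gamma\rceil+1}L$ in $d=1$; iterated integration by parts then gives the vanishing moments $\int y^j K(y)\,dy=0$ for every $j\le 2\lceil\gamma\rceil$, equivalently $\hat K(\xi)=(i\xi)^{2\lceil\gamma\rceil+1}\hat L(\xi)$. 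Expanding $\Phi_k(\delta y+x_0)=\sqrt{2}\{\sin(\pi kx_0)\cos(\pi k\delta y)+\cos(\pi kx_0)\sin(\pi k\delta y)\}$ and writing $\alpha_k=\mathrm{Im}\,\hat K(\pi k\delta)$, $\beta_k=\mathrm{Re}\,\hat K(\pi k\delta)$, the product $\langle\Phi_k,K_\delta\rangle\langle\Phi_k,\partial_x K_\delta\rangle$ becomes the purely oscillatory expression $\pi k\delta\,\mathrm{Im}[e^{-2ik\pi x_0}\hat K(\pi k\delta)^{2}]$, with an extra factor of $k\delta$ compared to the absolute-value bound; for (ii) the analogous decomposition isolates an oscillatory piece $\mathrm{Re}[e^{-2ik\pi x_0}\hat K^2]$ and a non-oscillatory piece $|\hat K|^{2}/2$ in $\langle\Phi_k,\partial_x K_\delta\rangle^{2}$.

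I would then split the $k$-sum at $k\sim\delta^{-1}$. In the low-frequency range $k\delta\le 1$ the bound $|\hat K(\pi k\delta)|\lesssim (k\delta)^{2\lceil\gamma\rceil+1}$ provides ample decay; in the high-frequency range $k\delta\ge 1$ summation by parts against the oscillatory phase $e^{-2ik\pi x_0}$, whose partial sums are bounded since $x_0$ is an interior point, combined with the Sobolev decay from $L\in H^{2\lceil\gamma\rceil+3}$ extracts the missing $\delta$-factor and gives (iii) and the oscillatory half of (ii). For the non-oscillatory half of (ii), the apparent nonzero scaling limit $\int_0^\infty|\hat K(\xi)|^{2}\xi^{-4\gamma}\,d\xi$ coming from the $1$-term in $G_k$ is precisely cancelled by the Riemann-sum limit of the subtracted exponential $e^{-\vartheta\lambda_k(t+t')}$: since $t+t'>0$ the latter gives uniform high-frequency exponential decay and, after the change of variable $\xi=\pi k\delta$, converges to the same integral with opposite sign. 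The main obstacle is making this last cancellation rigorous uniformly in $t,t'\in[0,T]$; my fallback is to perform a single (for (iii)) or double (for (ii)) integration by parts and Taylor-expand the smooth covariance kernels $c^{\Delta}_{t',t}$ and $c_{t,t'}$ at $x_0$, using the vanishing moments of $K_\delta$ to annihilate the polynomial part and controlling the fractional-order remainder through Lemmas~\ref{lem:fracDelta_K}, \ref{lem:Kprime}, and \ref{lem:scalar_ineq} together with the Sobolev regularity of $c_{t,t'}$ away from the diagonal $x=x'$.
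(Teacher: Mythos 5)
Part (i) of your proposal is essentially the paper's argument written in eigenfunction coordinates: the inequality $x^{1/2-\epsilon}e^{-x}\le C$ applied eigenvalue-wise is exactly Proposition~\ref{prop:semigroup_props}(i) with $h=1/2-\epsilon$, and the scaling and uniform bounds are the same Lemmas \ref{lem:delta_neg_frac_scaling} and \ref{lem:fracDelta_K} (your exponent check should read $1/4+\lceil\gamma\rceil-\gamma+\epsilon/2$ rather than $1/2+\lceil\gamma\rceil-\gamma+\epsilon/2$, but nothing hinges on it). The real divergence is in (ii) and (iii). The paper never fights the oscillatory sum: by Lemma~\ref{lem:covFun1} and the identity $\int_0^a S_{\vartheta,\delta}(2s+c)u\,\dif s=\tfrac{1}{2\vartheta}(-\Delta_\delta)^{-1}(S_{\vartheta,\delta}(c)-S_{\vartheta,\delta}(2a+c))u$, the expectation in (iii) equals exactly $\tfrac{1}{2\vartheta}\sc{(S_{\vartheta,\delta}((t+t')\delta^{-2})-S_{\vartheta,\delta}(|t-t'|\delta^{-2}))(-\Delta_{\delta})^{-\gamma}K}{(-\Delta_{\delta})^{-\gamma}\partial_{x}K}_{L^2(\Lambda_\delta)}$; off the diagonal both semigroups decay, and on the diagonal the strong convergences of Lemmas~\ref{lem:fracDelta_K}(ii) and \ref{lem:Kprime} identify the limit as $-\tfrac{1}{2\vartheta}\sc{v}{\partial_x v}_{L^2(\R)}=0$ with $v=(-\Delta_0)^{\lceil\gamma\rceil-\gamma}\widetilde K$. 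That cancellation is the antisymmetry of $\partial_x$ (your observation that $\mathrm{Re}(i\xi|\hat K|^2)=0$), valid for any real $K$; the hypothesis $\widetilde K=\partial_x L$ is not its source --- it is only needed so that negative powers $(-\Delta_\delta)^{-r/2}\partial_x K$ remain uniformly bounded (Lemma~\ref{lem:Kprime}).

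Two concrete gaps remain in your execution. First, for (iii), a single summation by parts against $e^{2\pi i k x_0}$ does not extract any extra factor of $\delta$: after substituting $\xi=\pi k\delta$ the envelope is $g(\pi k\delta)$ with $g(\xi)=\xi^{1-4\gamma}|\hat K(\xi)|^2$ a fixed function, so both $\sup_k|c_k|$ and $\sum_k|c_{k+1}-c_k|\approx\int|g'|$ are of order one uniformly in $\delta$, and Abel summation returns $O(1)$, not $o(1)$. One needs a second summation by parts with pointwise control of $(|\hat K|^2)''$ (which your $L^2$-Sobolev hypothesis on $L$ gives only in an averaged sense), or, far more simply, the paper's recognition of the sum as a rescaled inner product converging strongly. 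Second, and fatally for (ii) as you attempt it: the claimed cancellation between the ``$1$'' term and the ``$e^{-\vartheta\lambda_k(t+t')}$'' term does not occur. The exponential term is supported on the $\delta$-independent frequency range $k\lesssim(t+t')^{-1/2}$, where $\hat K(\pi k\delta)\to0$, so its contribution vanishes and cannot cancel the Riemann-sum limit $\tfrac{1}{2\pi\vartheta}\int_0^\infty\xi^{-2-4\gamma}|\widehat{\partial_xK}(\xi)|^2\dif\xi>0$ of the non-oscillatory main term; indeed $\delta^{-4\gamma}\E[\sc{\bar X(t)}{\partial_xK_\delta}^2]$ converges to that positive constant. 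So (ii) can only hold for $t\ne t'$ --- which is what the paper's semigroup-decay argument actually delivers and all that the application in Lemma~\ref{lem:U_3} requires, the diagonal being a null set in the double time integral. The obstacle you flag at the end is real: that last cancellation cannot be made rigorous because it is not there.
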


\begin{proof} Using \eqref{eq:covInequ} with $z_{\delta}=z'_{\delta} = (\Delta K)_{\delta}$ yields 
\begin{align*}
 & |\E[\bar{X}_{\delta}^{\Delta}(t)\bar{X}_{\delta}^{\Delta}(t')]|
 =\delta^{-4}|\E[\sc{\bar{X}(t)}{(\Delta K)_{\delta}}\sc{\bar{X}(t')}{(\Delta K)_{\delta}}]|\\
& \quad \lesssim\delta^{4\gamma-2}\norm{S_{\vartheta,\delta}(|t-t'|\delta^{-2}) (-\Delta_{\delta})^{1/2-\gamma}K}_{L^{2}(\Lambda_{\delta})} 
	 \norm{(-\Delta_{\delta})^{1/2-\gamma}K}_{L^{2}(\Lambda_{\delta})}.
	\end{align*}
Then, (i) follows by Proposition~\ref{prop:semigroup_props}(i) with $h=1/2-\epsilon$, combined with Lemma~\ref{lem:fracDelta_K}, where we take $r=1+\gamma-\varepsilon$. 
Assertion (ii) follows similarly by applying \eqref{eq:covInequ} with $z_\delta=z'_\delta=(\partial_{x}K)_\delta$ and using Lemma~\ref{lem:Kprime}(i). For (iii), in view of Lemma~\ref{lem:covFun1} with $B_{\delta}^{*}=I$, we have
	\begin{align*}
	A_{\delta}&:= \delta^{1-4\gamma}\E[\bar{X}_{\delta}^{\Delta}(t')\sc{\bar{X}(t)}{\partial_{x}K_{\delta}}]=\delta^{-2-4\gamma}\E[\sc{\bar{X}(t')}{(\Delta K)_{\delta}}\sc{\bar{X}(t)}{(\partial_{x}K)_{\delta}}]\\
	& =\int_{0}^{(t\wedge t')\delta^{-2}}\sc{S_{\vartheta,\delta}(|t-t'|\delta^{-2}+2s)(-\Delta_{\delta})^{-\gamma}\Delta K}{(-\Delta_{\delta})^{-\gamma}\partial_{x}K}_{L^2(\Lambda_\delta)}\dif s\\
	& =\frac{1}{2\vartheta}\sc{(S_{\vartheta,\delta}((t+ t')\delta^{-2})-S_{\vartheta,\delta}(|t-t'|\delta^{-2})) (-\Delta_{\delta})^{-\gamma}K}{(-\Delta_{\delta})^{-\gamma}\partial_{x}K}_{L^2(\Lambda_\delta)}.
	\end{align*}
When $t\neq t'$, then both semigroups in the above expression vanish as $\delta\rightarrow0$, and the original claim follows.  If $t=t'$, then  the first semigroup vanishes as $\delta\rightarrow0$, and by Lemmas~\ref{lem:fracDelta_K} and \ref{lem:Kprime}(ii)
	\[
A_{\delta} \to 	-\frac{1}{2\vartheta}\sc{(-\Delta_{0})^{\lceil\gamma\rceil-\gamma}\widetilde{K}}{(-\Delta_{0})^{\lceil\gamma\rceil-\gamma}\partial_{x}\widetilde{K}}_{L^{2}(\R^{d})}, \quad \delta\to0. 
\]
By the same arguments as in \eqref{eq:b_t_k_delta}, we conclude that the limiting term is zero, and thus $A_{\delta}\to0$.  This concludes the proof. 
\end{proof}

\begin{lemma}\label{lem:U_1}
As $\delta\to 0$, we have that $\delta^{1-4\gamma}U_{1,\delta}\r{\P}0$.
\end{lemma}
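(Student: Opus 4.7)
Since $U_{1,\delta}$ is a polynomial of degree $3$ in the centered Gaussian field $\bar X$, the strategy is purely Gaussian: show $\E[U_{1,\delta}]=0$ for free, then control the second moment $\E[U_{1,\delta}^2]$ by a Wick expansion, and finally conclude $L^2$- (hence in-probability-) convergence of $\delta^{1-4\gamma}U_{1,\delta}$ by showing $\Var(U_{1,\delta})=o(\delta^{8\gamma-2})$.

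First, observe that for each fixed $t$ and $x$, the triple $(\bar X_\delta^\Delta(t),\bar X(t,x),\bar X(t,x))$ is jointly centered Gaussian, so by Isserlis' theorem (the product of an odd number of centered jointly Gaussian variables has zero expectation)
\[
\E[\bar X_\delta^\Delta(t)\bar X(t,x)^2]=0,
\]
and integrating in $x$ and $t$ yields $\E[U_{1,\delta}]=0$. Hence it suffices to bound $\E[U_{1,\delta}^2]$. Writing
\[
\E[U_{1,\delta}^2]=\int_0^T\!\!\int_0^T\!\!\int_{\Lambda^2}\E[\bar X_\delta^\Delta(t)\bar X_\delta^\Delta(t')\bar X(t,x)^2\bar X(t',x')^2]\,\partial_xK_\delta(x)\partial_xK_\delta(x')\dif x\,\dif x'\dif t'\dif t,
\]
apply Wick's formula to expand the $6$-th moment into a sum over the $15$ pairings of the six entries; up to the symmetries $\bar X(t,x)=\bar X(t,x)$ and $\bar X(t',x')=\bar X(t',x')$, the pairings collapse into six groups whose contributions, after integration, are (with implicit multiplicities):
\begin{align*}
T_1&:=\int\!\!\int\E[\bar X_\delta^\Delta(t)\bar X_\delta^\Delta(t')]\sc{c_{t,t}}{\partial_xK_\delta}\sc{c_{t',t'}}{\partial_xK_\delta}\dif t'\dif t,\\
T_2&:=\int\!\!\int\E[\bar X_\delta^\Delta(t)\bar X_\delta^\Delta(t')]\int_{\Lambda^2}c_{t,t'}(x,x')^2\partial_xK_\delta(x)\partial_xK_\delta(x')\dif x\,\dif x'\dif t'\dif t,\\
T_3&:=\int\!\!\int\sc{c_{t,t}^\Delta c_{t',t}^\Delta}{\partial_xK_\delta}\sc{c_{t',t'}}{\partial_xK_\delta}\dif t'\dif t,\qquad T_3':=\text{symmetric in }t\leftrightarrow t',\\
T_4^{(i)}&:=\int\!\!\int\int_{\Lambda^2}c_{t,t'}(x,x')c_{t,t'}^{(i)}(x,x')\partial_xK_\delta(x)\partial_xK_\delta(x')\dif x\,\dif x'\dif t'\dif t,\quad i=1,2.
\end{align*}
For $T_1$ and $T_2$, combine Lemma \ref{lem:gauss_regularity_bounds}(i)--(ii) with the singular off-diagonal bound from Lemma \ref{lem:gauss_cov_bounds}(i): the integrable singularity $|t-t'|^{-1/2+\epsilon}$ is absorbed, giving $T_1,T_2=O(\delta^{8\gamma-1-\epsilon'})$. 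For $T_3$ (and $T_3'$) use Lemma \ref{lem:gauss_regularity_bounds}(iii) together with (i) to obtain $O(\delta^{8\gamma-1-\epsilon})$. All of these are $o(\delta^{8\gamma-2})$. Finally, for the two most delicate terms $T_4^{(1)}$ and $T_4^{(2)}$ (which scale exactly like $\delta^{8\gamma-2}$ and thus provide no room to spare), invoke Lemma \ref{lem:gauss_regularity_bounds}(iv) directly, which guarantees $\delta^{2-8\gamma}T_4^{(i)}\to 0$; the symmetry $t\leftrightarrow t'$ lets us pass from the half-square $\{t'\leq t\}$ (as stated in the lemma) to the full square at the cost of a factor $2$.

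Collecting the six groups yields $\delta^{2-8\gamma}\E[U_{1,\delta}^2]\to 0$, so $\E[(\delta^{1-4\gamma}U_{1,\delta})^2]\to 0$, and $\delta^{1-4\gamma}U_{1,\delta}\xrightarrow{\P}0$ follows. The main difficulty is handling $T_4^{(1)}$ and $T_4^{(2)}$: these are exactly at the critical scaling, and without the cancellation encoded in Lemma \ref{lem:gauss_regularity_bounds}(iv) (ultimately a consequence of the fact that the localized derivative kernel $\partial_xK$ integrates, in the appropriate inner product, to zero after integration by parts against the radially symmetric limit kernel on $\R^d$) the bound would only be $O(\delta^{8\gamma-2})$ and not $o(\delta^{8\gamma-2})$.
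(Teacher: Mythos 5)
Your proposal is correct and follows essentially the same route as the paper: the same Wick expansion of $\E[U_{1,\delta}^2]$ into pair-partitions of six Gaussian factors, the same collapse (via the symmetries $Z_2=Z_3$, $Z_5=Z_6$) into the groups the paper calls $I_1,\dots,I_4$, and the same division of labor among Lemmas \ref{lem:gauss_regularity_bounds}(i)--(iv) and \ref{lem:gauss_cov_bounds}(i), including the identification of the critically scaled terms handled by Lemma \ref{lem:gauss_regularity_bounds}(iv).
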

\begin{proof}
Since $\bar{X}(t,x)$ and $\bar{X}_{\delta}^{\Delta}(t,x)$ are centered Gaussians, using Wick's formula for moments of centered Gaussians, cf. \cite[Theorem 1.28]{Janson:1997uy}, we get 
\begin{align*}	
\E[U_{1,\delta}] & =\int_{\Lambda}\int_{0}^{T}\E[\bar{X}_{\delta}^{\Delta}(t)\bar{X}(t,x)^{2}]
\partial_{x}K_{\delta}(x)\dif x\dif t=0, \\ 
\text{Var}(U_{1,\delta}) & =\sum_{\pi\in\Pi_{2}(6)}V_{\pi}, 
\end{align*}
where $\Pi_{2}(6)$ is the set of partitions of $\{1,\dots,6\}$ into 2-tuples (pairs) and where 
\[
V_{\pi}=2\int_{\Lambda^{2}}\int_{0}^{T}\int_{0}^{t}\prod_{(i,j)\in\pi}\E[Z_{i}Z_{j}]\partial_{x}K_{\delta}(x)\partial_{x}K_{\delta}(x')\dif t'\dif t\dif\,(x,x'),
\]
with $Z_{1}=\bar{X}_{\delta}^{\Delta}(t)$, $Z_{2}=Z_{3}=\bar{X}(t,x)$, 	$Z_{4}=\bar{X}_{\delta}^{\Delta}(t')$, $Z_{5}=Z_{6}=\bar{X}(t',x')$. 

Clearly, it is enough to show that $\delta^{2-8\gamma}V_{\pi}\r{\P}0$ for any $\pi\in\Pi_2(6)$. 
Since $Z_{2}=Z_{3}$ and $Z_{5}=Z_{6}$, by symmetry, it is sufficient to consider only six partitions, conveniently grouped as follows:
\begin{align*}
I_1= & \{((1,2),(3,4),(5,6)), \ ((1,5),(2,3),(4,6))\},\\
I_2= & \{((1,2),(3,5),(4,6)),\ ((1,5),(2,4),(3,6))\},\\
I_3= & \{((1,4),(2,3),(5,6))\},\quad I_4= \{((1,4),(2,5),(3,6))\}.
\end{align*}
All relevant terms were already studied in Lemmas~\ref{lem:gauss_regularity_bounds} and \ref{lem:gauss_cov_bounds}. For $\pi\in I_2$, we apply Lemma~\ref{lem:gauss_regularity_bounds}(iv) and obtain that $\delta^{2-8\gamma}V_{\pi}\r{\P}0$. On the other hand, $V_{\pi}=O_{\P}(\delta^{8\gamma-1-\epsilon})$
for $\pi\in I_1$ by Lemma $\ref{lem:gauss_regularity_bounds}$(i,iii), for $\pi\in I_3$ by Lemmas $\ref{lem:gauss_regularity_bounds}$(i) and \ref{lem:gauss_cov_bounds}(i), and for $\pi \in I_4$ by applying Lemmas~$\ref{lem:gauss_regularity_bounds}$(ii) and \ref{lem:gauss_cov_bounds}(i). The proof is complete. 
\end{proof}

\begin{lemma}\label{lem:U_3} 
As $\delta\to 0$, we  have that $\delta^{1-4\gamma}U_{3,\delta}\r{\P}0$.
\end{lemma}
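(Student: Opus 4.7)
The plan is to expand $\widetilde X(t,0)$ in the Wiener chaos generated by $W$ and reduce $U_{3,\delta}$ to finitely many Gaussian moment computations of the same flavor as the one used for $U_{1,\delta}$, using chaos orthogonality to discard non-contributing pieces. By \eqref{eq:burgers_X_bar_tilde_regularities} and the one-dimensional Sobolev embedding $W^{s,p}\hookrightarrow C(\bar\Lambda)$ for $s>1/p$, the random variable $\widetilde X(t,0)$ is a.s.\ well-defined jointly in $t$ with $\sup_{0\leq t\leq T}|\widetilde X(t,0)|=O_{\P}(1)$, so the chaos expansion converges in $L^2(\P)$.

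Iterating the mild formulation \eqref{eq:NonlinearMild} for the Burgers nonlinearity $F(u)=-\tfrac12\partial_x(u^2)$ and the splitting $X=\bar X+\widetilde X$ yields a Wiener--Itô decomposition
\[
\widetilde X(t,0) = \sum_{n\geq 0} J_n(t), \qquad J_n(t)\in\mathcal H_n,
\]
where $\mathcal H_n$ denotes the $n$th homogeneous Wiener chaos; each $J_n(t)$ is an explicit multiple Wiener integral assembled from the Green's function of $\vartheta\Delta$ at $(t-s,0,\cdot)$, the initial datum $X_0$, and $\bar X$. Writing $\xi_\delta(t):=\bar X_\delta^\Delta(t)\langle\bar X(t),\partial_x K_\delta\rangle$, note that $\xi_\delta(t)$ is a product of two centered Gaussians and therefore sits in $\mathcal H_0\oplus\mathcal H_2$. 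Consequently, only $J_0$ and $J_2$ contribute to $\E[U_{3,\delta}]$ by chaos orthogonality.

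For the mean, the $n=0$ piece reduces to $2\int_0^T J_0(t)\E[\xi_\delta(t)]\dif t$, which satisfies $\delta^{1-4\gamma}\times\{\,\cdot\,\}\to 0$ by Lemma~\ref{lem:gauss_cov_bounds}(iii) since $J_0$ is bounded. The $n=2$ piece is a four-point Gaussian correlation $\E[J_2(t)\,\xi_\delta(t)]$; applying Wick's theorem and then an integration by parts to transfer one $\partial_x$ from the Green's function onto either $\partial_x K_\delta$ or $\Delta K_\delta$, every resulting contraction factors into quantities already estimated in Lemmas~\ref{lem:gauss_regularity_bounds} and \ref{lem:gauss_cov_bounds}, yielding $o(\delta^{4\gamma-1})$ after time integration. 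The variance of $\delta^{1-4\gamma}U_{3,\delta}$ is handled by the same scheme applied to the higher-order correlation $\E[J_n(t)J_{n'}(s)\xi_\delta(t)\xi_\delta(s)]$: only finitely many pairs $(n,n')$ contribute by orthogonality, and each pairing of the Wick expansion factors into covariances that always contain the $\bar X_\delta^\Delta\times\langle\bar X,\partial_x K_\delta\rangle$ structure of Lemma~\ref{lem:U_1}, decorated with additional Green's function convolutions that only improve the decay thanks to the $(-\Delta)^{-1}$ smoothing. Chebyshev's inequality then closes the argument.

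The principal obstacle is the combinatorial bookkeeping of the Wick pairings at higher chaos levels: the number of Gaussians that must be paired grows with $n$, and one has to verify that no pairing produces a slower decay than $\delta^{8\gamma-2+\epsilon}$. The saving grace is that each $J_n$ carries at least one extra smoothing convolution with the Green's function relative to $\bar X$, so the extra pairings can only sharpen, not weaken, the estimates already obtained for $U_{1,\delta}$ in Lemma~\ref{lem:U_1}.
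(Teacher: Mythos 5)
Your overall strategy (expand $\widetilde X(t,0)$ in Wiener chaos and reduce $U_{3,\delta}$ to Gaussian covariance estimates) is in the same family as the paper's, but the execution has a genuine gap at the variance step. You claim that ``only finitely many pairs $(n,n')$ contribute by orthogonality'' to $\E[J_n(t)J_{n'}(s)\xi_\delta(t)\xi_\delta(s)]$. This is false: $\xi_\delta(t)\xi_\delta(s)$ lives in $\mathcal{H}_0\oplus\mathcal{H}_2\oplus\mathcal{H}_4$, while the product $J_n(t)J_{n'}(s)$ has components in every chaos of order between $|n-n'|$ and $n+n'$ (of the right parity); the expectation is therefore generically nonzero whenever $|n-n'|\le 4$, which leaves infinitely many contributing pairs. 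To close the argument along your lines you would need explicit kernels for every $J_n$, Wick/contraction bounds uniform in $n$, and summability over $n$ of the resulting estimates --- none of which is supplied by the remark that the extra Green's-function smoothing ``can only sharpen'' the bounds. A secondary issue: $\sup_{0\le t\le T}|\widetilde X(t,0)|=\mathcal{O}_{\P}(1)$ does not imply $\widetilde X(t,0)\in L^2(\P)$, so it does not justify $L^2(\P)$-convergence of the chaos expansion; square-integrability has to come from the variational solution theory (the paper uses $\widetilde X\in L^2([0,T]\times\Omega;W^{1,2}(\Lambda))$, valid since $B$ is Hilbert--Schmidt for $\gamma>1/4$).

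The paper avoids all of this with a decoupling device you may want to adopt: write $\widetilde X(t,0)=\sum_{i\ge1} b_i(t)\xi_i$ for an abstract complete orthonormal system $(\xi_i)$ of $L^2(\Omega,\mathcal{G},\P)$ with deterministic $b_i\in L^2([0,T])$, truncate at a level $N$ making the tail small in $L^2([0,T]\times\Omega)$ uniformly in $\delta$ (Cauchy--Schwarz plus Lemma~\ref{lem:covFun2}), and then for each fixed $i\le N$ bound $\E[|\xi_i s_{i,\delta}|]\le \E[\xi_i^2]^{1/2}\,\E[s_{i,\delta}^2]^{1/2}$, where $s_{i,\delta}=\int_0^T b_i(t)\,\bar X_\delta^\Delta(t)\sc{\bar X(t)}{\partial_x K_\delta}\dif t$. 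Since $s_{i,\delta}$ involves only the Gaussian field $\bar X$, the quantity $\E[s_{i,\delta}^2]$ is a pure four-point Wick computation handled by Lemma~\ref{lem:gauss_cov_bounds}(i)--(iii), and no joint moment of $\xi_i$ with $\bar X$ is ever needed. This reduces the whole lemma to finitely many genuinely Gaussian estimates and is the step your proposal is missing.
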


\begin{proof}
Similar to Lemma~\ref{lem:U_1}, we aim to compute the mean and the variance of $U_{3,\delta}$. 
Since $\widetilde{X}(t,0)$ is not Gaussian, we will study its Wiener chaos decomposition (cf. \cite{Nualart2006}).  

We consider the Hilbert space $\c H:=L^{2}([0,T]\times\Lambda)$ endowed with the norm $\norm z_{\c H}=\int_{[0,T]\times\Lambda}z^{2}(t,x)\dif(t,x)$,  and correspondingly let  $(\widetilde{W}(z))_{z\in\c H}$
be the isonormal Gaussian process $\widetilde{W}(z):=\int_{0}^{T}z(t,\cdot)\dif W(t)$. Also, let $(m_{j})_{j\geq1}$ be an orthonormal basis in $L^{2}([0,T])$ such that $(m_{j}\cdot \Phi_k)_{j,k\geq1}$ forms an orthonormal basis in $\c H$. We denote by $\cG$ the sigma algebra generated by $(\widetilde{W}(z))_{z\in\c H}$. It is well-known (see \cite[Proposition 1.1.1]{Nualart2006}) that there exists a sequence of random variables $(\xi_{i})_{i\geq1}$ forming a complete orthonormal system in $L^{2}(\Omega,\c G,\P)$, where each $\xi_{i}$ is a linear combination of multinomials of the form $\Pi_{l=1}^{M}\widetilde{W}(m_{j_{l}}\cdot \Phi_{k_{l}})^{b_{l}}$ for some $M,j_{l},k_{l}\in\N$, $b_{l}\in\N_{0}$.
	
In view of \cite[Theorem 5.1.3 and Example 5.1.8]{LiuRoeckner2015}, where we use that $B$ is Hilbert-Schmidt for $\gamma>1/4$, we have $\widetilde{X}\in L^2([0,T]\times\Omega; W^{1,2}(\Lambda))$, and hence $\widetilde{X}(t,0)\in L^{2}(\Omega,\c G,\P)$ for $0\leq t\leq T$. This yields the chaos expansion $\widetilde{X}(t,0)=\sum_{i\geq1}b_{i}(t)\xi_{i}$, with some deterministic $b_{i}\in L^{2}([0,T])$. For $N\in\N$, we put 
\begin{align*}
U_{3,\delta,N}& := 2 \sum_{i=1}^N \xi_i\int_{0}^{T}b_{i}(t)\bar{X}_{\delta}^{\Delta}(t)\sc{\bar{X}(t)}{\partial_{x}K_{\delta}}\dif t =:2\sum_{i=1}^{N}\xi_i s_{i,\delta}. 
\end{align*}

For a fixed $\eta>0$, choose $N\equiv N(\eta)\in\N$ sufficiently large such that 
\[
\int_{0}^{T}\E[(\widetilde{X}(t,0)-\sum_{i=1}^{N}b_{i}(t)\xi_{i})^{2}]\dif t=\sum_{i=N+1}^{\infty}\int_{0}^{T}b_{i}(t)^{2}\dif t<\eta.
\]

By the Cauchy-Schwarz inequality and Gaussianity we get that
\begin{align*}
& \delta^{2-8\gamma}\E[|U_{3,\delta}-U_{3, \delta, N}|]^{2} \lesssim\delta^{-4-8\gamma}\eta\int_{0}^{T}\E[\sc{\bar{X}(t)}{(\Delta K)_{\delta}}^{2}]\E[\sc{\bar{X}(t)}{(\partial_{x}K)_{\delta}}^{2}]\dif t\\
& \quad\lesssim\eta\norm{(-\Delta_{\delta})^{1/2-\gamma}K}_{L^{2}(\Lambda_{\delta})}^{2}
\norm{(-\Delta_{\delta})^{-1/2-\gamma}\partial_{x}K}_{L^{2}(\Lambda_{\delta})}^{2},
\end{align*}
using in the last inequality Lemma~\ref{lem:covFun2}(i) with $z=z'=\Delta K$
and $z=z'=\partial_{x}K$. Moreover, by Lemmas~\ref{lem:fracDelta_K} and \ref{lem:Kprime}(i), the terms in the last inequality above are uniformly bounded in $\delta>0$, and hence
\begin{equation}\label{eq:burgers_U_3_delta_claim1}
\sup_{0<\delta\leq 1}\left(\delta^{2-8\gamma}\E[|U_{3,\delta}-U_{3,\delta,N}|]^{2}\right)  \lesssim\eta. 
\end{equation}

Next, we will prove that 
\begin{equation}\label{eq:burgers_U3333}
\delta^{2-8\gamma}\E[s_{i,\delta}^{2}]\rightarrow0, \quad i\in\bN.  
\end{equation}
Analogous to Lemma~\ref{lem:U_1}, by Wick's formula and taking advantage of the symmetry in $t,t'$ we obtain 
\begin{align*}
\E[s_{i,\delta}^{2}]&=2\int_{0}^{T}\int_{0}^{t}  b_{i}(t)b_{i}(t')(\rho_{1,\delta}(t,t')+\rho_{2,\delta}(t,t')+\rho_{3,\delta}(t,t'))\dif t'\dif t,\\
\rho_{1,\delta}(t,t') & =\E[\bar{X}_{\delta}^{\Delta}(t)\sc{\bar{X}(t)}{\partial_{x}K_{\delta}}]\E[\bar{X}_{\delta}^{\Delta}(t')\sc{\bar{X}(t')}{\partial_{x}K_{\delta}}],\\
\rho_{2,\delta}(t,t') & =\E[\bar{X}_{\delta}^{\Delta}(t')\sc{\bar{X}(t)}{\partial_{x}K_{\delta}}]\E[\bar{X}_{\delta}^{\Delta}(t)\sc{\bar{X}(t')}{\partial_{x}K_{\delta}}],\\
\rho_{3,\delta}(t,t') & =\E[\sc{\bar{X}(t)}{\partial_{x}K_{\delta}}\sc{\bar{X}(t')}{\partial_{x}K_{\delta}}]\E[\bar{X}_{\delta}^{\Delta}(t)\bar{X}_{\delta}^{\Delta}(t')].
\end{align*}
Clearly \eqref{eq:burgers_U3333} follows from here by invoking the Cauchy-Schwarz inequality and Lemma~\ref{lem:gauss_cov_bounds}(i-iii). Consequently, using \eqref{eq:burgers_U3333} for $1\leq i\leq N$,  and applying again the Cauchy-Schwarz inequality, we deduce that $\delta^{2-8\gamma}\E[U_{3,\delta,N}^2]<\eta$ for any sufficiently small $\delta$ depending on $N$ and thus on $\eta$. Together with \eqref{eq:burgers_U_3_delta_claim1} and since $\eta$ was arbitrary, we get $\delta^{1-4\gamma}U_{3,\delta}\xrightarrow{\P}0$.
\end{proof}

\section{Well-Posedness and higher regularity of the solutions}\label{append:wellPos}

In this section we provide well-posedness and higher regularity results for the linear and semilinear SPDEs needed for our study. This is a well-established topic with a vast literature, see e.g. \cite{DaPratoZabczykBook2014, LiuRoeckner2015, van_Neerven_2012, Krylov96}. We aim at giving a short and self-contained presentation.

\subsection{Regularity of the solution to the linear equation} \label{app:RegularityLinear}

We start with a result on well-posedness of the linear equation, as well as the 
optimal regularity of its solution. We recall that the Laplace operator on any smooth bounded domain $\Lambda\subset\bR^d$ with Dirichlet boundary conditions has only point spectrum $\set{-\lambda_k}_{k\in\bN}$, and without loss of generality can be arranged such that $0\leq \lambda_1\leq \ldots \leq  \lambda_k \leq \ldots$.  Moreover, the corresponding eigenfunctions, say $\set{\Phi_k}_{k\in\bN}$, form a complete orthonormal system in $L^2(\Lambda)$; cf. \cite{Shubin}.  It is also well known that $\lambda_k\sim k^{2/d}$, as $k\to\infty$. Recall the optimal linear regularity $s^*=1+2\gamma-d/2$ from 
Section \ref{sec:higherReg}.

\begin{proposition}\label{prop:LinearRegularity} Grant Assumption~\ref{assump:B}. Then, the linear equation \eqref{eq:SPDElinear} has a unique mild solution $\bar{X}$ taking values in $L^2(\Lambda)$, and for all $2\leq p<\infty$, $s<s^*$:
\begin{enumerate}
\item $\bar{X}\in C([0,T];W^{s-d/2+d/p,p}(\Lambda))$, in particular, $\bar{X}\in C([0,T];W^{s}(\Lambda))$,
\item $\bar{X}\in C([0,T];W^{s,p}(\Lambda))$, provided that  
\begin{equation}\label{eq:supPhi}
	\sup_{k\geq1}\norm{\Phi_k}_{L^\infty(\Lambda)}<\infty.
\end{equation}
\end{enumerate}
Moreover, $s^*$ is maximal with that property, and $\bar X\notin C([0,T];W^{s^*}(\Lambda))$ with probability one.
\end{proposition}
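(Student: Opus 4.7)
The plan is the standard spectral-Gaussian argument for stochastic convolutions, set up in the Dirichlet eigensystem $(\lambda_k,\Phi_k)_{k\geq 1}$ of $-\Delta$ on $\Lambda$ with Weyl asymptotics $\lambda_k \asymp k^{2/d}$. By It\^o's isometry and the spectral calculation
\begin{equation*}
\norm{(-\Delta)^{s/2}S_\vartheta(\tau)B}_{HS(L^2)}^2 = \sum_{k\geq 1}\lambda_k^s\, e^{-2\vartheta\lambda_k\tau}\norm{B^\ast\Phi_k}^2,
\end{equation*}
one obtains $\E\norm{\bar X(t)}_s^2 = \sum_k \lambda_k^s\,(1-e^{-2\vartheta\lambda_k t})(2\vartheta\lambda_k)^{-1}\norm{B^\ast\Phi_k}^2$. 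The one substantive input needed is the global pointwise bound $\norm{B^\ast\Phi_k}_{L^2(\Lambda)}^2 \lesssim \lambda_k^{-2\gamma}$, which expresses that $B$ behaves globally like $(-\Delta)^{-\gamma}$ and is transparent in the concrete cases of Example~\ref{ex:B}. Combined with Weyl this gives
\begin{equation*}
\E\norm{\bar X(t)}_s^2 \lesssim \sum_{k\geq 1}\lambda_k^{s-2\gamma-1}<\infty\quad \text{for all } s<s^\ast := 1+2\gamma - d/2,
\end{equation*}
yielding existence and uniqueness of the mild solution (taking $s\geq 0$ small) together with the $W^{s,2}$ regularity.

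Temporal continuity of $\bar X$ in $W^{s,2}$ is then obtained by splitting $\bar X(t+h)-\bar X(t) = (S_\vartheta(h)-I)\bar X(t) + \int_t^{t+h}S_\vartheta(t+h-r)B\,\dif W(r)$, using the analyticity estimate $\norm{(t\Delta)^\alpha S_\vartheta(t)}\lesssim 1$ from Proposition~\ref{prop:semigroup_props}(i) to extract a H\"older factor in $h$ on the first term, applying the spectral bound above to the second, and concluding via Kolmogorov's continuity criterion. Claim~(i) for general $p\geq 2$ is then immediate from the Sobolev embedding $W^{s,2}(\Lambda)\hookrightarrow W^{s-d/2+d/p,p}(\Lambda)$ applied at each fixed $t$.

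For claim~(ii), the hypothesis $\sup_k\norm{\Phi_k}_{L^\infty}<\infty$ lets one bypass the $d/p$-loss of the Sobolev embedding by working pointwise. Since $(-\Delta)^{s/2}\bar X(t,x)$ is a centered scalar Gaussian for each fixed $(t,x)$, Gaussian moment equivalence yields
\begin{equation*}
\E\norm{\bar X(t)}_{s,p}^p = \int_\Lambda \E|(-\Delta)^{s/2}\bar X(t,x)|^p\,\dif x \lesssim \int_\Lambda \bigl(\E|(-\Delta)^{s/2}\bar X(t,x)|^2\bigr)^{p/2}\dif x.
\end{equation*}
The pointwise variance can then be bounded uniformly in $x$ via the representation $\E|(-\Delta)^{s/2}\bar X(t,x)|^2 = \int_0^t\norm{B^\ast S_\vartheta(t-r)(-\Delta)^{s/2}\delta_x}_{L^2}^2\,\dif r$, after factoring $B^\ast$ through $(-\Delta)^{-\gamma}$, expanding the resulting kernel against $(\Phi_k)$ and using $|\Phi_k(x)|\leq C$: this reduces matters to $C'\sum_k\lambda_k^{s-2\gamma-1}$, finite for $s<s^\ast$. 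Continuity in $t$ again follows from Kolmogorov applied to the analogous $L^p$-bound on increments.

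The main obstacle in this plan is not the Gaussian bookkeeping but rather the passage from Assumption~\ref{assump:B}, which is a purely local scaling condition at $x_0$, to the global pointwise spectral bound $\norm{B^\ast\Phi_k}^2\lesssim\lambda_k^{-2\gamma}$ and, for~(ii), to the global factorization $B^\ast = T^\ast(-\Delta)^{-\gamma}$ with $T^\ast$ bounded. This step is not automatic from Assumption~\ref{assump:B} as stated and must rely on additional global structure on $B$ of the type exemplified by Example~\ref{ex:B}.
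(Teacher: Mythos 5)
Your proposal follows essentially the same strategy as the paper: reduce to $B=(-\Delta)^{-\gamma}$ by peeling off a bounded operator, compute the pointwise/$L^2$ variance spectrally, observe that $\sum_k\lambda_k^{s-2\gamma-1}<\infty$ exactly for $s<s^*$, use \eqref{eq:supPhi} to control $\Phi_k^2(x)$ uniformly for part (ii), and deduce part (i) from the Sobolev embedding $W^{s}(\Lambda)\subset W^{s-d/2+d/p,p}(\Lambda)$. The one genuine methodological difference is temporal continuity: the paper uses the factorization method, introducing $Y_\alpha(t)=\int_0^t(t-r)^{-\alpha}S_\vartheta(t-r)B\,\mathrm{d}W(r)$ for small $\alpha>0$ and invoking \cite[Proposition 5.9]{DaPratoZabczykBook2014}, whereas you split increments and apply Kolmogorov's criterion. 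Both work: the factorization route converts an $L^{p'}([0,T];W^{s,p}(\Lambda))$ bound on $Y_\alpha$ directly into continuity at the cost of carrying the parameter $\alpha$ through the variance estimate, while your route trades a little spatial regularity (harmless, since $s<s^*$ is an open condition) for temporal H\"older continuity of a modification.

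The ``main obstacle'' you flag at the end is not actually an obstacle. Assumption~\ref{assump:B} requires the identity $B^*(-\Delta)^{\gamma}z_{\delta,x_0}=(B^*_{\delta,x_0}z)_{\delta,x_0}$ to hold for \emph{every} $\delta>0$ with $B^*_{\delta,x_0}$ bounded, not merely asymptotically as $\delta\to0$; it is therefore not a purely local condition. Taking $\delta=1$, the map $z\mapsto z_{1,x_0}$ is just translation, so the identity gives $\norm{B^*(-\Delta)^{\gamma}w}\le\norm{B^*_{1,x_0}}\,\norm{w}$ for all $w\in C_c^{\infty}(\Lambda)$, and by density this is exactly the global boundedness of $B_1^*:=B^*(-\Delta)^{\gamma}$ on $L^2(\Lambda)$ that the paper asserts when it writes that $B_1=(-\Delta)^{\gamma}B$ is bounded by Assumption~\ref{assump:B}. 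This yields your bound $\norm{B^*\Phi_k}\le\norm{B_1^*}\lambda_k^{-\gamma}$ without any additional global structure on $B$. With that point repaired your argument is complete, modulo the same technicality the paper also glosses over, namely that the bounded extension of $B^*(-\Delta)^{\gamma}$ from $C_c^{\infty}(\Lambda)$ may legitimately be applied to the eigenfunctions $\Phi_k$.
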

\begin{proof}
Recall \eqref{eq:mildSolutionLinear} and define for $\alpha\geq 0$ the process
\begin{equation}
Y_{\alpha}(t):=\int_{0}^{t}(t-r)^{-\alpha}S_{\vartheta}(t-r)B\mathrm{d}W(r),\quad 0\leq t \leq T.
\end{equation}
We show below for all $s\geq 0$ that
\begin{align}
\mathbb{E}\left[|(-\Delta)^{\frac{s}{2}}Y_{\alpha}(t)(x)|^{2}\right] \leq C \sum_{k\geq 1}\lambda_{k}^{-2\gamma+s+2\alpha-1}\Phi_{k}^2(x),\quad x\in \Lambda.\label{eq:linearRegClaim}
\end{align}
Taking $s,\alpha=0$ shows by Itô's isometry, with Hilbert-Schmidt norm $\norm{\cdot}_2$ on $L^2(\Lambda)$, that $\int_0^t\norm{S_{\vartheta}(t-s)B}^2_{2}\dif s = \E[\norm{\bar{X}(t)}^2]<\infty$. This means that the stochastic integral in \eqref{eq:mildSolutionLinear} is well-defined. That $\bar{X}$ is the unique mild solution to \eqref{eq:SPDElinear}, follows by general theory \cite[Chapter~5]{DaPratoZabczykBook2014}. 

To establish the regularity of $\bar X$, we argue as in \cite[Theorem 5.25]{DaPratoZabczykBook2014} using the factorization method. We first show (ii). Let $s<s^*$, $p\geq 2$ and set $E_1=E_2=W^{s,p}(\Lambda)$. Recall that if $Z\sim N(0,1)$, then $\E[|Z|^{p}]=c_{p}\E[Z^2]^{p/2}$ for some $c_{p}<\infty$. The H\"older inequality and the inequality in \eqref{eq:linearRegClaim} show for $p'\geq 2$ that
\begin{align*}
	\E\left[\int_0^T\norm{(-\Delta)^\frac{s}{2}Y_\alpha(t)}^{p'}_{E_2}\mathrm{d}t\right]
		& \lesssim \int_0^T\left(\int_\Lambda\mathbb{E}\left[|(-\Delta)^\frac{s}{2}Y_{\alpha}(t)(x)|^{2}\right]^{\frac{p}{2}}\mathrm{d}x\right)^{\frac{p'}{p}}\mathrm{d}t \\
		&\lesssim \left(\sum_{k\geq 1}k^{\frac{2}{d}(-2\gamma+2\alpha-1+s)}\right)^{\frac{p'}{p}},
\end{align*}
where we used \eqref{eq:supPhi} in the last line. Since $-2\gamma+2\alpha-1+s<2\alpha-d/2$, the last line is finite for  sufficiently small $\alpha$. We find that $Y_{\alpha}$ has trajectories in $L^{p'}([0,T];E_2)$. Choosing $p'$ large enough such that $\alpha>1/p'$ and $r=0$ in \cite[Proposition 5.9]{DaPratoZabczykBook2014}, we conclude that $\bar X\in C([0,T];E_1)=C([0,T];W^{s,p}(\Lambda))$. This proves (ii). For (i), it is enough to observe for $p'=p=2$ that the upper bound in the last display equals $\sum_{k\geq 1}k^{\frac{2}{d}(-2\gamma+2\alpha-1+s)}$, which is finite for $s<s^*$ as just discussed. The supplement follows from the Sobolev embedding $W^{s}(\Lambda)\subset W^{s-d/2+d/p,p}(\Lambda)$.
	
We still have to prove \eqref{eq:linearRegClaim}. Let $B_1:=(-\Delta)^{\gamma}B$ and note that by Assumption~\ref{assump:B} the operator\footnote{With slight abuse of notations, we use the same notation for $B_1$ as in Assumption~\ref{assump:B}, although strictly speaking  they are not the same.}  $B_1:L^2(\Lambda)\rightarrow L^2(\Lambda)$ is bounded. 
For $x\in\Lambda$ and $f\in C(\Lambda)$, we define $\delta_x(f) = f(x)$. 
Then $\tilde\delta_x := \delta_x\circ (-\Delta)^{-\frac{d}{2}-\epsilon}$ is a bounded linear functional on $L^2(\Lambda)$ for any $\epsilon>0$. 
Hence, 
	\begin{align*}
\mathbb{E}\left[|(-\Delta)^{\frac{s}{2}}Y_{\alpha}(t)(x)|^{2}\right] 
			&= \E\left[\left|\delta_x\left(\int_0^t (t-r)^{-\alpha}(-\Delta)^{\frac{s}{2}-\gamma}S_{\vartheta}(t-r)B_1dW(r)\right)\right|^2\right] \\
&= \int_0^t (t-r)^{-2\alpha}\norm{B_1^*(-\Delta)^{\frac{d}{2}+\epsilon}S_{\vartheta}(t-r)(-\Delta)^{\frac{s}{2}-\gamma}\tilde\delta_x^*}_{2}^2dr \\
			& \lesssim \int_0^t (t-r)^{-2\alpha}\norm{(-\Delta)^{\frac{d}{2}+\epsilon}S_{\vartheta}(t-r)(-\Delta)^{\frac{s}{2}-\gamma}\tilde\delta_x^*}_{2}^2dr \\
			&= \E\left[\left|\delta_x\left(\int_0^t (t-r)^{-\alpha}(-\Delta)^{\frac{s}{2}-\gamma}S_{\vartheta}(t-r)dW(r)\right)\right|^2\right].
	\end{align*}
This allows us to reduce the argument to $B_1=I$, i.e. $B=(-\Delta)^{-\gamma}$. In this case, 
$$
Y_{\alpha}(t,x)=\sum_{k=1}^{\infty}\lambda_{k}^{-\gamma}\left(\int_{0}^{t}(t-r)^{-\alpha}e^{-\lambda_{k}(t-r)}\mathrm{d}\beta_{k}(r)\right)\Phi_{k}(x),
$$
where the $(\beta_k)_{k\in\N}$ are independent standard Wiener processes. The inequality \eqref{eq:linearRegClaim} follows then from
\begin{align*}
\mathbb{E}|(-\Delta)^{\frac{s}{2}}Y_{\alpha}(t,x)|^{2} & =\sum_{k\geq1}\lambda_{k}^{-2\gamma}\left(\int_{0}^{t}r^{-2\alpha}e^{-2\lambda_{k}r}\mathrm{d}r\right)((-\Delta)^{\frac{s}{2}}\Phi_{k}(x))^{2}\\
 & \lesssim\sum_{k\geq 1}\lambda_{k}^{-2\gamma+2\alpha-1+s}\Phi_{k}^2(x).
\end{align*}
Finally, the optimality of $s^*$ follows as in \cite[Proposition 4.3]{PasemannStannat2019}, taking into account that $(-\Delta)^{\gamma}B$ is an isomorphism on $L^2(\Lambda)$.
\end{proof}

Recall the $L^p$-regularity index $\bar{s}$ from \eqref{eq:sBar}. The proposition shows that $\bar{s}\geq s^*-d/2+d/p$ for all $p\geq 2$. Choosing $\alpha=0$ in \eqref{eq:linearRegClaim} also shows $\bar{s}\leq s^*$. The upper bound $\bar{s} = s^*$ is achieved if\eqref{eq:supPhi} holds. The condition \eqref{eq:supPhi} depends on the geometry of the domain $\Lambda$, but is true for rectangular domains in any dimension, in particular, for bounded intervals in $d=1$; cf. the discussion in  \cite[Remark~5.27]{DaPratoZabczykBook2014}.

\subsection{Well-posedness and regularity of the solution to the semilinear equation} \label{app:WellPosedSemilinear}

In this section we study the well-posedness and higher regularity of the solution to \eqref{eq:SPDENonlinear} in its mild formulation \eqref{eq:NonlinearMild}. We will use a classical fixed point argument, cf. \cite{DaPratoZabczykBook2014}, \cite{DaPratoDebusscheTemam1994}. 
In addition to Assumption~\ref{assump:A} from 
Section \ref{sec:higherReg}, 
we will make use of local Lipschitz and coercivity conditions for $p\geq 2$, and $s, s_1, s_2, \eta \geq 0$:

\begin{assumption}{$A_{s, \eta, p}$}\label{assump:A}
	There exists $\epsilon>0$ and a continuous function $g:[0,\infty)\rightarrow [0,\infty)$ such that for $u\in W^{s, p}(\Lambda)$:
	\begin{equation}
	\norm{F(u)}_{s+\eta-2+\epsilon, p}\leq g(\norm{u}_{s, p}).
	\end{equation}
\end{assumption}
\begin{assumption}{$L_{s, \eta, p}$} \label{assump:L} 
	There exist $\epsilon > 0$ and a continuous function $h:[0,\infty)^2\rightarrow[0,\infty)$ such that for any $u, v\in W^{s+\eta, p}(\Lambda)$:
	\begin{equation}
	\norm{F(u)-F(v)}_{s+\eta-2+\epsilon,p}\leq \norm{u-v}_{s+\eta,p}h(\norm{u}_{s, p}, \norm{v}_{s, p}). 
	\end{equation}
\end{assumption}
	
\begin{assumption}{$C_{s_1, s_2}$}\label{assump:C}   
There exists a continuous function $b:[0,\infty)\rightarrow [0,\infty)$ such that for any $u\in W^{s_1}(\Lambda)$, $v\in W^{s_2}(\Lambda)$ with $F(u+v)\in W^{s_1}(\Lambda)$:
\begin{equation}
	|\langle F(u+v), u\rangle_{W^{s_1}(\Lambda)}| \leq (1+\norm{u}_{s_1}^2)b(\norm{v}_{s_2}).
\end{equation}
\end{assumption}

Next, we present the main result of this section. 
\begin{theorem}\label{thm:SemilinearExistence}
Let $s, s_1,\eta\geq 0$, $p\geq 2$ with $s+\eta\geq s_1+2$, and suppose that 
$$
X_0\in W^{s+\eta, p}(\Lambda), \quad \textrm{and} \quad \bar X\in C([0,T];W^{s, p}(\Lambda)).
$$ 
Suppose that Assumption~$A_{s', \eta, p'}$ is satisfied for $s_1\leq s'\leq s$ and $2\leq p'\leq p$. Furthermore suppose that Assumptions~$L_{s, \eta, p}$ and $C_{s_1,s}$ are fulfilled. Then there exists a unique solution $\widetilde{X}$ to \eqref{eq:NonlinearMild} such that  $\widetilde{X}\in C([0,T];W^{s+\eta,p}(\Lambda))$. 

In particular, there exists a unique mild solution $X \in C([0,T];W^{s, p}(\Lambda))$ to equation \eqref{eq:SPDE}.  
\end{theorem}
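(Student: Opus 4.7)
The plan is to establish existence and uniqueness of $\widetilde X\in C([0,T];W^{s+\eta,p}(\Lambda))$ solving \eqref{eq:NonlinearMild} by combining a local Banach fixed-point argument with a global \emph{a priori} bound in the weaker space $W^{s_1}(\Lambda)$, and then bootstrapping regularity as in the proof of Proposition~\ref{prop:ExcessRegularity}. Once this is achieved, $X:=\bar X+\widetilde X$ will be a mild solution to \eqref{eq:SPDE} by construction (summing \eqref{eq:mildSolutionLinear} and \eqref{eq:NonlinearMild} reproduces \eqref{eq:mildSolution}), and $X\in C([0,T];W^{s,p}(\Lambda))$ follows from $\bar X\in C([0,T];W^{s,p}(\Lambda))$ together with the embedding $W^{s+\eta,p}(\Lambda)\hookrightarrow W^{s,p}(\Lambda)$.

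For local existence on an interval $[0,T_0]$, I would apply the contraction principle to
\begin{equation*}
\Phi(u)(t):=S_\vartheta(t)X_0+\int_0^t S_\vartheta(t-r)F(\bar X(r)+u(r))\,\dif r
\end{equation*}
on a closed ball in $C([0,T_0];W^{s+\eta,p}(\Lambda))$. The analytic-semigroup smoothing estimate $\norm{(-\Delta)^{1-\epsilon/2}S_\vartheta(t)}_{L^p(\Lambda)}\lesssim t^{-1+\epsilon/2}$ from Proposition~\ref{prop:semigroup_props}(i), combined with Assumption~\ref{assump:L} and the embedding $W^{s+\eta,p}\hookrightarrow W^{s,p}$, makes $\Phi$ a strict contraction provided $T_0$ is chosen small enough relative to the radius of the ball, $\norm{X_0}_{s+\eta,p}$, and $\sup_{0\le t\le T}\norm{\bar X(t)}_{s,p}$. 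The resulting fixed point is the unique local mild solution, and the same Lipschitz bound gives uniqueness on the whole existence interval.

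To extend to $[0,T]$, denote by $T^*$ the maximal existence time and suppose towards a contradiction that $T^*<T$. Formally applying $(-\Delta)^{s_1/2}$ to \eqref{eq:SPDENonlinear} and pairing with $(-\Delta)^{s_1/2}\widetilde X$, Assumption~\ref{assump:C} (with $u=\widetilde X(t)$, $v=\bar X(t)$, legitimate since $s+\eta\ge s_1+2$ together with Assumption~\ref{assump:A} forces $F(\bar X+\widetilde X)\in W^{s_1}$) yields
\begin{equation*}
\tfrac{1}{2}\tfrac{\dif}{\dif t}\norm{\widetilde X(t)}_{s_1}^2+\vartheta\norm{\widetilde X(t)}_{s_1+1}^2\le(1+\norm{\widetilde X(t)}_{s_1}^2)\,b(\norm{\bar X(t)}_{s}).
\end{equation*}
Since $\bar X\in C([0,T];W^{s,p}(\Lambda))\hookrightarrow C([0,T];W^{s}(\Lambda))$ (using $p\ge 2$ and boundedness of $\Lambda$), Gronwall's inequality delivers $\sup_{t<T^*}\norm{\widetilde X(t)}_{s_1}<\infty$. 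Feeding this bound into the iterative bootstrap from the proof of Proposition~\ref{prop:ExcessRegularity} — which alternates gains of spatial regularity coming from semigroup smoothing against the growth hypotheses $A_{s',\eta,p'}$ for $s_1\le s'\le s$, $2\le p'\le p$, with gains of $L^{p'}$-integrability coming from Sobolev embedding — upgrades it to $\sup_{t<T^*}\norm{\widetilde X(t)}_{s+\eta,p}<\infty$, contradicting the standard blow-up alternative. Hence $T^*=T$.

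The main obstacle I expect is the rigorous justification of the energy identity in the previous paragraph, since a mild solution need not be time-differentiable and $\widetilde X(t)$ need not lie in the domain of $\Delta$ in the classical sense. The standard remedy is to carry out the estimate on a finite-dimensional Galerkin approximation of \eqref{eq:SPDENonlinear} (projecting onto the span of the first $N$ Dirichlet eigenfunctions of $-\Delta$), where the chain rule applies classically and Assumption~\ref{assump:C} is directly applicable; one then passes to the limit $N\to\infty$ using weak compactness from the $W^{s_1+1}$-coercive term, and invokes uniqueness from the fixed-point step to identify the limit with $\widetilde X$ and transfer the $W^{s_1}$-bound.
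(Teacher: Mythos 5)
Your proposal is correct and follows the same architecture as the paper's proof: a Banach fixed-point argument in $C([0,T'];W^{s+\eta,p}(\Lambda))$ for local existence (the paper's Lemma~\ref{lemExistenceLocalInTime}), a Gronwall-type a priori bound in $W^{s_1}(\Lambda)$ obtained from Assumption~$C_{s_1,s}$ and the negativity of the Laplacian to rule out blow-up (Lemma~\ref{lem:SemilinearWellPosedGlobalInTime}), and the iterative bootstrap of Proposition~\ref{prop:ExcessRegularity} to recover the full $W^{s+\eta,p}$ regularity. The one place you diverge is in rigorizing the chain rule for $\norm{\widetilde X(t)}_{s_1}^2$: you propose a Galerkin scheme, whereas the paper regularizes the already-constructed solution by the resolvent, setting $\widetilde X^{(n)}=n(n-\vartheta\Delta)^{-1}\widetilde X$, showing via $L_{s,\eta,p}$ that the residual $R^{(n)}=\frac{\dif}{\dif t}\widetilde X^{(n)}-\vartheta\Delta\widetilde X^{(n)}-F(\bar X+\widetilde X^{(n)})$ vanishes in $C([0,\bar T];W^{s_1}(\Lambda))$, and then applying the chain rule and Gronwall to $\widetilde X^{(n)}$ directly. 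The paper's device is slightly more economical: since it smooths the known solution rather than building an independent approximation, it avoids the weak-compactness and limit-identification steps your Galerkin route would require (identifying the Galerkin limit with $\widetilde X$ needs uniqueness in a class weaker than the one where the fixed point lives, or strong convergence of the approximants). Both routes are standard and lead to the same conclusion.
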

\begin{proof}
The statement follows from Proposition~\ref{prop:ExcessRegularity}, provided that $\widetilde{X}\in C([0,T];W^{s_1}(\Lambda))$. This inclusion indeed holds true, as proved in Lemma~\ref{lem:SemilinearWellPosedGlobalInTime} below.  
\end{proof}

For the rest of this section, we fix $s, s_1, \eta\geq 0$ and $p\geq 2$ that satisfy the assumptions from Theorem~\ref{thm:SemilinearExistence}. Since all the statements are pathwise, we also fix $\omega\in\Omega$. For $T',m>0$, let 
$$
M(T', m):=\{u\in C([0,T'];W^{s+\eta, p}(\Lambda))\;|\;\sup_{0\leq t\leq T'}\norm{u(t)}_{s+\eta, p}\leq m\},
$$ 
and define the operator $G:M(T', m)\rightarrow C([0,T']; W^{s+\eta, p}(\Lambda))$ as
\begin{equation}
(Gu)(t) = S_{\vartheta}(t)X_0 + \int_0^t S_{\vartheta}(t-r)F(\bar X(r) + u(r))\mathrm{d}r. 
\end{equation}
Note that $M(T',m)$ is a closed ball in a Banach space, hence complete.

\begin{lemma}\label{lemExistenceLocalInTime}
	Suppose that Assumptions $A_{s,\eta, p}$ and $L_{s, \eta, p}$ are fulfilled,  $X_0\in W^{s+\eta, p}(\Lambda)$ and let $m>\norm{X_0}_{s+\eta, p}$. Then,  there exists $T'>0$ such that equation  \eqref{eq:NonlinearMild} has a unique solution in $M(T', m)$.
\end{lemma}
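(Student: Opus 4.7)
The plan is to apply the Banach fixed-point theorem to the operator $G$ on the complete metric space $M(T',m)$ for a sufficiently small $T'>0$. Two properties must be checked: that $G$ maps $M(T',m)$ into itself and that $G$ is a strict contraction there.

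First I would invoke the smoothing property of the analytic semigroup $S_\vartheta$ from Proposition~\ref{prop:semigroup_props}(i) to obtain, for $v\in W^{s+\eta-2+\epsilon,p}(\Lambda)$ and $0<r<t$,
\begin{equation*}
\norm{S_\vartheta(t-r)v}_{s+\eta,p}\lesssim (t-r)^{-1+\epsilon/2}\norm{v}_{s+\eta-2+\epsilon,p},
\end{equation*}
where $\epsilon>0$ is the constant supplied by Assumption $A_{s,\eta,p}$ (apply the bound with $h=1-\epsilon/2$ to $(-\Delta)^{(s+\eta-2+\epsilon)/2}v$, using that $S_\vartheta(t-r)$ commutes with fractional powers of $-\Delta$). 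Combining this with Assumption $A_{s,\eta,p}$ itself and the continuous embedding $W^{s+\eta,p}(\Lambda)\hookrightarrow W^{s,p}(\Lambda)$ (which follows from $\eta\ge 0$ and the spectral gap of $-\Delta$), for any $u\in M(T',m)$ one obtains
\begin{equation*}
\norm{(Gu)(t)}_{s+\eta,p}\le \norm{X_0}_{s+\eta,p}+C\int_0^t (t-r)^{-1+\epsilon/2}\,g\bigl(\norm{\bar X(r)+u(r)}_{s,p}\bigr)\,\mathrm{d}r.
\end{equation*}
Taking $g$ non-decreasing without loss of generality, and using that $\norm{\bar X(r)+u(r)}_{s,p}$ is bounded uniformly on $[0,T']$ by $\sup_{0\le t\le T}\norm{\bar X(t)}_{s,p}+C'm$, the integral is at most $C_0(T')^{\epsilon/2}$ for a constant $C_0$ depending on $m$, $\bar X$, and $g$. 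Choosing $T'$ small enough so that $C_0(T')^{\epsilon/2}\le m-\norm{X_0}_{s+\eta,p}$ secures the self-map property; continuity in $t$ of $Gu$ with values in $W^{s+\eta,p}(\Lambda)$ follows from strong continuity of the semigroup and dominated convergence.

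Next, I would establish the contraction property. Assumption $L_{s,\eta,p}$ combined with the same smoothing bound yields, for $u,v\in M(T',m)$,
\begin{equation*}
\norm{(Gu)(t)-(Gv)(t)}_{s+\eta,p}\le Ch_0\int_0^t (t-r)^{-1+\epsilon/2}\,\norm{u(r)-v(r)}_{s+\eta,p}\,\mathrm{d}r,
\end{equation*}
where $h_0$ is the supremum of $h$ over the relevant bounded set of its arguments. This gives
\begin{equation*}
\sup_{0\le t\le T'}\norm{(Gu)(t)-(Gv)(t)}_{s+\eta,p}\le C_1(T')^{\epsilon/2}\sup_{0\le t\le T'}\norm{u(t)-v(t)}_{s+\eta,p}.
\end{equation*}
Shrinking $T'$ further if necessary so that $C_1(T')^{\epsilon/2}<1$ makes $G$ a strict contraction on $M(T',m)$, and Banach's fixed-point theorem then supplies a unique $\widetilde X\in M(T',m)$ solving \eqref{eq:NonlinearMild} on $[0,T']$.

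The main issue is purely quantitative: $T'$ must be small enough for the self-map and contraction estimates simultaneously. Since both bounds scale like $(T')^{\epsilon/2}$, this is immediate. A minor technical point is the embedding $W^{s+\eta,p}(\Lambda)\hookrightarrow W^{s,p}(\Lambda)$, which reduces to the boundedness of $(-\Delta)^{-\eta/2}$ on $L^p(\Lambda)$ --- a direct consequence of \eqref{eq:neg_frac_laplace_formula} and Proposition~\ref{prop:semigroup_props}(i).
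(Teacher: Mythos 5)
Your proposal is correct and follows essentially the same route as the paper's proof: both establish the self-map property via Assumption $A_{s,\eta,p}$ together with the semigroup smoothing bound from Proposition~\ref{prop:semigroup_props}(i) (yielding a factor $(T')^{\epsilon/2}$), and the contraction property via Assumption $L_{s,\eta,p}$ in the same way, before invoking Banach's fixed-point theorem. The only implicit point, which the paper also handles by a harmless reduction, is that one must take $\epsilon<2$ so that the exponent $-1+\epsilon/2$ in the smoothing estimate is indeed integrable with positive power $h=1-\epsilon/2$.
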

\begin{proof}
Analogous to the proof of Proposition~\ref{prop:ExcessRegularity} with $\epsilon'=0$, $\epsilon<2$, for any $T'>0$, we deduce 
\begin{align*}
	\norm{(Gu)(t)}_{s+\eta, p} 
		&\leq  \norm{X_0}_{s+\eta, p} +\frac{2}{\epsilon}T'^{\frac{\epsilon}{2}} g\left(\sup_{0\leq t\leq T'}\norm{\bar X(t)}_{s, p} + Cm\right),
	\end{align*}
where $C$ is the embedding constant coming from $W^{s+\eta, p}(\Lambda)\subset W^{s, p}(\Lambda)$. Note that the above estimate holds uniformly in $t\in[0,T']$. Moreover, for sufficiently small $T'>0$,  $G$ maps $M(T',m)$ into itself. The claim follows, once it is proved that  $T'$ can be chosen such that $G$ is a contraction mapping on $M(T',m)$, which we will show next. 
By Proposition~\ref{prop:semigroup_props}(i) for $\delta=1$, and Assumption~\ref{assump:L}, for any $u, v\in M(T',m)$, we have 	
\begin{align*}
\norm{&(Gu-Gv)(t)}_{s+\eta, p}  \\
& \leq \int_0^t\norm{S_{\vartheta}(t-r)(F(\bar X(r)+u(r))-F(\bar X(r)+v(r)))}_{s+\eta,p}\mathrm{d}r \\
	&  \lesssim \int_0^t(t-r)^{-1+\frac{\epsilon}{2}}\norm{F(\bar X(r)+u(r))-F(\bar X(r) + v(r))}_{s+\eta-2+\epsilon,p}\mathrm{d}r \\
	& \leq \int_0^t (t-r)^{-1+\frac{\epsilon}{2}} \norm{u(r)-v(r)}_{s+\eta,p} h(\norm{\bar X(r)+u(r)}_{s, p},\norm{\bar X(r)+v(r)}_{s, p})\mathrm{d}r \\
	&  \leq \frac{2}{\epsilon}T'^{\frac{\epsilon}{2}}
	 \sup_{0\leq t\leq T'}\norm{u(t)-v(t)}_{s+\eta,p}\sup_{0\leq t\leq T'}
	 h\left(\norm{\bar X(t)+u(t)}_{s, p},\norm{\bar X(t)+v(t)}_{s, p}\right).
	\end{align*}
Since $\norm{\bar X(t)+u(t)}_{s, p}\leq \sup_{0\leq t'\leq T'}\norm{\bar X(t')}_{s, p} + Cm$, there exists a (random) constant $\tilde C$ such that
\begin{align*}
\sup_{0\leq t\leq T'}\norm{(Gu-Gv)(t)}_{s+\eta, p}\leq \tilde C 
\frac{2}{\epsilon}T'^{\frac{\epsilon}{2}} \sup_{0\leq t\leq T'}\norm{u(t)-v(t)}_{s+\eta,p},
\end{align*}
and hence, for small enough $T'$ the mapping $G$ is a contraction mapping. The proof is complete. 
\end{proof}

\begin{lemma}\label{lem:SemilinearWellPosedGlobalInTime}
	Suppose that Assumptions $A_{s,\eta, p}$, $L_{s, \eta, p}$ and $C_{s_1,s}$ hold, with $s+\eta\geq s_1 + 2$, and suppose that $X_0\in W^{s+\eta, p}(\Lambda)$, $\bar X\in C([0,T];W^{s,p}(\Lambda))$. 
	Then, the solution $\widetilde X$ to (\ref{eq:NonlinearMild}) exists up to time $T$, and $\widetilde{X}\in C([0,T];W^{s_1,2}(\Lambda))$. 
\end{lemma}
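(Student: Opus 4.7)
The strategy is to promote the local $W^{s+\eta,p}$-solution furnished by Lemma~\ref{lemExistenceLocalInTime} to a global one in the weaker space $W^{s_1,2}(\Lambda)$ by means of an a priori bound coming from the coercivity Assumption~\ref{assump:C}. The key observation is that any local $W^{s+\eta,p}$-solution is automatically $W^{s_1,2}$-valued: indeed $s+\eta\ge s_1+2$, $p\ge 2$, and boundedness of $\Lambda$ give the chain of embeddings
\begin{equation*}
W^{s+\eta,p}(\Lambda)\hookrightarrow W^{s+\eta,2}(\Lambda)\hookrightarrow W^{s_1+2,2}(\Lambda)\hookrightarrow W^{s_1,2}(\Lambda),
\end{equation*}
so $\widetilde X(t)\in W^{s_1+2,2}$, $\Delta\widetilde X(t)\in W^{s_1,2}$, and, using Assumption~\ref{assump:A} together with Sobolev embedding, $F(\bar X(t)+\widetilde X(t))\in W^{s_1,2}$. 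Consequently, the PDE \eqref{eq:SPDENonlinear} holds as an identity in $W^{s_1,2}(\Lambda)$ on every interval of existence.

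Next, I would let $T^*\in(0,T]$ be the maximal existence time of the $W^{s+\eta,p}$-solution supplied by Lemma~\ref{lemExistenceLocalInTime}, so that either $T^*=T$ or $\limsup_{t\uparrow T^*}\norm{\widetilde X(t)}_{s+\eta,p}=\infty$. Testing \eqref{eq:SPDENonlinear} against $(-\Delta)^{s_1}\widetilde X(t)$ in $L^2(\Lambda)$ and integrating by parts yields the energy identity
\begin{equation*}
\tfrac{1}{2}\tfrac{d}{dt}\norm{\widetilde X(t)}_{s_1}^2+\vartheta\norm{\widetilde X(t)}_{s_1+1}^2=\langle F(\bar X(t)+\widetilde X(t)),\widetilde X(t)\rangle_{W^{s_1}(\Lambda)}.
\end{equation*}
Applying Assumption~\ref{assump:C} with $u=\widetilde X(t)$, $v=\bar X(t)$, and dropping the nonnegative gradient term produces
\begin{equation*}
\tfrac{d}{dt}\norm{\widetilde X(t)}_{s_1}^2\leq 2\bigl(1+\norm{\widetilde X(t)}_{s_1}^2\bigr)\,b(\norm{\bar X(t)}_s).
\end{equation*}
Since $\bar X\in C([0,T];W^{s,2}(\Lambda))$ (again by $p\ge 2$), the map $t\mapsto b(\norm{\bar X(t)}_s)$ is bounded on $[0,T]$, and Gronwall's lemma delivers
\begin{equation*}
\sup_{0\leq t< T^*}\norm{\widetilde X(t)}_{s_1}^2\leq (\norm{X_0}_{s_1}^2+1)\exp\!\Big(2\int_0^T b(\norm{\bar X(r)}_s)\,dr\Big)-1,
\end{equation*}
a finite constant independent of $T^*$.

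To conclude, I would argue $T^*=T$ by contradiction. If $T^*<T$, the uniform $W^{s_1,2}$-bound, combined with the mild formulation \eqref{eq:NonlinearMild} and the parabolic regularization of $S_\vartheta$, can be bootstrapped---in the spirit of Proposition~\ref{prop:ExcessRegularity} and using Assumptions~\ref{assump:A} and \ref{assump:L}---into a uniform $W^{s+\eta,p}$-bound on $[0,T^*)$, contradicting the blowup alternative of the local theory. This bootstrap-and-restart step is the main obstacle: the energy estimate controls a much weaker norm than the one used by the local fixed point, and some care is required because Proposition~\ref{prop:ExcessRegularity} is formulated on a fixed interval rather than as a sharp blowup criterion, so one must iterate it on subintervals $[0,T_1]\subset[0,T^*)$ with constants uniform in $T_1$. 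Once $T^*=T$ is secured, the continuity $\widetilde X\in C([0,T];W^{s_1,2}(\Lambda))$ follows from the mild formulation and dominated convergence.
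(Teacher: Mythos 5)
Your argument matches the paper's own proof in all essentials: both establish a global a priori bound on $\norm{\widetilde X(t)}_{s_1}^2$ by pairing the equation with $\widetilde X$ in $W^{s_1}(\Lambda)$, applying the coercivity assumption $C_{s_1,s}$ together with Gronwall's lemma, and then ruling out blow-up of the local solution from Lemma \ref{lemExistenceLocalInTime}. The only divergence is technical: you differentiate $\norm{\widetilde X(t)}_{s_1}^2$ directly (implicitly using that the mild solution is strict in $W^{s_1}$, which does hold here because $F(\bar X+\widetilde X)\in C([0,T'];W^{s_1+\epsilon}(\Lambda))$ for some $\epsilon>0$ thanks to $s+\eta\geq s_1+2$ and Assumption $A_{s,\eta,p}$), whereas the paper justifies the same energy identity via the Yosida approximation $\widetilde X^{(n)}=n(n-\vartheta\Delta)^{-1}\widetilde X$; the continuation/bootstrap subtlety you honestly flag at the end is present, and treated just as tersely, in the paper's own argument.
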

\begin{proof} 
By Lemma \ref{lemExistenceLocalInTime}, there exists a solution $\widetilde X \in W^{s+\eta,p}(\Lambda) \subset W^{s_1}(\Lambda)$, locally in time. Let $0<\bar T\leq T$ be the (random) maximal time of existence of $\widetilde X \in W^{s_1}(\Lambda)$. Whenever $\bar T < T$, we have $\sup_{0\leq t\leq \bar T}\norm{\widetilde X(t)}_{s_1}=\infty$. 
 
Assume $\bar T<T$, and set $\widetilde X^{(n)}:=n(n-\vartheta\Delta)^{-1}\widetilde X$. Then, as $n\to\infty$, $\widetilde X^{(n)}\rightarrow\widetilde X$ in $C([0,\bar T];W^{s+\eta, p}(\Lambda))$. Furthermore, 
	\begin{align*}
	R^{(n)}  &:= \frac{\dif}{\dif t}\widetilde X^{(n)}-\vartheta\Delta\widetilde X^{(n)} - F(\bar X + \widetilde X^{(n)}) \\
	&\hphantom{:}= n(n-\vartheta\Delta)^{-1}F(\bar X+\widetilde X) - F(\bar X + \widetilde X^{(n)})\rightarrow 0
	\end{align*}
	in $C([0,\bar T]; W^{s+\eta-2, p}(\Lambda))$ by $L_{s, \eta, p}$, and hence also in $C([0,\bar T];W^{s_1}(\Lambda))$. Now, we apply the chain rule to $\norm{\widetilde X^{(n)}(t)}_{s_1}^2$ and use that the Laplacian is negative-definite such that $\norm{\widetilde X^{(n)}(t)}_{s_1}^2 - \norm{\widetilde X^{(n)}(0)}_{s_1}^2$ equals
\begin{align*}
	& 2\int_0^t 
\left
	\langle\vartheta\Delta\widetilde X^{(n)}(r) + F(\bar X(r) + \widetilde X^{(n)}(r))   + R^{(n)}(r), \widetilde X^{(n)}(r) \right\rangle_{W^{s_1}(\Lambda)} \mathrm{d}r \\
	&  \lesssim \int_0^t
	\left(\left|\left\langle F(\bar X(r) + \widetilde X^{(n)}(r)), \widetilde X^{(n)}(r)\right\rangle_{W^{s_1}(\Lambda)} \right| +\norm{\widetilde X^{(n)}(r)}_{s_1}^2 +\norm{R^{(n)}(r)}_{s_1}^2 \right)\mathrm{d}r \\
	& \lesssim \int_0^t \left(
	\left(1+\norm{\widetilde X^{(n)}(r)}_{s_1}^2\right)b \left( \norm{\bar X(r)}_{s}\right) +\norm{\widetilde X^{(n)}(r)}_{s_1}^2 +\norm{R^{(n)}(r)}_{s_1}^2\right)\mathrm{d}r,
\end{align*}
where we applied $C_{s_1,s}$ in the last inequality. Applying Gronwall's inequality and letting $n\rightarrow\infty$, we conclude that  $\sup_{0\leq t\leq \bar T}\norm{\widetilde X(t)}_{s_1}^2 <\infty$, in contradiction to $\bar T < T$. 
Hence $T=\bar T$ almost surely. 
\end{proof}

In the next two sections we consider two important examples - stochastic reaction-diffusion equations and Burgers equation - and for each of them we provide simple conditions that guarantee that the conclusions from Theorem~\ref{thm:SemilinearExistence} are true. 
  
\subsubsection{Application to reaction-diffusion equations}

As in 
Section~\ref{sec:ReactionDiffMain} 
we consider reaction-diffusion equations whose nonlinearity is given by a function $f:\R\rightarrow\R$, namely $F(u)(x) = f(u(x))$. First, we deal with the case that $f$ is a polynomial
\begin{equation}
	f(x) = a_mx^m + \dots + a_1x + a_0,\label{eq:polyF}
\end{equation}
with $a_m<0$ and $m\in 2\N+1$. 
We prove an auxiliary result:
\begin{lemma}\label{lem:ReacDiffConditions} Let $p\geq1$ and consider $f$ as in \eqref{eq:polyF}. Then:
	\begin{enumerate}
		\item Assumption $A_{s, \eta, p}$ is true for any $\eta<2$, $p\geq 2$ and $s > d/p$.
		\item Assumption $A_{s, \eta, p}$ is true for $s=0$, with $p>d(m-1)/2$ and $\eta<2-d(m-1)/p$.
		\item Assumption $L_{s, \eta, p}$ holds for any $\eta\in[0,2)$, $s > d/p$.
		\item Assumption $C_{s_1,s_2}$ is satisfied with $s_1=0$, $s_2>d/2$. 
	\end{enumerate}
\end{lemma}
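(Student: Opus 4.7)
\medskip
\noindent\textbf{Proof plan.} The strategy is to exploit the algebra property of $W^{s,p}(\Lambda)$ in the range $s>d/p$, combined with duality of Sobolev embeddings in the low-regularity regime $s=0$, and finally the one-sided dissipativity inherent to a polynomial $f$ of odd degree with negative leading coefficient.

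For (i) and (iii), I would invoke the Banach algebra property of $W^{s,p}(\Lambda)$ for $s>d/p$ (cf.~\cite{Triebel1983book}). Since $f(u)=\sum_{k=0}^{m}a_k u^k$, iterating the algebra estimate $\norm{uv}_{s,p}\lesssim \norm{u}_{s,p}\norm{v}_{s,p}$ yields $\norm{f(u)}_{s,p}\le C(1+\norm{u}_{s,p}^m)$; choosing $\epsilon:=2-\eta>0$ gives $s+\eta-2+\epsilon=s$ and hence (i) with $g(x)=C(1+x^m)$. For (iii), I would use the polynomial factorization $u^k-v^k=(u-v)\sum_{j=0}^{k-1}u^{j}v^{k-1-j}$ to write $f(u)-f(v)=(u-v)Q(u,v)$ with $Q$ a polynomial of total degree $m-1$, apply the algebra property to obtain $\norm{f(u)-f(v)}_{s,p}\le\norm{u-v}_{s,p}\,h(\norm{u}_{s,p},\norm{v}_{s,p})$ with $h$ polynomial, and absorb $\norm{u-v}_{s,p}\le\norm{u-v}_{s+\eta,p}$ together with $\epsilon=2-\eta$.

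For (ii), the algebra property is unavailable since $s=0\le d/p$, so I would pass to $L^q$-estimates and dualize the Sobolev embedding. From $|f(w)|\lesssim 1+|w|^m$ we get $f(u)\in L^{p/m}(\Lambda)$ when $u\in L^p(\Lambda)$. The embedding $L^{p/m}(\Lambda)\hookrightarrow W^{-\alpha,p}(\Lambda)$ is the dual of $W^{\alpha,p/(p-1)}(\Lambda)\hookrightarrow L^{p/(p-m)}(\Lambda)$, which holds by the classical Sobolev embedding precisely when $\alpha\ge d(m-1)/p$. Setting $\alpha:=2-\eta-\epsilon$, the hypothesis $\eta<2-d(m-1)/p$ (well-posed thanks to $p>d(m-1)/2$) allows one to pick $\epsilon>0$ sufficiently small so that $\alpha\ge d(m-1)/p$, yielding $\norm{f(u)}_{\eta-2+\epsilon,p}\le C(1+\norm{u}_{0,p}^m)$.

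For (iv), the key observation is that the two polynomials $w\mapsto wf(w)=a_m w^{m+1}+\dots$ and $w\mapsto f'(w)=ma_m w^{m-1}+\dots$ both have even-degree leading terms with negative coefficients (since $m$ is odd and $a_m<0$), so they are bounded \emph{above} by finite constants depending only on $f$. Splitting
\[
\int_\Lambda f(u+v)\,u\,dx=\int_\Lambda[f(u+v)-f(v)]\,u\,dx+\int_\Lambda f(v)\,u\,dx,
\]
the mean value representation $f(u+v)-f(v)=u\int_0^1 f'(v+tu)\,dt$ gives pointwise $[f(u+v)-f(v)]u=u^2\int_0^1 f'(v+tu)\,dt\le C u^2$, whose $\Lambda$-integral is bounded by $C\norm{u}_0^2$. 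For the remainder, Cauchy--Schwarz combined with the Sobolev embedding $W^{s_2}(\Lambda)\hookrightarrow L^\infty(\Lambda)$ (valid since $s_2>d/2$) yields $|\int f(v)u\,dx|\le\norm{f(v)}_0\norm{u}_0\le C(1+\norm{v}_{s_2}^m)\norm{u}_0$, and Young's inequality absorbs the linear factor $\norm{u}_0$ into $1+\norm{u}_0^2$. This delivers the desired bound with $b(x)=C(1+x^m)$.

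The delicate point in (iv) is that the stated assumption involves an absolute value, whereas the argument above produces only an upper bound on $\int f(u+v)u\,dx$: a matching lower bound at $s_1=0$ would require controlling $\int|u+v|^{m+1}$ by $\norm{u}_0^2$, which is impossible without higher integrability of $u$. I would resolve this by noting that the only use of Assumption~\ref{assump:C} is in the Gronwall step of Lemma~\ref{lem:SemilinearWellPosedGlobalInTime}, where the Laplacian contributes non-positively in the $W^{s_1}$ inner product; thus the one-sided upper bound suffices to close the energy estimate, so the absolute value in the formulation of Assumption~\ref{assump:C} can be replaced by an upper bound without loss.
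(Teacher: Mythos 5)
Your parts (i)--(iii) follow exactly the paper's route: (i) and (iii) rest on the multiplication-algebra property of $W^{s,p}(\Lambda)$ for $s>d/p$ (the paper routes (i) through Lemma~\ref{lemLpReactionDiffusion} with $\alpha=0$), and your duality argument for (ii), giving $L^{p/m}(\Lambda)\hookrightarrow W^{-\alpha,p}(\Lambda)$ for $\alpha\geq d(m-1)/p$, is the same Sobolev embedding the paper invokes when it writes $\norm{x^l}_{\eta-2+\epsilon,p}\lesssim\norm{x^l}_{0,p/l}$.

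For (iv) the comparison is more interesting, because the paper gives no proof at all --- it only cites \cite[Example 7.10]{DaPratoZabczykBook2014} and \cite[Proposition 2.5]{PasemannStannat2019}. Your computation (split off $\int f(v)u$, use the mean-value form $[f(u+v)-f(v)]u=u^2\int_0^1 f'(v+tu)\,dt$ together with the upper bound on $f'$ coming from $m$ odd and $a_m<0$, and control $\norm{f(v)}_0$ via $W^{s_2}(\Lambda)\hookrightarrow L^\infty(\Lambda)$) is precisely the standard dissipativity argument behind those references, so it correctly fills the gap. Your closing observation is also a genuine and worthwhile catch: with the absolute value as literally written in Assumption~\ref{assump:C}, the bound fails at $s_1=0$ for $m\geq 3$ (already for $f(w)=-w^3$, $v=0$, one would need $\norm{u}_{L^4}^4\lesssim 1+\norm{u}_{L^2}^2$), so the assumption must be read as the one-sided upper bound $\langle F(u+v),u\rangle_{W^{s_1}(\Lambda)}\leq(1+\norm{u}_{s_1}^2)b(\norm{v}_{s_2})$. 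As you note, this is all that the Gronwall step in Lemma~\ref{lem:SemilinearWellPosedGlobalInTime} uses (the Laplacian term there is sign-definite and discarded), so the weakened form suffices and the paper's statement should be understood in that one-sided sense.
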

\begin{proof} 
(i) This follows from Lemma \ref{lemLpReactionDiffusion} with $\alpha=0$.

\smallskip
\noindent 
(ii) The argument is similar to Lemma \ref{lemLpReactionDiffusion}. It suffices to bound $\norm{x^l}_{\eta-2+\epsilon,p}$, for $l=2,\ldots,m$. Since  $p>\frac{d}{2}(l-1)$ and $0<\eta<2-\frac{d}{p}(l-1)$, by the Sobolev embedding theorem, we have $\norm{x^l}_{\eta-2+\epsilon, p} \lesssim \norm{x^l}_{0,p/l} = \norm{x}_{0,p}^l$.

\smallskip
\noindent (iii) This follows from $\norm{xy}_{s, p}\lesssim\norm{x}_{s, p}\norm{y}_{s, p}$.  
		
\smallskip
\noindent (iv) This is a well-known property, cf. \cite[Example 7.10]{DaPratoZabczykBook2014}. See e.g. \cite[Proposition 2.5]{PasemannStannat2019} for the calculation.
\end{proof}

\begin{proposition}\label{prop:ReacDiffExistence}
Consider $f$ as in \eqref{eq:polyF}. Suppose that $X_0\in W^{s+2,p}(\Lambda)$ and $\bar X\in C([0,T];W^{s, p}(\Lambda))$ for some $p\geq 2$ and $s>1\vee\frac{d}{p}$. Assume that $d\leq 3$ and $p>\frac{dm}{2}$, and if $d=3$ also assume that $m\leq 3$. Then, the assertions of Theorem~\ref{thm:SemilinearExistence} hold true. 
\end{proposition}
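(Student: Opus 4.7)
The plan is to verify the hypotheses of Theorem~\ref{thm:SemilinearExistence} with $s_1=0$ and an $\eta\in(0,2)$ chosen close to $2$, so that well-posedness and the regularity $\widetilde X\in C([0,T];W^{s+\eta,p}(\Lambda))$ follow, and hence $X=\bar X+\widetilde X\in C([0,T];W^{s,p}(\Lambda))$. The conditions $X_0\in W^{s+\eta,p}(\Lambda)$ and $\bar X\in C([0,T];W^{s,p}(\Lambda))$ are immediate from the hypothesis, and the compatibility condition $s+\eta\geq s_1+2=2$ can be arranged since $s>1$ and $\eta$ may be taken arbitrarily close to $2$.

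Assumptions $L_{s,\eta,p}$ and $C_{s_1,s}$ are handled directly by Lemma~\ref{lem:ReacDiffConditions}(iii) and~(iv), respectively. The former requires $s>d/p$, which is part of the hypothesis, while the latter requires $s>d/2$; this is secured by $s>1$ in dimensions $d\leq 2$ and by the polynomial-specific coercivity argument from \cite{PasemannStannat2019} in dimension $d=3$, exploiting the odd degree $m$ and negative leading coefficient of $f$ in \eqref{eq:polyF}.

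The main work is the verification of Assumption $A_{s',\eta,p'}$ on the rectangle $0\leq s'\leq s$, $2\leq p'\leq p$. For $s'>d/p'$ this is a direct application of Lemma~\ref{lem:ReacDiffConditions}(i); for $s'=0$ it follows from Lemma~\ref{lem:ReacDiffConditions}(ii), which is the origin of the dimensional constraints of the proposition: the required inequality $p'>d(m-1)/2$ together with the admissible excess regularity $\eta<2-d(m-1)/p'$ is precisely what the hypothesis $p>dm/2$ (and, in $d=3$, also the restriction $m\leq 3$) makes possible, simultaneously with $s+\eta\geq 2$. The intermediate range $0<s'\leq d/p'$ is covered by interpolation/Sobolev embedding between the spaces $W^{s',p'}(\Lambda)$, which reduces it to one of the two previously handled cases.

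The delicate point --- and the reason the proposition places several dimension-dependent restrictions on $p$, $m$, and $d$ --- is the simultaneous balancing of these constants: the parameter $\eta$ must be large enough to secure $s+\eta\geq 2$ while remaining below the polynomial-growth threshold $2-d(m-1)/p'$ dictated by Lemma~\ref{lem:ReacDiffConditions}(ii) uniformly over the required range of $p'$. Once these choices are made consistently, all hypotheses of Theorem~\ref{thm:SemilinearExistence} are fulfilled and the conclusion follows at once.
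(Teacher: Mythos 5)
There is a genuine gap: your plan is to apply Theorem~\ref{thm:SemilinearExistence} in one shot with $s_1=0$, which forces you to verify Assumption $A_{s',\eta,p'}$ for a \emph{single} $\eta>0$ on the whole range $0\leq s'\leq s$, $2\leq p'\leq p$ --- and this is impossible in general. At the corner $s'=0$, $p'=2$, the only available tool is Lemma~\ref{lem:ReacDiffConditions}(ii), which requires $p'>d(m-1)/2$ and $\eta<2-d(m-1)/p'$. Already for $d=1$, $m=5$ (allowed by the proposition, since $p>dm/2$ only constrains the top exponent $p$, not $p'=2$) one has $d(m-1)/2=2$, so the condition fails at $p'=2$; for $d=2$, $m=3$ it fails likewise. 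This is not an artifact of the lemma: $F(u)=u^m$ maps $L^2$ only into $L^{2/m}$, and the resulting negative Sobolev index is too low to leave room for any positive excess regularity when $m$ is large. Your appeal to ``interpolation'' for the intermediate range $0<s'\leq d/p'$ is also unsubstantiated --- Assumption \ref{assump:A} does not interpolate in $s'$ in any evident way --- but in any case the obstruction already sits at the endpoint $s'=0$.

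What the paper does instead, and what your proposal is missing, is an intermediate a priori estimate that cannot be extracted from the abstract assumptions $A$, $L$, $C$ alone. One first obtains a global solution in $L^2(\Lambda)$ from Lemmas~\ref{lemExistenceLocalInTime} and \ref{lem:SemilinearWellPosedGlobalInTime}, and then exploits the dissipative structure of \eqref{eq:polyF} ($m$ odd, $a_m<0$, hence $f'$ bounded from above) to prove the coercivity estimate $\langle\vartheta\Delta Y+F(Y),Y\rangle_{W^{1,2}(\Lambda)}\lesssim-\vartheta\norm{Y}_2^2+C\norm{Y}_1^2$, which upgrades $\widetilde X$ to $C([0,T];W^{1}(\Lambda))$. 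Only then does the bootstrap start: $W^{1}(\Lambda)\hookrightarrow L^p(\Lambda)$ for $d\leq 2$ (resp. $L^6(\Lambda)$ for $d=3$), so Lemma~\ref{lem:ReacDiffConditions}(ii) is invoked at the \emph{large} exponent, where $p>dm/2$ guarantees $2-d(m-1)/p>d/p$ and Proposition~\ref{prop:ExcessRegularity} lifts $\widetilde X$ above the algebra threshold $d/p$; from there Lemma~\ref{lem:ReacDiffConditions}(i) applies inductively. Skipping the $L^2\to W^{1}(\Lambda)$ step leaves the starting point of the bootstrap too low for the hypotheses of Theorem~\ref{thm:SemilinearExistence} (equivalently, of Proposition~\ref{prop:ExcessRegularity}) to be satisfiable. (A smaller issue: for $d=3$ your verification of $C_{0,s}$ needs $s>3/2$ from Lemma~\ref{lem:ReacDiffConditions}(iv), which the hypothesis $s>1\vee d/p$ does not supply; you gesture at a coercivity argument here but do not give it.)
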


\begin{proof}
By Lemma \ref{lem:ReacDiffConditions}, the conditions of Lemma \ref{lemExistenceLocalInTime} and  \ref{lem:SemilinearWellPosedGlobalInTime} are met with $p$ and $s$ for any $\eta<2$ and $s_1=0$. Thus, there is a solution to (\ref{eq:NonlinearMild}) in $L^2(\Lambda)$. As the leading coefficient of $F$ is negative, $F'$ is bounded from above, and it holds for sufficiently smooth $Y$, e.g. $Y\in W^{(3/2)\vee d}(\Lambda)$, that
	\begin{align*}
		\langle\theta\Delta Y+F(Y), Y\rangle_{W^{1,2}(\Lambda)}
			&\leq -\theta\norm{Y}_{2}^2+\langle F'(Y)\nabla Y,\nabla Y\rangle_{L^2} \\
			&\lesssim -\theta\norm{Y}_{2}^2+C\norm{Y}_{1}^2.
	\end{align*}
	Using this coercivity property, one shows as in \cite[Lemma 4.29]{LiuRoeckner2015} using a suitable approximation sequence that $X=\bar X + \widetilde X$ (and thus $\widetilde X$) has in fact values in $W^{1}(\Lambda)$. For additional regularity, we use the Sobolev embedding theorems: If $d=1$ or $d=2$, then $W^{1}(\Lambda)$ is embedded in $L^p(\Lambda)$. By Lemma \ref{lem:ReacDiffConditions}(ii) and Proposition \ref{prop:ExcessRegularity}, $\widetilde X\in C([0,T];W^{s', p}(\Lambda))$ for some $s'>d/p$ (here we use $p>dm/2$). Now conclude inductively with Lemma \ref{lem:ReacDiffConditions}(i). 
	If $d=3$ and $m=3$, we argue similarly. $W^{1}(\Lambda)$ embeds into $L^6(\Lambda)$, so by Lemma \ref{lem:ReacDiffConditions}(ii) with $d=m=3$, $p=6$ and $\eta=1/2$, $\widetilde X$ has values in $W^{1/2, 6}(\Lambda)$, which in turn embeds into $L^q(\Lambda)$ for any $q\geq 2$. Now conclude as in the case $d\in\{1, 2\}$. 
\end{proof}

In particular, using Proposition \ref{prop:LinearRegularity}(i,ii), we have the following result.

\begin{theorem}\label{thm:ExistenceReactionDiffusionComplete}
Consider $f$ as in \eqref{eq:polyF}. Let $d\leq 3$. In the case $d=3$ also assume $m\leq 3$. Grant Assumption \ref{assump:B} and let
	\begin{equation*}
		\gamma > \left\{
		\begin{matrix}
			\frac{1}{4}, & d=1, \\
			1, & d=2, \\
			\frac{3}{2}, & d=3,
		\end{matrix}
		\right.
		\quad\quad s_d = \left\{
		\begin{matrix}
			\frac{1}{2}+2\gamma, & d=1, \\
			-1 + 2\gamma, & d=2, \\
			-2 + 2\gamma, & d=3.
		\end{matrix}
		\right.
	\end{equation*}
For any $p>dm/2$ and $1\vee d/p<s<s_d$, if $X_0\in W^{s+2, p}(\Lambda)$, then there exists a unique solution $\widetilde X\in C([0,T];W^{s+2, p}(\Lambda))$ to (\ref{eq:NonlinearMild}), and in particular, there exists a unique mild solution $X\in C([0,T];W^{s, p}(\Lambda))$ to (\ref{eq:SPDE}). 
\end{theorem}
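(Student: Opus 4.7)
My proof proposal for Theorem~\ref{thm:ExistenceReactionDiffusionComplete} is essentially an assembly of two earlier results: the linear regularity estimate from Proposition~\ref{prop:LinearRegularity} and the nonlinear existence/bootstrap result from Proposition~\ref{prop:ReacDiffExistence}. The plan is to use the splitting $X = \bar{X} + \widetilde{X}$ from Section~\ref{sec:MainAssumptions}, where $\bar{X}$ solves the linear equation \eqref{eq:SPDElinear} and $\widetilde{X}$ solves the mild equation \eqref{eq:NonlinearMild}. Fix $p > dm/2$ (with $p \geq 2$) and $1 \vee d/p < s < s_d$ as in the statement, and note that Assumption~\ref{assump:B} is granted by hypothesis.

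First I would apply Proposition~\ref{prop:LinearRegularity} to obtain the correct regularity for $\bar X$. For $d = 1$, the Dirichlet eigenfunctions on an interval are uniformly bounded in $L^{\infty}$, so \eqref{eq:supPhi} holds and part (ii) yields $\bar X \in C([0,T]; W^{s, p}(\Lambda))$ for every $s < s^* = 1/2 + 2\gamma = s_1$. For $d \in \{2, 3\}$, part (i) gives $\bar X \in C([0,T]; W^{s', p}(\Lambda))$ for every $s' < s^* - d/2 + d/p = s_d + d/p$; since $s < s_d < s_d + d/p$, the required regularity follows in all three cases.

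With $\bar X$ now in the correct space, the remaining hypotheses of Proposition~\ref{prop:ReacDiffExistence}---namely $X_0 \in W^{s+2,p}(\Lambda)$, $s > 1 \vee d/p$, $d \leq 3$, $p > dm/2$, and $m \leq 3$ when $d = 3$---hold by assumption. Invoking that proposition produces a unique $\widetilde{X} \in C([0,T]; W^{s+2, p}(\Lambda))$ solving \eqref{eq:NonlinearMild}. Setting $X := \bar X + \widetilde X$ and using the embedding $W^{s+2,p}(\Lambda) \hookrightarrow W^{s,p}(\Lambda)$, I obtain $X \in C([0,T]; W^{s, p}(\Lambda))$, which is the unique mild solution of \eqref{eq:SPDE}; uniqueness follows from the uniqueness of $\widetilde X$, since $\bar X$ is pathwise determined by $W$.

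The only delicate point is a dimension-dependent bookkeeping of regularity. The contrast between parts (i) and (ii) of Proposition~\ref{prop:LinearRegularity}---the Sobolev loss $d/p$ in (i) versus the sharper regularity obtained under \eqref{eq:supPhi}---forces $s_d$ to drop by a full unit as $d$ increases, and this explains the asymmetry in the thresholds for $\gamma$ and $s_d$ stated in the theorem. The lower bounds on $\gamma$ are calibrated precisely so that the admissible range for $(s, p)$ is nonempty (equivalently, so that $s_d > 1 \vee d/p$ can be realized for some $p > dm/2$), and verifying this compatibility is the only nontrivial item to check before chaining together the two component results.
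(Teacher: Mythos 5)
Your proposal is correct and follows exactly the paper's route: the paper derives this theorem in one line by combining Proposition~\ref{prop:LinearRegularity}(i,ii) (to place $\bar X$ in $C([0,T];W^{s,p}(\Lambda))$ for $s<s_d$, using \eqref{eq:supPhi} when $d=1$ and the embedding with loss $d/2-d/p$ when $d=2,3$) with Proposition~\ref{prop:ReacDiffExistence}. Your dimension-by-dimension bookkeeping of $s^*-d/2+d/p=s_d+d/p$ matches the paper's intended calibration of $\gamma$ and $s_d$.
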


\begin{remark}
	In Theorem \ref{thm:ExistenceReactionDiffusionComplete}, $\gamma$ is chosen such that $B$ is always a Hilbert-Schmidt operator. In $d=2, 3$, the condition we pose is more restrictive as we just use minimal regularity for $\bar X$ from Proposition \ref{prop:LinearRegularity}(i). Furthermore, note that in $d=3$, when $d/p>1$, w.l.o.g. we can choose $p$ larger such that $1\vee d/p<s_d$ is satisfied. 
\end{remark}

Next, we test the conditions for reaction terms of the form $f\in C^\infty_b(\R)$. 

\begin{lemma}\label{lem:ConditionsBoundedPerturbation} \
	Consider $f\in C^\infty_b(\R)$. Then:
	\begin{enumerate}
		\item Assumption \ref{assump:A} is true for $p\geq 2$, $s>0$ and $\eta<2$, and we can choose $g(x)=C(1+|x|^{1\vee s})$ for some $C>0$. 
		\item Assumption \ref{assump:L} is true for $p\geq 2$, $s>d/p$ and $\eta < 2$.
		\item Assumption \ref{assump:C} is true for $s_1=1$, $s_2\geq 1$.
	\end{enumerate}
\end{lemma}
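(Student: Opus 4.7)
The three assertions all hinge on $f\in C^\infty_b(\R)$ having uniformly bounded derivatives of all orders, so the Nemytskii operator $u\mapsto f\circ u$ behaves like a nonlinear Lipschitz map on Sobolev-type spaces. For part~(i), since $\eta<2$ I would pick $\epsilon\in(0,2-\eta)$ so that $s+\eta-2+\epsilon<s$, and use the continuous embedding $W^{s,p}(\Lambda)\hookrightarrow W^{s+\eta-2+\epsilon,p}(\Lambda)$ to reduce the claim to
\[
\norm{f(u)}_{s,p}\lesssim 1+\norm{u}_{s,p}^{1\vee s}.
\]
For $0<s\le 1$, this follows from $\norm{f(u)}_{L^p(\Lambda)}\le\norm{f}_\infty|\Lambda|^{1/p}$ combined with the pointwise Lipschitz bound $|f(u(x))-f(u(y))|\le\norm{f'}_\infty|u(x)-u(y)|$ applied to the Gagliardo seminorm, invoking the equivalence between Besov--Slobodeckij and Bessel potential norms on smooth bounded domains. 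For integer $s>1$, Faà di Bruno writes $D^\alpha f(u)$ as a sum of products $f^{(k)}(u)\prod_i D^{\beta_i}u$ with $\sum_i|\beta_i|=|\alpha|=s$, and Gagliardo--Nirenberg bounds each such product by $\norm{u}_{L^\infty}^{k-1}\norm{u}_{s,p}$, which after absorbing $\norm{u}_{L^\infty}$ via Gagliardo--Nirenberg gives a total of order $\norm{u}_{s,p}^s$. Non-integer $s>1$ is then handled by complex interpolation between the integer endpoints.

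For part~(ii), I would start from the mean value representation
\[
F(u)-F(v)=(u-v)\int_0^1 f'\bigl(v+t(u-v)\bigr)\,dt,
\]
and again choose $\epsilon\in(0,2-\eta)$ to reduce the left-hand norm to $\norm{\cdot}_{s,p}$ by embedding. Since $s>d/p$, the space $W^{s,p}(\Lambda)$ is a Banach algebra under pointwise multiplication, so
\[
\norm{F(u)-F(v)}_{s,p}\lesssim\norm{u-v}_{s,p}\,\bnorm{\int_0^1 f'(v+t(u-v))\,dt}_{s,p}.
\]
Applying part~(i) to $f'\in C^\infty_b(\R)$ together with convexity of $\norm{\cdot}_{s,p}$ bounds the integral factor by $h(\norm{u}_{s,p},\norm{v}_{s,p}):=C(1+(\norm{u}_{s,p}\vee\norm{v}_{s,p})^{1\vee s})$, while the embedding $\norm{u-v}_{s,p}\le C\norm{u-v}_{s+\eta,p}$ closes the argument.

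For part~(iii) with $s_1=1$, the $W^{1}(\Lambda)$ inner product coming from $\norm{u}_1=\norm{(-\Delta)^{1/2}u}_{L^2(\Lambda)}$ satisfies $\sc{w}{u}_{W^1(\Lambda)}=\sc{\nabla w}{\nabla u}_{L^2(\Lambda)}$ by self-adjointness of the Dirichlet Laplacian and Green's formula. The chain rule $\nabla F(u+v)=f'(u+v)\nabla(u+v)$ together with Cauchy--Schwarz then gives
\[
\bigl|\sc{F(u+v)}{u}_{W^1(\Lambda)}\bigr|\le\norm{f'}_\infty\bigl(\norm{u}_1^2+\norm{u}_1\norm{v}_1\bigr)\le C(1+\norm{u}_1^2)(1+\norm{v}_1^2),
\]
and since $\norm{v}_1\le C\norm{v}_{s_2}$ whenever $s_2\ge 1$, this is of the required form with $b(y)=C(1+y^2)$. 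The main obstacle is the Nemytskii estimate in~(i) for non-integer or large $s$: globally controlling $\norm{f(u)}_{s,p}$ in Bessel potential spaces requires a careful combination of Faà di Bruno, Gagliardo--Nirenberg, Sobolev embedding, and interpolation in order to keep the growth exactly of order $1\vee s$; once~(i) is established, (ii) is a Lipschitz-through-integration argument plus the algebra property, and (iii) reduces to a direct integration by parts.
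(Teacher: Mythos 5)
Your parts (ii) and (iii) coincide with the paper's own proofs: the paper also writes $f(u)-f(v)=(u-v)\int_0^1 f'(v+t(u-v))\,dt$, uses the algebra property of $W^{s,p}(\Lambda)$ for $s>d/p$ together with part (i) applied to $f'$, and for (iii) computes $\langle f(u+v),(-\Delta)u\rangle=\langle f'(u+v)\nabla(u+v),\nabla u\rangle$ and applies Cauchy--Schwarz. The divergence is in part (i), which the paper disposes of in one line by citing Adams--Frazier, Theorem~A, for the bounds $\norm{\bar f(u)}_{s,p}\lesssim\norm{u}_{s,p}$ ($0<s\leq 1$) and $\norm{\bar f(u)}_{s,p}\lesssim\norm{u}_{s,p}+\norm{u}_{s,p}^{s}$ ($s>1$), with $\bar f=f-f(0)$. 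You instead try to prove the composition estimate from scratch, and this is where the gaps are.

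Concretely: (a) For $s>1$ your Fa\`a di Bruno/Gagliardo--Nirenberg argument produces $\norm{u}_{L^\infty}^{k-1}\norm{u}_{s,p}$, and absorbing $\norm{u}_{L^\infty}$ into $\norm{u}_{s,p}$ requires the embedding $W^{s,p}(\Lambda)\hookrightarrow L^\infty(\Lambda)$, i.e.\ $s>d/p$. The lemma asserts (i) for all $s>0$, $p\geq 2$ in any dimension, and the subcritical regime $s\leq d/p$ is genuinely used downstream (the iteration in the proof of the proposition following the lemma starts from $W^{1}(\Lambda)=W^{1,2}(\Lambda)$ in $d\geq 2$). Handling that regime without $L^\infty$ control is precisely the content of the Adams--Frazier theorem; the classical Moser estimate you are reproducing does not give it. (b) ``Non-integer $s>1$ is handled by complex interpolation between the integer endpoints'' is not a valid step: $u\mapsto f(u)$ is nonlinear, so the complex interpolation theorem does not apply to it, and the available nonlinear interpolation results land in real-interpolation (Besov) scales, not the Bessel-potential scale used here. (c) For $0<s\leq 1$ you invoke an ``equivalence'' between the Besov--Slobodeckij and Bessel potential norms; for $p\neq 2$ these spaces are different. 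This last point is repairable using the slack $s+\eta-2+\epsilon<s$ and the one-sided embeddings between the two scales, but (a) and (b) are genuine gaps that the paper avoids by citation rather than by your argument.
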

\begin{proof}
(i) With $\bar f = f - f(0)$, \cite[Theorem A]{AdamsFrazier1992} gives $\norm{\bar f(u)}_{s,p}\lesssim \norm{u}_{s, p} + \norm{u}_{s, p}^s$ in the case $s>1$ and $\norm{\bar f(u)}_{s, p}\lesssim\norm{u}_{s, p}$ in the case $0<s\leq 1$. The claim follows easily.

(ii) First note that $f'\in C^\infty_b(\R)$ as well. Using the algebra property of $W^{s, p}(\Lambda)$, for $u,v\in W^{s, p}(\Lambda)$ together with part (i):
\begin{align*}
			\norm{f(u)-f(v)}_{s,p} &\lesssim \int_0^1\norm{f'(u+t(v-u))}_{s,p}\mathrm{d}t\;\norm{u-v}_{s,p} \\
				&\lesssim \int_0^1\left(1+\norm{u+t(u-v)}_{s,p}^{1\vee s}\right)\mathrm{d}t\;\norm{u-v}_{s,p} \\
				&\lesssim \left(1+\norm{u}_{s, p}^{1\vee s} + \norm{v}_{s, p}^{1\vee s}\right)\norm{u-v}_{s,p},
\end{align*}
		and the claim follows.
		
(iii) Making use of the boundedness of $f\in C^\infty_b(\R,\R)$, we have
		\begin{align*}
			\langle f(u+v), u\rangle_{W^{1, 2}(\Lambda)} 
				&= \langle f(u+v), (-\Delta)u\rangle \\
				&= \langle f'(u+v)(\nabla u +\nabla v),\nabla u\rangle\\
				&\lesssim \norm{u}_{1}^2 + \norm{u}_{1}\norm{v}_{1},
		\end{align*}
		and we can choose $b(x) = 1 + 2x$. 
\end{proof}

\begin{proposition}
Grant Assumption \ref{assump:B}. Consider $f\in C^\infty_b(\R)$.	Suppose that $X_0\in W^{s+2,p}(\Lambda)$ and $\bar X\in C([0,T]; W^{s,p}(\Lambda))$ for some $p\geq 2$, $s>1$. Then, the assertions of Theorem \ref{thm:SemilinearExistence} hold true. 
\end{proposition}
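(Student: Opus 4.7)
The plan is to invoke Theorem \ref{thm:SemilinearExistence} directly, using Lemma \ref{lem:ConditionsBoundedPerturbation} to verify each of the three abstract hypotheses $A$, $L$ and $C$. Concretely, I would fix a small number $\varepsilon>0$ with $s+2-\varepsilon > 3$ (possible since $s>1$) and apply the theorem with parameters $s$, $p$ as given, $\eta=2-\varepsilon$, and $s_1=1$. Then $s+\eta = s+2-\varepsilon > 3 = s_1+2$, so the structural inequality of Theorem \ref{thm:SemilinearExistence} is met, and $X_0\in W^{s+2,p}(\Lambda)\subset W^{s+\eta,p}(\Lambda)$, while $\bar X\in C([0,T];W^{s,p}(\Lambda))$ is given.

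Next, I would check the three assumptions. Assumption $A_{s',\eta,p'}$ for all $1\le s'\le s$ and $2\le p'\le p$ follows from Lemma \ref{lem:ConditionsBoundedPerturbation}(i), since there $s'>0$ and $\eta<2$ are the only requirements. Assumption $L_{s,\eta,p}$ follows from Lemma \ref{lem:ConditionsBoundedPerturbation}(ii) (noting that the setting of the proposition places us in a regime where $s>d/p$, since the algebra property of $W^{s,p}(\Lambda)$ used inside the lemma already requires this; in concrete applications one picks $p$ large enough to ensure $s>d/p$). Finally, Assumption $C_{s_1,s}=C_{1,s}$ with $s_2=s\ge 1$ is exactly Lemma \ref{lem:ConditionsBoundedPerturbation}(iii).

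With these three ingredients in place, Theorem \ref{thm:SemilinearExistence} delivers a unique $\widetilde X\in C([0,T];W^{s+\eta,p}(\Lambda))$ solving the mild equation \eqref{eq:NonlinearMild}, and consequently $X=\bar X+\widetilde X \in C([0,T];W^{s,p}(\Lambda))$ is the unique mild solution to \eqref{eq:SPDE}.

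The main obstacle is essentially bookkeeping: the indices must be matched so that the fixed-point construction in Lemma \ref{lemExistenceLocalInTime} closes and the coercivity bound in Lemma \ref{lem:SemilinearWellPosedGlobalInTime} applies to prevent blow-up before time $T$. The boundedness of $f$ and all its derivatives is what makes Assumption $C$ particularly benign here (compared to the polynomial case of Proposition \ref{prop:ReacDiffExistence}, where the sign of the leading coefficient and extra Sobolev embeddings were needed to bootstrap). Since no such dimension-dependent embedding is required now, there is no restriction on $d$ beyond what is already implicit in the Sobolev algebra condition $s>d/p$.
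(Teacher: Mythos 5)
Your proposal is correct and follows essentially the same route as the paper: the paper likewise verifies the hypotheses via Lemma \ref{lem:ConditionsBoundedPerturbation} with $s_1=1$ and $\eta<2$ and then invokes the existence machinery of Lemmas \ref{lemExistenceLocalInTime} and \ref{lem:SemilinearWellPosedGlobalInTime} (i.e.\ the content of Theorem \ref{thm:SemilinearExistence}), merely adding an explicit bootstrap to higher $W^{s,p}$-regularity afterwards. The caveat you flag about needing $s>d/p$ for Assumption $L_{s,\eta,p}$ is shared by the paper's own proof, so it does not count against you.
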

\begin{proof}
	By Lemma \ref{lem:ConditionsBoundedPerturbation}, the conditions of Lemma \ref{lemExistenceLocalInTime} and \ref{lem:SemilinearWellPosedGlobalInTime} are satisfied with $p$, $s$ for any $\eta<2$ and $s_1=1$. Thus, there is a solution to \eqref{eq:NonlinearMild} in $W^{1}(\Lambda)$. Additional $W^{s, p}(\Lambda)$-regularity for $s<3+2\gamma-d/2$ and $p\geq 2$ now follows by iteratively applying Lemma \ref{lem:ConditionsBoundedPerturbation} (i) and the Sobolev embedding theorem. 
\end{proof}

Using this, we immediately get the next result. 

\begin{theorem}\label{thm:compOperators}
Consider $f\in C^\infty_b(\R)$.	Grant Assumption \ref{assump:B} and let $\gamma>1/4$ in $d=1$ or $\gamma>d/2$ in $d\geq 2$, respectively. Let $s_d=1+2\gamma-d$ for $d\geq 2$ and $s_1=1/2+2\gamma$. For any $p\geq 2$ and $1<s<s_d$, if $X_0\in W^{s+2,p}(\Lambda)$, then there exists a unique solution $\widetilde X\in C([0,T];W^{s+2,p}(\Lambda))$ to \eqref{eq:NonlinearMild}, and in particular, there exists a unique mild solution $X\in C([0,T]; W^{s,p}(\Lambda))$ to \eqref{eq:SPDE}. 
\end{theorem}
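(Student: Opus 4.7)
The theorem will follow by combining the linear regularity from Proposition~\ref{prop:LinearRegularity} with the preceding proposition on $C_b^\infty$-nonlinear reaction-diffusion equations. The plan is to verify that, under the stated lower bound on $\gamma$, the linear process $\bar X$ already takes values in $W^{s,p}(\Lambda)$ for every $p\geq 2$ and $1<s<s_d$; once this is in place, the preceding proposition immediately supplies a unique $\widetilde X\in C([0,T];W^{s+2,p}(\Lambda))$ solving \eqref{eq:NonlinearMild}, and therefore a unique mild solution $X=\bar X+\widetilde X\in C([0,T];W^{s,p}(\Lambda))$ to \eqref{eq:SPDE}.

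For the linear-regularity step I would handle the two dimensional regimes separately. In $d=1$ the domain is an interval, so the Dirichlet eigenfunctions of $-\Delta$ are normalized sines and satisfy \eqref{eq:supPhi}; Proposition~\ref{prop:LinearRegularity}(ii) then yields $\bar X\in C([0,T];W^{s,p}(\Lambda))$ for all $p\geq 2$ and $s<s^{*}=1/2+2\gamma=s_1$. In $d\geq 2$, I would instead apply Proposition~\ref{prop:LinearRegularity}(i): for fixed $p\geq 2$ and $s<s_d=1+2\gamma-d$, the auxiliary index $s':=s+d/2-d/p$ satisfies $s'<s_d+d/2-d/p\leq s^{*}$, so (i) gives $\bar X\in C([0,T];W^{s'-d/2+d/p,p}(\Lambda))=C([0,T];W^{s,p}(\Lambda))$. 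With this in hand, the preceding proposition applies since its hypotheses ($X_0\in W^{s+2,p}(\Lambda)$, $\bar X\in C([0,T];W^{s,p}(\Lambda))$ with $p\geq 2$ and $s>1$) are all satisfied, producing the desired $\widetilde X$ and hence the unique mild solution $X$ with the claimed regularity.

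I expect no serious obstacle here: the only nontrivial book-keeping is the index arithmetic in $d\geq 2$, where one must notice that for every $p\geq 2$ the inequality $s_d=1+2\gamma-d\leq 1+2\gamma-d+d/p=s^{*}-d/2+d/p$ holds, so Proposition~\ref{prop:LinearRegularity}(i) delivers just enough spatial regularity for any $p$. All the hard analysis, including the construction of $\widetilde X$, the fixed-point argument in $W^{s+\eta,p}$, the uniform-boundedness of composition operators for $f\in C_b^\infty(\R)$, and the coercivity-based extension from local to global existence, has already been absorbed into Lemmas~\ref{lemExistenceLocalInTime}, \ref{lem:SemilinearWellPosedGlobalInTime} and \ref{lem:ConditionsBoundedPerturbation} and hence into the preceding proposition, leaving only the index-matching argument above for the present theorem.
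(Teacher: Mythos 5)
Your proposal is correct and follows essentially the same route as the paper: the paper derives Theorem~\ref{thm:compOperators} directly from the preceding proposition, with the choice of $\gamma$ serving exactly to guarantee $\bar X\in C([0,T];W^{s,p}(\Lambda))$ via Proposition~\ref{prop:LinearRegularity} (part (ii) with \eqref{eq:supPhi} in $d=1$, part (i) in $d\geq 2$). Your explicit index arithmetic checking $s_d\leq s^{*}-d/2+d/p$ is precisely the bookkeeping the paper leaves implicit.
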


\subsubsection{Application to the stochastic Burgers equation}
Let $d=1$ and 
\begin{equation}
	F(u) = -u\partial_xu = -\frac{1}{2}\partial_x(u^2).
\end{equation}
Assume $X_0\in W^{s+1, p}(\Lambda)$ and $\bar X\in C([0,T];W^{s, p}(\Lambda))$ for some $s>1$ and $p\geq 2$. Proposition \ref{prop:LinearRegularity} shows that the latter condition is satisfied if $1+2\gamma-d/2>1$, i.e. $\gamma>d/4=1/4$, independently of $p\geq 2$. We note that this assumption can be further weakened; see for instance the analysis in \cite{DaPratoDebusscheTemam1994} that includes the case $\gamma=0$.

\begin{lemma}\label{lem:BurgersConditions} The following statements hold true: 
	\begin{enumerate}
		\item Assumption $A_{s, \eta, p}$ is true for any $p\geq 2$, $s>1/p$ and $\eta<1$.
		\item Assumption $A_{s, \eta, p}$ is true for $s=0$, $p\geq 2$ with $\eta < 1-1/p$.
		\item Assumption $L_{s, \eta, p}$ holds for $p\geq 2$, $s>1/p$ and $\eta\in[0,1)$. 
		\item Assumption $C_{s_1, s_2}$ is true for $s_1=0$ and $s_2>3/2$. 
	\end{enumerate}
\end{lemma}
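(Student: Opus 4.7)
The approach is to exploit the structure $F(u)=-\tfrac12\partial_x(u^2)$, which factors $F$ as a first-order differential operator composed with the Nemytskii map $u\mapsto u^2$. The first three parts will follow from Sobolev algebra and embedding properties; the last part is the coercivity estimate, whose key ingredient is that the cubic self-interaction term vanishes after integration by parts.

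\textbf{Parts (i) and (iii).} These are essentially Lemma~\ref{lemLpReactionDiffusion} applied with $\alpha=1$ and $Q_m(x)=x^2$, but I will sketch the direct reasoning since (iii) needs a slight refinement. For (i), write $\|F(u)\|_{s+\eta-2+\epsilon,p}\lesssim \|u^2\|_{s+\eta-1+\epsilon,p}$ using that $\partial_x$ is of order $1$. Choosing $\epsilon\in(0,1-\eta]$ gives $s+\eta-1+\epsilon\le s$, and for $s>1/p=d/p$ the space $W^{s,p}(\Lambda)$ is a Banach algebra, so $\|u^2\|_{s,p}\lesssim \|u\|_{s,p}^2$. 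Set $g(x)=Cx^2$. For (iii), factor $u^2-v^2=(u-v)(u+v)$, apply the same $\partial_x$-bound and embedding to reach level $s$, then use the algebra property and $\|u-v\|_{s,p}\le\|u-v\|_{s+\eta,p}$ to bound by $\|u-v\|_{s+\eta,p}\cdot C(\|u\|_{s,p}+\|v\|_{s,p})$; set $h(a,b)=C(a+b)$.

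\textbf{Part (ii).} Here $s=0$, so $u^2$ is only in $L^{p/2}(\Lambda)$. I would use the dual Sobolev embedding in dimension $d=1$: $L^{p/2}(\Lambda)\hookrightarrow W^{r,p}(\Lambda)$ for any $r$ satisfying $r\le -(2/p-1/p)=-1/p$. Thus
\[
\|F(u)\|_{\eta-2+\epsilon,p}\lesssim \|u^2\|_{\eta-1+\epsilon,p}\lesssim \|u^2\|_{L^{p/2}(\Lambda)}=\|u\|_{0,p}^2,
\]
provided $\eta-1+\epsilon\le -1/p$, i.e.\ $\epsilon\le 1-1/p-\eta$, which admits $\epsilon>0$ exactly when $\eta<1-1/p$.

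\textbf{Part (iv).} With $s_1=0$, $s_2>3/2$, I need to control $\langle F(u+v),u\rangle$ in $L^2$. Expand
\[
\langle F(u+v),u\rangle=-\langle u\partial_x u,u\rangle-\langle u\partial_x v,u\rangle-\langle v\partial_x u,u\rangle-\langle v\partial_x v,u\rangle.
\]
The first term equals $-\tfrac13\int\partial_x(u^3)\,dx=0$ by the Dirichlet boundary condition on $u$; this cancellation is the pivotal point and is the reason one can take $s_1=0$ rather than $s_1=1$ as in the reaction--diffusion case. For the third term, symmetrize via integration by parts: $\langle v\partial_x u,u\rangle=\tfrac12\int v\,\partial_x(u^2)=-\tfrac12\int\partial_x v\cdot u^2$. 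The remaining terms are estimated directly. Using the Sobolev embedding $W^{s_2}(\Lambda)\hookrightarrow C^1(\bar\Lambda)$ for $s_2>3/2$ in $d=1$, both $v$ and $\partial_x v$ are bounded by $\|v\|_{s_2}$, yielding
\[
|\langle F(u+v),u\rangle|\lesssim \|v\|_{s_2}\|u\|_0^2+\|v\|_{s_2}^2\|u\|_0\lesssim (1+\|u\|_0^2)(\|v\|_{s_2}+\|v\|_{s_2}^2),
\]
so $b(x)=C(x+x^2)$ works.

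The only subtle step is part (iv), and the subtlety is not analytic but structural: one must notice the cubic cancellation and symmetrize the cross term before applying Hölder, exactly as in the classical energy estimate for the deterministic Burgers equation. Parts (i)--(iii) are routine once the factorization $F=-\tfrac12\partial_x\circ(\cdot)^2$ is exploited.
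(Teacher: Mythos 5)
Your proof is correct and follows essentially the same route as the paper, which disposes of (i)--(iii) by the argument of Lemma~\ref{lem:ReacDiffConditions} (i.e.\ Lemma~\ref{lemLpReactionDiffusion} with $\alpha=1$, the algebra property for $s>1/p$, and the Sobolev embedding at $s=0$) and of (iv) by citing the standard energy estimate with the cubic cancellation $\int\partial_x(u^3)=0$, which is exactly the computation you carry out. The only difference is that you write out the details the paper delegates to references.
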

\begin{proof}
	(i) - (iii) are shown as in Lemma \ref{lem:ReacDiffConditions}. (iv) is well-known, the calculations can be found e.g. in \cite{PasemannStannat2019}.\end{proof}

\begin{proposition}\label{prop:BurgersExistence}
	The conclusions of Theorem \ref{thm:SemilinearExistence} are applicable in this case. 
\end{proposition}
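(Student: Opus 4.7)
\medskip
\noindent\textbf{Plan of proof.} The statement is a verification exercise: I need to check that the hypotheses of Theorem~\ref{thm:SemilinearExistence} are met for the Burgers nonlinearity $F(u)=-u\partial_x u$ under the standing assumptions of this subsection, with the structural input provided by Lemma~\ref{lem:BurgersConditions}. First I would fix parameters: set $s_1=0$ and choose an excess regularity $\eta$ in the nonempty range $[\max(0,2-s),1)$, where nonemptiness uses $s>1$ and $\eta<1$ reflects the first-order nature of the Burgers nonlinearity. With this choice the compatibility condition $s+\eta\ge s_1+2$ holds by construction, the initial condition $X_0\in W^{s+\eta,p}(\Lambda)$ follows from $X_0\in W^{s+1,p}(\Lambda)$ and $\eta<1$, and $\bar X\in C([0,T];W^{s,p}(\Lambda))$ is given by hypothesis (which, in turn, is guaranteed by Proposition~\ref{prop:LinearRegularity} under $\gamma>1/4$).

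Next I would verify the functional-analytic assumptions. Lemma~\ref{lem:BurgersConditions}(i) yields $A_{s',\eta,p'}$ for all $s'>1/p'$ and $\eta<1$, while Lemma~\ref{lem:BurgersConditions}(ii) covers the endpoint $s'=0$ for all $p'\ge 2$; since $s>1\ge 1/p'$ for every $p'\in[2,p]$, both cases together cover the required range $s_1\le s'\le s$, $2\le p'\le p$. The local Lipschitz assumption $L_{s,\eta,p}$ follows from Lemma~\ref{lem:BurgersConditions}(iii), which applies since $s>1>1/p$ and $\eta<1$. Finally, the coercivity assumption $C_{s_1,s}=C_{0,s}$ is supplied by Lemma~\ref{lem:BurgersConditions}(iv) whenever $s>3/2$. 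These verifications assemble directly into the hypotheses of Lemmas~\ref{lemExistenceLocalInTime} and~\ref{lem:SemilinearWellPosedGlobalInTime}, yielding a unique $\widetilde X\in C([0,T];W^{s+\eta,p}(\Lambda))$ and hence $X\in C([0,T];W^{s,p}(\Lambda))$, exactly as in the proof of Proposition~\ref{prop:ReacDiffExistence}.

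The delicate point, which I anticipate to be the main obstacle, is the low-regularity regime $1<s\le 3/2$, where Lemma~\ref{lem:BurgersConditions}(iv) does not directly apply. Here I would argue by bootstrap, mirroring the reaction-diffusion case: one first establishes a global $L^2$-valued solution via Lemma~\ref{lem:SemilinearWellPosedGlobalInTime} applied with some auxiliary $\tilde s>3/2$ (feasible because $\bar X$ already has regularity beyond $3/2$ whenever $\gamma>1/2$; in the borderline range $1/4<\gamma\le 1/2$ one instead uses the well-known $L^2$-valued global existence for Burgers due to \cite{DaPratoDebusscheTemam1994} as the base case), and then iteratively upgrades regularity via Proposition~\ref{prop:ExcessRegularity} using the Sobolev embedding and Lemma~\ref{lem:BurgersConditions}(i) to gain a fixed amount $\eta>0$ of smoothness at every step, until the target space $W^{s+\eta,p}(\Lambda)$ is reached. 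Each iteration is finite by the uniform gain $\eta<1$, and uniqueness follows from local Lipschitzness at the final level. Thus all hypotheses of Theorem~\ref{thm:SemilinearExistence} are met and its conclusions transfer to the Burgers setting.
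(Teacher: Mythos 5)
Your verification follows the same route as the paper's own proof: check the hypotheses of Lemmas \ref{lemExistenceLocalInTime} and \ref{lem:SemilinearWellPosedGlobalInTime} with $s_1=0$ via Lemma \ref{lem:BurgersConditions}, obtain a global $L^2(\Lambda)$-valued solution, and then bootstrap to the target regularity through Proposition \ref{prop:ExcessRegularity} and the Sobolev embedding (jumping from $W^{s'}(\Lambda)$, $s'<1/2$, into $L^q(\Lambda)$ for all $q\geq 2$ and then iterating Lemma \ref{lem:BurgersConditions}(i,ii)). The one place where you genuinely depart from the paper is the coercivity assumption $C_{0,s}$ in the range $1<s\leq 3/2$: you are right that Lemma \ref{lem:BurgersConditions}(iv) only delivers $C_{0,s_2}$ for $s_2>3/2$, and the paper's proof is silent on this point, so flagging it is a real contribution. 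Your patch, however, is the weakest link: for $1/4<\gamma\leq 1/2$ you fall back on the $L^2$ well-posedness of \cite{DaPratoDebusscheTemam1994}, which is proved there for $B=I$ and does not literally cover a general $B$ satisfying Assumption \ref{assump:B}. A cleaner fix that stays inside the paper's toolbox is to observe that in $d=1$ with $\gamma>1/4$, Proposition \ref{prop:LinearRegularity} gives $\bar X\in C([0,T];W^{s',p}(\Lambda))$ for every $p\geq 2$ and every $s'<1/2+2\gamma$, hence $\bar X(t)\in C^{1}(\overline{\Lambda})$ uniformly in $t$, and the Gronwall step in Lemma \ref{lem:SemilinearWellPosedGlobalInTime} can be run with a bound depending on $\norm{\partial_x\bar X(t)}_{L^\infty(\Lambda)}$ rather than on $\norm{\bar X(t)}_{s_2}$ with $s_2>3/2$. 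One further small imprecision: Lemma \ref{lem:BurgersConditions}(i) and (ii) give $A_{s',\eta,p'}$ for $s'>1/p'$ and for $s'=0$, but not for $0<s'\leq 1/p'$, so the claim that the two items together cover the full range $s_1\leq s'\leq s$, $2\leq p'\leq p$ overstates what the lemma provides; like the paper, your iteration in fact sidesteps this window by passing through $L^q(\Lambda)$ for large $q$, and it would be worth saying so explicitly.
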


\begin{proof}
	Lemma \ref{lem:BurgersConditions} implies that Lemma \ref{lemExistenceLocalInTime} and \ref{lem:SemilinearWellPosedGlobalInTime} can be applied with $p\geq 2$, $s>1/p$, $\eta<1$ and $s_1=0$. Therefore, the process $\widetilde X$ is well-posed in $C([0,T]; L^2(\Lambda))$. By Lemma \ref{lem:BurgersConditions}(ii) and Proposition \ref{prop:ExcessRegularity}, $\widetilde X$ has values in $W^{s'}(\Lambda)$ for any $s'<1/2$, and consequently in $L^q(\Lambda)$ for any $q\geq 2$. Now conclude as in Proposition \ref{prop:ReacDiffExistence} by applying Lemma \ref{lem:BurgersConditions}(i,ii) iteratively. 
\end{proof}

Combining the above, we obtain the next result on well-posedness of the stochastic Burgers equation. 

\begin{theorem}\label{thm:ExistenceBurgersComplete}
	Grant Assumption \ref{assump:B} and let $\gamma>\frac{1}{4}$. For any $p\geq 2$ and $1<s<1/2+2\gamma$, if $X_0\in W^{s+1,p}(\Lambda)$, then there exists a unique solution $\widetilde X\in C([0,T];W^{s+1, p}(\Lambda))$ to (\ref{eq:NonlinearMild}) and a unique solution $X\in C([0,T];W^{s, p}(\Lambda))$ to (\ref{eq:SPDE}). 
\end{theorem}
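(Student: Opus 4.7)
The statement is essentially a packaging of the results established in Propositions~\ref{prop:LinearRegularity} and~\ref{prop:BurgersExistence}, so the plan is to assemble them via the splitting argument $X = \bar X + \widetilde X$ introduced in Section~\ref{sec:MainAssumptions}. First, I would fix $p \geq 2$ and $1 < s < 1/2 + 2\gamma$. Since $d=1$ and $\Lambda$ is a bounded interval, the Dirichlet eigenfunctions of $-\Delta$ are explicit sines, so condition~\eqref{eq:supPhi} of Proposition~\ref{prop:LinearRegularity} holds. With $\gamma > 1/4$ we have $s^* = 1 + 2\gamma - d/2 = 1/2 + 2\gamma > 1$, and Proposition~\ref{prop:LinearRegularity}(ii) therefore yields
\[
\bar X \in C([0,T];W^{s,p}(\Lambda)), \qquad p\geq 2,\ s < 1/2 + 2\gamma.
\]

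Next I would feed this regularity of the linear part, together with the assumption $X_0 \in W^{s+1,p}(\Lambda)$, into Proposition~\ref{prop:BurgersExistence}. That proposition applies Theorem~\ref{thm:SemilinearExistence} to the Burgers nonlinearity $F(u) = -u\partial_x u$, with the three structural hypotheses $A_{s,\eta,p}$, $L_{s,\eta,p}$ and $C_{s_1,s_2}$ verified in Lemma~\ref{lem:BurgersConditions} for any $\eta < 1$, $s_1 = 0$ and sufficiently large $s_2$. Hence there exists a unique $\widetilde X \in C([0,T]; W^{s+1,p}(\Lambda))$ solving~\eqref{eq:NonlinearMild}.

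Finally, I would observe that $X := \bar X + \widetilde X$ is by construction the unique mild solution to~\eqref{eq:SPDE}, and the continuous embedding $W^{s+1,p}(\Lambda) \hookrightarrow W^{s,p}(\Lambda)$ combined with the two regularity statements above gives $X \in C([0,T]; W^{s,p}(\Lambda))$.

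The only non-routine ingredient is the one already absorbed into Lemma~\ref{lem:BurgersConditions} and the global-in-time continuation of Lemma~\ref{lem:SemilinearWellPosedGlobalInTime}: the coercivity bound $C_{0,s_2}$ for the Burgers drift. This rests on the cancellation $\langle u\partial_x u, u\rangle = \tfrac13\int_\Lambda \partial_x(u^3)\,\dif x = 0$ coming from the Dirichlet boundary conditions, which prevents blow-up in $L^2$ and lets the bootstrap in Proposition~\ref{prop:BurgersExistence} push the regularity from $L^2(\Lambda)$ up to the optimal level $W^{s+1,p}(\Lambda)$ via iterated applications of Proposition~\ref{prop:ExcessRegularity} and the Sobolev embedding; everything else is bookkeeping.
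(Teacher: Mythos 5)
Your proposal is correct and follows essentially the same route as the paper, which states this theorem as a direct combination of Proposition~\ref{prop:LinearRegularity} (using \eqref{eq:supPhi} for $d=1$ to get $\bar X\in C([0,T];W^{s,p}(\Lambda))$ for all $s<s^*=1/2+2\gamma$) with Proposition~\ref{prop:BurgersExistence} and the splitting $X=\bar X+\widetilde X$. Your closing remarks on the coercivity cancellation correctly identify the one non-routine ingredient, which the paper delegates to Lemma~\ref{lem:BurgersConditions}(iv) and the reference therein.
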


\end{appendix}

\paragraph*{Acknowledgement.}
	We thank Markus Reiß and Wilhelm Stannat for very helpful comments and discussions. The authors are grateful to the editors and the anonymous referees for their helpful comments,
	suggestions, and insightful questions which helped to improve the paper.  This research has been partially funded by Deutsche Forschungsgemeinschaft (DFG) - SFB1294/1 - 318763901.

\bibliographystyle{alpha}
\bibliography{bibliography}

\end{document}